\documentclass[11pt]{article}

\usepackage[letterpaper, margin=1.0in]{geometry}
\usepackage{bm}
\usepackage{xcolor}
\usepackage{setspace}
\makeatletter

\usepackage[sort&compress,numbers]{natbib}
\usepackage{appendix}
\usepackage{enumerate}
\usepackage[ruled,linesnumbered]{algorithm2e}
    \SetKwInOut{Input}{Input}
    \SetKwInOut{Output}{Output}
\usepackage{multirow}
\usepackage{graphicx}
\usepackage{subcaption}
\usepackage{tabularx}
\usepackage{makecell}
\usepackage{booktabs}
\usepackage{cancel}
\usepackage{hyperref}

\usepackage{placeins}   

\usepackage{comment}
\usepackage[normalem]{ulem}

\usepackage{amsmath}
\usepackage{amstext}
\usepackage{amsthm}
\usepackage{amssymb}
\usepackage{bbm}
\usepackage{amsbsy}
\usepackage{mathtools}

    \newtheorem{prop}{Proposition}[section]
    \numberwithin{prop}{section} 
    \newtheorem{lem}{Lemma}[section]
    \numberwithin{lem}{section} 
    \newtheorem{rem}{Remark}[section]
    \numberwithin{rem}{section} 
    
    \numberwithin{cor}{section} 
    \newtheorem{thm}{Theorem}[section]
    \numberwithin{thm}{section} 
    \newtheorem{defn}{Definition}
    \numberwithin{defn}{section}
    
    \numberwithin{fact}{section}

    \theoremstyle{plain}
    \newtheorem*{thm*}{Theorem}
    \newtheorem*{rem*}{Remark}
    
    \newcommand{\beq}{\begin{equation}}
    \newcommand{\eeq}{\end{equation}}
    \newcommand{\beqnn}{\begin{equation*}}
    \newcommand{\eeqnn}{\end{equation*}}
        
    \def\ba {\bm{a}}
    \def\bb {\bm{b}}
    \def\bc {\bm{c}}
    \def\bd {\bm{d}}
    \def\be {\bm{e}}
    \def\bg {\bm{g}}
    \def\bh {\bm{h}} 
    \def\bn {\bm{n}}
    \def\bp {\bm{p}}
    \def\bq {\bm{q}}
    \def\br {\bm{r}}
    \def\bu {\bm{u}}
    \def\bs {\bm{s}}
    \def\bv {\bm{v}}
    \def\bw {\bm{w}}
    \def\bx {\bm{x}}
    \def\by {\bm{y}}
    \def\bz {\bm{z}}
    
    \def\bl {\boldsymbol{\lambda}}
    \def\btheta {\boldsymbol{\theta}}
    \def\bxi {\boldsymbol{\xi}}

    \DeclareRobustCommand{\matr}[1]{\bm{#1}}
    \newcommand{\normsq}[1]{\left\Vert{#1} \right\Vert_{2}^{2}}
    \newcommand{\normtwo}[1]{\left\Vert{#1} \right\Vert_{2}}
    
    \newcommand{\norminf}[1]{\left\Vert{#1} \right\Vert_{\infty}}
    \newcommand{\diff}{\mathop{}\!d}
    \newcommand{\image}[1]{\mathrm{Im}({#1})}

    \DeclareMathOperator*{\argmin}{arg\,min}

    \def\Atop{\matr{A}^{\top}}
    \def\cone{\mathrm{cone}}
    \def\dom {\mathrm{dom~}}        
    \def\gmRm {\Gamma_0(\Rm)}
    \def\gmRn {\Gamma_0(\Rn)}
    \def\interior {\mathrm{int }}   
    \def\proj {\mathrm{proj}}
    \def\ri {\mathrm{ri}}           

    \def\bxsol {\bx^{s}}
    \def\bpsol {\bp^{s}}
    \def\eqset {\mathcal{E}}
    \def\eqcset {\mathcal{E}^{\mathsf{C}}}
    
    \def\Jcbxo {J^{\mathsf{C}}(\bxo)}
    
    \def\N {\mathbb{N}}
    \def\R {\mathbb{R}}
    \def\Rn {\R^n}
    \def\Rm {\R^m}
    
    \def\bxo {\bx_{0}}
    
    \def\exo {\eqset(\bxo)}
    \def\ecxo {\eqcset(\bxo)}

\title{Continuous-time analysis and algorithms for linear and convex quadratic programs via exactly solvable differential inclusions}
\author{Gabriel P. Langlois$^1$ \and Akwum Onwunta$^{2}$ \and J\'er\^ome Darbon$^3$}
\date{%
    $^1$Department of Mathematics, University of Illinois Urbana-Champaign (\url{gp42@illinois.edu}). \textbf{Corresponding author}.\\%
    $^2$Industrial and Systems Engineering, Lehigh University (\url{ako221@lehigh.edu}).\\%
    $^3$Division of Applied Mathematics, Brown University (\url{jerome_darbon@brown.edu}).\\[2ex]%
    \today
}

\begin{document}
\maketitle

\begin{abstract}
Linear and convex quadratic programs are fundamental problems in optimization. While many methods have been developed for solving these programs in large-scale applications, to our knowledge none have attempted to solve their continuous-time gradient flow, a differential inclusion, in full generality without assumptions beyond feasibility. Recent work solved the gradient flows in two special cases, basis pursuit and the lasso, and obtained competitive algorithms. Motivated by these results, we solve the general case exactly. Using the minimal selection principle for monotone differential inclusions, we show that the trajectory of the gradient flow consists of finitely many pieces, each obtained from one nonnegative least-squares problem. Its final piece yields an optimal solution or a certificate of unboundedness. This analysis leads to two algorithms. The first algorithm is exact, terminates after finitely many steps, and applies to linear programs and projections onto polyhedra. The second algorithm, which we call minimal selection descent, discretizes the gradient flow for general convex quadratic programs. Its iterates converge to an optimal solution when one exists and otherwise certify unboundedness. We apply both algorithms to a positivity-preserving limiter for a discontinuous Galerkin scheme and to optimal control problems for the Poisson and heat equations. In the limiter, the first algorithm conserves the total mass more accurately than the state-of-the-art Douglas--Rachford splitting method. On the control problems, minimal selection descent reaches the optimal value to machine precision and is competitive in time with commercial and open-source solvers.
\end{abstract}
\tableofcontents

\section{Introduction}\label{sec:intro}
Linear and convex quadratic programs~\cite{markowitz1956optimization,wolfe1959simplex} are fundamental optimization problems used in many fields, including operations research, economics, robotics, machine learning, and scientific computing~\cite{charnes1957management,tassa2014control,tibshirani1996regression,chambolle2016introduction,pearson2017fast,liu2025optimization}. These problems can be expressed as
\begin{equation}\label{eq:intro_main_problem}
    \min_{\bx \in \Rn} \left\{\frac{1}{2}\langle\bx,\matr{Q}\bx\rangle + \langle\bc,\bx\rangle\right\}  \ \text{subject to} \ \matr{A}\bx \leqslant \bb,
\end{equation}
where $\matr{Q} \in \R^{n \times n}$ is symmetric positive semidefinite (SPSD), $\matr{A} \in \R^{m \times n}$, $\bc \in \Rn$, and $\bb \in \Rm$. Problem~\eqref{eq:intro_main_problem} is a linear program when $\matr{Q} = \matr{0}$ and a quadratic program otherwise. The study of algorithms for these problems began in the 1940s and 1950s, and much of modern optimization theory originates in it~\cite{dantzig1951maximization,cottle2010harry,todd2002many}. In current applications, the dimensions $m$ and $n$ are often large, and researchers have developed many new methods to solve problem~\eqref{eq:intro_main_problem} at that scale~\cite{wright2025optimization,dinh2025real}.

We consider here the continuous-time optimization of problem~\eqref{eq:intro_main_problem} via a \emph{differential inclusion}~\cite{aubin2012differential,brezis1973ope}. Let $C = \{\bx \in \Rn : \matr{A}\bx \leqslant \bb\}$ and assume that $C$ is nonempty. The (nonsmooth) gradient flow of problem~\eqref{eq:intro_main_problem} is the differential inclusion
\begin{equation}\label{eq:intro_di}
\dot{\bx}(\tau) \in -\matr{Q}\bx(\tau) - \bc - N_{C}(\bx(\tau)), \quad \bx(0) = \bxo \in C,
\end{equation}
where $N_{C}(\bx(\tau))$ denotes the normal cone of $C$ at $\bx(\tau)$ (see Table~\ref{tab:notation} in Section~\ref{sec:setup}). At each time $\tau$, the velocity $\dot{\bx}(\tau)$ is an element of the nonempty set on the right-hand side of~\eqref{eq:intro_di}. This differential inclusion has a unique solution, which converges to an optimal solution of problem~\eqref{eq:intro_main_problem} whenever one exists~\cite{aubin2012differential,brezis1973ope}.

Solutions of differential inclusions, however, have rarely been computed exactly, except in simple examples~\cite{hiriart2013convexI,goebel2024set} and for certain linear complementarity systems describing mechanical systems and electrical networks~\cite{heemels2000linear}. Most research has instead focused on the theory of differential inclusions and on the numerical approximation of their solutions in, e.g., nonsmooth analysis, control theory, and optimization~\cite{brezis1973ope,aubin2012differential,dupuis1991lipschitz,dupuis1993dynamical,hiriart2013convexII,attouch2014variational,dontchev1992difference}. Exact solutions have recently been computed for the differential inclusions of basis pursuit and the lasso~\cite{chen2001atomic,tibshirani1996regression} by~\citet{tendero2021algorithm} and~\citet{langlois2025fast}. Both papers used these solutions to analyze the dynamics and to derive algorithms that are exact (numerically up to machine precision) and competitive with established methods, such as the simplex method and coordinate descent.

Motivated by these results, we solve exactly the differential inclusion~\eqref{eq:intro_di} characterizing linear and convex quadratic programs. We use the \emph{minimal selection principle} (Section~\ref{subsec:msp}), which states that at each time, the solution moves along the element of minimal norm in the set on the right-hand side of~\eqref{eq:intro_di}. Our analysis yields two algorithms. Algorithm~\ref{alg:quad_prog} computes the solution exactly when $\matr{Q} = t\matr{I}_{n\times n}$ with $t \geqslant 0$, that is, for linear programs ($t = 0$) and projections onto polyhedra ($t > 0$), and terminates after finitely many iterations. Algorithm~\ref{alg:quad_prog_2}, which we call \emph{minimal selection descent} (MSD), discretizes~\eqref{eq:intro_di} by the explicit Euler method and applies to every SPSD $\matr{Q}$.

\begin{algorithm}[ht!]
  \caption{Computing an optimal solution of problem~\eqref{eq:intro_main_problem} with $\matr{Q} = t\matr{I}_{n\times n}$, or certifying unboundedness.}
  \label{alg:quad_prog}
  \Input{A matrix $\matr{A} \in \mathbb{R}^{m \times n}$, vectors $\bb \in \Rm$, $\bc \in \Rn$, a scalar $t \geqslant 0$, and a feasible initial point $\bx^{(0)} \in C$.}
  \Output{An optimal solution $\bxsol$ of~\eqref{eq:intro_main_problem} with dual variable $\bpsol$, or a direction $\bd^{s}$ certifying unboundedness ($\matr{A}\bd^{s} \leqslant \boldsymbol{0}$ and $\langle\bc,\bd^{s}\rangle < 0$).}
  \BlankLine
  \For{$k = 0, 1, 2, \dots$}{
    Compute $\eqset^{(k)} = \bigl\{i \in \{1,\dots,m\} : [\matr{A}\bx^{(k)}]_i = b_i\bigr\}$\;
    Compute $\bp^{(k)} \in \displaystyle\argmin_{\substack{p_i \geqslant 0,\ i \in \eqset^{(k)} \\[2pt] p_i = 0,\ i \notin \eqset^{(k)}}} \frac{1}{2}\normsq{\Atop\bp + t\bx^{(k)} + \bc}$ and set $\bd^{(k)} = -\Atop\bp^{(k)} - t\bx^{(k)} - \bc$\;
    Compute the stepsize $\Delta^{(k)} = \min_{i \notin \eqset^{(k)}} \frac{b_i - [\matr{A}\bx^{(k)}]_i}{\max(0,[\matr{A}\bd^{(k)}]_i)}$\;
    \BlankLine
    \uIf{$t > 0$ and $t\Delta^{(k)} \geqslant 1$}{
      Set $\bxsol = \bx^{(k)} + \dfrac{1}{t}\bd^{(k)}$ and $\bpsol = \bp^{(k)}$\;
      \Return $(\bxsol,\bpsol)$\tcp*{Optimal}
    }
    \uElseIf{$t = 0$ and $\Delta^{(k)} = +\infty$}{
      \lIf{$\normtwo{\bd^{(k)}} = 0$}{\Return $(\bx^{(k)},\bp^{(k)})$, an optimal pair}
      \lElse{\Return $\bd^{s} = \bd^{(k)}$, certifying unboundedness}
    }
    Update $\bx^{(k+1)} = \bx^{(k)} + \Delta^{(k)}\bd^{(k)}$\;
  }
\end{algorithm}

\begin{algorithm}[ht!]
  \caption{Minimal selection descent for problem~\eqref{eq:intro_main_problem} with SPSD matrix $\matr{Q}$.}
  \label{alg:quad_prog_2}
  \Input{A matrix $\matr{A} \in \mathbb{R}^{m \times n}$, an SPSD matrix $\matr{Q} \in \R^{n\times n}$, vectors $\bb \in \Rm$, $\bc \in \Rn$, a feasible initial point $\bx^{(0)} \in C$, and a tolerance $\varepsilon \geqslant 0$.}
  \Output{An approximate primal optimal solution $\bxsol$ of~\eqref{eq:intro_main_problem} with dual variable $\bpsol$, or a direction $\bd^{s}$ certifying unboundedness ($\matr{A}\bd^{s} \leqslant \boldsymbol{0}$, $\matr{Q}\bd^{s} = \boldsymbol{0}$, and $\langle\bc,\bd^{s}\rangle < 0$).}
  \BlankLine
  \For{$k = 0, 1, 2, \dots$}{
    Compute $\eqset^{(k)} = \bigl\{i \in \{1,\dots,m\} : [\matr{A}\bx^{(k)}]_i = b_i\bigr\}$\;
    Compute $\bd^{(k)} = -\bigl(\Atop\bp^{(k)} + \matr{Q}\bx^{(k)} + \bc\bigr)$, where $\bp^{(k)} \in \displaystyle\argmin_{\substack{p_i \geqslant 0,\ i \in \eqset^{(k)} \\[2pt] p_i = 0,\ i \notin \eqset^{(k)}}} \frac{1}{2}\normsq{\Atop\bp + \matr{Q}\bx^{(k)} + \bc}$\;
    \lIf{$\normtwo{\bd^{(k)}} \leqslant \varepsilon$}{\Return $(\bxsol,\bpsol) = (\bx^{(k)},\bp^{(k)})$\tcp*[h]{Converged}}
    Compute the stepsize $\Delta^{(k)} = \min\left(\min_{i \notin \eqset^{(k)}}\left\{
        \frac{b_i - [\matr{A}\bx^{(k)}]_i}{\max(0,[\matr{A}\bd^{(k)}]_i)}\right\},
	 \frac{\normsq{\bd^{(k)}}}{\langle\bd^{(k)},\matr{Q}\bd^{(k)}\rangle}\right)$\;
    \lIf{$\Delta^{(k)} = +\infty$}{\Return $\bd^{s} = \bd^{(k)}$\tcp*[h]{Unbounded}}
    Update $\bx^{(k+1)} = \bx^{(k)} + \Delta^{(k)}\bd^{(k)}$\;
  }
\end{algorithm}

\paragraph{Contributions of this paper:} \textbf{(i)} We compute the solution of the differential inclusion~\eqref{eq:intro_di} exactly. Each piece of the solution is an explicit function of time given by one nonnegative least-squares problem, and each breakpoint is the first zero of an explicit analytic function. \textbf{(ii)} We prove that the solution has finitely many pieces for every SPSD $\matr{Q}$, and its final piece yields either an optimal solution of problem~\eqref{eq:intro_main_problem} or a certificate of unboundedness. When $\matr{Q} = t\matr{I}_{n\times n}$ with $t \geqslant 0$, every piece lies on a straight line with a closed-form breakpoint and no active set occurs twice, so Algorithm~\ref{alg:quad_prog} terminates after finitely many iterations. \textbf{(iii)} For general SPSD $\matr{Q}$, each step of MSD preserves feasibility and solves one nonnegative least-squares problem over the active constraints. We prove that the iterates converge to an optimal solution when one exists and otherwise certify unboundedness or drive the objective to $-\infty$. When strict complementarity holds at the limit, the iterates also identify the optimal active set after finitely many steps and then converge at a linear rate. \textbf{(iv)} We apply our algorithms to a positivity-preserving limiter for a discontinuous Galerkin scheme and to optimal control problems constrained by the Poisson and heat equations. In the limiter, Algorithm~\ref{alg:quad_prog} conserves the total mass much more accurately than the splitting method of~\citet{liu2025optimization}. On the control problems, MSD reaches the optimal value to machine precision and is competitive in time with commercial and open-source solvers.

\paragraph{Related work:} Reviews of algorithms for linear and convex quadratic programs include~\cite{todd2002many,wright2025optimization,dinh2025real}. Closest to our approach are the steepest descent methods for linear programs, which move from one feasible point to another along steepest descent directions over the faces of the feasible polyhedron~\cite{brown1951computational,zoutendijk1960methods,rosen1960gradient,lemke1961constrained,chang1989steepest}. These methods either assumed that the data are nondegenerate or approximated the steepest descent direction. When $\matr{Q} = \matr{0}$, the pieces of the trajectory of~\eqref{eq:intro_di} are also rays along steepest descent directions. We compute these directions exactly, and our analysis extends to convex quadratic programs. Like primal active-set methods for quadratic programs~\cite[Section~16.5]{nocedal2006numerical}, our algorithms keep the iterates feasible and hold a set of constraints active on each step. Active-set methods, however, step toward the minimizer of the objective on the face of the current set, which requires solving a linear system involving $\matr{Q}$, and they change the set one constraint at a time. Our algorithms move along the minimal selection, compute it from one nonnegative least-squares problem problem in $\matr{A}$, and can add or remove several constraints at once.

By the minimal selection principle, the differential inclusion~\eqref{eq:intro_di} can be written equivalently as a projected dynamical system~\cite{dupuis1991lipschitz,dupuis1993dynamical}, whose velocity is the projection of $-\matr{Q}\bx - \bc$ onto the tangent cone of $C$ (Section~\ref{subsec:moreau}). It can also be written as a complementarity system, which couples a differential equation with complementarity conditions between the multipliers of the constraints and their slacks. The three formulations have the same unique solution~\cite{brogliato2006equivalence}. After a constant state is appended, the complementarity formulation is a linear complementarity system, a class that also models mechanical systems with unilateral constraints and electrical networks with ideal diodes~\cite{heemels2000linear}. \citet{heemels2000linear} construct the solutions of such systems piece by piece, but for~\eqref{eq:intro_di} their assumptions require all rows of $\matr{A}$ to be linearly independent, and they leave open whether the pieces can accumulate. Our analysis requires linear independence only for the active constraints and proves that the pieces are finitely many.

\paragraph{Organization of this paper:} We begin in Section~\ref{sec:setup} by detailing our setup and discussing the \emph{minimal selection principle} for differential inclusions, which casts the problem of computing an optimal solution to~\eqref{eq:intro_main_problem} (or certifying unboundedness) into that of computing the trajectory of an ordinary differential equation. In Section~\ref{sec:results_I}, we characterize the instantaneous and local dynamics of the solution to the differential inclusions associated with problem~\eqref{eq:intro_main_problem}. In Section~\ref{sec:results_II}, we analyze the global solution to the differential inclusions and investigate its asymptotic limit, showing in particular that the global solution and its asymptotic limit can be computed from finitely many pieces and nonnegative least-squares problems. Section~\ref{sec:computations_global} analyzes the explicit Euler discretization of the differential inclusion associated with problem~\eqref{eq:intro_main_problem} for a general SPSD $\matr{Q}$, and shows that the iterates converge to an optimal solution or certify unboundedness, and that they identify the optimal active set and converge at a linear rate when the multipliers at the limit are positive. We present in Section~\ref{sec:numerics} numerical experiments to test our algorithms on challenging scientific computing problems, including preserving positivity for discontinuous Galerkin methods and PDE-constrained optimal control problems.

\section{Preliminaries}
\label{sec:setup}
We review here the first-order optimality conditions of problem~\eqref{eq:intro_main_problem} and present our differential inclusions approach. Table~\ref{tab:notation} below summarizes some notation and definitions. We refer the reader to Appendix~\ref{app:A} for more mathematical background.
\begin{table}[ht]
    \caption{Some notation and definitions used in this work.~\label{tab:notation}}
    \begin{center}
    \begin{tabular}{l|l}
    \hline
        Notation & Meaning \\
        \hline
        $[m]$ & The indices $i \in \{1,2,\dots,m\}$; \\
        $\bp_{\eqset}$ & The $|\eqset|$-dimensional subvector of $\bp \in \Rm$ indexed by $\eqset \subset [m]$; \\
        $\left\langle \bx_{1},\bx_{2} \right\rangle$ & The Euclidean scalar product of two vectors $\bx_{1},\bx_{2} \in \Rn$;\\ 
        $\interior{~C}$ & Interior of a nonempty subset $C$; \\
        $\ri~C$ & Relative interior of a nonempty subset $C$; \\
        $\chi_{C}$ & The characteristic function of a set $C$: \\
        & \quad $\chi_{C}(\bx) \coloneqq \begin{cases} 0 \; &\text{if}\;\bx \in C, \\ +\infty \; &\text{otherwise}.\end{cases}$ \\
        $\dom f$ & The domain of a function $f$: $\dom f \coloneqq \left\{ \bx \in \Rn : f(\bx) < +\infty\right\}$; \\
        $N_{C}(\bx)$ & The normal cone of a closed convex set $C$ at $\bx \in C$: \\
        & $\quad N_{C}(\bx) \coloneqq \left\{\bs \in \Rn : \langle\bs,\by-\bx\rangle \leqslant 0 \,\text{for every}\, \by \in C \right\}$;\\
        $T_{C}(\bx)$ & The tangent cone of a closed convex set $C$ at $\bx \in C$: \\
        & $\quad T_{C}(\bx) \coloneqq  \left\{\bd \in \Rn : \langle\bs,\bd\rangle\leqslant 0 \ \text{for every $\bs \in N_{C}(\bx)$}\right\}$; \\
        $\proj_{C}(\bx)$ & Projection of $\bx \in \Rn$ onto a closed convex set $C \subset \Rn$: \\
        & \quad $\proj_{C}(\bx) \coloneqq \argmin_{\by\in C}\left\Vert \bx-\by\right\Vert _{2}^{2}$; \\
        \hline
    \end{tabular}   
    \end{center} 
\end{table}

\subsection{Setup}
\label{subsec:char_di}
We assume throughout that problem~\eqref{eq:intro_main_problem} is feasible, i.e., there is $\bx \in \Rn$ such that $\matr{A}\bx \leqslant \bb$. For convenience, we use the convention $0/0 = 0$ and, for $a>0$, $a/0 = +\infty$. Let $C = \left\{\bx \in \Rn : \matr{A}\bx \leqslant \bb \right\}$, let $\chi_{C}$ denote the characteristic function of $C$, let $f(\bx) = \tfrac{1}{2}\langle\bx,\matr{Q}\bx\rangle + \langle\bc,\bx\rangle$, and let $V(\bx) = f(\bx) + \chi_{C}(\bx)$ denote the objective function in~\eqref{eq:intro_main_problem}.

The subdifferential of $V$ at a point $\bxo \in C$ is described in terms of the normal cone $N_{C}(\bxo)$ (Table~\ref{tab:notation}). Subdifferential calculus \cite[Exercise~10.22(a),(b)]{rockafellar2009variational} gives
\[
\partial_{\bx} V(\bxo) = \nabla_{\bx}f(\bxo) + N_{C}(\bxo).
\]
Since $C$ is a polyhedron, the rows of $\matr{A}$ corresponding to the constraints active at $\bxo$ generate $N_{C}(\bxo)$~\cite[Example 5.2.6b]{hiriart2013convexI}. We define correspondingly the \emph{equicorrelation set} of $\bxo$ as
\[
\exo \coloneqq \left\{i \in [m] : [\matr{A}\bxo]_{i} = b_{i}\right\}.
\]
Hence, explicitly,
\begin{equation}\label{eq:subdiff_V}
\partial_{\bx}V(\bxo) = \left\{\nabla_{\bx}f(\bxo) + \Atop\bp : \bp_{\eqset(\bxo)} \geqslant \boldsymbol{0}, \ \bp_{\eqcset(\bxo)} = \boldsymbol{0} \right\}.
\end{equation}

A point $\bxsol \in C$ is an optimal solution of problem~\eqref{eq:intro_main_problem} if and only if $\boldsymbol{0} \in \partial_{\bx}V(\bxsol)$~\cite[Theorem 10.1]{rockafellar2009variational}. Equivalently, there is $\bpsol \in \Rm$ with $\bp^{s}_{\eqset(\bxsol)} \geqslant \boldsymbol{0}$ and $\bp^{s}_{\eqcset(\bxsol)} = \boldsymbol{0}$ such that
\[
\nabla_{\bx}f(\bxsol) + \Atop\bpsol = \boldsymbol{0}.
\]
If $\matr{Q}$ is SPD, then $V$ is strongly convex and problem~\eqref{eq:intro_main_problem} has a unique optimal solution. If $\matr{Q}$ is merely SPSD, problem~\eqref{eq:intro_main_problem} may have no optimal solution, or several.

\subsection{Differential inclusions, the minimal selection principle, and the slow system}
\label{subsec:msp}
To certify if problem~\eqref{eq:intro_main_problem} has an optimal solution, and output one when it does, we propose to compute the trajectory of the differential inclusion
\begin{equation}
\dot{\bx}(\tau) \in -\partial_{\bx} V(\bx(\tau)), \quad \bx(0) = \bxo \in C.
\label{eq:main_grad_incl}
\end{equation}
A differential inclusion replaces the right-hand side of an ordinary differential equation with an appropriate set-valued mapping. When the mapping is the subdifferential of a proper, lower semicontinuous, convex function (as in~\eqref{eq:main_grad_incl}), the inclusion is called a \emph{gradient inclusion}. The next theorem states that the solution of~\eqref{eq:main_grad_incl} exists, is unique, and converges to a minimizer of $V$ when one exists. The theorem also states the \emph{minimal selection principle}, namely that at every $\tau \geqslant 0$, the solution evolves along the element of minimal norm in $-\partial_{\bx} V(\bx(\tau))$.

\begin{thm}\label{thm:MSP}
The differential inclusion~\eqref{eq:main_grad_incl} has a unique global solution $\bx \colon [0,+\infty) \to C$ that is globally Lipschitz continuous and satisfies the following properties:
\begin{itemize}
    \item[(i)] The right derivative $\tau \mapsto \dot{\bx}(\tau)$ is right-continuous and $\tau \mapsto \normtwo{\dot{\bx}(\tau)}$ is nonincreasing.
    
    \item[(ii)] If $\bx \mapsto V(\bx)$ achieves its minimum at some point, then $\tau \mapsto \bx(\tau)$ converges to such a point:
	\[
	\lim_{\tau \to +\infty} V(\bx(\tau)) = \min_{\bx \in \Rn} V(\bx) \ \text{and} \ \lim_{\tau \to +\infty}\bx(\tau) \in \argmin_{\bx \in \Rn} V(\bx).
	\]

	\item[(iii)] (The minimal selection principle.) The function $\tau \mapsto \bx(\tau)$ solves the initial value problem
	\begin{equation}\label{eq:slow_ode}
		\dot{\bx}(\tau) = -\proj_{\partial_{\bx}V(\bx(\tau))}(\boldsymbol{0}), \quad \bx(0) = \bxo \in C
 	\end{equation}
 	at every $\tau \geqslant 0$, the derivative on the left being the right derivative of part~(i).
\end{itemize}
\end{thm}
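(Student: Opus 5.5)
The plan is to read the statement as a special case of the Brézis--Komura theory of maximal monotone evolution equations, and to supply the few facts specific to $V$. First I check the hypotheses. $V = f + \chi_C$ is proper, lower semicontinuous and convex, since $f$ is a convex quadratic and $C$ is a nonempty closed polyhedron. Hence $\partial_{\bx}V$ is maximal monotone with $\dom \partial_{\bx} V = C$. The initial point $\bxo \in C$ lies in this domain, so by \cite[Theorem~3.1]{brezis1973ope} there is a unique solution $\bx \colon [0,+\infty) \to C$ that is Lipschitz on bounded intervals. The same theorem gives that the right derivative exists everywhere, equals $-\proj_{\partial V(\bx(\tau))}(\boldsymbol{0})$, is right-continuous, and that $\tau\mapsto\normtwo{\dot\bx(\tau)}$ is nonincreasing. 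This settles part~(iii) and part~(i) at once.

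Global Lipschitz continuity on $[0,+\infty)$ then follows from monotonicity of the velocity norm: $\normtwo{\dot\bx(\tau)} \leqslant \normtwo{\proj_{\partial V(\bxo)}(\boldsymbol{0})}$ for every $\tau \geqslant 0$, and integrating the right derivative of a locally Lipschitz function gives the bound on all of $[0,+\infty)$. If one wants this self-contained, monotonicity follows by comparing $\bx(\cdot)$ with its time shift $\bx(\cdot+h)$. Monotonicity of $\partial V$ gives $\frac{d}{d\tau}\normsq{\bx(\tau+h)-\bx(\tau)}\leqslant 0$; dividing by $h^2$ and letting $h\downarrow 0$ yields the claim.

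For part~(ii), I would invoke Bruck's theorem \cite{brezis1973ope}: for the subdifferential of a proper lsc convex function with a nonempty set of minimizers, the trajectory converges to a minimizer. To sketch it directly, fix $\bar\bx\in\argmin V$. Monotonicity, with $\boldsymbol{0}\in\partial V(\bar\bx)$, shows $\normtwo{\bx(\tau)-\bar\bx}$ is nonincreasing, so the trajectory is bounded; this is Fejér monotonicity. Next I use the chain rule $\frac{d}{d\tau}V(\bx(\tau)) = -\normsq{\dot\bx(\tau)}$ for a.e. $\tau$. Combined with the subgradient inequality $V(\bar\bx)\geqslant V(\bx(\tau)) + \langle -\dot\bx(\tau), \bar\bx-\bx(\tau)\rangle$, this gives $V(\bx(\tau)) - \min V = O(1/\tau)$. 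In particular $V(\bx(\tau))\to\min V$.

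Every cluster point of the trajectory is then a minimizer, by lower semicontinuity of $V$. The distance to each minimizer is nonincreasing, so Opial's lemma gives convergence of the whole trajectory. The main obstacle is this step: Fejér monotonicity together with cluster points in $\argmin V$ forces uniqueness of the limit, and I would argue it by comparing distances to two putative cluster points. In finite dimensions it is simpler, since the distance to a cluster point $\bx^*$ is nonincreasing and tends to zero along a subsequence, hence along the whole trajectory.
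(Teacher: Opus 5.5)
Your proposal is correct and follows the paper's route: the paper proves the theorem only by citing Brezis's Theorem~3.1 (existence, uniqueness, Lipschitz bound, right derivative equal to the minimal selection, right-continuity, nonincreasing speed) and Aubin--Cellina for the asymptotics, and your time-shift argument, the chain rule with the $O(1/\tau)$ bound, and the finite-dimensional Fej\'er/cluster-point argument fill in those standard details correctly. One bibliographic point: Bruck's convergence theorem dates from 1975 and is not in Brezis's 1973 monograph, but your self-contained finite-dimensional argument for part~(ii) makes that citation unnecessary.
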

\begin{proof}
See~\cite[Theorem 1 (p.~147); Theorem 1 (p.~159)]{aubin2012differential} and~\cite[Theorem~3.1, p.~54]{brezis1973ope}.
\end{proof}

\subsection{Geometric interpretation of the minimal selection principle}
\label{subsec:moreau}
Following~\cite{aubin2012differential}, we call~\eqref{eq:slow_ode} the \emph{slow system} and its solution the \emph{slow solution}. At any $\bx \in C$, the normal cone $N_{C}(\bx)$ and the tangent cone $T_{C}(\bx)$ (Table~\ref{tab:notation}) are mutually polar closed convex cones, so by Moreau's decomposition theorem~\cite{moreau1962decomposition}, every $\bz \in \Rn$ decomposes as
 \begin{equation}\label{eq:moreau_decomp}
 \bz = \proj_{N_{C}(\bx)}(\bz) + \proj_{T_{C}(\bx)}(\bz), \quad \langle \proj_{N_{C}(\bx)}(\bz), \proj_{T_{C}(\bx)}(\bz)\rangle = 0.
 \end{equation}
 Since $\partial_{\bx}V(\bx) = \nabla_{\bx}f(\bx) + N_{C}(\bx)$, we have $\proj_{\partial_{\bx}V(\bx)}(\boldsymbol{0}) = \nabla_{\bx}f(\bx) + \proj_{N_{C}(\bx)}(-\nabla_{\bx}f(\bx))$, and applying~\eqref{eq:moreau_decomp} with $\bz = -\nabla_{\bx}f(\bx)$ gives $-\proj_{\partial_{\bx}V(\bx)}(\boldsymbol{0}) = \proj_{T_{C}(\bx)}(-\nabla_{\bx}f(\bx))$. The slow system~\eqref{eq:slow_ode} then becomes the \emph{projected dynamical system}~\cite{dupuis1991lipschitz}
 \begin{equation*}
 \dot{\bx}(\tau) = \bd(\bx(\tau)), \quad \bx(0) = \bxo \in C, \quad \text{where} \quad \bd(\bx) \coloneqq \proj_{T_{C}(\bx)}(-\nabla_{\bx}f(\bx)).
 \end{equation*}
 We call $\bd(\bx)$ the \emph{direction} at $\bx \in C$. The slow solution therefore evolves along the projection of the negative gradient $-\nabla_{\bx}f$ onto the tangent cone of $C$.

\subsection{Computing the direction via nonnegative least squares}
\label{subsec:computing_nnls}
Integrating the slow system~\eqref{eq:slow_ode} requires evaluating the direction $\bd(\bx(\tau))$ for all $\tau \geqslant 0$. At a single point $\bxo \in C$, we evaluate $\bd(\bxo)$ via nonnegative least squares (NNLS). Write $\bn(\bxo) \coloneqq \proj_{N_{C}(\bxo)}(-\nabla_{\bx}f(\bxo))$. By formula~\eqref{eq:subdiff_V}, we have $\bn(\bxo) = \Atop \bp(\bxo)$, where
\begin{equation}\label{eq:nnls_problem}
\bp(\bxo) \in \argmin_{\bp \in \Rm} \left\{\frac{1}{2}\normsq{\Atop\bp + \nabla_{\bx}f(\bxo)} : \bp_{\exo}\geqslant \boldsymbol{0},\, \bp_{\ecxo} = \boldsymbol{0}\right\}.
\end{equation}
Moreau's decomposition theorem~\eqref{eq:moreau_decomp} then gives
\begin{equation}\label{eq:d_from_p}
\bd(\bxo) = -\Atop\bp(\bxo) - \nabla_{\bx}f(\bxo).
\end{equation}
The vector $\bp(\bxo)$ may not always be unique, but the projections $\bd(\bxo)$ and $\bn(\bxo)$ are.

The optimal NNLS solution $\bp(\bxo)$ admits an explicit form in terms of the \emph{active set}
\begin{equation}
    J(\bp(\bxo)) \coloneqq \left\{i \in [m] : [\bp(\bxo)]_{i} > 0\right\}.
\label{eq:active_set}
\end{equation}
We write $J(\bxo) \equiv J(\bp(\bxo))$ when $\bp(\bxo)$ is unambiguous. By Carath\'eodory's theorem~\cite[Corollary~17.1.2]{rockafellar1997convex}, we can choose $\bp(\bxo)$ so that
\begin{equation}\label{eq:LI}\tag{LI}
\text{the rows of $\matr{A}_{J(\bxo)}$ are linearly independent.}
\end{equation}
Since $\bp_{J(\bxo)}(\bxo)$ is strictly positive, it is an unconstrained minimizer of $\bq \mapsto \tfrac{1}{2}\normsq{\matr{A}_{J(\bxo)}^{\top}\bq + \nabla_{\bx}f(\bxo)}$ by convexity. The first-order optimality conditions give
\begin{equation}\label{eq:nnls_explicit}
\bp_{J(\bxo)}(\bxo) = - \left(\matr{A}_{J(\bxo)}\matr{A}_{J(\bxo)}^{\top}\right)^{-1}\matr{A}_{J(\bxo)}\nabla_{\bx}f(\bxo).
\end{equation}
Let
\begin{equation}\label{eq:nnls_pperp}
\matr{P}^{\perp}_{J(\bxo)} \coloneqq \matr{I}_{n \times n} - \matr{A}_{J(\bxo)}^{\top}\left(\matr{A}_{J(\bxo)}\matr{A}^{\top}_{J(\bxo)}\right)^{-1}\matr{A}_{J(\bxo)}
\end{equation}
denote the orthogonal projection onto $\ker(\matr{A}_{J(\bxo)})$. Since $\bp(\bxo)$ vanishes outside $J(\bxo)$, substituting~\eqref{eq:nnls_explicit} into~\eqref{eq:d_from_p} gives
\begin{equation}\label{eq:steepest_dd}
\bd(\bxo) = -\matr{P}^{\perp}_{J(\bxo)}\nabla_{\bx}f(\bxo).
\end{equation}
When $J(\bxo) = \emptyset$, we read~\eqref{eq:nnls_explicit} as $\bp(\bxo) = \boldsymbol{0}$ and~\eqref{eq:nnls_pperp} as $\matr{P}^{\perp}_{J(\bxo)} = \matr{I}_{n \times n}$.

\begin{rem}\label{rem:computing_nnls}
Specialized algorithms for solving NNLS problems include active-set algorithms terminating in finite time, such as the classical Lawson--Hanson algorithm~\cite{lawson1995solving} and its generalization, Meyer's algorithm~\cite{meyer2013simple}. See also~\cite{dimiccoli2016fundamentals} for a review of other cone projection algorithms.
\end{rem}

\subsection{Karush--Kuhn--Tucker and strict complementarity conditions}
\label{subsec:properties_nnls}
The solution $\bp(\bxo)$ of the NNLS problem~\eqref{eq:nnls_problem} and the direction $\bd(\bxo)$ satisfy the Karush--Kuhn--Tucker (KKT) conditions of that problem and two of their consequences:
\begin{subequations}\label{eq:nnls_oc}
\begin{align}
&[\matr{A}\bd(\bxo)]_{\exo} \leqslant \boldsymbol{0}, \label{eq:nnls_oc_1} \\
&[\bp(\bxo)]_{i}[\matr{A}\bd(\bxo)]_{i} = 0 \ \text{for $i=1,\dots,m$}, \label{eq:nnls_oc_2} \\
&[\matr{A}\bd(\bxo)]_{i} = 0 \ \text{for every $i \in J(\bxo)$}, \label{eq:nnls_oc_2_5} \\
&\normsq{\bd(\bxo)} + \langle \nabla_{\bx}f(\bxo), \bd(\bxo)\rangle = 0. \label{eq:nnls_oc_3}
\end{align}
\end{subequations}
Conditions~\eqref{eq:nnls_oc_1}--\eqref{eq:nnls_oc_3} all follow from Moreau's decomposition~\eqref{eq:moreau_decomp}. Indeed, condition~\eqref{eq:nnls_oc_1} states that $\bd(\bxo) \in T_{C}(\bxo)$. For condition~\eqref{eq:nnls_oc_2}, the orthogonality in~\eqref{eq:moreau_decomp} gives
\[
\langle\matr{A}\bd(\bxo),\bp(\bxo)\rangle = \langle\bd(\bxo),\Atop\bp(\bxo)\rangle = \langle\bd(\bxo),\bn(\bxo)\rangle = 0,
\]
and every term of $\langle\matr{A}\bd(\bxo),\bp(\bxo)\rangle$ vanishes because $\bp_{\exo}(\bxo) \geqslant \boldsymbol{0}$, $\bp_{\ecxo}(\bxo) = \boldsymbol{0}$, and~\eqref{eq:nnls_oc_1} holds. Condition~\eqref{eq:nnls_oc_2_5} follows from~\eqref{eq:nnls_oc_2} because $[\bp(\bxo)]_{i} > 0$ for every $i \in J(\bxo)$. Condition~\eqref{eq:nnls_oc_3} follows from~\eqref{eq:d_from_p}, which gives $\langle \nabla_{\bx}f(\bxo),\bd(\bxo)\rangle = \langle -\bd(\bxo) - \bn(\bxo),\bd(\bxo)\rangle = -\normsq{\bd(\bxo)}$. Problem~\eqref{eq:nnls_problem} is convex with linear constraints, so conditions~\eqref{eq:nnls_oc_1}--\eqref{eq:nnls_oc_2} are also sufficient. Every feasible $\bp$ whose residual $-\Atop\bp - \nabla_{\bx}f(\bxo)$ satisfies~\eqref{eq:nnls_oc_1}--\eqref{eq:nnls_oc_2} in place of $\bd(\bxo)$ solves~\eqref{eq:nnls_problem}.

\begin{rem}\label{rem:ssc}
The KKT complementarity condition~\eqref{eq:nnls_oc_2} over the equicorrelation set $\exo$ need not be strict. To ease the exposition, the results of Sections~\ref{subsec:instant_dynamics}--\ref{subsec:computations_local}, except Proposition~\ref{prop:descent_direction}, assume the \emph{strict complementarity condition}
\begin{equation}\label{eq:SCC}\tag{SCC}
	[\matr{A}\bd(\bxo)]_{i} < 0 \ \text{for every $i \in \exo \setminus J(\bxo)$}.
\end{equation}
Section~\ref{subsec:removing_scc_local} extends these results to the case
  where~\eqref{eq:SCC} fails.
\end{rem}

\section{Local solution of the slow system}
\label{sec:results_I}
In this section, we compute the local solution $\tau \mapsto \bx(\tau)$ of the slow system~\eqref{eq:slow_ode} started at $\bxo \in C$ and show that $\tau \mapsto \bd(\bx(\tau))$ may jump at a breakpoint time $\tau_{*}(\bxo) > 0$. Section~\ref{subsec:instant_dynamics} analyzes the instantaneous dynamics, Section~\ref{subsec:local_sol} integrates the system on $[0,\tau_{*}(\bxo))$, and Section~\ref{subsec:continuation} extends the solution to $\tau_{*}(\bxo)$ whenever it is finite. Section~\ref{subsec:computations_local} computes the breakpoint time in practice, and Section~\ref{subsec:removing_scc_local} extends these results to the case where~\eqref{eq:SCC} fails.

\subsection{Instantaneous dynamics of the slow system}
\label{subsec:instant_dynamics}
\subsubsection{Descent properties}
\label{subsubsec:descent_prop}
How does the slow system~\eqref{eq:slow_ode} evolve instantaneously? We first show that the objective function $V$ decreases along the ray $\Delta \mapsto \bxo + \Delta\bd(\bxo)$ for small $\Delta > 0$. The decrease requires that $\bxo + \Delta\bd(\bxo)$ stay feasible, that is, $\matr{A}(\bxo + \Delta\bd(\bxo)) \leqslant \bb$. Since $[\matr{A}\bd(\bxo)]_{\eqset(\bxo)} \leqslant \boldsymbol{0}$ by~\eqref{eq:nnls_oc_1} and $[\matr{A}\bxo]_{\eqcset(\bxo)} < \bb_{\eqcset(\bxo)}$ by definition, the largest $\Delta$ for which this inequality holds is
\begin{equation}\label{eq:delta_feas}
\Delta_{\mathrm{feas}}(\bxo) \coloneqq \min_{i \in \eqcset(\bxo)} \frac{b_i-[\matr{A}\bxo]_{i}}{\max(0,[\matr{A}\bd(\bxo)]_{i})},
\end{equation}
which equals $+\infty$ if $\eqcset(\bxo) = \emptyset$ or $[\matr{A}\bd(\bxo)]_{\eqcset(\bxo)} \leqslant \boldsymbol{0}$. In any case, $\bxo + \Delta\bd(\bxo) \in C$ for every $\Delta \in [0,\Delta_{\mathrm{feas}}(\bxo)]$, and condition~\eqref{eq:nnls_oc_3} gives
\begin{equation}\label{eq:obj_fun_after_descent}
  V(\bxo + \Delta\bd(\bxo)) 
  = V(\bxo) - \Delta\normsq{\bd(\bxo)} + \frac{\Delta^2}{2}\langle\bd(\bxo),\matr{Q}\bd(\bxo)\rangle.
\end{equation}
Formula~\eqref{eq:obj_fun_after_descent} shows that $\bd(\bxo)$ is a descent direction.

\begin{prop}\label{prop:descent_direction}
Let $\bxo \in C$. If $\bd(\bxo) \neq \boldsymbol{0}$, then $V(\bxo + \Delta\bd(\bxo)) < V(\bxo)$ for every $\Delta \in (0,\Delta_{+}(\bxo))$, where
\begin{equation}\label{eq:delta_plus}
\Delta_{+}(\bxo) \coloneqq \min\left(\Delta_{\mathrm{feas}}(\bxo),\, \frac{2\normsq{\bd(\bxo)}}{\langle\bd(\bxo),\matr{Q}\bd(\bxo)\rangle}\right).
\end{equation}
\end{prop}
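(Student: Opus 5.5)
The plan is to read the statement off formula~\eqref{eq:obj_fun_after_descent}, checking that it applies on the whole interval $(0,\Delta_{+}(\bxo))$. First I note that $\Delta_{+}(\bxo) \leqslant \Delta_{\mathrm{feas}}(\bxo)$. So for every $\Delta \in (0,\Delta_{+}(\bxo))$ the point $\bxo + \Delta\bd(\bxo)$ lies in $C$, as shown just before the statement. There $\chi_{C}$ vanishes, and $V$ agrees with the quadratic $f$ along the ray.

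Next I expand $f(\bxo + \Delta\bd(\bxo))$ exactly:
\[
f(\bxo + \Delta\bd(\bxo)) = f(\bxo) + \Delta\langle\nabla_{\bx}f(\bxo),\bd(\bxo)\rangle + \frac{\Delta^{2}}{2}\langle\bd(\bxo),\matr{Q}\bd(\bxo)\rangle.
\]
Substituting condition~\eqref{eq:nnls_oc_3}, namely $\langle\nabla_{\bx}f(\bxo),\bd(\bxo)\rangle = -\normsq{\bd(\bxo)}$, recovers~\eqref{eq:obj_fun_after_descent}. It follows that
\[
V(\bxo + \Delta\bd(\bxo)) - V(\bxo) = -\Delta\left(\normsq{\bd(\bxo)} - \frac{\Delta}{2}\langle\bd(\bxo),\matr{Q}\bd(\bxo)\rangle\right).
\]
Since $\Delta > 0$, it remains to show that the bracket is strictly positive.

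I split into two cases according to the curvature $\kappa \coloneqq \langle\bd(\bxo),\matr{Q}\bd(\bxo)\rangle$, which is nonnegative because $\matr{Q}$ is SPSD.
\begin{itemize}
\item If $\kappa = 0$, the second entry of the minimum in~\eqref{eq:delta_plus} is $a/0 = +\infty$ with $a = 2\normsq{\bd(\bxo)} > 0$, using the paper's convention. The bracket equals $\normsq{\bd(\bxo)}$, which is positive because $\bd(\bxo) \neq \boldsymbol{0}$.
\item If $\kappa > 0$, then $\Delta < \Delta_{+}(\bxo) \leqslant 2\normsq{\bd(\bxo)}/\kappa$. This gives $\tfrac{\Delta}{2}\kappa < \normsq{\bd(\bxo)}$, so the bracket is again strictly positive.
\end{itemize}

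One detail needs care: the interval $(0,\Delta_{+}(\bxo))$ must be nonempty for the statement to be non-vacuous. I would remark that $\Delta_{\mathrm{feas}}(\bxo) > 0$, since each numerator $b_i - [\matr{A}\bxo]_i$ is positive for $i \in \eqcset(\bxo)$. The second entry of the minimum is also positive, so $\Delta_{+}(\bxo) > 0$. There is no real obstacle here; the only points needing attention are this convention for division by zero and the requirement that the ray stay in $C$, so that $\chi_{C}$ contributes nothing.
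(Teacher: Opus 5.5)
Your proposal is correct and follows essentially the same route as the paper: stay inside $C$ because $\Delta_{+}(\bxo) \leqslant \Delta_{\mathrm{feas}}(\bxo)$, use the exact expansion~\eqref{eq:obj_fun_after_descent}, and split on whether $\langle\bd(\bxo),\matr{Q}\bd(\bxo)\rangle$ is zero or positive. Your extra remark that $\Delta_{+}(\bxo) > 0$ is not needed for the claim, but it is correct.
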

\begin{proof}
Suppose $\bd(\bxo) \neq \boldsymbol{0}$. If $\langle\bd(\bxo),\matr{Q}\bd(\bxo)\rangle = 0$, then the right-hand side of~\eqref{eq:obj_fun_after_descent} decreases with $\Delta$ until $\Delta = \Delta_{\mathrm{feas}}(\bxo)$. If $\langle\bd(\bxo),\matr{Q}\bd(\bxo)\rangle > 0$, the difference $V(\bxo + \Delta\bd(\bxo)) - V(\bxo)$ stays negative for $\Delta$ between $0$ and the smaller of $\Delta_{\mathrm{feas}}(\bxo)$ and $2\normsq{\bd(\bxo)}/\langle\bd(\bxo),\matr{Q}\bd(\bxo)\rangle$. In either case, $V(\bxo + \Delta \bd(\bxo)) < V(\bxo)$ whenever $\Delta \in (0,\Delta_{+}(\bxo))$.
\end{proof}

\subsubsection{The evolution rule}
\label{subsubsec:evolution_rule}
Proposition~\ref{prop:descent_direction} shows that $V$ decreases along the ray $\Delta \mapsto \bxo + \Delta\bd(\bxo)$. The ray stays in $C$ for every $\Delta \in [0,\Delta_{\mathrm{feas}}(\bxo)]$, so the direction $\bd(\bxo + \Delta\bd(\bxo))$ is well-defined there. We prove next that this direction is a linear function of $\bd(\bxo)$ for small $\Delta > 0$, a result we call the \emph{evolution rule}. We first show that the active set does not change along the ray, that is, $J(\bxo + \Delta\bd(\bxo)) = J(\bxo)$, as long as $\Delta$ stays below $\Delta_{\mathrm{feas}}(\bxo)$ and
\begin{equation}\label{eq:delta_J}
\Delta_{J}(\bxo) \coloneqq \min_{i \in J(\bxo)} \ [\bp(\bxo)]_{i}/\max(0,[(\matr{A}_{J(\bxo)}\matr{A}_{J(\bxo)}^{\top})^{-1}\matr{A}_{J(\bxo)}\matr{Q}\bd(\bxo)]_{i}).
\end{equation}
We set $\Delta_{J}(\bxo) = +\infty$ when $J(\bxo) = \emptyset$.

\begin{lem}\label{lem:active_set}
Let $\bxo \in C$, choose $\bp(\bxo)$ so that~\eqref{eq:LI} holds, and assume that the strict complementarity condition~\eqref{eq:SCC} holds. Then $J(\bxo + \Delta\bd(\bxo)) = J(\bxo)$ for every $\Delta \in [0,\Delta_{*}(\bxo))$, where $\Delta_{*}(\bxo) \coloneqq \min(\Delta_{\mathrm{feas}}(\bxo),\Delta_{J}(\bxo))$.
\end{lem}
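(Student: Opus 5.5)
Fix $\Delta \in [0,\Delta_{*}(\bxo))$ and set $\bx_\Delta \coloneqq \bxo + \Delta\bd(\bxo)$, $J \coloneqq J(\bxo)$. We will write down an explicit candidate multiplier $\bp_\Delta$ and show that it satisfies the sufficient KKT conditions~\eqref{eq:nnls_oc_1}--\eqref{eq:nnls_oc_2} for the NNLS problem~\eqref{eq:nnls_problem} at $\bx_\Delta$, with positive entries exactly on $J$. Motivated by~\eqref{eq:nnls_explicit}, take $[\bp_\Delta]_{J} \coloneqq -(\matr{A}_J\matr{A}_J^{\top})^{-1}\matr{A}_J\nabla_{\bx}f(\bx_\Delta)$ and $[\bp_\Delta]_i = 0$ for $i \notin J$. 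Since $\nabla_{\bx}f(\bx_\Delta) = \nabla_{\bx}f(\bxo) + \Delta\matr{Q}\bd(\bxo)$, this is
\[
[\bp_\Delta]_J = [\bp(\bxo)]_J - \Delta(\matr{A}_J\matr{A}_J^{\top})^{-1}\matr{A}_J\matr{Q}\bd(\bxo),
\]
which is affine in $\Delta$. The residual is $\bd_\Delta \coloneqq -\Atop\bp_\Delta - \nabla_{\bx}f(\bx_\Delta) = -\matr{P}^{\perp}_J\nabla_{\bx}f(\bx_\Delta) = \bd(\bxo) - \Delta\matr{P}^{\perp}_J\matr{Q}\bd(\bxo)$, using~\eqref{eq:steepest_dd}.

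\textbf{Equicorrelation set.} We first show $\eqset(\bx_\Delta) = \eqset(\bxo)$ for $\Delta>0$ small, and that $\eqset(\bx_\Delta)\supseteq J$ on the whole interval. Indices $i \in \eqcset(\bxo)$ stay inactive for $\Delta < \Delta_{\mathrm{feas}}(\bxo)$, by definition~\eqref{eq:delta_feas}. Indices in $J$ stay active because $[\matr{A}\bd(\bxo)]_J = \boldsymbol{0}$ by~\eqref{eq:nnls_oc_2_5}. Indices $i \in \eqset(\bxo)\setminus J$ satisfy $[\matr{A}\bd(\bxo)]_i<0$ by~\eqref{eq:SCC}, so they become inactive for every $\Delta>0$. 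Hence for $\Delta \in (0,\Delta_*)$ we get $\eqset(\bx_\Delta) = J$. This simplifies matters: the KKT conditions only need to be checked on $J$. (At $\Delta=0$ the claim is trivial.)

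\textbf{KKT verification.} Feasibility of $\bp_\Delta$ requires $[\bp_\Delta]_J > 0$. Each component is affine in $\Delta$ with value $[\bp(\bxo)]_i>0$ at $0$, so it stays positive exactly while $\Delta < [\bp(\bxo)]_i/\max(0,[(\matr{A}_J\matr{A}_J^\top)^{-1}\matr{A}_J\matr{Q}\bd(\bxo)]_i)$. This holds for all $i\in J$ precisely when $\Delta<\Delta_J(\bxo)$. Next, $\matr{A}_J\bd_\Delta = -\matr{A}_J\matr{P}^\perp_J\nabla_{\bx} f(\bx_\Delta) = \boldsymbol{0}$, which gives both~\eqref{eq:nnls_oc_1} on $\eqset(\bx_\Delta)=J$ and complementarity~\eqref{eq:nnls_oc_2}. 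By the sufficiency remark after~\eqref{eq:nnls_oc}, $\bp_\Delta$ solves~\eqref{eq:nnls_problem} at $\bx_\Delta$, and $J(\bp_\Delta)=J$. Moreover $\matr{A}_J$ has linearly independent rows, so~\eqref{eq:LI} holds for this choice.

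\textbf{Main obstacle.} The delicate point is that $J(\cdot)$ depends on the choice of NNLS solution when $\bp$ is nonunique. We therefore interpret the claim as saying that $\bp_\Delta$ is a valid LI choice with active set $J$. Under~\eqref{eq:LI} this choice is in fact forced: $\bn(\bx_\Delta)$ is unique, and any solution supported in $\eqset(\bx_\Delta)=J$ must satisfy $\matr{A}_J^\top\bq = \bn(\bx_\Delta)$. Injectivity of $\matr{A}_J^\top$ then pins down $\bq$, so every NNLS solution equals $\bp_\Delta$. This removes the ambiguity. The reduction $\eqset(\bx_\Delta)=J$ is therefore the key step, and it is exactly where~\eqref{eq:SCC} enters.
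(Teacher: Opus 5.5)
Your proposal is correct and follows essentially the paper's proof: you show $\eqset(\bx_{\Delta}) = J(\bxo)$ from \eqref{eq:SCC} and the definition of $\Delta_{\mathrm{feas}}(\bxo)$, then show that the affine least-squares multiplier on $J(\bxo)$ stays positive for $\Delta < \Delta_{J}(\bxo)$ and is the unique NNLS solution by \eqref{eq:LI}; you certify its optimality through the KKT conditions, whereas the paper notes that the unconstrained minimizer over vectors supported on $J(\bxo)$ is feasible. One small slip: your equicorrelation paragraph opens by announcing $\eqset(\bx_{\Delta}) = \eqset(\bxo)$, which is false whenever $\eqset(\bxo)\setminus J(\bxo) \neq \emptyset$, but what you actually prove, and need, is $\eqset(\bx_{\Delta}) = J(\bxo)$ for $\Delta \in (0,\Delta_{*}(\bxo))$.
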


\begin{proof}
Let $\Delta \in (0,\Delta_{*}(\bxo))$. Since $\Delta < \Delta_{\mathrm{feas}}(\bxo)$, the point $\bxo + \Delta\bd(\bxo)$ lies in $C$ and the direction $\bd(\bxo + \Delta\bd(\bxo))$ is well-defined. We first show that $\eqset(\bxo + \Delta\bd(\bxo)) = J(\bxo)$. On the one hand, $J(\bxo) \subset \eqset(\bxo)$ because $\bp_{\eqcset(\bxo)}(\bxo) = \boldsymbol{0}$, so condition~\eqref{eq:nnls_oc_2_5} gives
\[
[\matr{A}(\bxo + \Delta\bd(\bxo))]_{J(\bxo)} = [\matr{A}\bxo]_{J(\bxo)} + \Delta[\matr{A}\bd(\bxo)]_{J(\bxo)} = \bb_{J(\bxo)},
\]
that is, $J(\bxo) \subset \eqset(\bxo + \Delta\bd(\bxo))$. On the other hand,~\eqref{eq:SCC} implies
\[
[\matr{A}(\bxo + \Delta\bd(\bxo))]_{\eqset(\bxo) \setminus J(\bxo)} = [\matr{A}\bxo]_{\eqset(\bxo) \setminus J(\bxo)} + \Delta[\matr{A}\bd(\bxo)]_{\eqset(\bxo) \setminus J(\bxo)} < \bb_{\eqset(\bxo)\setminus J(\bxo)},
\]
while every $i \in \eqcset(\bxo)$ satisfies $[\matr{A}(\bxo + \Delta\bd(\bxo))]_{i} < b_{i}$ by the definition~\eqref{eq:delta_feas} of $\Delta_{\mathrm{feas}}(\bxo)$, that is, $\eqset(\bxo + \Delta\bd(\bxo)) \subset J(\bxo)$. Combining, we find $\eqset(\bxo + \Delta\bd(\bxo)) = J(\bxo)$.

Now consider the NNLS problem~\eqref{eq:nnls_problem} at $\bxo + \Delta\bd(\bxo)$,
\[
\min_{\substack{\bp_{\eqset(\bxo + \Delta\bd(\bxo))} \geqslant \boldsymbol{0} \\ \bp_{\eqcset(\bxo + \Delta\bd(\bxo))} = \boldsymbol{0}}} \frac{1}{2}\normsq{\Atop\bp + \matr{Q}(\bxo + \Delta\bd(\bxo)) + \bc},
\]
whose solutions give the direction $\bd(\bxo + \Delta\bd(\bxo))$ via formula~\eqref{eq:d_from_p}. Let $\bp^{\mathrm{LS}}(\bxo + \Delta\bd(\bxo))$ minimize the same objective over the vectors supported on $J(\bxo)$ without the sign constraints. This minimizer is unique because the rows of $\matr{A}_{J(\bxo)}$ are linearly independent. Moreover,
\begin{equation*}
\begin{alignedat}{1}
\bp^{\mathrm{LS}}_{J(\bxo)}(\bxo + \Delta\bd(\bxo)) &= -(\matr{A}_{J(\bxo)}\matr{A}_{J(\bxo)}^{\top})^{-1}\matr{A}_{J(\bxo)}\bigl(\matr{Q}(\bxo + \Delta\bd(\bxo)) + \bc\bigr) \\
&= -(\matr{A}_{J(\bxo)}\matr{A}_{J(\bxo)}^{\top})^{-1}\matr{A}_{J(\bxo)}(\matr{Q}\bxo + \bc) - \Delta\,(\matr{A}_{J(\bxo)}\matr{A}_{J(\bxo)}^{\top})^{-1}\matr{A}_{J(\bxo)}\matr{Q}\bd(\bxo) \\
&= \bp_{J(\bxo)}(\bxo) - \Delta\,(\matr{A}_{J(\bxo)}\matr{A}_{J(\bxo)}^{\top})^{-1}\matr{A}_{J(\bxo)}\matr{Q}\bd(\bxo),
\end{alignedat}
\end{equation*}
where we used formula~\eqref{eq:nnls_explicit} to obtain the last equality. Each entry of $\bp^{\mathrm{LS}}_{J(\bxo)}(\bxo + \Delta\bd(\bxo))$ is affine in $\Delta$ and positive at $\Delta = 0$, and it stays positive for every $\Delta < \Delta_{J}(\bxo)$ by~\eqref{eq:delta_J}. Since $\eqset(\bxo + \Delta\bd(\bxo)) = J(\bxo)$, the vector $\bp^{\mathrm{LS}}(\bxo + \Delta\bd(\bxo))$ is therefore feasible for the NNLS problem at $\bxo + \Delta\bd(\bxo)$. It minimizes the objective without the sign constraints, so it solves that problem, and it is the only solution because the rows of $\matr{A}_{J(\bxo)}$ are linearly independent. We conclude that $J(\bxo + \Delta\bd(\bxo)) = J(\bxo)$ for every $\Delta \in (0,\Delta_{*}(\bxo))$, and trivially at $\Delta = 0$.
\end{proof}

By Lemma~\ref{lem:active_set}, the active set stays equal to $J(\bxo)$ for every $\Delta \in (0,\Delta_{*}(\bxo))$, so the point $\bxo + \Delta\bd(\bxo)$ stays in the relative interior of the face of $C$ identified by $J(\bxo)$. Formula~\eqref{eq:steepest_dd} therefore holds along the ray with the same projection $\matr{P}^{\perp}_{J(\bxo)}$, and the evolution rule follows.

\begin{prop}[Evolution rule]\label{prop:evolution_rule}
Under the hypotheses of Lemma~\ref{lem:active_set}, we have
\begin{equation}\label{eq:evolution_rule}
\bd(\bxo + \Delta\bd(\bxo)) = \left(\matr{I}_{n\times n} - \Delta\matr{P}_{J(\bxo)}^{\perp}\matr{Q}\right)\bd(\bxo) \quad \text{for every $\Delta \in [0,\Delta_{*}(\bxo))$}.
\end{equation}
\end{prop}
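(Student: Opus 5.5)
The plan is to combine Lemma~\ref{lem:active_set} with the closed-form expression~\eqref{eq:steepest_dd} and then exploit the linearity of $\nabla_{\bx}f$. Fix $\Delta \in [0,\Delta_{*}(\bxo))$ and write $\bx_\Delta \coloneqq \bxo + \Delta\bd(\bxo)$. The case $\Delta = 0$ is trivial, so take $\Delta > 0$.

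\textbf{Identify the direction at $\bx_\Delta$.} The proof of Lemma~\ref{lem:active_set} shows more than the statement records. It shows that the unique NNLS solution at $\bx_\Delta$ is $\bp^{\mathrm{LS}}(\bx_\Delta)$. This vector is supported exactly on $J(\bxo)$, and its entries there are strictly positive. So $J(\bx_\Delta) = J(\bxo)$, and property~\eqref{eq:LI} still holds at $\bx_\Delta$ because the rows of $\matr{A}_{J(\bxo)}$ are linearly independent. Hence the derivation leading to~\eqref{eq:steepest_dd} applies verbatim at $\bx_\Delta$ and gives
\[
\bd(\bx_\Delta) = -\matr{P}^{\perp}_{J(\bxo)}\nabla_{\bx}f(\bx_\Delta).
\]
Alternatively, one can substitute $\bp^{\mathrm{LS}}(\bx_\Delta)$ into~\eqref{eq:d_from_p} directly, which yields the same formula.

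\textbf{Expand by linearity.} Since $\nabla_{\bx}f(\bx) = \matr{Q}\bx + \bc$, we have $\nabla_{\bx}f(\bx_\Delta) = \nabla_{\bx}f(\bxo) + \Delta\matr{Q}\bd(\bxo)$. Therefore
\[
\bd(\bx_\Delta) = -\matr{P}^{\perp}_{J(\bxo)}\nabla_{\bx}f(\bxo) - \Delta\matr{P}^{\perp}_{J(\bxo)}\matr{Q}\bd(\bxo) = \bd(\bxo) - \Delta\matr{P}^{\perp}_{J(\bxo)}\matr{Q}\bd(\bxo),
\]
where the last equality uses~\eqref{eq:steepest_dd} at $\bxo$. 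This is exactly~\eqref{eq:evolution_rule}.

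\textbf{Main obstacle.} The only delicate point is justifying that the same projector $\matr{P}^{\perp}_{J(\bxo)}$ can be used at $\bx_\Delta$. This requires that the active set at $\bx_\Delta$, in the sense of~\eqref{eq:active_set}, be unambiguous and satisfy~\eqref{eq:LI}. Both follow from the uniqueness of the NNLS solution established in the proof of Lemma~\ref{lem:active_set}, and I would state this explicitly rather than rely only on the lemma's stated conclusion.
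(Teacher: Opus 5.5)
Your proposal is correct and follows essentially the same route as the paper: use Lemma~\ref{lem:active_set} to keep the projector $\matr{P}^{\perp}_{J(\bxo)}$ at $\bxo + \Delta\bd(\bxo)$, apply \eqref{eq:steepest_dd} there, and expand $\matr{Q}(\bxo+\Delta\bd(\bxo))+\bc$ by linearity. Your extra remark is a reasonable explicit justification of a point the paper leaves implicit in the lemma's proof: uniqueness of the NNLS solution makes $J(\bxo+\Delta\bd(\bxo))$ unambiguous, and that set satisfies \eqref{eq:LI}.
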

\begin{proof}
By Lemma~\ref{lem:active_set}, formula~\eqref{eq:steepest_dd} applied at $\bxo + \Delta\bd(\bxo)$ yields
\[
\begin{alignedat}{1}
\bd(\bxo + \Delta\bd(\bxo)) &= -\matr{P}^{\perp}_{J(\bxo + \Delta\bd(\bxo))}(\matr{Q}(\bxo + \Delta\bd(\bxo)) + \bc) \\
&= -\matr{P}^{\perp}_{J(\bxo)}(\matr{Q}(\bxo + \Delta\bd(\bxo)) + \bc) \\
&= -\matr{P}^{\perp}_{J(\bxo)}(\matr{Q}\bxo + \bc) - \Delta\matr{P}^{\perp}_{J(\bxo)}\matr{Q}\bd(\bxo) \\
&= \bd(\bxo) - \Delta\matr{P}^{\perp}_{J(\bxo)}\matr{Q}\bd(\bxo).
\end{alignedat}
\]
Rearranging yields~\eqref{eq:evolution_rule}.
\end{proof}

\subsection{Local dynamics of the slow system}
\label{subsec:local_sol}
Proposition~\ref{prop:evolution_rule} describes how the direction $\bd(\bxo + \Delta\bd(\bxo))$ changes with $\Delta$. In particular, taking the limit $\Delta \to 0^{+}$ in~\eqref{eq:evolution_rule} yields
\[
\lim_{\Delta \to 0^{+}} \frac{\bd(\bxo + \Delta\bd(\bxo)) - \bd(\bxo)}{\Delta} = -\matr{P}^{\perp}_{J(\bxo)}\matr{Q}\bd(\bxo).
\]
This limit suggests that $\bd(\bx(\tau)) = e^{-\matr{P}^{\perp}_{J(\bxo)}\matr{Q}\tau}\bd(\bxo)$ for small $\tau > 0$. We now verify this formula under the strict complementarity condition~\eqref{eq:SCC}. Since $\dot{\bx}(\tau) = \bd(\bx(\tau))$, integrating this formula gives the candidate solution $\bxo + \matr{M}(\tau)\bd(\bxo)$, where
\begin{equation}\label{eq:int_mexp}
\matr{M}(\tau) \coloneqq \int_{0}^{\tau} e^{-\matr{P}^{\perp}_{J(\bxo)}\matr{Q}s}\diff s.
\end{equation}
The constraints in $J(\bxo)$ stay active along the candidate solution. By the identity $\matr{A}_{J(\bxo)}\matr{P}^{\perp}_{J(\bxo)} = \matr{0}$, every term of order $k \geqslant 1$ in the series of $\matr{A}_{J(\bxo)}e^{-\matr{P}^{\perp}_{J(\bxo)}\matr{Q}s}$ vanishes, so condition~\eqref{eq:nnls_oc_2_5} gives
\begin{equation}\label{eq:active_rows_invariant}
\matr{A}_{J(\bxo)}\matr{M}(\tau)\bd(\bxo) = \tau\matr{A}_{J(\bxo)}\bd(\bxo) = \boldsymbol{0} \quad \text{for every $\tau \geqslant 0$}.
\end{equation}
Formula~\eqref{eq:nnls_explicit}, evaluated at the candidate solution with the active set $J(\bxo)$, gives the multipliers
\begin{equation}\label{eq:w_mexp}
\bw(\tau) \coloneqq \bp_{J(\bxo)}(\bxo) - \left(\matr{A}_{J(\bxo)}\matr{A}_{J(\bxo)}^{\top}\right)^{-1}\matr{A}_{J(\bxo)}\matr{Q}\matr{M}(\tau)\bd(\bxo).
\end{equation}
The candidate solution stays in $C$, with equicorrelation set $J(\bxo)$ and nonnegative multipliers $\bw(\tau)$, until a constraint outside $J(\bxo)$ becomes active or a multiplier $w_{i}(\tau)$ turns negative. We therefore define the \emph{feasibility time}, \emph{active-set time}, and \emph{breakpoint time}
\begin{subequations}
\begin{align}
\tau_{\mathrm{feas}}(\bxo) &\coloneqq \inf\Bigl\{\tau > 0 : \bigl[\matr{A}\bigl(\bxo + \matr{M}(\tau)\bd(\bxo)\bigr)\bigr]_{i} = b_{i} \text{ for some } i \in \Jcbxo \Bigr\}, \label{eq:tau_feas} \\
\tau_{J}(\bxo) &\coloneqq \inf\Bigl\{\tau > 0 : w_{i}(\tau) < 0 \text{ for some } i \in J(\bxo)\Bigr\}, \label{eq:tau_J} \\
\tau_{*}(\bxo) &\coloneqq \min(\tau_{\mathrm{feas}}(\bxo), \tau_{J}(\bxo)). \label{eq:tau_star}
\end{align}
\end{subequations}
The next theorem shows that $\tau_{*}(\bxo) > 0$ and that, before this time, the candidate solution is the slow solution, with the constraints outside $J(\bxo)$ inactive and the multipliers $\bw(\tau)$ nonnegative.

\begin{thm}\label{thm:local_solution}
Let $\bxo \in C$, choose $\bp(\bxo)$ so that~\eqref{eq:LI} holds, and assume that~\eqref{eq:SCC} holds. Then $\tau_{*}(\bxo) > 0$ and the slow solution solves the initial value problem
\begin{equation}\label{eq:local_simple_ode}
	\dot{\bx}(\tau) = e^{-\matr{P}_{J(\bxo)}^{\perp}\matr{Q}\tau}\bd(\bxo), \quad \bx(0) = \bxo
\end{equation}
for $\tau \in [0,\tau_{*}(\bxo))$. In particular,
\begin{equation}\label{eq:local_solution}
	\bx(\tau) = \bxo + \matr{M}(\tau)\bd(\bxo) \quad \text{for $\tau \in [0,\tau_{*}(\bxo))$.}
\end{equation}
Moreover, for every $\tau \in (0,\tau_{*}(\bxo))$, we have $\eqset(\bx(\tau)) = J(\bxo)$ and the Moreau decomposition
\begin{equation}\label{eq:local_multipliers}
-\matr{Q}\bx(\tau) - \bc = \bd(\bx(\tau)) + \matr{A}_{J(\bxo)}^{\top}\bw(\tau), \quad \text{with } \bw(\tau) \geqslant \boldsymbol{0},
\end{equation}
where $\bd(\bx(\tau)) \in T_{C}(\bx(\tau))$ and $\matr{A}_{J(\bxo)}^{\top}\bw(\tau) \in N_{C}(\bx(\tau))$.
\end{thm}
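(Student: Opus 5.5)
We verify directly that the candidate curve $\by(\tau) \coloneqq \bxo + \matr{M}(\tau)\bd(\bxo)$ solves the slow system~\eqref{eq:slow_ode} on $[0,\tau_{*}(\bxo))$. Uniqueness in Theorem~\ref{thm:MSP}, with the slow solution characterized by part~(iii), then identifies $\by$ with $\bx$. Write $J = J(\bxo)$ and $\bv(\tau) \coloneqq e^{-\matr{P}^{\perp}_{J}\matr{Q}\tau}\bd(\bxo)$, so that $\dot{\by}(\tau) = \bv(\tau)$. We must show that $\bv(\tau) = \bd(\by(\tau))$ for every $\tau$ in the interval.

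\emph{Step 1: $\tau_{*}(\bxo) > 0$.} Every $i \in \eqcset(\bxo)$ has slack $b_i - [\matr{A}\bxo]_i > 0$, so by continuity of $\matr{M}$ the constraint stays strictly inactive for small $\tau$. For $i \in \eqset(\bxo)\setminus J$, condition~\eqref{eq:SCC} gives $[\matr{A}\bd(\bxo)]_i < 0$. Since $\matr{M}(\tau) = \tau\matr{I}_{n\times n} + O(\tau^2)$, we get $[\matr{A}\by(\tau)]_i = b_i + \tau[\matr{A}\bd(\bxo)]_i + O(\tau^2) < b_i$ for small $\tau > 0$. Hence $\tau_{\mathrm{feas}}(\bxo) > 0$. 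Similarly, $\bw(0) = \bp_{J}(\bxo) > \boldsymbol{0}$ and $\bw$ is continuous, so $\tau_{J}(\bxo) > 0$.

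\emph{Step 2: feasibility and the equicorrelation set.} Take $\tau \in (0,\tau_{*}(\bxo))$. Identity~\eqref{eq:active_rows_invariant} gives $[\matr{A}\by(\tau)]_{J} = \bb_{J}$. The constraints in $J^{\mathsf{C}}(\bxo)$ satisfy $[\matr{A}\by(\tau)]_i \neq b_i$ by the definition of $\tau_{\mathrm{feas}}$. They start strictly below $b_i$ for small $\tau$, and $\tau \mapsto [\matr{A}\by(\tau)]_i$ is continuous, so the intermediate value theorem keeps them strictly below $b_i$ throughout. Thus $\by(\tau) \in C$ and $\eqset(\by(\tau)) = J$. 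The one point needing care is that Step 1 must handle both groups of indices in $J^{\mathsf{C}}$: those with positive slack and those in $\eqset(\bxo)\setminus J$.

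\emph{Step 3: identification of the direction.} We show that the pair $(\bv(\tau), \bw(\tau))$ satisfies the sufficient KKT conditions~\eqref{eq:nnls_oc_1}--\eqref{eq:nnls_oc_2} at $\by(\tau)$. Since $\eqset(\by(\tau)) = J$ and $\bw(\tau) \geqslant \boldsymbol{0}$ on $[0,\tau_{J})$, the vector $\bp$ equal to $\bw(\tau)$ on $J$ and zero elsewhere is feasible for the NNLS problem. Its residual is
\[
-\matr{A}_{J}^{\top}\bw(\tau) - \matr{Q}\by(\tau) - \bc
= -\matr{P}^{\perp}_{J}\bigl(\matr{Q}\bxo + \bc + \matr{Q}\matr{M}(\tau)\bd(\bxo)\bigr)
= \bd(\bxo) - \matr{P}^{\perp}_{J}\matr{Q}\matr{M}(\tau)\bd(\bxo).
\]
The first equality uses the definition~\eqref{eq:w_mexp} of $\bw$ together with~\eqref{eq:nnls_explicit}; the second uses~\eqref{eq:steepest_dd}. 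This expression equals $\bv(\tau)$, because $\frac{d}{ds}e^{-\matr{P}^{\perp}_{J}\matr{Q}s} = -\matr{P}^{\perp}_{J}\matr{Q}e^{-\matr{P}^{\perp}_{J}\matr{Q}s}$ integrates to $\bv(\tau) = \bd(\bxo) - \matr{P}^{\perp}_{J}\matr{Q}\matr{M}(\tau)\bd(\bxo)$.

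It remains to check the KKT conditions. Since $\matr{A}_{J}\matr{P}^{\perp}_{J} = \matr{0}$ and $\matr{A}_{J}\bd(\bxo) = \boldsymbol{0}$ by~\eqref{eq:nnls_oc_2_5}, we get $\matr{A}_{J}\bv(\tau) = \boldsymbol{0}$. This gives both~\eqref{eq:nnls_oc_1} on $\eqset(\by(\tau)) = J$ and complementarity~\eqref{eq:nnls_oc_2}. Therefore $\bv(\tau) = \bd(\by(\tau))$, which is~\eqref{eq:local_simple_ode}. The same computation yields the decomposition~\eqref{eq:local_multipliers}. There $\matr{A}_{J}^{\top}\bw(\tau) \in N_C(\by(\tau))$ by~\eqref{eq:subdiff_V}, and $\bd \in T_C(\by(\tau))$ by the Moreau decomposition.

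\emph{Step 4: conclusion.} The curve $\by$ is Lipschitz, lies in $C$, and satisfies $\dot{\by}(\tau) = -\proj_{\partial V(\by(\tau))}(\boldsymbol{0})$ on $(0,\tau_{*})$; it also satisfies this at $\tau = 0$ as a right derivative. Hence $\by$ solves the differential inclusion~\eqref{eq:main_grad_incl} on $[0,\tau_{*})$. By uniqueness of its solution (Theorem~\ref{thm:MSP}), $\by = \bx$ there, which gives~\eqref{eq:local_solution}.
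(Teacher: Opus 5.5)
Your proposal is correct and follows the paper's proof essentially step for step. The steps are: positivity of $\tau_{*}(\bxo)$ from~\eqref{eq:SCC} and the continuity of $\bw$; the intermediate value theorem for feasibility and $\eqset(\by(\tau)) = J(\bxo)$; identification of $\bd(\by(\tau))$ through the splitting $-\matr{Q}\by(\tau) - \bc = -\matr{P}^{\perp}_{J(\bxo)}(\matr{Q}\by(\tau)+\bc) + \matr{A}_{J(\bxo)}^{\top}\bw(\tau)$; and then uniqueness. You certify the splitting via the NNLS KKT conditions while the paper invokes Moreau's theorem directly, which is equivalent. The one point to tighten is Step~4: Theorem~\ref{thm:MSP} as stated gives uniqueness of the \emph{global} solution, so on the subinterval $[0,\tau_{*}(\bxo))$ the paper argues explicitly that monotonicity of $\partial_{\bx}V$ makes $\normsq{\bx(\tau) - \by(\tau)}$ nonincreasing from $0$, and you should do the same.
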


\begin{proof}
Let $\tilde{\bx}(\tau) \coloneqq \bxo + \matr{M}(\tau)\bd(\bxo)$. Then $\tilde{\bx}(0) = \bxo$. By the fundamental theorem of calculus,
\begin{equation*}
\dot{\tilde{\bx}}(\tau) = \dot{\matr{M}}(\tau)\bd(\bxo) = e^{-\matr{P}^{\perp}_{J(\bxo)}\matr{Q}\tau}\bd(\bxo).
\end{equation*}
In the next three steps, we prove that $\tau_{*}(\bxo) > 0$, then that $\tilde{\bx}(\tau) \in C$, $\eqset(\tilde{\bx}(\tau)) = J(\bxo)$, and $\bd(\tilde{\bx}(\tau)) = -\matr{P}^{\perp}_{J(\bxo)}\bigl(\matr{Q}\tilde{\bx}(\tau) + \bc\bigr)$ for $\tau \in (0,\tau_{*}(\bxo))$, and finally that $\tau \mapsto \tilde{\bx}(\tau)$ is the slow solution.

\paragraph{Step~1.} Let $\bh(\tau) \coloneqq \matr{A}\tilde{\bx}(\tau) - \bb$. Then 
$\bh(0) = \matr{A}\bxo - \bb$ and $\dot{\bh}(0) = \matr{A}\bd(\bxo)$. By definition of $\eqset(\bxo)$ and by~\eqref{eq:SCC},
\begin{equation*}
\bh_{\eqcset(\bxo)}(0) < \boldsymbol{0}, \ \bh_{\eqset(\bxo)\setminus J(\bxo)}(0) = \boldsymbol{0}, \ \text{and} \ \dot{\bh}_{\eqset(\bxo)\setminus J(\bxo)}(0) = [\matr{A}\bd(\bxo)]_{\eqset(\bxo) \setminus J(\bxo)} < \boldsymbol{0}.
\end{equation*}
Thus $\bh_{\Jcbxo}(\tau) < \boldsymbol{0}$ for sufficiently small $\tau > 0$, giving $\tau_{\mathrm{feas}}(\bxo) > 0$. Next, the function $\tau \mapsto \bw(\tau)$ defined in~\eqref{eq:w_mexp} is continuous with $\bw(0) = \bp_{J(\bxo)}(\bxo) > \boldsymbol{0}$. Hence $\bw(\tau) > \boldsymbol{0}$ for sufficiently small $\tau > 0$, giving $\tau_{J}(\bxo) > 0$. We thus find $\tau_{*}(\bxo) > 0$.

\paragraph{Step~2a.} We first show that $[\matr{A}\tilde{\bx}(\tau)]_{J(\bxo)} = \bb_{J(\bxo)}$ for every $\tau \geqslant 0$. Since $J(\bxo) \subset \eqset(\bxo)$, we have $\matr{A}_{J(\bxo)}\bxo = \bb_{J(\bxo)}$, and~\eqref{eq:active_rows_invariant} gives
\[
\matr{A}_{J(\bxo)}\tilde{\bx}(\tau) - \bb_{J(\bxo)} = \matr{A}_{J(\bxo)}\bigl(\bxo + \matr{M}(\tau)\bd(\bxo)\bigr) - \bb_{J(\bxo)} = \boldsymbol{0}, \qquad \tau \geqslant 0.
\]
This identity shows $J(\bxo) \subset \eqset(\tilde{\bx}(\tau))$ whenever $\tilde{\bx}(\tau) \in C$.

\paragraph{Step~2b.} Conversely, $\bh_{\Jcbxo}$ is continuous and negative near zero by Step~1, and none of its components vanishes on $(0,\tau_{\mathrm{feas}}(\bxo))$ by definition~\eqref{eq:tau_feas}. By the intermediate value theorem, each of its components is negative on $(0,\tau_{\mathrm{feas}}(\bxo))$. Now let $\tau \in (0,\tau_{*}(\bxo))$. Since $\tau_{*}(\bxo) \leqslant \tau_{\mathrm{feas}}(\bxo)$, we have $\bh_{\Jcbxo}(\tau) < \boldsymbol{0}$, so Step~2a shows that $\tilde{\bx}(\tau) \in C$ and $\eqset(\tilde{\bx}(\tau)) \subset J(\bxo)$. We conclude that $\eqset(\tilde{\bx}(\tau)) = J(\bxo)$ for all $\tau \in (0,\tau_{*}(\bxo))$.

\paragraph{Step~2c.} We show that $\bd(\tilde{\bx}(\tau)) = -\matr{P}^{\perp}_{J(\bxo)}\bigl(\matr{Q}\tilde{\bx}(\tau) + \bc\bigr)$ for all $\tau \in [0,\tau_{*}(\bxo))$. At $\tau = 0$, this is formula~\eqref{eq:steepest_dd}, so let $\tau \in (0,\tau_{*}(\bxo))$. Splitting $-\matr{Q}\tilde{\bx}(\tau) - \bc$ along $\matr{I}_{n \times n} = \matr{P}^{\perp}_{J(\bxo)} + (\matr{I}_{n \times n} - \matr{P}^{\perp}_{J(\bxo)})$ and expanding $\tilde{\bx}(\tau) = \bxo + \matr{M}(\tau)\bd(\bxo)$ with formulas~\eqref{eq:nnls_explicit} and~\eqref{eq:w_mexp} gives the decomposition
\begin{equation*}
-\matr{Q}\tilde{\bx}(\tau) - \bc = -\matr{P}^{\perp}_{J(\bxo)}\bigl(\matr{Q}\tilde{\bx}(\tau) + \bc\bigr) + \matr{A}_{J(\bxo)}^{\top}\bw(\tau).
\end{equation*}
Since $\eqset(\tilde{\bx}(\tau)) = J(\bxo)$ and $\bw(\tau) \geqslant \boldsymbol{0}$ by definition~\eqref{eq:tau_J} of $\tau_{J}(\bxo)$, the second term on the right-hand side is a nonnegative combination of the rows active at $\tilde{\bx}(\tau)$, so it belongs to $N_{C}(\tilde{\bx}(\tau))$. The first term lies in $\ker\matr{A}_{J(\bxo)}$ because $\matr{A}_{J(\bxo)}\matr{P}^{\perp}_{J(\bxo)} = \matr{0}$, so it belongs to $T_{C}(\tilde{\bx}(\tau))$ and is orthogonal to the second term. By Moreau's decomposition theorem~\eqref{eq:moreau_decomp}, the first term is therefore $\proj_{T_{C}(\tilde{\bx}(\tau))}(-\matr{Q}\tilde{\bx}(\tau) - \bc) = \bd(\tilde{\bx}(\tau))$, as claimed.

\paragraph{Step~3.} We now verify that $\tau \mapsto \tilde{\bx}(\tau)$ solves the slow system~\eqref{eq:slow_ode}. Step~2c and formula~\eqref{eq:steepest_dd} give, for all $\tau \in [0,\tau_{*}(\bxo))$,
\begin{equation*}
\begin{alignedat}{1}
\bd(\tilde{\bx}(\tau)) &= -\matr{P}^{\perp}_{J(\bxo)}(\matr{Q}\bxo + \bc) - \matr{P}^{\perp}_{J(\bxo)}\matr{Q}\matr{M}(\tau)\bd(\bxo) \\
&= \bigl(\matr{I}_{n \times n} - \matr{P}^{\perp}_{J(\bxo)}\matr{Q}\matr{M}(\tau)\bigr)\bd(\bxo) = e^{-\matr{P}^{\perp}_{J(\bxo)}\matr{Q}\tau}\bd(\bxo) = \dot{\tilde{\bx}}(\tau),
\end{alignedat}
\end{equation*}
where the third equality follows from the fundamental theorem of calculus. Hence the candidate solution $\tau \mapsto \tilde{\bx}(\tau)$ solves the slow system on $[0,\tau_{*}(\bxo))$. Both $\tau \mapsto \bx(\tau)$ and $\tau \mapsto \tilde{\bx}(\tau)$ therefore solve the differential inclusion~\eqref{eq:main_grad_incl} on $[0,\tau_{*}(\bxo))$ and agree at $\tau = 0$. The locally Lipschitz function $\tau \mapsto \normsq{\bx(\tau) - \tilde{\bx}(\tau)}$ is differentiable at almost every $\tau$, with derivative $2\langle\dot{\bx}(\tau) - \dot{\tilde{\bx}}(\tau),\bx(\tau) - \tilde{\bx}(\tau)\rangle \leqslant 0$ because $\partial_{\bx}V$ is monotone. This nonnegative function vanishes at $\tau = 0$, so it vanishes identically because it is nonincreasing, and $\bx(\tau) = \tilde{\bx}(\tau) = \bxo + \matr{M}(\tau)\bd(\bxo)$ on $[0,\tau_{*}(\bxo))$. The identity $\eqset(\bx(\tau)) = J(\bxo)$ and formula~\eqref{eq:local_multipliers} then follow from Steps~2b and~2c.
\end{proof}

\subsection{Continuation of the local dynamics}
\label{subsec:continuation}
The feasibility and active-set times $\tau_{\mathrm{feas}}(\bxo)$ and $\tau_{J}(\bxo)$, when finite, indicate when the equicorrelation set grows or the active set may shrink. Since $\tau \mapsto \bx(\tau)$ is continuous by Theorem~\ref{thm:MSP}, formula~\eqref{eq:local_solution} extends to $\tau_{*}(\bxo)$ whenever $\tau_{*}(\bxo) < +\infty$. The right derivative $\tau \mapsto \dot{\bx}(\tau) = \bd(\bx(\tau))$, however, need not be continuous at $\tau_{*}(\bxo)$, as the next result demonstrates. 

\begin{prop}\label{prop:continuation}
Let $\bxo \in C$, choose $\bp(\bxo)$ so that~\eqref{eq:LI} holds, assume that~\eqref{eq:SCC} holds, and suppose $\tau_{*}(\bxo) < +\infty$. If $[\matr{A}e^{-\matr{P}^{\perp}_{J(\bxo)}\matr{Q}\tau_{*}(\bxo)}\bd(\bxo)]_{i} > 0$ for some $i \in \eqset(\bx(\tau_{*}(\bxo)))$, then the right derivative $\tau \mapsto \dot{\bx}(\tau) = \bd(\bx(\tau))$ is discontinuous at $\tau_{*}(\bxo)$, and
\begin{equation}
\normtwo{\bd(\bx(\tau_{*}(\bxo)))} < \normtwo{e^{-\matr{P}_{J(\bxo)}^{\perp}\matr{Q}\tau_{*}(\bxo)}\bd(\bxo)},
\label{eq:discontinuity}
\end{equation}
where the right-hand side is the norm of the left limit of $\bd(\bx(\tau))$ as $\tau \to \tau_{*}(\bxo)^{-}$. Otherwise, the right derivative is continuous.
\end{prop}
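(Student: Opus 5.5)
Write $\tau_* = \tau_*(\bxo)$, $\bx_* = \bx(\tau_*)$, and let $\bd_-\coloneqq e^{-\matr{P}^{\perp}_{J(\bxo)}\matr{Q}\tau_*}\bd(\bxo)$. The plan is to compare $\bd(\bx_*)$ with $\bd_-$ through the Moreau decomposition at $\bx_*$.

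By Theorem~\ref{thm:local_solution} and continuity of $\bx$, we have $\bd(\bx(\tau))\to\bd_-$ as $\tau\to\tau_*^-$. Passing to the limit in~\eqref{eq:local_multipliers} gives
\[
-\matr{Q}\bx_*-\bc=\bd_-+\matr{A}_{J(\bxo)}^{\top}\bw(\tau_*),\qquad \bw(\tau_*)\geqslant\boldsymbol{0}.
\]
Since $J(\bxo)\subset\eqset(\bx_*)$ by Step~2a, the second term lies in $N_C(\bx_*)$. Hence $\bd_- = -\nabla f(\bx_*) - \bn$ for some $\bn \in N_C(\bx_*)$. By the minimal selection characterization (Theorem~\ref{thm:MSP}(iii)), $\bd(\bx_*)$ is the minimal-norm element of $-\nabla f(\bx_*)-N_C(\bx_*)$. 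This immediately gives $\normtwo{\bd(\bx_*)}\leqslant\normtwo{\bd_-}$. It also gives equality if and only if $\bd(\bx_*)=\bd_-$, since the minimal-norm element of a closed convex set is unique.

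The key step is to characterize when $\bd_-$ is itself the minimal selection, i.e.\ when $\bd_- = \proj_{T_C(\bx_*)}(-\nabla f(\bx_*))$. By the KKT sufficiency remark after~\eqref{eq:nnls_oc}, applied with the feasible multiplier $\bp$ equal to $\bw(\tau_*)$ on $J(\bxo)$ and zero elsewhere, it suffices that $\bd_-$ satisfy~\eqref{eq:nnls_oc_1}--\eqref{eq:nnls_oc_2} at $\bx_*$.
\begin{itemize}
\item Complementarity~\eqref{eq:nnls_oc_2} always holds. By~\eqref{eq:active_rows_invariant} differentiated in $\tau$, $\matr{A}_{J(\bxo)}\bd_-=\boldsymbol{0}$, and the multiplier vanishes outside $J(\bxo)$.
\item Condition~\eqref{eq:nnls_oc_1} reads $[\matr{A}\bd_-]_i\leqslant 0$ for all $i\in\eqset(\bx_*)$. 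This is exactly the negation of the hypothesis of the proposition.
\end{itemize}
So if no such $i$ exists, $\bd(\bx_*)=\bd_-$ and the right derivative is continuous at $\tau_*$. It is right-continuous by Theorem~\ref{thm:MSP}(i), and the left limit equals $\bd_-$.

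Conversely, suppose $[\matr{A}\bd_-]_i>0$ for some $i\in\eqset(\bx_*)$. Then $\bd_-\notin T_C(\bx_*)$, while $\bd(\bx_*)\in T_C(\bx_*)$, so $\bd(\bx_*)\neq\bd_-$. The left and right values differ, which gives the discontinuity. Uniqueness of the minimal-norm element then turns the weak inequality into the strict inequality~\eqref{eq:discontinuity}.

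The main obstacle I expect is the limit step. One must check that $\bw(\tau_*)$ is still nonnegative, which holds by continuity of $\bw$ and the definition of $\tau_J$. One must also check that the decomposition at $\bx_*$ uses only rows in $\eqset(\bx_*)$, where $\eqset(\bx_*)$ may be strictly larger than $J(\bxo)$. Both points follow from continuity, but they should be stated explicitly.
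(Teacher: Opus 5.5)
Your proof is correct and follows the paper's argument. Both build the multiplier $\hat{\bp}$ equal to $\bw(\tau_{*})$ on $J(\bxo)$, check the KKT conditions of the NNLS problem at $\bx(\tau_{*})$ for its residual $e^{-\matr{P}^{\perp}_{J(\bxo)}\matr{Q}\tau_{*}}\bd(\bxo)$, and get the strict inequality because a feasible non-optimal point has a residual strictly longer than the minimal-norm one. You read the sign condition directly as the negation of the hypothesis, which skips the paper's extra remark that $[\matr{A}e^{-\matr{P}^{\perp}_{J(\bxo)}\matr{Q}\tau_{*}}\bd(\bxo)]_{I^{+}} \geqslant \boldsymbol{0}$; that remark is not needed for the proposition as stated.
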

\begin{proof}
Let $\tau_{*} = \tau_{*}(\bxo)$, $\bx_{*} = \bx(\tau_{*})$, and $\eqset_{*} = \eqset(\bx_{*})$, for short.

\paragraph{Step~1.} By Theorem~\ref{thm:local_solution}, $\eqset(\bx(\tau)) = J(\bxo)$ for all $\tau \in (0,\tau_{*})$. Let
\[
I^{+} \coloneqq \left\{i \in \Jcbxo : [\matr{A}\bx_{*}]_{i} = b_{i}\right\},
\]
so that $\eqset_{*} = J(\bxo) \cup I^{+}$. Indeed, $\bx_{*} = \bxo + \matr{M}(\tau_{*})\bd(\bxo)$ by continuity, so $\matr{A}_{J(\bxo)}\bxo = \bb_{J(\bxo)}$ and~\eqref{eq:active_rows_invariant} give $\matr{A}_{J(\bxo)}\bx_{*} = \bb_{J(\bxo)}$. Define $\hat{\bp}$ by $\hat{\bp}_{J(\bxo)} \coloneqq \bw(\tau_{*})$ and $\hat{\bp}_{\Jcbxo} \coloneqq \boldsymbol{0}$. Since $\bw(\tau) \geqslant \boldsymbol{0}$ for $\tau \in (0,\tau_{*})$ by~\eqref{eq:local_multipliers} and $\bw$ is continuous, $\hat{\bp} \geqslant \boldsymbol{0}$. Because $\hat{\bp}$ is supported on $J(\bxo) \subset \eqset_{*}$, it is a feasible solution of the NNLS problem~\eqref{eq:nnls_problem} at $\bx_{*}$. Letting $\tau \to \tau_{*}^{-}$ in~\eqref{eq:local_multipliers} and~\eqref{eq:local_simple_ode} gives
\begin{equation}\label{eq:bphat_left}
	-\Atop\hat{\bp} - \matr{Q}\bx_{*} - \bc = \left(\matr{I}_{n\times n} - \matr{P}^{\perp}_{J(\bxo)}\matr{Q}\matr{M}(\tau_{*})\right)\bd(\bxo) = e^{-\matr{P}^{\perp}_{J(\bxo)}\matr{Q}\tau_{*}}\bd(\bxo).
\end{equation}

\paragraph{Step~2.} We now determine when $\hat{\bp}$ is an optimal solution of the NNLS problem~\eqref{eq:nnls_problem} at $\bx_{*}$. By Section~\ref{subsec:properties_nnls}, $\hat{\bp}$ is optimal if and only if the KKT conditions~\eqref{eq:nnls_oc_1}--\eqref{eq:nnls_oc_2} hold for its residual $-\Atop\hat{\bp} - \matr{Q}\bx_{*} - \bc$. On the one hand, the sign condition~\eqref{eq:nnls_oc_1} reads
\begin{equation}\label{eq:kkt_bpstar_1}
	\matr{A}_{\eqset_{*}}\left(-\Atop\hat{\bp} - \matr{Q}\bx_{*} - \bc\right) = \matr{A}_{\eqset_{*}}e^{-\matr{P}_{J(\bxo)}^{\perp}\matr{Q}\tau_{*}}\bd(\bxo) \leqslant \boldsymbol{0},
\end{equation}
with equality over $J(\bxo) \subset \eqset_{*}$ because $\matr{A}_{J(\bxo)}e^{-\matr{P}^{\perp}_{J(\bxo)}\matr{Q}s}\bd(\bxo) = \boldsymbol{0}$ for all $s \geqslant 0$, as differentiating~\eqref{eq:active_rows_invariant} shows. Since $\hat{\bp}$ is supported on $J(\bxo)$, this equality settles the complementarity condition~\eqref{eq:nnls_oc_2}, so $\hat{\bp}$ is optimal if and only if~\eqref{eq:kkt_bpstar_1} holds. On the other hand, for every $i \in I^{+}$, the function $\tau \mapsto [\matr{A}(\bxo + \matr{M}(\tau)\bd(\bxo)) - \bb]_{i}$ is negative on $(0,\tau_{*})$ by Step~1 and vanishes at $\tau_{*}$. Thus the derivative of the function at $\tau_{*}$ is nonnegative, yielding
\[
\frac{\diff}{\diff\tau}\bigg|_{\tau = \tau_{*}}\left([\matr{A}(\bxo + \matr{M}(\tau)\bd(\bxo))]_{i}\right) = [\matr{A}e^{-\matr{P}_{J(\bxo)}^{\perp}\matr{Q}\tau_{*}}\bd(\bxo)]_{i} \geqslant 0.
\]
Since $[\matr{A}e^{-\matr{P}^{\perp}_{J(\bxo)}\matr{Q}\tau_{*}}\bd(\bxo)]_{J(\bxo)} = \boldsymbol{0}$ and $\eqset_{*} = J(\bxo) \cup I^{+}$, condition~\eqref{eq:kkt_bpstar_1} holds if and only if $[\matr{A}e^{-\matr{P}^{\perp}_{J(\bxo)}\matr{Q}\tau_{*}}\bd(\bxo)]_{I^{+}} = \boldsymbol{0}$. Therefore, $\hat{\bp}$ is an optimal NNLS solution at $\bx_{*}$ if and only if the hypothesis of the proposition fails.

\paragraph{Step~3a.} If the hypothesis of the proposition fails, then Step~2 shows $\hat{\bp}$ is an optimal NNLS solution. In that case,
\[
\dot{\bx}(\tau_{*}) = \bd(\bx_{*}) = -\left(\Atop\hat{\bp} + \matr{Q}\bx_{*} + \bc\right) = e^{-\matr{P}^{\perp}_{J(\bxo)}\matr{Q}\tau_{*}}\bd(\bxo),
\]
where the last equality is~\eqref{eq:bphat_left}. By~\eqref{eq:local_simple_ode}, the right-hand side is also the left limit of $\dot{\bx}(\tau)$ as $\tau \to \tau_{*}^{-}$. Hence $\tau \mapsto \dot{\bx}(\tau)$ is continuous at $\tau_{*}$.

\paragraph{Step~3b.} If the hypothesis of the proposition holds, then Step~2 shows that $\hat{\bp}$ is not an optimal NNLS solution. Since $\hat{\bp}$ is feasible but not optimal, its objective value exceeds the optimal value $\frac{1}{2}\normsq{\bd(\bx_{*})}$, and~\eqref{eq:bphat_left} yields $\normtwo{\bd(\bx_{*})} < \normtwo{\Atop\hat{\bp} + \matr{Q}\bx_{*} + \bc} = \normtwo{e^{-\matr{P}^{\perp}_{J(\bxo)}\matr{Q}\tau_{*}}\bd(\bxo)}$. This gives~\eqref{eq:discontinuity}. By~\eqref{eq:local_simple_ode}, the right-hand side of~\eqref{eq:discontinuity} is the norm of the left limit of $\dot{\bx}(\tau)$ as $\tau \to \tau_{*}^{-}$, so this left limit differs from $\dot{\bx}(\tau_{*}) = \bd(\bx_{*})$ and $\tau \mapsto \dot{\bx}(\tau)$ is discontinuous at $\tau_{*}$.
\end{proof}

\subsection{Computing the breakpoint time}
\label{subsec:computations_local}
Theorem~\ref{thm:local_solution} and Proposition~\ref{prop:continuation} describe the slow solution up to and at the breakpoint time $\tau_{*}(\bxo)$. Computing this time requires finding the first zero of the function
\[
\tau \mapsto \left[\matr{A}(\bxo + \matr{M}(\tau)\bd(\bxo)) - \bb\right]_{i}, \quad i \in \Jcbxo,
\]
and the first time that a component of $\bw(\tau)$ in~\eqref{eq:w_mexp} turns negative, if any exist. Condition~\eqref{eq:nnls_oc_2_5} gives $\bd(\bxo) \in \ker(\matr{A}_{J})$ (where we write $J \coloneqq J(\bxo)$ for short). This kernel is the range of $\matr{P}^{\perp}_{J}$ and is invariant under $\matr{P}^{\perp}_{J}\matr{Q}$, which agrees on it with the SPSD matrix $\matr{P}^{\perp}_{J}\matr{Q}\matr{P}^{\perp}_{J}$. Expanding $\bd(\bxo)$ in an orthonormal basis of $\ker(\matr{A}_{J})$ made of eigenvectors of $\matr{P}^{\perp}_{J}\matr{Q}\matr{P}^{\perp}_{J}$, with eigenvalues $\lambda_{k} \geqslant 0$, and integrating as in~\eqref{eq:int_mexp} shows that each component of $\matr{M}(\tau)\bd(\bxo)$ is a linear combination of terms of the form
\[
\left(1-e^{-\lambda_{k}\tau}\right)/\lambda_{k}\ \text{(for $\lambda_{k} > 0$) and $\tau$ (for $\lambda_{k} = 0$)}.
\]
Thus the equations for $\tau_{\mathrm{feas}}(\bxo)$ and $\tau_{J}(\bxo)$ generally have no closed-form solution.

These equations, however, have closed-form solutions when $\bd(\bxo)$ is an eigenvector of $\matr{P}^{\perp}_{J}\matr{Q}$, since each then involves a single term. The direction $\bd(\bxo)$ is such an eigenvector when $\matr{Q} = t\matr{I}_{n\times n}$ with $t \geqslant 0$, because $\bd(\bxo) \in \ker(\matr{A}_{J})$ gives $\matr{P}^{\perp}_{J}\matr{Q}\bd(\bxo) = t\bd(\bxo)$. Thus $e^{-\matr{P}_{J}^{\perp}\matr{Q}\tau}\bd(\bxo) = e^{-t\tau}\bd(\bxo)$ and the trajectory of the slow solution on $[0,\tau_{*}(\bxo))$ is
\begin{equation}\label{eq:x_Q_equals_t_id}
\bx(\tau) = \bxo + \Delta(\tau)\bd(\bxo),\quad \text{with} \ \Delta(\tau) \coloneqq \begin{cases}
 \tau &\text{if $t=0$}, \\
 (1-e^{-t\tau})/t &\text{if $t>0$}.
 \end{cases}
\end{equation} 
For the active-set time $\tau_{J}(\bxo)$, we use $\matr{M}(\tau)\bd(\bxo) = \Delta(\tau)\bd(\bxo)$ and $\bd(\bxo) \in \ker(\matr{A}_{J})$ to get
\[
\bw(\tau) = \bp_{J}(\bxo) - t\Delta(\tau)\left(\matr{A}_{J}\matr{A}_{J}^{\top}\right)^{-1}\matr{A}_{J}\bd(\bxo) = \bp_{J}(\bxo) > \boldsymbol{0}
\]
for all $\tau > 0$. Hence we find $\tau_{J}(\bxo) = +\infty$. For the feasibility time $\tau_{\mathrm{feas}}(\bxo)$, each constraint $i \in \eqset(\bxo) \setminus J$ satisfies $[\matr{A}(\bxo + \Delta(\tau)\bd(\bxo)) - \bb]_{i} = \Delta(\tau)[\matr{A}\bd(\bxo)]_{i} < 0$ for $\tau > 0$ by~\eqref{eq:SCC}. By~\eqref{eq:delta_feas} and~\eqref{eq:tau_feas}, $\tau_{\mathrm{feas}}(\bxo)$ is therefore the solution of $\Delta(\tau) = \Delta_{\mathrm{feas}}(\bxo)$, if one exists. Since $\tau \mapsto \Delta(\tau)$ is strictly increasing with $\Delta(0) = 0$ and, for $t>0$, $\Delta(\tau) \to 1/t$ as $\tau \to +\infty$, the feasibility time is finite if and only if $\Delta_{\mathrm{feas}}(\bxo) < 1/t$. When this holds,
\begin{equation}\label{eq:exact_bp_time_qid}
\tau_{\mathrm{feas}}(\bxo) = \begin{cases}
 \Delta_{\mathrm{feas}}(\bxo) &\text{if $t=0$}, \\
 -\log(1-t\Delta_{\mathrm{feas}}(\bxo))/t &\text{if $t>0$},
 \end{cases}
\end{equation}
and $\tau_{*}(\bxo) = \tau_{\mathrm{feas}}(\bxo)$. If $\Delta_{\mathrm{feas}}(\bxo) \geqslant 1/t$, then $\tau_{\mathrm{feas}}(\bxo) = +\infty$, so $\tau_{*}(\bxo) = +\infty$ and~\eqref{eq:x_Q_equals_t_id} holds for all $\tau \geqslant 0$. For $t > 0$, $\bx(\tau) \to \bxo + \bd(\bxo)/t$ as $\tau \to +\infty$. For $t = 0$ and $\bd(\bxo) \neq \boldsymbol{0}$, formula~\eqref{eq:obj_fun_after_descent} gives $V(\bx(\tau)) = V(\bxo) - \tau\normsq{\bd(\bxo)} \to -\infty$.

\subsection{Removing strict complementarity}
\label{subsec:removing_scc_local}
All results so far except Proposition~\ref{prop:descent_direction} assume the strict complementarity condition~\eqref{eq:SCC}. When~\eqref{eq:SCC} fails, the \emph{grazing set}
\begin{equation}\label{eq:grazing_set}
G(\bxo) \coloneqq \left\{i \in \eqset(\bxo) \setminus J(\bxo) : [\matr{A}\bd(\bxo)]_{i} = 0\right\}
\end{equation}
is nonempty. A \emph{grazing constraint} stays active on the ray $\bxo + \Delta\bd(\bxo)$. Along the slow solution, it may join the active set, become inactive, or stay active with a zero multiplier. The direction $\bd(\bxo)$ on its own does not determine which of these occurs. The rate at which the direction $\bd(\bx(\tau))$ changes at $\tau = 0$ is the projection of $-\matr{Q}\bd(\bxo)$ onto the cone $\{\bs \in \Rn : [\matr{A}\bs]_{J(\bxo)} = \boldsymbol{0},\ [\matr{A}\bs]_{G(\bxo)} \leqslant \boldsymbol{0}\}$. A grazing constraint joins the active set if some multiplier of this projection is positive on it, and becomes inactive if the projection strictly decreases its value. Otherwise, the higher derivatives of $\bd(\bx(\tau))$ at $\tau = 0$ determine which case occurs.

The next theorem, whose proof is highly technical and relegated to Appendix~\ref{app:thm:scc_local}, extends the results of Sections~\ref{subsec:local_sol}--\ref{subsec:continuation} to the case where~\eqref{eq:SCC} fails. Complementarity~\eqref{eq:nnls_oc_2} holds for every optimal NNLS solution, so the set $J(\bxo) \cup G(\bxo) = \{i \in \eqset(\bxo) : [\matr{A}\bd(\bxo)]_{i} = 0\}$ does not depend on the choice of $\bp(\bxo)$. For an index set $K \subseteq [m]$, let $\matr{P}^{\perp}_{K}$ denote the orthogonal projection onto $\ker\matr{A}_{K}$, given by~\eqref{eq:nnls_pperp} with $K$ in place of $J(\bxo)$ when the rows of $\matr{A}_{K}$ are linearly independent. For a set $K$ with $J(\bxo) \subseteq K \subseteq J(\bxo) \cup G(\bxo)$, we write $\matr{M}_{K}$ and $\tau^{K}_{\mathrm{feas}}(\bxo)$ for~\eqref{eq:int_mexp} and~\eqref{eq:tau_feas} with $K$ in place of $J(\bxo)$. Here, $\tau^{K}_{\mathrm{feas}}(\bxo)$ ignores every constraint whose value vanishes identically along the curve $\bxo + \matr{M}_{K}(\tau)\bd(\bxo)$. Let
\begin{equation}\label{eq:tau_J_cone}
\begin{aligned}
\tau^{K}_{J}(\bxo) \coloneqq \inf\Bigl\{\tau > 0 :\ & \by \in C \ \text{and} \ -\matr{Q}\by - \bc - e^{-\matr{P}^{\perp}_{K}\matr{Q}\tau}\bd(\bxo) \notin N_{C}(\by), \\
& \text{where } \by = \bxo + \matr{M}_{K}(\tau)\bd(\bxo)\Bigr\},
\end{aligned}
\end{equation}
be the first time at which the normal part along the curve leaves the normal cone, and let $\tau^{K}_{*}(\bxo) \coloneqq \min(\tau^{K}_{\mathrm{feas}}(\bxo),\tau^{K}_{J}(\bxo))$. Unlike~\eqref{eq:tau_J}, definition~\eqref{eq:tau_J_cone} does not have multipliers because the rows of $\matr{A}_{K}$ can be linearly dependent.

\begin{thm}
\label{thm:scc_local}
Let $\bxo \in C$ and choose $\bp(\bxo)$ so that~\eqref{eq:LI} holds. There exists a set $K$ with $J(\bxo) \subseteq K \subseteq J(\bxo) \cup G(\bxo)$ such that the following hold.
\begin{itemize}
\item[(i)] The breakpoint time $\tau^{K}_{*}(\bxo)$ is positive, and the slow solution satisfies
\begin{equation}\label{eq:grazing_solution}
\bx(\tau) = \bxo + \matr{M}_{K}(\tau)\bd(\bxo),
\qquad
\dot{\bx}(\tau) = e^{-\matr{P}^{\perp}_{K}\matr{Q}\tau}\bd(\bxo)
\qquad \text{on } [0,\tau^{K}_{*}(\bxo)).
\end{equation}
Moreover, for all sufficiently small $\tau > 0$, there exists $\bw \in \R^{|K|}$ with positive entries such that
\begin{equation}\label{eq:grazing_multipliers}
-\matr{Q}\bx(\tau) - \bc = \bd(\bx(\tau)) + \matr{A}_{K}^{\top}\bw.
\end{equation}

\item[(ii)] Suppose $\tau^{K}_{*}(\bxo) < +\infty$. If $[\matr{A}e^{-\matr{P}^{\perp}_{K}\matr{Q}\tau^{K}_{*}(\bxo)}\bd(\bxo)]_{i} > 0$ for some $i \in \eqset(\bx(\tau^{K}_{*}(\bxo)))$, then the right derivative $\tau \mapsto \dot{\bx}(\tau)$ is discontinuous at $\tau^{K}_{*}(\bxo)$, and
\begin{equation}\label{eq:scc_discontinuity}
\normtwo{\bd(\bx(\tau^{K}_{*}(\bxo)))} < \normtwo{e^{-\matr{P}^{\perp}_{K}\matr{Q}\tau^{K}_{*}(\bxo)}\bd(\bxo)},
\end{equation}
where the right-hand side is the norm of the left limit of $\dot{\bx}(\tau)$ as $\tau \to \tau^{K}_{*}(\bxo)^{-}$. Otherwise, the right derivative is continuous.

\item[(iii)] If the rows of $\matr{A}_{J(\bxo) \cup G(\bxo)}$ are linearly independent, then $J(\bx(\tau)) = K$ for all sufficiently small $\tau > 0$, and $K$ is the only set with $J(\bxo) \subseteq K \subseteq J(\bxo) \cup G(\bxo)$ for which part~(i) holds.
\end{itemize}
\end{thm}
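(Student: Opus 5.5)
Our plan is to reduce Theorem~\ref{thm:scc_local} to the machinery of Theorem~\ref{thm:local_solution} and Proposition~\ref{prop:continuation}, once the correct index set $K$ is in hand. We identify $K$ from the slow solution itself rather than construct it a priori. Write $J = J(\bxo)$ and $G = G(\bxo)$. By Theorem~\ref{thm:MSP}(i), the right derivative $\dot{\bx}$ is right-continuous. Hence for small $\tau > 0$ the equicorrelation set $\eqset(\bx(\tau))$ lies in $\{i \in \eqset(\bxo) : [\matr{A}\bd(\bxo)]_{i} = 0\} = J \cup G$: constraints outside $\eqset(\bxo)$ stay inactive by continuity, and a constraint with $[\matr{A}\bd(\bxo)]_{i} < 0$ leaves immediately. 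Conversely, $J \subseteq \eqset(\bx(\tau))$ for small $\tau$. The first step is to show that $\eqset(\bx(\tau))$ is eventually constant on some $(0,\varepsilon)$. We do this by a finiteness and analyticity argument: on any interval where the equicorrelation set equals a fixed $K'$, the slow solution is governed by the linear ODE of Theorem~\ref{thm:local_solution} with $K'$ in place of $J$. The constraint values along such a piece are real-analytic in $\tau$, so each either vanishes identically or has isolated zeros. Since there are only finitely many candidate sets, they cannot alternate infinitely often near $0^{+}$. We then take $K$ to be the set of constraints in $J \cup G$ whose values vanish identically near $0^{+}$.

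With $K$ fixed, we verify part~(i) by repeating Steps~2a--3 of the proof of Theorem~\ref{thm:local_solution}. Set $\tilde{\bx}(\tau) = \bxo + \matr{M}_{K}(\tau)\bd(\bxo)$. Since $\bd(\bxo) \in \ker \matr{A}_{K}$ (it lies in $\ker \matr{A}_{J \cup G}$), the identity $\matr{A}_{K}\matr{P}^{\perp}_{K} = \matr{0}$ gives $\matr{A}_{K}\tilde{\bx}(\tau) = \bb_{K}$. Because $\bd(\bxo) \in \ker\matr{A}_K$ and $\matr{P}^\perp_K\bd(\bxo)=\bd(\bxo)$, the direction formula $\bd(\tilde{\bx}(\tau)) = -\matr{P}^{\perp}_{K}(\matr{Q}\tilde{\bx}(\tau)+\bc)$ reduces exactly to the condition defining $\tau^{K}_{J}(\bxo)$ in~\eqref{eq:tau_J_cone}. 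This condition says that the normal part $-\matr{Q}\tilde{\bx} - \bc - e^{-\matr{P}^{\perp}_{K}\matr{Q}\tau}\bd(\bxo)$ lies in $N_{C}(\tilde{\bx})$. Its orthogonality to the tangential part is automatic, since that part lies in $\ker\matr{A}_{K}$ and the normal part lies in the range of $\matr{A}_{K}^{\top}$. Moreau's decomposition~\eqref{eq:moreau_decomp} then identifies the tangential part as $\bd(\tilde{\bx})$. The monotonicity uniqueness argument of Step~3 gives $\bx = \tilde{\bx}$ on $[0,\tau^{K}_{*})$.

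The key point, positivity of $\tau^{K}_{*}$, comes from the choice of $K$, since the true solution realizes this piece near $0^{+}$. On the piece, the normal component lies in the cone generated by $\matr{A}_{K}^{\top}$. We then apply Carath\'eodory and the fact that every index of $K$ stays active with a multiplier that is eventually nonzero, both by analyticity. If some index had a multiplier identically zero, we would drop it and shrink $K$ accordingly. This yields $\bw$ with positive entries for small $\tau$, which is~\eqref{eq:grazing_multipliers}.

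Part~(ii) is proved verbatim as Proposition~\ref{prop:continuation}. The left-limit multiplier vector is feasible for the NNLS problem at $\bx_{*}$, and it is optimal exactly when no $i \in \eqset(\bx_{*})$ has a positive derivative. If it is not optimal, the strict inequality~\eqref{eq:scc_discontinuity} follows. For part~(iii), linear independence of $\matr{A}_{J\cup G}$ makes the multipliers unique and affine-analytic. Positivity of $\bw$ then forces $J(\bx(\tau)) = K$. Uniqueness of $K$ follows because two admissible sets $K \neq K'$ would yield curves with different active constraint sets. Both curves would have to equal the unique slow solution, and since each active set is pinned by unique multipliers, this is a contradiction.

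The main obstacle is the first step: ruling out infinitely many switches of the equicorrelation set as $\tau \to 0^{+}$. Piecewise analyticity only holds after we know the pieces exist, so the argument is circular. To break the circularity, we would first try a power-series construction. We determine $K$ order by order from the Taylor coefficients of $e^{-\matr{P}^{\perp}_{K'}\matr{Q}\tau}\bd(\bxo)$, via the iterated cone projections described before the theorem. This construction stabilizes after at most $|G|$ refinements, and it produces a candidate that can then be checked directly.
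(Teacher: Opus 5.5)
There is a genuine gap, and it is the one you flag yourself: nothing in the proposal proves $\tau^{K}_{*}(\bxo) > 0$, that is, that a single analytic piece starts at $\bxo$.

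\textbf{The primary route is circular.} It takes $K$ from an interval $(0,\varepsilon)$ on which $\eqset(\bx(\tau))$ is constant, but the existence of such an interval is what has to be shown. Theorem~\ref{thm:MSP}(i) gives only $\dot{\bx}(\tau) \to \bd(\bxo)$; it controls the direction, not the face. So it does not exclude grazing constraints touching and leaving infinitely often as $\tau \downarrow 0$. Analyticity is available only after one face is known to persist. The sentence ``the true solution realizes this piece near $0^{+}$'' therefore assumes the conclusion.

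\textbf{The fallback is the right route but is not carried out.} The iterated cone projections are what the paper uses, but ``a candidate that can then be checked directly'' hides the entire proof. Two ingredients are missing.
\begin{itemize}
\item \emph{A final-set identity.} Write $\bz_{0} \coloneqq \bd(\bxo)$, let $L_{r} \supseteq J(\bxo)$ be the running set of committed constraints, and let $U_{r}$ be the grazing constraints still undecided. Stage $r$ projects $-\matr{Q}\bz_{r-1}$ onto $\{\bs \in \ker\matr{A}_{L_{r}} : [\matr{A}\bs]_{U_{r}} \leqslant \boldsymbol{0}\}$, not onto $\ker\matr{A}_{K}$. To connect $\bz_{r}$ with the curve $\bxo + \matr{M}_{K}(\tau)\bd(\bxo)$ built from the final set, you must show that every stage-$r$ multiplier is supported in $L_{r+1} \subseteq K$ and that $\bz_{r} \in \ker\matr{A}_{K}$. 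Then $\bz_{r} = (-\matr{P}^{\perp}_{K}\matr{Q})^{r}\bd(\bxo)$, which is exactly the $r$-th Taylor coefficient of the velocity along that curve.
\item \emph{The order-by-order sign analysis this identity yields.} A constraint dropped at stage $r$ has value $\tau^{r+1}[\matr{A}\bz_{r}]_{i}/(r+1)! + O(\tau^{r+2}) < 0$. A constraint never decided has $[\matr{A}\bz_{r}]_{i} = 0$ for every $r$, so it rides identically and is left out of $K$. A constraint committed at stage $r$ gets weight $\tau^{r}\eta^{(r)}_{i}/r! + O(\tau^{r+1})$. Here the stage multiplier $\bm{\eta}^{(r)}$ is chosen, using convexity of the multiplier set, to be positive on every index committed at that stage.
\end{itemize}

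\textbf{Your substitute for the positive multipliers fails when the rows of $\matr{A}_{K}$ are dependent,} a case the theorem must cover. Carath\'eodory gives, at each $\tau$, a nonnegative representation whose support may depend on $\tau$. It does not give one vector $\bw$ positive on all of $K$. There is also no canonical analytic multiplier to which ``eventually nonzero by analyticity'' could apply. The paper instead sets
\[
\bw(\tau) = [\bp(\bxo)]_{K} + \sum_{r \leqslant \bar{s}}(\tau^{r}/r!)\bm{\eta}^{(r)}_{K} + (\matr{A}_{K}^{\top})^{\dagger}\bm{\rho}(\tau),
\]
where $\bar{s}$ is the last stage that commits or drops a constraint and $\normtwo{\bm{\rho}(\tau)} = O(\tau^{\bar{s}+1})$. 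The leading terms then decide the signs.

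\textbf{Part~(ii) is not verbatim Proposition~\ref{prop:continuation}.} There, $\hat{\bp}$ is the limit of $\bw(\tau)$, which exists only because the rows of $\matr{A}_{J(\bxo)}$ are independent. Here you must obtain $\hat{\bp}$ from the closedness of the cone generated by the rows of $K$ and of the riding constraints. You must also check that the KKT sign condition holds with equality on the riding constraints.
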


We denote by $J^{+}_{*}(\bxo)$ the set $K$ constructed in Appendix~\ref{app:thm:scc_local}, and write $\matr{M}_{*}$, $\tau^{+}_{\mathrm{feas}}(\bxo)$, $\tau^{+}_{J}(\bxo)$, and $\tau^{+}_{*}(\bxo)$ for the objects built from it. By part~(iii), $K$ is unique when the rows of $\matr{A}_{J(\bxo) \cup G(\bxo)}$ are linearly independent. When they are linearly dependent, $K$ need not be unique, but the slow solution is. Under~\eqref{eq:SCC}, $G(\bxo) = \emptyset$, so $J^{+}_{*}(\bxo) = J(\bxo)$. For $\tau \in (0,\tau_{\mathrm{feas}}(\bxo))$, the arguments of Steps~2b and~2c of the proof of Theorem~\ref{thm:local_solution} give $\eqset(\by) = J(\bxo)$ and the normal part $\matr{A}_{J(\bxo)}^{\top}\bw(\tau)$, so~\eqref{eq:tau_J_cone} reduces to the sign test in~\eqref{eq:tau_J}. Hence $\tau^{+}_{*}(\bxo) = \tau_{*}(\bxo)$, and Theorem~\ref{thm:scc_local} reduces to Theorem~\ref{thm:local_solution} and Proposition~\ref{prop:continuation}.

\section{Global solution of the slow system}
\label{sec:results_II}
\subsection{Global dynamics of the slow system}
\label{subsec:global_sol}
Section~\ref{sec:results_I} solved the slow system~\eqref{eq:slow_ode} explicitly on an interval $[0,\tau^{+}_{*}(\bxo))$. When $\tau^{+}_{*}(\bxo)$ is finite, continuity extends the solution to $\tau = \tau^{+}_{*}(\bxo)$. In this case, since $\bx(\tau^{+}_{*}(\bxo)) \in C$ and the slow system is autonomous, Theorem~\ref{thm:scc_local} applies again at the point $\bx(\tau^{+}_{*}(\bxo))$ and adds a second piece.

We now repeat these arguments to construct the global solution. Letting $\bx^{(0)} = \bxo \in C$, $\tau^{(0)} = 0$, and $k=0$, we iterate, with $\bp^{(k)}$ chosen so that~\eqref{eq:LI} holds at $\bx^{(k)}$,
\begin{equation}\label{eq:global_iteration}
\begin{alignedat}{1}
\bp^{(k)} &\in \argmin_{\substack{\bp_{\eqset(\bx^{(k)})}\geqslant \boldsymbol{0} \\ \bp_{\eqset(\bx^{(k)})^{\mathsf{C}}} = \boldsymbol{0}}} \frac{1}{2}\normsq{\Atop\bp + \matr{Q}\bx^{(k)} + \bc}, \quad \bd^{(k)} = -\Atop\bp^{(k)} - \matr{Q}\bx^{(k)} - \bc \\
J_{*}^{(k)} &= J^{+}_{*}(\bx^{(k)}), \quad \tau_{*}^{(k)} = \tau_{*}^{+}(\bx^{(k)}) \\
\bx^{(k+1)} &= \bx^{(k)} + \matr{M}_{J_{*}^{(k)}}(\tau_{*}^{(k)})\bd^{(k)}, \quad \tau^{(k+1)} = \tau^{(k)} + \tau_{*}^{(k)}.
\end{alignedat}
\end{equation}
The iteration stops at the first $k$ with $\tau_{*}^{(k)} = +\infty$. The slow solution is then
\begin{equation}\label{eq:global_solution}
\bx(\tau) = \bx^{(k)} + \matr{M}_{J_{*}^{(k)}}(\tau-\tau^{(k)})\bd^{(k)} \ \text{over} \ \tau \in [\tau^{(k)},\tau^{(k+1)}).
\end{equation}
We call the times $\tau^{(k)}$ with $k \geqslant 1$ the \emph{breakpoints} of the slow solution, the interval $[\tau^{(k)},\tau^{(k+1)})$ its $k$-th \emph{piece}, and $J_{*}^{(k)}$ the active set of that piece. Formula~\eqref{eq:global_solution} extends the slow solution to the interval $[0,\tau^{(k)})$ for every $k\in \mathbb{N}$ with $\tau^{(k)} < +\infty$.

\subsection{Finite termination of the slow system}
\label{subsec:finite_general}
What happens to the global dynamics as $k \to +\infty$? We show below that the iteration~\eqref{eq:global_iteration} terminates after finitely many steps, i.e., there exists $K \in \mathbb{N}$ such that $\tau_{*}^{(K)} = +\infty$. Moreover, the final piece either yields an optimal solution of problem~\eqref{eq:intro_main_problem} or certifies that the problem is unbounded.

\begin{thm}
\label{thm:finite_general}
Let $\bxo \in C$ and set $\bx^{(0)} = \bxo$. Then there exists $K \in \N$ such that $\tau_{*}^{(K)} = +\infty$ and
\[
\bd^{s} \coloneqq \lim_{\tau \to +\infty}\bd(\bx(\tau)) = \proj_{\ker\matr{Q}\,\cap\,\ker\matr{A}_{J_{*}^{(K)}}}(\bd^{(K)}).
\]
Moreover:
\begin{itemize}
\item[(i)] If $\bd^{s} = \boldsymbol{0}$, then $\lim_{\tau \to +\infty}\bx(\tau) = \bx^{(K)} + (\matr{P}^{\perp}_{J_{*}^{(K)}}\matr{Q}\matr{P}^{\perp}_{J_{*}^{(K)}})^{\dagger}\bd^{(K)}$ is an optimal solution of problem~\eqref{eq:intro_main_problem}, where the dagger denotes the pseudo-inverse.
\item[(ii)] If $\bd^{s} \neq \boldsymbol{0}$, then $\matr{A}\bd^{s} \leqslant \boldsymbol{0}$, $\matr{Q}\bd^{s} = \boldsymbol{0}$, and $\langle\bc,\bd^{s}\rangle = -\normsq{\bd^{s}} < 0$. In particular, problem~\eqref{eq:intro_main_problem} is unbounded below along $\bd^{s}$.
\end{itemize}
\end{thm}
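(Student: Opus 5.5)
The proof has three stages: finite termination of~\eqref{eq:global_iteration}, computation of the limit along the final piece, and identification of that limit as optimal or as an unbounded direction.

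\textbf{Finite termination.} By Theorem~\ref{thm:MSP}(i), $\tau\mapsto\normtwo{\dot{\bx}(\tau)}$ is nonincreasing. I would argue by contradiction that the iteration cannot produce infinitely many finite breakpoints $\tau^{(k)}$. There are two cases.

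First, suppose $\tau^{(k)}\to\bar\tau<+\infty$ (accumulation). On each piece, the pair (equicorrelation set on the open piece, active set $J_*^{(k)}$) belongs to a finite list of index sets. By Lipschitz continuity, $\bx(\tau^{(k)})\to\bar{\bx}=\bx(\bar\tau)$. Applying Theorem~\ref{thm:scc_local} at $\bar{\bx}$ gives only a forward statement, so I would also need a left-sided analogue. Near $\bar{\bx}$, the constraints inactive at $\bar{\bx}$ stay inactive. The slow solution is therefore governed by the finitely many faces through $\bar{\bx}$, and the dynamics reduce to a conical (homogeneous) problem after translating to $\bar{\bx}$ and linearizing $f$ around $\bar{\bx}$.

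Second, suppose $\tau^{(k)}\to+\infty$. Then I would show the sequence of active sets eventually repeats, and that repetition forces an infinite number of strict norm drops. At each breakpoint where the derivative is discontinuous, \eqref{eq:discontinuity}/\eqref{eq:scc_discontinuity} gives a strict norm decrease. At continuous breakpoints, the piece formula changes only through the active set. The quantitative tool I would use: along a piece with active set $K$, $\bd(\bx(\tau))=e^{-\matr{P}^\perp_K\matr{Q}\tau}\bd$, and the ``value'' $V(\bx(\tau))$ plus the norm $\normtwo{\bd}$ evolve analytically. Each face $K$ can be revisited only with strictly smaller $\normtwo{\bd}$ or $V$. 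I would then show that the set of attainable values of $\normtwo{\bd}$ at entry into a face is finite. This would come from a face-wise decomposition: on face $K$, $\bd$ splits into a $\ker(\matr{P}^\perp_K\matr{Q}\matr{P}^\perp_K)$ component, which is constant, plus a decaying part. \textbf{This finiteness argument is the step I expect to be the main obstacle.}

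\textbf{Limit on the final piece.} Once $\tau_*^{(K)}=+\infty$, write $K\!=\!J_*^{(K)}$ and $\matr{B}=\matr{P}^\perp_K\matr{Q}\matr{P}^\perp_K$. Since $\bd^{(K)}\in\ker\matr{A}_K$, which is invariant under $\matr{P}^\perp_K\matr{Q}$, we have $e^{-\matr{P}^\perp_K\matr{Q}\tau}\bd^{(K)}=e^{-\matr{B}\tau}\bd^{(K)}$. I would expand in eigenvectors of $\matr{B}$, as in Section~\ref{subsec:computations_local}. The components with $\lambda_k>0$ decay, so $\bd^s$ is the projection onto $\ker\matr{B}\cap\ker\matr{A}_K$. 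For $\bv\in\ker\matr{A}_K$, $\matr{B}\bv=0$ iff $\langle\bv,\matr{Q}\bv\rangle=0$ iff $\matr{Q}\bv=0$, since $\matr{Q}$ is SPSD. This gives $\ker\matr{B}\cap\ker\matr{A}_K=\ker\matr{Q}\cap\ker\matr{A}_K$.

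\textbf{Case (i): $\bd^s=\boldsymbol{0}$.} Then $\bd^{(K)}\in\operatorname{range}\matr{B}$, and integrating gives $\matr{M}_K(\tau)\bd^{(K)}\to\matr{B}^\dagger\bd^{(K)}$, which is the claimed limit. Optimality would follow from Theorem~\ref{thm:MSP}(ii) if a minimizer exists. More directly, the right derivative tends to $0$; I would take limits in the Moreau decomposition~\eqref{eq:grazing_multipliers}, using closedness of the finitely generated cone $N_C$ at the limit, to get $\boldsymbol{0}\in\partial V(\bx^\infty)$.

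\textbf{Case (ii): $\bd^s\neq\boldsymbol{0}$.} Then $\matr{Q}\bd^s=\boldsymbol{0}$ by the kernel identity above. Next, $\matr{A}\bd^s\le\boldsymbol{0}$, because the trajectory stays feasible, grows like $\tau\bd^s$, and $\bx(\tau)/\tau\to\bd^s$. Finally, I would take the limit in~\eqref{eq:nnls_oc_3}:
\[
\normsq{\bd(\bx(\tau))}+\langle\matr{Q}\bx(\tau)+\bc,\bd(\bx(\tau))\rangle=0.
\]
The term $\langle\matr{Q}\bx(\tau),\bd^s\rangle$ vanishes, and the cross terms involving the decaying modes need care: I would use $\matr{Q}$-symmetry to write $\langle\matr{Q}\bx,\bd\rangle=\langle\bx,\matr{Q}\bd\rangle$, which tends to $0$ since $\matr{Q}\bd(\bx(\tau))$ decays exponentially while $\bx$ grows only linearly. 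This yields $\langle\bc,\bd^s\rangle=-\normsq{\bd^s}<0$, and hence the problem is unbounded below along $\bd^s$.
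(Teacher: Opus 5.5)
\textbf{The gap is the finite-termination step.} Your treatment of the final piece and of cases (i) and (ii) is sound once finite termination is known. The eigen-decomposition of $\matr{P}^{\perp}_{K}\matr{Q}\matr{P}^{\perp}_{K}$ on $\ker\matr{A}_{K}$ gives both the projection formula and the limit point. In (ii), $\bx(\tau)/\tau \to \bd^{s}$ and the limit of~\eqref{eq:nnls_oc_3} work. For optimality in (i), take limits in the Moreau decomposition $-\nabla_{\bx}f(\bx(\tau)) - \bd(\bx(\tau)) \in N_{C}(\bx(\tau)) \subseteq N_{C}(\bx^{\infty})$, valid for large $\tau$. Do not use~\eqref{eq:grazing_multipliers}, which is only asserted for small $\tau$.

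Finite termination, however, is the real content of the theorem, and neither of your two cases gives an argument.

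\emph{Accumulation at infinity.} The norm-drop mechanism does not exist in general. By Proposition~\ref{prop:continuation} and Theorem~\ref{thm:scc_local}(ii), a breakpoint where a multiplier reaches zero, or where a constraint becomes active tangentially, leaves $\dot{\bx}$ continuous, so there is no drop at all. Even infinitely many strict drops contradict nothing unless they are bounded below. That lower bound is what your finiteness claim would supply, but the claim is false. Even for $\matr{Q} = t\matr{I}_{n\times n}$ with $t>0$, $\normtwo{\bd}$ at entry depends on the entry point. The only finite-valued monotone quantity in the paper is $\normtwo{\Atop\bp^{(k)}}$ in Theorem~\ref{thm:finite_tI}(ii), and it depends on $J$ alone only because $\matr{A}_{J}\matr{Q}\bx = t\bb_{J}$ on the face. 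For general $\matr{Q}$, the direction and multipliers at entry vary continuously over the face, so no finite list of values exists.

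\emph{Accumulation at a finite time $\bar{\tau}$.} Linearizing $f$ at $\bar{\bx}$ discards the term $-\matr{S}_{J}(\bx - \bar{\bx})$ in the velocity on a $J$-piece. Near an accumulation point, this is exactly the term that moves the event values (slacks and multipliers) off zero. Finite termination of the conical, linear problem therefore says nothing about the true trajectory. Nor is there a left-sided analogue of Theorem~\ref{thm:scc_local} to invoke, since the past of the slow solution is not determined by $\bar{\bx}$.

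\textbf{The missing idea.} Turn every breakpoint into a zero of one of finitely many affine \emph{event values}: a constraint value $[\matr{A}\bx]_{i} - b_{i}$, or a least-squares multiplier $[\bp_{B}(\bx)]_{i}$ over a row basis $B$ (Lemma~\ref{lem:pieces_flow}(ii)). Then show that each recurring event value, evaluated along the trajectory, eventually either vanishes identically or has no zero (Lemma~\ref{lem:events_flow}). This needs an asymptotic expansion of the trajectory whose coefficients do not depend on which recurring face it lies on.

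\emph{At a finite time $\bar{\tau}$}, the paper first uses the nested-cone estimate $\normsq{\bd^{(k+1)} - \bd^{-}} \leqslant \normsq{\bd^{(k)}} - \normsq{\bd^{(k+1)}}$, with $\bd^{-}$ the left limit of the velocity. This shows that all recurring faces share one limit velocity $\bar{\bv}$. It then expands $\bx(\tau) - \bar{\bx} + (\bar{\tau} - \tau)\bar{\bv}$ in powers of $\bar{\tau} - \tau$, with face-independent coefficients $\bu^{[j]} = \matr{S}_{J}\bu^{[j-1]}$ (Lemmas~\ref{lem:driven_shared}--\ref{lem:sign_shared}).

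\emph{At infinity}, it first identifies $\bd^{s} = \proj_{\mathcal{R}}(-\bc)$, with $\mathcal{R} = \{\bv : \matr{A}\bv \leqslant \boldsymbol{0},\ \matr{Q}\bv = \boldsymbol{0}\}$, before knowing termination. This uses an energy estimate and the Frank--Wolfe theorem. It then shows that $\bx(\tau) - \tau\bd^{s}$ is the slow solution of a modified problem that attains its minimum (Proposition~\ref{prop:limit_direction}). Finally, it expands the error along common eigenvectors of the compressed Hessians of all recurring faces, obtained recursively by a Rayleigh-quotient argument (Lemma~\ref{lem:direction_flow}).

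Your plan proves termination first and identifies $\bd^{s}$ only on the final piece. Without $\bd^{s}$ in hand you have no reference motion to expand around, so that ordering must be reversed as well.
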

\begin{proof}
The idea of the proof is that the constraint values $[\matr{A}\bx(\tau)]_{i} - b_{i}$ and the multipliers that decide when a constraint leaves the active set eventually keep a fixed sign. If there were infinitely many breakpoints, the trajectory would approach a motion with constant velocity, and its deviation from this motion would be a sum of fixed vectors with coefficients of decreasing size. The constraint values and the multipliers are linear in this deviation, so the first term that contributes to one of them determines its sign unless it vanishes identically. No constraint would then enter or leave the active set after some breakpoint, leading to a contradiction. The formulas for $\bd^{s}$ and the limit in (i) follow from the final piece~\eqref{eq:global_solution}. The proof is highly technical and is relegated to Appendix~\ref{app:finite_general}.
\end{proof}

\begin{rem}[The symmetric positive definite case]
\label{rem:finite_general_spd}
If $\matr{Q}$ is SPD, then $\matr{Q}\bd^{s} = \boldsymbol{0}$ forces $\bd^{s} = \boldsymbol{0}$, so $\bx(\tau)$ converges to the optimal solution $\bxsol$, unique by strict convexity.
\end{rem}

\subsection{An exact algorithm for linear programs and projections onto polyhedra}
\label{subsec:asymptotic_finite_term}
We consider here the special case where $\matr{Q} = t\matr{I}_{n\times n}$, corresponding to linear programs ($t = 0$) and projections onto a polyhedron $(t>0)$. In this case, the slow solution can be computed exactly and no active set is visited more than once. Each piece of the slow solution is then a ray, as in Section~\ref{subsec:computations_local}, and the following iteration computes these rays. Let $\bx^{(0)} = \bxo \in C$. For $k = 0,1,2,\dots$, choose the solution $\bp^{(k)} = \bp(\bx^{(k)})$ of the NNLS problem~\eqref{eq:nnls_problem} so that~\eqref{eq:LI} holds at $\bx^{(k)}$. Let $\bd^{(k)} = \bd(\bx^{(k)})$ and let $\Delta^{(k)} = \Delta_{\mathrm{feas}}(\bx^{(k)})$ be given by~\eqref{eq:delta_feas}, and set
\begin{equation}\label{eq:tI_iteration}
\bx^{(k+1)} = \bx^{(k)} + \Delta^{(k)}\bd^{(k)} \quad \text{as long as } \Delta^{(k)} < 1/t.
\end{equation}
We write $K$ for the first index with $\Delta^{(K)} \geqslant 1/t$, which we prove exists.

\begin{thm}
\label{thm:finite_tI}
Let $\matr{Q} = t\matr{I}_{n\times n}$ with $t \geqslant 0$, $\tau^{(0)} = 0$, and let $\bx^{(0)},\bx^{(1)},\dots$ be the iterates of~\eqref{eq:tI_iteration}.
\begin{itemize}
\item[(i)] The slow solution is
\[
\bx(\tau) = \bx^{(k)} + \begin{cases}
  (\tau - \tau^{(k)})\bd^{(k)} &\text{if $t=0$}, \\
  \bigl(1-e^{-t(\tau - \tau^{(k)})}\bigr)\bd^{(k)}/t &\text{if $t>0$},
 \end{cases}
\quad \text{for } \tau \in [\tau^{(k)},\tau^{(k+1)}) \text{ and } k = 0,\dots,K.
\]
For $k < K$, the breakpoint is $\tau^{(k+1)} = \tau^{(k)} + \Delta^{(k)}$ if $t = 0$ and $\tau^{(k+1)} = \tau^{(k)} - \log(1 - t\Delta^{(k)})/t$ if $t > 0$. The final piece is $[\tau^{(K)},+\infty)$.
\item[(ii)] The norm $\normtwo{\Atop\bp^{(k)}}$ depends on $J(\bx^{(k)})$ alone and increases strictly with $k$. Hence no active set occurs twice, the iteration~\eqref{eq:tI_iteration} stops, and $K \leqslant 2^{m} - 1$.
\item[(iii)] If $t > 0$, then $\bxsol \coloneqq \bx^{(K)} + \tfrac{1}{t}\bd^{(K)}$ is the unique optimal solution of problem~\eqref{eq:intro_main_problem}. If $t = 0$ and $\bd^{(K)} = \boldsymbol{0}$, then $\bxsol \coloneqq \bx^{(K)}$ is an optimal solution. In both cases, letting $\bp^{s} \coloneqq \bp^{(K)}$, the pair $(\bxsol,\bp^{s})$ satisfies the KKT conditions. If $t = 0$ and $\bd^{(K)} \neq \boldsymbol{0}$, then the ray $\bx^{(K)} + \Delta\bd^{(K)}$, $\Delta \geqslant 0$, is feasible and $V(\bx^{(K)} + \Delta\bd^{(K)}) \to -\infty$ as $\Delta \to +\infty$.
\end{itemize}
\end{thm}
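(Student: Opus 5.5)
The plan is to prove part~(i) directly, without going through Theorem~\ref{thm:scc_local}. The point is that when $\matr{Q}=t\matr{I}_{n\times n}$, grazing constraints cause no trouble. Fix $k$ and write $\bx_0=\bx^{(k)}$, $\bd=\bd^{(k)}$, $\bp=\bp^{(k)}$, $J=J(\bx_0)$, $G=G(\bx_0)$. For $\Delta\in[0,\min(\Delta^{(k)},1/t))$ (with $1/0=+\infty$), set $\by=\bx_0+\Delta\bd$. By~\eqref{eq:d_from_p},
\[
-t\by-\bc=(1-t\Delta)\bd+\Atop\bp .
\]
For $\Delta\in(0,\Delta^{(k)})$, the equicorrelation set of $\by$ is exactly $J\cup G$. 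Indeed, constraints in $\eqset(\bx_0)$ with $[\matr{A}\bd]_i<0$ become inactive, constraints in $J\cup G$ satisfy $[\matr{A}\bd]_i=0$, and inactive constraints stay inactive by~\eqref{eq:delta_feas}. We then check the two pieces of the decomposition:
\begin{itemize}
\item[] $(1-t\Delta)\bd\in T_C(\by)$, because $[\matr{A}\bd]_{J\cup G}=\boldsymbol{0}$;
\item[] $\Atop\bp\in N_C(\by)$, because $\bp\geqslant\boldsymbol{0}$ is supported on $J$;
\item[] the two terms are orthogonal, by~\eqref{eq:nnls_oc_2}.
\end{itemize}
Moreau's decomposition~\eqref{eq:moreau_decomp} then gives $\bd(\by)=(1-t\Delta)\bd$. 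The candidate curve $\bx_0+\Delta(\tau)\bd$ therefore solves the slow system exactly when $\dot\Delta=1-t\Delta$ with $\Delta(0)=0$, that is, $\Delta(\tau)=\tau$ or $\Delta(\tau)=(1-e^{-t\tau})/t$. Uniqueness follows from the monotonicity argument of Step~3 in Theorem~\ref{thm:local_solution}. Solving $\Delta(\tau)=\Delta^{(k)}$ gives the breakpoints, as in~\eqref{eq:exact_bp_time_qid}. If $\Delta^{(k)}\geqslant 1/t$, the curve never hits a new constraint, so this is the final piece $[\tau^{(K)},+\infty)$. Continuity at each breakpoint follows from Theorem~\ref{thm:MSP}.

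For part~(ii), write $\bn^{(k)}=\Atop\bp^{(k)}$. First, $\bn^{(k)}$ depends on $J=J(\bx^{(k)})$ alone. By~\eqref{eq:nnls_explicit},
\[
\bp^{(k)}_J=-(\matr{A}_J\matr{A}_J^{\top})^{-1}\matr{A}_J(t\bx^{(k)}+\bc)=-(\matr{A}_J\matr{A}_J^{\top})^{-1}(t\bb_J+\matr{A}_J\bc),
\]
since $\matr{A}_J\bx^{(k)}=\bb_J$.

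Next, monotonicity. Let $\bg=-t\bx^{(k+1)}-\bc$. By the computation above with $\Delta=\Delta^{(k)}$, and by continuity, $\bg=(1-t\Delta^{(k)})\bd^{(k)}+\bn^{(k)}$. Hence $\langle\bg,\bn^{(k)}\rangle=\normsq{\bn^{(k)}}$, and $\bn^{(k)}\in N_C(\bx^{(k+1)})$. Moreau's decomposition at $\bx^{(k+1)}$ gives
\[
\normsq{\bn^{(k+1)}}=\langle\bg,\bn^{(k+1)}\rangle
\quad\text{and}\quad
\langle\bg-\bn^{(k+1)},\bn^{(k)}\rangle\leqslant 0 .
\]
The second relation yields $\langle\bn^{(k+1)},\bn^{(k)}\rangle\geqslant\normsq{\bn^{(k)}}$. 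Expanding $\normsq{\bn^{(k+1)}}=\normsq{\bn^{(k)}+(\bn^{(k+1)}-\bn^{(k)})}$ then gives
\[
\normsq{\bn^{(k+1)}}-\normsq{\bn^{(k)}}\geqslant\normsq{\bn^{(k+1)}-\bn^{(k)}} .
\]

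For strictness, at $\Delta=\Delta^{(k)}<\infty$ some newly active constraint $i$ has $[\matr{A}\bd^{(k)}]_i>0$, and $1-t\Delta^{(k)}>0$. So $(1-t\Delta^{(k)})\bd^{(k)}\notin T_C(\bx^{(k+1)})$, which means $\bd^{(k+1)}$ differs from it. Since $\bd+\bn=\bg$ is fixed, $\bn$ changes, and the increase is strict. A strictly increasing quantity that depends only on $J$ means no active set repeats. There are at most $2^m$ subsets, so there are at most $2^m$ iterates, and the iteration stops with $K\leqslant 2^m-1$.

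For part~(iii), take first $t>0$ and $\Delta^{(K)}\geqslant 1/t$, and set $\bxsol=\bx^{(K)}+\bd^{(K)}/t$. It is feasible, by the same case analysis of constraints used in part~(i). Since $\matr{A}_J\bd^{(K)}=\boldsymbol{0}$, it satisfies $J\subseteq\eqset(\bxsol)$. The identity above with $\Delta=1/t$ gives $-t\bxsol-\bc=\Atop\bp^{(K)}$, so $\boldsymbol{0}\in\partial_{\bx}V(\bxsol)$ with multiplier $\bp^{(K)}$. Uniqueness follows from strong convexity. For $t=0$ and $\bd^{(K)}=\boldsymbol{0}$, the KKT conditions are immediate from~\eqref{eq:d_from_p}. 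For $t=0$ and $\bd^{(K)}\neq\boldsymbol{0}$, we have $\Delta_{\mathrm{feas}}=+\infty$, so the ray is feasible, and~\eqref{eq:obj_fun_after_descent} gives $V=V(\bx^{(K)})-\Delta\normsq{\bd^{(K)}}\to-\infty$.

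I expect the main obstacle to be the strict increase in part~(ii), specifically identifying the left-limit multiplier as a feasible but non-optimal NNLS point at $\bx^{(k+1)}$, without assuming~\eqref{eq:SCC}.
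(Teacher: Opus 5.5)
Your proposal is correct and follows the paper's proof essentially step for step. Part~(i) is the same direct Moreau-decomposition check along the ray; the paper also bypasses Theorem~\ref{thm:scc_local}. Part~(iii) is identical.

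The only cosmetic difference is in part~(ii). You run the nested-projection argument on the normal-cone side, that is, Lemma~\ref{lem:nested_cones} applied to $\cone\{\matr{A}_i^{\top} : i \in J(\bx^{(k)})\} \subseteq N_C(\bx^{(k+1)})$. This gives the quantitative gap $\normsq{\bn^{(k+1)}} - \normsq{\bn^{(k)}} \geqslant \normsq{\bn^{(k+1)} - \bn^{(k)}}$. The paper instead projects onto the tangent-side cone $\{\bs : [\matr{A}\bs]_{J(\bx^{(k)})} \leqslant \boldsymbol{0}\}$ and uses uniqueness of the nearest point. Both arguments rest on the same fact: a newly active constraint with $[\matr{A}\bd^{(k)}]_i > 0$ forces $\bd^{(k+1)} \neq (1-t\Delta^{(k)})\bd^{(k)}$. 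So the obstacle you anticipated is already handled by your argument without~\eqref{eq:SCC}.
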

\begin{proof}
Throughout, we write $J^{(k)} \coloneqq J(\bx^{(k)})$ and $\bn^{(k)}$ for the normal component of the antigradient, so that $-\nabla_{\bx}f(\bx^{(k)}) = \bd^{(k)} + \bn^{(k)}$ with $\langle\bd^{(k)},\bn^{(k)}\rangle = 0$ by~\eqref{eq:moreau_decomp} and~\eqref{eq:nnls_oc_2}.

\paragraph{Step~1.} Fix $k \geqslant 0$, suppose $\bx(\tau^{(k)}) = \bx^{(k)}$. For $0 < \Delta < \min(\Delta^{(k)},1/t)$, where $1/t$ reads as $+\infty$ when $t = 0$, and put $\bx_{\Delta} \coloneqq \bx^{(k)} + \Delta\bd^{(k)}$. Every constraint $i$ with $[\bp^{(k)}]_{i} > 0$ lies in $\eqset(\bx^{(k)})$ and satisfies $[\matr{A}\bd^{(k)}]_{i} = 0$ by~\eqref{eq:nnls_oc_2}, so it is active at $\bx_{\Delta}$ for every $\Delta \geqslant 0$ and $\bn^{(k)} \in N_{C}(\bx_{\Delta})$. A constraint $i \in \eqset(\bx^{(k)})$ with $[\matr{A}\bd^{(k)}]_{i} < 0$ is inactive at $\bx_{\Delta}$, and so is every $i \notin \eqset(\bx^{(k)})$ by the definition~\eqref{eq:delta_feas} of $\Delta^{(k)}$. Every constraint active at $\bx_{\Delta}$ therefore satisfies $[\matr{A}\bd^{(k)}]_{i} = 0$, and $(1 - t\Delta)\bd^{(k)} \in T_{C}(\bx_{\Delta})$. Since $\nabla_{\bx}f$ is affine with slope $t\matr{I}_{n\times n}$, we have $-\nabla_{\bx}f(\bx_{\Delta}) = (1 - t\Delta)\bd^{(k)} + \bn^{(k)}$, and the two terms are orthogonal. Moreau's decomposition theorem~\eqref{eq:moreau_decomp} gives $\bd(\bx_{\Delta}) = (1 - t\Delta)\bd^{(k)}$.

The function $\tau \mapsto \Delta(\tau)$ of~\eqref{eq:x_Q_equals_t_id} satisfies $\dot{\Delta}(\tau) = e^{-t\tau} = 1 - t\Delta(\tau)$ and increases strictly and continuously from $0$ to $1/t$, with $1/t = +\infty$ when $t = 0$. The curve $\tilde{\bx}(\tau) \coloneqq \bx^{(k)} + \Delta(\tau - \tau^{(k)})\bd^{(k)}$ therefore solves the slow system~\eqref{eq:slow_ode} as long as $\Delta(\tau - \tau^{(k)}) < \Delta^{(k)}$. Since the slow system is autonomous, the monotonicity argument at the end of the proof of Theorem~\ref{thm:local_solution} shows that $\tilde{\bx}$ is the slow solution there. If $\Delta^{(k)} \geqslant 1/t$, then $k = K$ and $\bx = \tilde{\bx}$ on $[\tau^{(K)},+\infty)$. If $\Delta^{(k)} < 1/t$, then $\Delta(\tau - \tau^{(k)})$ reaches $\Delta^{(k)}$ at $\tau = \tau^{(k+1)}$ by~\eqref{eq:exact_bp_time_qid}, and the continuity of $\bx$ gives $\bx(\tau^{(k+1)}) = \bx^{(k+1)}$. In this case $\Delta^{(k)}$ is finite, so the least quotient in~\eqref{eq:delta_feas} is attained at a constraint $i$ with $[\matr{A}\bd^{(k)}]_{i} > 0$, which is active at $\bx^{(k+1)}$. Induction on $k$ proves~(i).

\paragraph{Step~2.} Let $k$ satisfy $\Delta^{(k)} < 1/t$. We first show that $\normtwo{\bn^{(k+1)}} > \normtwo{\bn^{(k)}}$. Since $\nabla_{\bx}f$ is affine with slope $t\matr{I}_{n\times n}$,
\[
-\nabla_{\bx}f(\bx^{(k+1)}) = -\nabla_{\bx}f(\bx^{(k)}) - t\Delta^{(k)}\bd^{(k)} = (1 - t\Delta^{(k)})\bd^{(k)} + \bn^{(k)}.
\]
Put $\bv \coloneqq (1 - t\Delta^{(k)})\bd^{(k)}$ and $\mathcal{K} \coloneqq \{\bs \in \Rn : [\matr{A}\bs]_{i} \leqslant 0 \text{ for every } i \text{ with } [\bp^{(k)}]_{i} > 0\}$. The vector $\bv$ lies in $\mathcal{K}$ by~\eqref{eq:nnls_oc_2}, the vector $\bn^{(k)}$ is a nonnegative combination of the rows that define $\mathcal{K}$ and so satisfies $\langle\bn^{(k)},\bs\rangle \leqslant 0$ for every $\bs \in \mathcal{K}$, and $\langle\bv,\bn^{(k)}\rangle = 0$. Since $-\nabla_{\bx}f(\bx^{(k+1)}) - \bv = \bn^{(k)}$, these three properties and the characterization~\eqref{eq:char_projections} give $\bv = \proj_{\mathcal{K}}(-\nabla_{\bx}f(\bx^{(k+1)}))$. The constraints with $[\bp^{(k)}]_{i} > 0$ are active at $\bx^{(k+1)}$ by Step~1, so $T_{C}(\bx^{(k+1)}) \subseteq \mathcal{K}$. Step~1 also gives a constraint $i$ active at $\bx^{(k+1)}$ with $[\matr{A}\bd^{(k)}]_{i} > 0$. Since $1 - t\Delta^{(k)} > 0$, this constraint satisfies $[\matr{A}\bv]_{i} > 0$, so $\bv \notin T_{C}(\bx^{(k+1)})$ and $\bv \neq \bd^{(k+1)}$. Since $\bd^{(k+1)} \in T_{C}(\bx^{(k+1)}) \subseteq \mathcal{K}$ and $\bv$ is the unique point of $\mathcal{K}$ nearest to $-\nabla_{\bx}f(\bx^{(k+1)})$, we get $\normtwo{\bn^{(k+1)}} = \normtwo{-\nabla_{\bx}f(\bx^{(k+1)}) - \bd^{(k+1)}} > \normtwo{-\nabla_{\bx}f(\bx^{(k+1)}) - \bv} = \normtwo{\bn^{(k)}}$.

Since $\matr{A}_{J^{(k)}}\bx^{(k)} = \bb_{J^{(k)}}$, formula~\eqref{eq:nnls_explicit} with $\nabla_{\bx}f(\bx^{(k)}) = t\bx^{(k)} + \bc$ gives 
\[
\bp^{(k)}_{J^{(k)}} = -(\matr{A}_{J^{(k)}}\matr{A}_{J^{(k)}}^{\top})^{-1}(t\bb_{J^{(k)}} + \matr{A}_{J^{(k)}}\bc).
\]
Hence $\bn^{(k)} = \matr{A}_{J^{(k)}}^{\top}\bp^{(k)}_{J^{(k)}}$ depends on $J^{(k)}$ alone, with $\bn^{(k)} = \boldsymbol{0}$ when $J^{(k)} = \emptyset$. Since $\normtwo{\bn^{(0)}} < \normtwo{\bn^{(1)}} < \cdots$, the active sets $J^{(0)},J^{(1)},\dots$ are distinct subsets of $[m]$. There are $2^{m}$ such subsets, so the iteration stops and $K + 1 \leqslant 2^{m}$.

\paragraph{Step~3.} First, let $t > 0$. Since $1/t \leqslant \Delta^{(K)}$, the point $\bxsol = \bx^{(K)} + \tfrac{1}{t}\bd^{(K)}$ lies in $C$ by the definition~\eqref{eq:delta_feas} of $\Delta^{(K)}$. Since $\bd^{(K)} = -\Atop\bp^{(K)} - t\bx^{(K)} - \bc$, we have
\[
\nabla_{\bx}f(\bxsol) = t\bxsol + \bc = t\bx^{(K)} + \bd^{(K)} + \bc = -\Atop\bp^{(K)},
\]
where $\bp^{(K)} \geqslant \boldsymbol{0}$ is supported on $J^{(K)}$, whose constraints are active at $\bxsol$ because $[\matr{A}\bx^{(K)}]_{J^{(K)}} = \bb_{J^{(K)}}$ and $[\matr{A}\bd^{(K)}]_{J^{(K)}} = \boldsymbol{0}$ by~\eqref{eq:nnls_oc_2_5}. The pair $(\bxsol,\bp^{(K)})$ therefore satisfies the KKT conditions, so $\bxsol$ is an optimal solution, unique by strict convexity, since $t > 0$. Next, let $t = 0$. If $\bd^{(K)} = \boldsymbol{0}$, then $\nabla_{\bx}f(\bx^{(K)}) = \bc = -\Atop\bp^{(K)}$, where $\bp^{(K)} \geqslant \boldsymbol{0}$ is supported on $J^{(K)} \subseteq \eqset(\bx^{(K)})$, so the pair $(\bx^{(K)},\bp^{(K)})$ satisfies the KKT conditions and $\bx^{(K)}$ is an optimal solution. If $\bd^{(K)} \neq \boldsymbol{0}$, then $\Delta^{(K)} = +\infty$, so $[\matr{A}\bd^{(K)}]_{i} \leqslant 0$ for every $i \notin \eqset(\bx^{(K)})$ by~\eqref{eq:delta_feas}, and~\eqref{eq:nnls_oc_1} gives the same over $\eqset(\bx^{(K)})$. Hence $\matr{A}\bd^{(K)} \leqslant \boldsymbol{0}$, and the ray $\bx^{(K)} + \Delta\bd^{(K)}$, $\Delta \geqslant 0$, is feasible. Formula~\eqref{eq:obj_fun_after_descent} with $\matr{Q} = \matr{0}$ then gives $V(\bx^{(K)} + \Delta\bd^{(K)}) = V(\bx^{(K)}) - \Delta\normsq{\bd^{(K)}} \to -\infty$ as $\Delta \to +\infty$.
\end{proof}

Theorem~\ref{thm:finite_tI} yields Algorithm~\ref{alg:quad_prog}. The algorithm computes the iterates $\bx^{(0)},\dots,\bx^{(K)}$ of~\eqref{eq:tI_iteration} in at most $2^{m}$ iterations. At termination, the algorithm returns an optimal solution to problem~\eqref{eq:intro_main_problem} along with its corresponding dual solution or certifies that problem~\eqref{eq:intro_main_problem} is unbounded.


\section{Minimal selection descent via the explicit Euler method}
\label{sec:computations_global}
For SPSD $\matr{Q}$, each piece of the slow solution requires the matrix exponential $e^{-\matr{P}^{\perp}_{J}\matr{Q}\tau}$, its integral $\matr{M}(\tau)$, and the first zero of the constraint values and multipliers along it (Section~\ref{subsec:computations_local}). These computations are generally expensive unless $\matr{Q} = t\matr{I}_{n\times n}$, in which case Algorithm~\ref{alg:quad_prog} can be used. To overcome this issue, we propose to perform gradient descent on the slow system~\eqref{eq:slow_ode} with a stepsize that keeps the iterates feasible and maximizes the descent. This leads to minimal selection descent (MSD), Algorithm~\ref{alg:quad_prog_2}. Unlike projected gradient descent (PGD), which steps along $-(\matr{Q}\bx^{(k)} + \bc)$ and then projects onto $C$, MSD projects $-(\matr{Q}\bx^{(k)} + \bc)$ onto the tangent cone $T_{C}(\bx^{(k)})$ and then steps along the direction $\bd^{(k)}$. This order is cheaper for a general polyhedron $C$. Indeed, the projection onto $C$ is problem~\eqref{eq:intro_main_problem} with $\matr{Q} = \matr{I}_{n\times n}$, so PGD solves a problem of the same class at every step. In contrast, the projection onto $T_{C}(\bx^{(k)})$ is the NNLS problem~\eqref{eq:nnls_problem} on the constraints active at $\bx^{(k)}$, for which efficient algorithms are available (Remark~\ref{rem:computing_nnls}).

\subsection{Description of the MSD algorithm}
\label{subsec:descent_algorithm}
Let $\matr{Q} \neq \matr{0}$ be SPSD.\footnote{The case $\matr{Q} = \boldsymbol{0}$ reduces Algorithm~\ref{alg:quad_prog_2} to Algorithm~\ref{alg:quad_prog} with $t = 0$.} At every iteration, the MSD algorithm seeks to maintain feasibility while minimizing the convex parabola
\[
V(\bx^{(k)} + \Delta\bd^{(k)}) = V(\bx^{(k)}) - \Delta\normsq{\bd^{(k)}} + \frac{\Delta^2}{2}\langle\bd^{(k)},\matr{Q}\bd^{(k)}\rangle.
\]
The parabola is minimized at $\Delta_{*}^{(k)} \coloneqq \normsq{\bd^{(k)}}/\langle\bd^{(k)},\matr{Q}\bd^{(k)}\rangle$ (equal to $+\infty$ when $\bd^{(k)} \in \ker\matr{Q}$), so we set $\Delta^{(k)} = \min(\Delta_{\mathrm{feas}}(\bx^{(k)}), \Delta_{*}^{(k)})$ to ensure $\matr{A}\bx^{(k)} \leqslant \bb$ for every $k \in \mathbb{N}$ and maximize the descent step. The size of the decrease depends on the largest eigenvalue of $\matr{Q}$.

\begin{lem}\label{lem:sufficient_decrease}
Let $\lambda_{\max}$ denote the largest eigenvalue of $\matr{Q}$. If $\bd^{(k)} \neq \boldsymbol{0}$ and $\Delta^{(k)} < +\infty$, then
\begin{equation}\label{eq:sufficient_decrease}
V(\bx^{(k+1)}) \leqslant V(\bx^{(k)}) -\min\bigl(\Delta_{\rm{feas}}(\bx^{(k)}),\,1/\lambda_{\max}\bigr)\normsq{\bd^{(k)}}/2.
\end{equation}
\end{lem}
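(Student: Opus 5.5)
The plan is to start from the exact quadratic identity~\eqref{eq:obj_fun_after_descent}, which holds at $\bx^{(k)}$ for every $\Delta \in [0,\Delta_{\mathrm{feas}}(\bx^{(k)})]$ because the ray stays in $C$. Write $q \coloneqq \langle\bd^{(k)},\matr{Q}\bd^{(k)}\rangle$ and $\Delta_{*}^{(k)} = \normsq{\bd^{(k)}}/q$. Since $\Delta^{(k)} \leqslant \Delta_{\mathrm{feas}}(\bx^{(k)})$, the identity applies at $\Delta = \Delta^{(k)}$ and gives
\[
V(\bx^{(k+1)}) = V(\bx^{(k)}) - \Delta^{(k)}\normsq{\bd^{(k)}} + \tfrac{(\Delta^{(k)})^2}{2}q .
\]
Because $\Delta^{(k)} \leqslant \Delta_{*}^{(k)}$, we have $\Delta^{(k)} q \leqslant \normsq{\bd^{(k)}}$ (trivially when $q=0$). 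Hence
\[
\tfrac{(\Delta^{(k)})^2}{2}q \leqslant \tfrac{\Delta^{(k)}}{2}\normsq{\bd^{(k)}},
\]
which yields $V(\bx^{(k+1)}) \leqslant V(\bx^{(k)}) - \Delta^{(k)}\normsq{\bd^{(k)}}/2$.

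It remains to bound $\Delta^{(k)}$ below by $\min(\Delta_{\mathrm{feas}}(\bx^{(k)}),1/\lambda_{\max})$. If $q>0$, the Rayleigh quotient bound $q \leqslant \lambda_{\max}\normsq{\bd^{(k)}}$ gives $\Delta_{*}^{(k)} \geqslant 1/\lambda_{\max}$. If $q = 0$, then $\Delta_{*}^{(k)} = +\infty \geqslant 1/\lambda_{\max}$. Here $\lambda_{\max}>0$ because $\matr{Q}\neq\matr{0}$ is SPSD; if $\matr{Q} = \matr{0}$, we read $1/\lambda_{\max} = +\infty$. Therefore
\[
\Delta^{(k)} = \min(\Delta_{\mathrm{feas}}(\bx^{(k)}),\Delta_{*}^{(k)}) \geqslant \min(\Delta_{\mathrm{feas}}(\bx^{(k)}),1/\lambda_{\max}).
\]
Substituting this lower bound gives~\eqref{eq:sufficient_decrease}.

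The hypothesis $\Delta^{(k)}<+\infty$ is used only to ensure that $\bx^{(k+1)}$ is well defined and the identity is evaluated at a finite step. The hypothesis $\bd^{(k)}\neq\boldsymbol{0}$ rules out the degenerate $0/0$ convention in $\Delta_{*}^{(k)}$. No step is a real obstacle. The only point needing care is the bookkeeping of the conventions $a/0=+\infty$ for $q=0$ and the case $\Delta_{\mathrm{feas}}=+\infty$, in which the minimum in~\eqref{eq:sufficient_decrease} is $1/\lambda_{\max}$ and is still finite.
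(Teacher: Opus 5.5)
Your proof is correct and follows essentially the paper's argument: both evaluate the parabola~\eqref{eq:obj_fun_after_descent} at $\Delta^{(k)}$ and combine $\Delta^{(k)}\langle\bd^{(k)},\matr{Q}\bd^{(k)}\rangle \leqslant \normsq{\bd^{(k)}}$ with the Rayleigh bound $\langle\bd^{(k)},\matr{Q}\bd^{(k)}\rangle \leqslant \lambda_{\max}\normsq{\bd^{(k)}}$. The only difference is that you merge the paper's two cases (full step $\Delta^{(k)} = \Delta_{*}^{(k)}$ and blocked step $\Delta^{(k)} = \Delta_{\mathrm{feas}}(\bx^{(k)})$) into the single bound $V(\bx^{(k+1)}) \leqslant V(\bx^{(k)}) - \Delta^{(k)}\normsq{\bd^{(k)}}/2$, followed by $\Delta^{(k)} \geqslant \min(\Delta_{\mathrm{feas}}(\bx^{(k)}),1/\lambda_{\max})$; that intermediate form is the one the paper itself reuses later in the proof of Lemma~\ref{lem:finite_length}.
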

\begin{proof}
If $\Delta^{(k)} = \Delta_{*}^{(k)}$, the parabola gives $V(\bx^{(k+1)}) = V(\bx^{(k)}) - \normtwo{\bd^{(k)}}^{4}/(2\langle\bd^{(k)},\matr{Q}\bd^{(k)}\rangle) \leqslant V(\bx^{(k)}) - \normsq{\bd^{(k)}}/(2\lambda_{\max})$, since $\langle\bd^{(k)},\matr{Q}\bd^{(k)}\rangle \leqslant \lambda_{\max}\normsq{\bd^{(k)}}$. If $\Delta^{(k)} = \Delta_{\mathrm{feas}}(\bx^{(k)}) \leqslant \Delta_{*}^{(k)}$, then $\Delta^{(k)}\langle\bd^{(k)},\matr{Q}\bd^{(k)}\rangle \leqslant \normsq{\bd^{(k)}}$, and the parabola gives $V(\bx^{(k+1)}) \leqslant V(\bx^{(k)}) - \Delta_{\mathrm{feas}}(\bx^{(k)})\normsq{\bd^{(k)}}/2$. Inequality~\eqref{eq:sufficient_decrease} follows.
\end{proof}

When $\Delta^{(k)} = +\infty$, the ray $\bx^{(k)} + \Delta\bd^{(k)}$, $\Delta \geqslant 0$, is feasible and $V$ decreases without bound along it, so the algorithm stops and returns $\bd^{(k)}$. The next proposition confirms this and shows that the iteration detects every unbounded problem, either through that exit or through $V(\bx^{(k)}) \to -\infty$. Theorem~\ref{thm:finite_general}(ii) gives the same certificate for the slow solution.

\begin{prop}
\label{prop:descent_dichotomy}
Let $\bx^{(0)} \in C$ and let $\bx^{(k)}$ denote the $k$-th iterate of Algorithm~\ref{alg:quad_prog_2} with $\varepsilon = 0$ and every $\bp^{(k)}$ chosen so that~\eqref{eq:LI} holds at $\bx^{(k)}$. If problem~\eqref{eq:intro_main_problem} is unbounded below, then either $V(\bx^{(k)}) \to -\infty$ or some iterate has $\bd^{(K)} \neq \boldsymbol{0}$, $\Delta^{(K)} = +\infty$, and
\begin{equation}\label{eq:recession_certificate}
\matr{A}\bd^{(K)} \leqslant \boldsymbol{0},
\qquad
\matr{Q}\bd^{(K)} = \boldsymbol{0},
\qquad
\langle\bc,\bd^{(K)}\rangle = -\normsq{\bd^{(K)}} < 0,
\end{equation}
so $V(\bx^{(K)} + \Delta\bd^{(K)}) \to -\infty$ as $\Delta \to +\infty$ along the feasible ray. If problem~\eqref{eq:intro_main_problem} admits an optimal solution, then no iterate with $\bd^{(k)} \neq \boldsymbol{0}$ has $\Delta^{(k)} = +\infty$.
\end{prop}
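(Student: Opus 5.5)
The plan is to work directly from the definition of the stepsize in Algorithm~\ref{alg:quad_prog_2}, splitting into whether the exit $\Delta^{(K)} = +\infty$ ever occurs.

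\textbf{The exit $\Delta^{(K)} = +\infty$.} Suppose some iterate has $\bd^{(K)} \neq \boldsymbol{0}$ and $\Delta^{(K)} = +\infty$. Both terms of the minimum must then be infinite.
\begin{itemize}
\item From $\Delta_{*}^{(K)} = \normsq{\bd^{(K)}}/\langle\bd^{(K)},\matr{Q}\bd^{(K)}\rangle = +\infty$ and the convention $a/0 = +\infty$ for $a>0$, we get $\langle\bd^{(K)},\matr{Q}\bd^{(K)}\rangle = 0$. Since $\matr{Q}$ is SPSD, this forces $\matr{Q}\bd^{(K)} = \boldsymbol{0}$ (write $\matr{Q} = \matr{Q}^{1/2}\matr{Q}^{1/2}$).
\item From $\Delta_{\mathrm{feas}}(\bx^{(K)}) = +\infty$ and~\eqref{eq:delta_feas}, $[\matr{A}\bd^{(K)}]_i \leqslant 0$ for every $i \notin \eqset(\bx^{(K)})$. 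Condition~\eqref{eq:nnls_oc_1} gives the same inequality on $\eqset(\bx^{(K)})$, so $\matr{A}\bd^{(K)} \leqslant \boldsymbol{0}$.
\item Condition~\eqref{eq:nnls_oc_3} gives $\langle \matr{Q}\bx^{(K)} + \bc, \bd^{(K)}\rangle = -\normsq{\bd^{(K)}}$. Since $\langle \matr{Q}\bx^{(K)},\bd^{(K)}\rangle = \langle \bx^{(K)},\matr{Q}\bd^{(K)}\rangle = 0$, this yields $\langle\bc,\bd^{(K)}\rangle = -\normsq{\bd^{(K)}} < 0$.
\end{itemize}
Formula~\eqref{eq:obj_fun_after_descent} then reads $V(\bx^{(K)} + \Delta\bd^{(K)}) = V(\bx^{(K)}) - \Delta\normsq{\bd^{(K)}}$ along a ray that stays feasible because $\matr{A}\bd^{(K)} \leqslant \boldsymbol{0}$. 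Hence $V \to -\infty$ along it, which gives~\eqref{eq:recession_certificate}.

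\textbf{The final claim.} If an optimal solution exists, $V$ is bounded below on $C$. The ray constructed above would then contradict boundedness, so no iterate with $\bd^{(k)} \neq \boldsymbol{0}$ can have $\Delta^{(k)} = +\infty$.

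\textbf{The unbounded case without the exit.} Assume the problem is unbounded below and the exit never occurs.
\begin{itemize}
\item If the algorithm stops with $\bd^{(k)} = \boldsymbol{0}$ (here $\varepsilon = 0$), then $\boldsymbol{0} \in \partial_{\bx}V(\bx^{(k)})$. By the optimality criterion of Section~\ref{subsec:char_di}, $\bx^{(k)}$ is then a minimizer, contradicting unboundedness. So the iteration runs forever with $\bd^{(k)} \neq \boldsymbol{0}$ and finite $\Delta^{(k)}$.
\item By Lemma~\ref{lem:sufficient_decrease}, $V(\bx^{(k)})$ is then strictly decreasing, so it converges either to $-\infty$ (the desired conclusion) or to a finite limit.
\end{itemize}

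\textbf{Main obstacle: ruling out a finite limit.} This is the step I expect to be hardest. The decrease bound only gives $\sum_k \min(\Delta_{\mathrm{feas}}(\bx^{(k)}), 1/\lambda_{\max})\normsq{\bd^{(k)}} < \infty$. When the feasibility steps are tiny, this does not force $\bd^{(k)} \to \boldsymbol{0}$.

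My first idea is to argue that a finite limit is incompatible with unboundedness of the QP. For a convex quadratic, unboundedness below implies the existence of a recession direction $\bd$ with $\matr{A}\bd \leqslant \boldsymbol{0}$, $\matr{Q}\bd = \boldsymbol{0}$, and $\langle\bc,\bd\rangle < 0$ (Frank--Wolfe theorem). However, the iterates need not follow such a direction.

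A more reliable route is to invoke the paper's later convergence analysis of MSD, in particular its finite-termination and active-set arguments. I would argue that along the tail the equicorrelation sets repeat and the steps eventually become full steps, namely $\Delta_{*}^{(k)}$ or a uniformly positive $\Delta_{\mathrm{feas}}$. The decrease is then at least a constant multiple of $\normsq{\bd^{(k)}}$.

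Failing that, I would note that the dichotomy is stated as ``either $V \to -\infty$ or the exit occurs.'' So it suffices to show that a finite limit of $V(\bx^{(k)})$ together with the no-exit assumption yields convergence of the iterates to a KKT point. This is the content of the subsequent convergence theorem, which I would cite. A KKT point is a minimizer, which contradicts unboundedness.
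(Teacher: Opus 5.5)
Your treatment of the exit $\Delta^{(K)}=+\infty$ and of the final claim is correct and matches the paper. The gap is in the unbounded case without the exit, which is the substantive part of the proposition, and none of your three routes closes it. You cannot cite the convergence theorem here. Theorem~\ref{thm:descent_convergence} assumes that problem~\eqref{eq:intro_main_problem} has an optimal solution. Its proof also relies on the first part of this proposition, and its key estimate, Lemma~\ref{lem:hoffman_error_bound}, needs some $\Upsilon_{J}\neq\emptyset$, which fails when the problem is unbounded. The statement that a finite limit of $V(\bx^{(k)})$ forces convergence to a KKT point is exactly what has to be proved. Your second route (the steps eventually become full) restates it without an argument. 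Even granting it, you would still need $\normtwo{\bd^{(k)}}$ bounded away from zero to make the decreases $\tfrac12\min(\Delta^{(k)},1/\lambda_{\max})\normsq{\bd^{(k)}}$ non-summable. Knowing $\bd^{(k)}\neq\boldsymbol{0}$ for each $k$ is not enough.

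Two ingredients are missing.

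\textbf{A uniform bound.} You need $\normtwo{\bd^{(k)}}\geqslant\theta_{0}>0$ for all $k$. The paper obtains it from the residual sets $\mathcal{D}_{J}$. Each is a closed polyhedron, and $-\bd(\bx)\in\mathcal{D}_{J(\bx)}$ at every feasible $\bx$ by~\eqref{eq:perturbed_kkt}. When no optimal solution exists, none of these finitely many sets contains the origin, so $\theta_{0}\coloneqq\min_{J}\mathrm{dist}(\boldsymbol{0},\mathcal{D}_{J})>0$. Alternatively, Lemma~\ref{lem:ds}(ii) gives $\normtwo{\bd(\bx)}\geqslant\normtwo{\bd^{s}}>0$.

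\textbf{Excluding blocked tails.} With this bound and $\inf_{k}V(\bx^{(k)})>-\infty$, Lemma~\ref{lem:tail_extraction} applies. Every full step has $\Delta_{*}^{(k)}\geqslant 1/\lambda_{\max}$, so all but finitely many steps are blocked with summable lengths. This is a blocked tail, and one must then rule it out. That is Theorem~\ref{thm:no_blocked_tail}, the real difficulty. Along such a tail the directions converge to a common $\bar{\bv}$, and the deviation $\bm{\zeta}^{(k)}$ obeys the discrete driven identity~\eqref{eq:driven_discrete}. The resulting sign dichotomy for $\langle\matr{A}_{i}^{\top},\bm{\zeta}^{(k)}\rangle$ shows that each recurring blocking wall blocks only finitely many steps, a contradiction. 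This theorem does not assume an optimal solution. Combining it with the uniform bound is what yields $V(\bx^{(k)})\to-\infty$; without both, that conclusion remains unproved.
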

\begin{proof}
See Appendix~\ref{app:proof_descent_psd}.
\end{proof}

\begin{rem}[Detecting unboundedness in practice]\label{rem:unbounded_detection}
The exit on $\Delta^{(k)} = +\infty$ does not always occur on unbounded problems. Let $\matr{Q} = \operatorname{diag}(1,0)$, $\bc = (0,-1)$, and $C = \{\bx \in \R^{2} : -1 \leqslant x_{1} \leqslant 1\}$, so that the problem is unbounded below along $(0,1)$. From $\bx^{(0)} = (1,0)$, the iterates yield $\bx^{(k)} = ((-1)^{k}, 2k)$ and $\Delta^{(k)} = 2$, so that $V(\bx^{(k)}) \to -\infty$. In practice one must therefore also monitor $V$.
\end{rem}

\subsection{Convergence of the MSD algorithm}
\label{subsec:descent_convergence}
We now assume that problem~\eqref{eq:intro_main_problem} admits an optimal solution. Let $V^{s}$ denote its optimal value. By Lemma~\ref{lem:sufficient_decrease} and Proposition~\ref{prop:descent_dichotomy}, every step of Algorithm~\ref{alg:quad_prog_2} is well-defined and decreases $V$. We show next that the iterates converge to an optimal solution. In particular, when the multipliers at the limit are positive, the iterates identify its active set after finitely many steps and then converge at a linear rate.

\begin{thm}\label{thm:descent_convergence}
Suppose problem~\eqref{eq:intro_main_problem} admits an optimal solution, let $\bx^{(0)} \in C$, and let $\bx^{(k)}$ denote the $k$-th iterate of Algorithm~\ref{alg:quad_prog_2} with $\varepsilon = 0$ and every $\bp^{(k)}$ chosen so that~\eqref{eq:LI} holds at $\bx^{(k)}$.
\begin{itemize}
\item[(i)] Either $\bd^{(K)} = \boldsymbol{0}$ for some $K \in \N$, in which case $\bx^{(K)}$ is an optimal solution and we set $\bxsol \coloneqq \bx^{(K)}$, or the iterates converge to an optimal solution $\bxsol$.

\item[(ii)] Assume $J(\bxsol) = \eqset(\bxsol)$, i.e., the dual solution $\bpsol$ is positive on every constraint active at $\bxsol$. Then there is $K' \in \N$ such that every $k \geqslant K'$ at which the iteration has not stopped satisfies
\begin{equation}\label{eq:descent_identification}
\eqset(\bx^{(k)}) = J(\bx^{(k)}) = \eqset(\bxsol)
\quad \text{and} \quad
\Delta^{(k)} = \Delta_{*}^{(k)}.
\end{equation}
Let $\bar{\matr{S}} \coloneqq \matr{P}^{\perp}_{\eqset(\bxsol)}\matr{Q}\matr{P}^{\perp}_{\eqset(\bxsol)}$ and, when $\bar{\matr{S}} \neq \matr{0}$, let $\bar{\kappa}$ denote the ratio of its largest to its smallest positive eigenvalue. If $\bar{\matr{S}} = \matr{0}$, then $\bd^{(K')} = \boldsymbol{0}$. Otherwise, every such $k$ satisfies
\begin{equation}\label{eq:descent_rate}
V(\bx^{(k+1)}) - V^{s} \;\leqslant\; \left(\frac{\bar{\kappa} - 1}{\bar{\kappa} + 1}\right)^{2}\bigl(V(\bx^{(k)}) - V^{s}\bigr).
\end{equation}
\end{itemize}
\end{thm}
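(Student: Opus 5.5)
For part~(i), we begin with the sufficient decrease of Lemma~\ref{lem:sufficient_decrease}. By Proposition~\ref{prop:descent_dichotomy}, every step with $\bd^{(k)} \neq \boldsymbol{0}$ has $\Delta^{(k)} < +\infty$. Since $V(\bx^{(k)}) \geqslant V^{s}$, summing~\eqref{eq:sufficient_decrease} gives
\[
\sum_{k}\min\bigl(\Delta_{\mathrm{feas}}(\bx^{(k)}),1/\lambda_{\max}\bigr)\normsq{\bd^{(k)}} < +\infty .
\]
If $\bd^{(K)} = \boldsymbol{0}$ for some $K$, then $\boldsymbol{0} \in \partial_{\bx}V(\bx^{(K)})$ and $\bx^{(K)}$ is optimal. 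Otherwise, we will show the iterates are Fejér monotone with respect to the solution set $S$, in the weighted sense $\normtwo{\bx^{(k+1)}-\by}^{2} \leqslant \normtwo{\bx^{(k)}-\by}^{2} + (\text{summable})$ for every $\by \in S$. To this end, we expand $\normtwo{\bx^{(k)} + \Delta^{(k)}\bd^{(k)} - \by}^{2}$ and use $\bd^{(k)} = -\nabla f(\bx^{(k)}) - \bn^{(k)}$ with $\bn^{(k)} \in N_{C}(\bx^{(k)})$. This gives $\langle \bd^{(k)}, \by - \bx^{(k)}\rangle \geqslant \langle -\nabla f(\bx^{(k)}),\by-\bx^{(k)}\rangle \geqslant f(\bx^{(k)}) - f(\by) \geqslant 0$ by convexity, so the cross term has the right sign. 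The quadratic term $(\Delta^{(k)})^{2}\normsq{\bd^{(k)}}$ is summable whenever $\Delta^{(k)}$ is bounded, which holds because $\Delta^{(k)} \leqslant \Delta_{*}^{(k)}$ is bounded on components of $\bd^{(k)}$ outside $\ker\matr{Q}$. The unbounded-$\Delta$ case needs care, since steps along $\ker\matr{Q}$ are limited only by feasibility; there I would instead use $\Delta^{(k)}\normsq{\bd^{(k)}} \leqslant 2(V(\bx^{(k)})-V(\bx^{(k+1)}))$ when $\Delta^{(k)}=\Delta_{\mathrm{feas}}$. Fejér monotonicity gives boundedness, so a cluster point $\bar\bx$ exists.

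The main obstacle is showing that any cluster point is optimal, because the direction map $\bx \mapsto \bd(\bx)$ is not continuous. The equicorrelation set can jump, and $\Delta_{\mathrm{feas}}(\bx^{(k)})$ may tend to zero. I would handle this with a finiteness argument over active patterns. Along a subsequence converging to $\bar\bx$, we may fix $\eqset(\bx^{(k)}) = E$ with $E \subseteq \eqset(\bar\bx)$. If infinitely many steps satisfy $\Delta^{(k)} = \Delta_{*}^{(k)}$ or have $\Delta_{\mathrm{feas}}$ bounded below, then $\bd^{(k)} \to \boldsymbol{0}$ along those steps. By closedness of the graph of $\partial V$, together with the fact that $N_{C}(\bx^{(k)}) \subseteq N_{C}(\bar\bx)$ for $k$ large, we get $\boldsymbol{0} \in \partial V(\bar\bx)$. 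The remaining case is a run of short feasibility steps. Each such step adds a new active constraint, and while $\Delta_{\mathrm{feas}}$ is small the point barely moves. Hence, after at most $m$ consecutive short steps the active set stabilizes near $\bar\bx$, and some step must then satisfy $\Delta^{(k)} = \Delta_{*}^{(k)}$ or have $\Delta_{\mathrm{feas}}$ bounded below. Once optimality of $\bar\bx$ is established, Fejér monotonicity with $\by = \bar\bx$ upgrades the cluster point to a full limit.

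For part~(ii), strict complementarity $J(\bxsol) = \eqset(\bxsol) =: E^{s}$ combined with (LI) lets us use the explicit formula~\eqref{eq:nnls_explicit}. For $\bx^{(k)}$ near $\bxsol$, the constraints outside $E^{s}$ are strictly inactive, so $\eqset(\bx^{(k)}) \subseteq E^{s}$. Once $\eqset(\bx^{(k)}) = E^{s}$, the unconstrained least-squares multipliers on $E^{s}$ are close to $\bpsol_{E^{s}} > \boldsymbol{0}$ and hence solve the NNLS problem, giving $J(\bx^{(k)}) = E^{s}$. Moreover, $\bd^{(k)} = -\matr{P}^{\perp}_{E^{s}}\nabla f(\bx^{(k)}) \in \ker\matr{A}_{E^{s}}$. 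Since $\normtwo{\bd^{(k)}} \to 0$ and $\Delta_{\mathrm{feas}}$ is bounded below by the slack margin of the inactive constraints, we get $\Delta^{(k)} = \Delta_{*}^{(k)}$, and the active set never changes again. To show that $E^{s}$ is actually reached, note that if $\eqset(\bx^{(k)}) \subsetneq E^{s}$, the NNLS at $\bx^{(k)}$ is posed with fewer constraints. In that case the direction has $[\matr{A}\bd^{(k)}]_{i} > 0$ for the missing $i \in E^{s}$, because its multiplier would be positive, while that constraint's slack tends to zero. The feasibility step is therefore the binding one and activates $i$; after at most $|E^{s}|$ such steps, $E^{s}$ is reached.

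With the active set frozen, the iteration becomes exact-line-search steepest descent for the quadratic $f$ restricted to the affine set $\{\matr{A}_{E^{s}}\bx = \bb_{E^{s}}\}$, whose reduced Hessian is $\bar{\matr{S}}$. If $\bar{\matr{S}} = \matr{0}$, then $f$ is affine on this face, so $\bd$ is constant; since $\bd^{(k)} \to \boldsymbol{0}$, it must vanish. Otherwise, we work in the range of $\bar{\matr{S}}$, where $\bd^{(k)}$ lives after subtracting its $\ker\matr{Q}$ component, which must be zero because the problem is bounded. The classical Kantorovich inequality then yields~\eqref{eq:descent_rate} with $\bar\kappa$ the ratio of the extreme positive eigenvalues.
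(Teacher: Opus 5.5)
The case you call a ``run of short feasibility steps'' is the whole difficulty of part~(i), and your argument for it fails. You claim that each short blocked step adds an active constraint, so the equicorrelation set stabilizes after at most $m$ steps. But $\eqset$ is not monotone along the iteration. A blocked step adds its blocking wall to $\eqset(\bx^{(k+1)})$, while every $i \in \eqset(\bx^{(k)})$ with $[\matr{A}\bd^{(k)}]_{i} < 0$ becomes inactive; any constraint of $\eqset(\bx^{(k)}) \setminus J(\bx^{(k)})$ may be of this kind. The NNLS problem at the new point may also shed constraints from the active set. Walls can therefore be hit, released, and hit again indefinitely. This gives infinitely many consecutive blocked steps with $\Delta_{\mathrm{feas}}(\bx^{(k)}) \to 0$, summable total length, and $\normtwo{\bd^{(k)}}$ bounded away from zero, so neither of your two good cases ever occurs. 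This is the jamming phenomenon familiar from feasible-direction methods such as Zoutendijk's. It is precisely the paper's blocked tail (Definition~\ref{def:blocked_tail}), and Lemma~\ref{lem:tail_extraction} shows one is present whenever $\inf_{k}\normtwo{\bd^{(k)}} > 0$. So getting $\liminf_{k}\normtwo{\bd^{(k)}} = 0$, which your cluster-point argument needs, is equivalent to excluding blocked tails. Nothing in your proposal uses a property of the exact tangent-cone projection that could do this. The paper proves it in Theorem~\ref{thm:no_blocked_tail}. Along a putative tail, the directions converge to one vector $\bar{\bv} \in \ker\matr{A}_{J}$ for every recurring active set $J$ (Proposition~\ref{prop:limit_velocity}). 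The deviation $\bm{\zeta}^{(k)}$ from the affine path is then expanded in iterated tail sums (Lemma~\ref{lem:discrete_tail}). As a result, the value of each recurring blocking wall is eventually either identically zero or never zero, so no wall blocks infinitely often.

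Your identification step in part~(ii) (``after at most $|E^{s}|$ such steps, $E^{s}$ is reached'') uses the same counting and fails for the same reason. Also, a missing $i \in E^{s}$ need not have $[\matr{A}\bd^{(k)}]_{i} > 0$ just because its optimal multiplier is positive; only $\normtwo{\bd^{(k)}} \geqslant \delta_{0}$ is available (Lemma~\ref{lem:localization}(ii)). The paper closes this step again with Lemma~\ref{lem:tail_extraction} and Theorem~\ref{thm:no_blocked_tail} (Proposition~\ref{prop:identification}). Your final phase (full steps on the face, then Kantorovich) matches Lemma~\ref{lem:absorbing}.

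Separately, your Fejér estimate does not handle large steps as written. $\Delta_{*}^{(k)}$ need not stay bounded when $\bd^{(k)}$ approaches $\ker\matr{Q}$, and $\Delta^{(k)}\normsq{\bd^{(k)}} \leqslant 2(V(\bx^{(k)}) - V(\bx^{(k+1)}))$ does not make $(\Delta^{(k)})^{2}\normsq{\bd^{(k)}}$ summable. It can be repaired exactly: for optimal $\by$, $\langle\bd^{(k)},\by - \bx^{(k)}\rangle \geqslant V(\bx^{(k)}) - V^{s} \geqslant V(\bx^{(k)}) - V(\bx^{(k+1)}) \geqslant \tfrac12\Delta^{(k)}\normsq{\bd^{(k)}}$. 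The last inequality holds because $\Delta^{(k)} \leqslant \Delta_{*}^{(k)}$, and the chain gives $\normtwo{\bx^{(k+1)} - \by} \leqslant \normtwo{\bx^{(k)} - \by}$. With that fix, Fejér monotonicity plus one optimal cluster point is a valid and simpler substitute for the paper's Hoffman-bound and finite-length argument (Lemmas~\ref{lem:hoffman_error_bound} and~\ref{lem:finite_length}). It only works once blocked tails have been excluded.
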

\begin{proof}
The idea is to show that there are no blocked tails, that is, infinitely many consecutive blocked steps of summable length along which $\normtwo{\bd^{(k)}}$ stays bounded away from zero. Without a blocked tail, Lemma~\ref{lem:sufficient_decrease} and a polyhedral error bound give $V(\bx^{(k)}) \downarrow V^{s}$, and a finite-length argument gives convergence of the iterates. For part~(ii), the condition $J(\bxsol) = \eqset(\bxsol)$ keeps the multipliers positive near $\bxsol$, so once an iterate reaches the face of $\bxsol$, every later step is a steepest descent step along that face, and the Kantorovich inequality gives~\eqref{eq:descent_rate}. The proof is technical and is relegated to Appendix~\ref{app:proofs_descent}.
\end{proof}


\section{Numerical experiments and applications}\label{sec:numerics}
In this section, we test Algorithms~\ref{alg:quad_prog} and~\ref{alg:quad_prog_2} on challenging scientific computing problems. Section~\ref{subsec:num_pos_fem2d} applies Algorithm~\ref{alg:quad_prog} to a positivity-preserving limiter for discontinuous Galerkin schemes, and Section~\ref{subsec:num_pde_oc} applies Algorithm~\ref{alg:quad_prog_2} to high-dimensional optimal control problems with bounds on the control, one for the Poisson equation in 3D and one for the 2D, time-dependent heat equation. All experiments were carried out in MATLAB R2025b on an Apple M3 processor with 16\,GB of RAM, with versions of MOSEK~\cite{aps2019mosek}, Gurobi~\cite{gurobi2026}, and HiGHS~\cite{huangfu2018parallelizing} downloaded on 23 May 2026. We measure times with \texttt{tic} and \texttt{toc}. MOSEK runs the primal and dual feasibility with relative gap tolerances $10^{-10}$, Gurobi uses the barrier convergence tolerance $10^{-14}$ with feasibility and optimality tolerances $10^{-9}$, and HiGHS uses optimality and primal and dual feasibility tolerances $10^{-10}$. Gurobi runs with its default method, which is the barrier method for convex quadratic programs, and HiGHS with its interior-point solver. All our code will be available on GitHub.\footnote{\url{https://github.com/Gabriel-P-Langlois/differential_inclusions_algorithms}}

\subsection{Positivity-preserving limiters for finite element methods}\label{subsec:num_pos_fem2d}
We apply Algorithm~\ref{alg:quad_prog} to a positivity-preserving limiter for numerical schemes~\cite{zhang2011maximum,liu2025optimization}. We consider here the limiter of \citet{liu2025optimization}, who enforce positivity in a semi-implicit discontinuous Galerkin (DG) scheme for Fokker--Planck equations, without losing conservation or high-order accuracy, by a two-stage postprocessing applied after each time step. Stage~1 corrects the cell averages of the DG solution by solving a convex quadratic program. Stage~2 then applies the Zhang--Shu scaling limiter~\cite{zhang2010maximum,zhang2010positivity} in each cell, which removes negative point values without changing the cell averages. Stage~1 is the problem we solve with Algorithm~\ref{alg:quad_prog}.

On a uniform mesh of $n$ square cells, let $\bar{\bm{f}} \in \Rn$ hold the cell averages of the DG solution and let $\mathbf{1} \in \Rn$ denote the vector of ones. Stage~1 solves~\cite[Lemma~1]{liu2025optimization}
\begin{equation}\label{eq:limiter_qp}
\min_{\bx \in \Rn} \frac{1}{2}\normsq{\bx - \bar{\bm{f}}} \ \text{subject to} \ \langle\mathbf{1},\bx\rangle = \langle\mathbf{1},\bar{\bm{f}}\rangle \ \text{and} \ \bx \geqslant l\mathbf{1},
\end{equation}
where $l > 0$ is the lower bound. The equality constraint conserves the total mass, and its coefficients are equal because the cells have equal areas. We pose problem~\eqref{eq:limiter_qp} on the hyperplane $\langle\mathbf{1},\bx\rangle = \langle\mathbf{1},\bar{\bm{f}}\rangle$. Problem~\eqref{eq:limiter_qp} is then problem~\eqref{eq:intro_main_problem} with $\matr{Q} = \matr{I}_{n\times n}$ and $\bc = -\bar{\bm{f}}$, the case $t = 1$ of Section~\ref{subsec:asymptotic_finite_term}.

We start Algorithm~\ref{alg:quad_prog} from the feasible point $\bx^{(0)} = l\mathbf{1} + s\bigl(\max(\bar{\bm{f}},l\mathbf{1}) - l\mathbf{1}\bigr)$ with $s = (\langle\mathbf{1},\bar{\bm{f}}\rangle - nl)/(\langle\mathbf{1},\max(\bar{\bm{f}},l\mathbf{1})\rangle - nl)$. For problem~\eqref{eq:limiter_qp}, the NNLS problem~\eqref{eq:nnls_problem} at each iteration reduces to one scalar equation. From $\bx^{(0)}$, the coordinates above the bound reach it in increasing order of their cell averages, so our implementation computes the breakpoints after sorting these coordinates by cell average.

We compare Algorithm~\ref{alg:quad_prog} with DR, the relaxed Douglas--Rachford splitting method of~\cite[Algorithm~DR]{liu2025optimization}, run with the parameters of~\cite[Remark~2]{liu2025optimization}, which come from~\cite{liu2024simple}. DR stops when $h\normtwo{\by^{(k+1)} - \by^{(k)}} < \epsilon_{\mathrm{tol}}$, where $\by^{(k)}$ is its auxiliary variable and $h$ is the side length of a cell~\cite[Algorithm~DR and Remark~3]{liu2025optimization}. We do not test the interior-point solvers of Section~\ref{subsec:num_pde_oc} on problem~\eqref{eq:limiter_qp} because they cannot use the clipped cell averages as a starting point, which Algorithm~\ref{alg:quad_prog} and DR do, and because problem~\eqref{eq:limiter_qp} has up to $4.2\cdot10^{6}$ variables at every time step. Neither our implementation of Algorithm~\ref{alg:quad_prog} nor that of DR was tuned for speed.

\subsubsection{A synthetic benchmark}
\label{subsubsec:num_pos_synthetic}

Liu et al.~\cite[Section~4.6]{liu2025optimization} time the DR algorithm on problem~\eqref{eq:limiter_qp} with the synthetic data of~\cite[Appendix~A]{liu2024optimization}. On this data, $\bar{\bm{f}}$ holds the values of the function $f$ on $[0,1]^{2}$
\begin{equation}\label{eq:synthetic_data}
f(x,y) = \begin{cases}
-0.25 & \text{if} \ |x - 0.25| \leqslant \delta/4 \ \text{or} \ |x - 0.75| \leqslant \delta/4, \\
\cos^{8}(2\pi x) + 10^{-13} & \text{otherwise},
\end{cases}
\end{equation}
at the points of the grid $\{0, h, \dots, 1 - h\}^{2}$, so that $n = h^{-2}$. We take $l = 10^{-13}$, $\epsilon_{\mathrm{tol}} = 10^{-13}$, and $h = 2^{-7}, 2^{-8}, \dots, 2^{-11}$ from~\cite[Section~4 and Table~3]{liu2025optimization}, and we pose problem~\eqref{eq:limiter_qp} on all $n$ points. Neither paper states the grid points or $\delta$. We take the grid above and $\delta = 0.05$, the fraction of $[0,1]^{2}$ that the two bands cover, which makes between $4.69\%$ and $5.47\%$ of the entries of $\bar{\bm{f}}$ negative, close to the $5\%$ of Liu et al. We solve each instance one hundred times for each solver and we report the mean time of one solve. The KKT conditions of problem~\eqref{eq:limiter_qp} give its minimizer as $\bx_{\mathrm{ref}} = \max(\bar{\bm{f}} + \nu^{*}\mathbf{1}, l\mathbf{1})$, where $\nu^{*} \in \R$ solves $\langle\mathbf{1},\max(\bar{\bm{f}} + \nu\mathbf{1}, l\mathbf{1})\rangle = \langle\mathbf{1},\bar{\bm{f}}\rangle$. We compute $\nu^{*}$ by bisection and measure the accuracy of each solver against $\bx_{\mathrm{ref}}$.

\begin{table}[ht]
\centering
\footnotesize
\setlength{\tabcolsep}{3pt}
\caption{Iterations of one solve, mean time of one solve of problem~\eqref{eq:limiter_qp} over one hundred solves, along with the error $\norminf{\bx - \bx_{\mathrm{ref}}}$ and conservation residual $|\langle\mathbf{1},\bx\rangle - \langle\mathbf{1},\bar{\bm{f}}\rangle|$.}
\label{tab:num_synth}
\begin{tabular}{llcccc}
\toprule
$h$ & solver & iterations & time & $\norminf{\bx - \bx_{\mathrm{ref}}}$ & $|\langle\mathbf{1},\bx\rangle - \langle\mathbf{1},\bar{\bm{f}}\rangle|$ \\
\midrule
$2^{-7}$ & Algorithm~\ref{alg:quad_prog} & $13$ & $1.39\cdot10^{-3}$\,s & $1.55\cdot10^{-15}$ & $7.37\cdot10^{-11}$ \\
 & DR & $17$ & $1.95\cdot10^{-3}$\,s & $4.05\cdot10^{-14}$ & $4.22\cdot10^{-10}$ \\
$2^{-8}$ & Algorithm~\ref{alg:quad_prog} & $25$ & $2.08\cdot10^{-3}$\,s & $6.44\cdot10^{-15}$ & $2.28\cdot10^{-9}$ \\
 & DR & $17$ & $2.04\cdot10^{-3}$\,s & $9.54\cdot10^{-14}$ & $4.33\cdot10^{-10}$ \\
$2^{-9}$ & Algorithm~\ref{alg:quad_prog} & $48$ & $5.99\cdot10^{-3}$\,s & $3.55\cdot10^{-15}$ & $2.76\cdot10^{-10}$ \\
 & DR & $17$ & $6.13\cdot10^{-3}$\,s & $1.78\cdot10^{-14}$ & $1.94\cdot10^{-9}$ \\
$2^{-10}$ & Algorithm~\ref{alg:quad_prog} & $95$ & $2.49\cdot10^{-2}$\,s & $6.01\cdot10^{-14}$ & $3.43\cdot10^{-9}$ \\
 & DR & $17$ & $3.02\cdot10^{-2}$\,s & $5.43\cdot10^{-14}$ & $7.28\cdot10^{-9}$ \\
$2^{-11}$ & Algorithm~\ref{alg:quad_prog} & $190$ & $1.08\cdot10^{-1}$\,s & $7.27\cdot10^{-14}$ & $2.56\cdot10^{-9}$ \\
 & DR & $17$ & $1.35\cdot10^{-1}$\,s & $5.68\cdot10^{-14}$ & $3.80\cdot10^{-8}$ \\
\bottomrule
\end{tabular}
\end{table}

Table~\ref{tab:num_synth} reports the results. On every instance, both solvers return points that satisfy the lower bound, lie within $10^{-13}$ of $\bx_{\mathrm{ref}}$ in the maximum norm, and conserve the total mass to within $4\cdot10^{-8}$. Algorithm~\ref{alg:quad_prog} is faster than DR on four of the five grids, by at most a factor of $1.4$, and slower by less than $3\%$ for $h = 2^{-8}$.

\subsubsection{A discontinuous Galerkin scheme for a Fokker--Planck equation}
\label{subsubsec:num_pos_dg}
We now solve problem~\eqref{eq:limiter_qp} at every time step of a DG computation, in which Algorithm~\ref{alg:quad_prog} and DR each postprocess their own DG solution. Stage~1 runs at every time step, up to $30\,400$ steps per computation, so this test measures how the conservation error of each solver builds up over a long computation. We reproduce the accuracy test of~\cite[Section~4.1]{liu2025optimization}, which solves the linear Fokker--Planck equation
\begin{equation}\label{eq:fokker_planck}
\partial_{\tau}f = \Delta f + \nabla\cdot(f\bv) \ \text{in} \ [1,20]\times\Omega, \qquad \Omega = [-10,10]^{2},
\end{equation}
with exact solution
\begin{equation}\label{eq:fokker_planck_exact}
f(\tau,\bv) = \frac{1}{2\pi(1 - e^{-2\tau})}\exp\left(-\frac{\normsq{\bv}}{2(1 - e^{-2\tau})}\right).
\end{equation}

We use the semi-implicit DG method of~\cite[Section~3.1]{liu2025optimization} with polynomials of degree $k = 2$ and $k = 3$ on uniform grids of $64^{2}$, $128^{2}$, and $256^{2}$ square cells. It treats the diffusion term implicitly, with the non-symmetric interior penalty form and penalty $\sigma/h$, $\sigma = 1$, and the convection term explicitly, with a Lax--Friedrichs flux. After each time step, the two-stage postprocessing solves problem~\eqref{eq:limiter_qp} with $l = 10^{-13}$ for the cell averages and then applies the Zhang--Shu scaling limiter. We use the pairs $(h,\Delta\tau)$ of~\cite[Table~1]{liu2025optimization} with the same $h$ in the penalty as in the stopping rule of DR, $0$ for the tolerance in the Zhang--Shu limiter, $\Delta\tau = 19/119$ for $k = 3$ on the grid of $64^{2}$ cells, and problem~\eqref{eq:limiter_qp} posed on all cells. The basis on each cell is the tensor-product Legendre basis, orthonormal on the reference cell $[-\tfrac12,\tfrac12]^{2}$, and every integral uses the tensor-product Gauss rule with $k+1$ points per direction. The initial data is the $L^{2}$ projection of~\eqref{eq:fokker_planck_exact} at $\tau = 1$, followed by the Zhang--Shu limiter. On the boundary faces, the scheme has no interior penalty terms, as in~\cite[Section~3.1]{liu2025optimization}. We set the convective flux to zero, as the conservation of mass in~\cite[Section~3.1]{liu2025optimization} requires. The scheme thus imposes the zero-flux condition $(\nabla f + f\bv)\cdot\boldsymbol{n} = 0$ weakly, where $\boldsymbol{n}$ is the outer unit normal to $\Omega$. Liu et al.~\cite[Section~4.1]{liu2025optimization} instead impose boundary data from the exact solution~\eqref{eq:fokker_planck_exact}. For $\tau \in [1,20]$, this solution is at most $3.1\cdot10^{-23}$ on the boundary of $\Omega$, and its flux $(\nabla f + f\bv)\cdot\boldsymbol{n}$ there is at most $2.3\cdot10^{-24}$ in absolute value.

\begin{table}[ht]
\centering
\small
\setlength{\tabcolsep}{4pt}
\caption{DG computation of Section~\ref{subsubsec:num_pos_dg} with stage~1 solved by Algorithm~\ref{alg:quad_prog} and DR: drift of the total mass and time of stage~1. The time of stage~1 is summed over the time steps, where the time of each step is the mean over five calls.}
\label{tab:num_dg}
\begin{tabular}{rrllcc}
\toprule
$k$ & $20/h$ & $\Delta\tau$ & solver & mass drift & stage~1 \\
\midrule
2 & 64 & $4\cdot10^{-2}$ & Algorithm~\ref{alg:quad_prog} & $1.25\cdot10^{-13}$ & $1.15\cdot10^{-1}$\,s \\
& & & DR & $1.27\cdot10^{-10}$ & $6.27\cdot10^{-2}$\,s \\
2 & 128 & $10^{-2}$ & Algorithm~\ref{alg:quad_prog} & $5.02\cdot10^{-13}$ & $1.34\cdot10^{0}$\,s \\
& & & DR & $2.20\cdot10^{-10}$ & $1.72\cdot10^{-1}$\,s \\
2 & 256 & $2.5\cdot10^{-3}$ & Algorithm~\ref{alg:quad_prog} & $2.13\cdot10^{-12}$ & $2.69\cdot10^{1}$\,s \\
& & & DR & $2.04\cdot10^{-10}$ & $1.49\cdot10^{0}$\,s \\
\midrule
3 & 64 & $19/119$ & Algorithm~\ref{alg:quad_prog} & $1.29\cdot10^{-14}$ & $1.64\cdot10^{-2}$\,s \\
& & & DR & $1.43\cdot10^{-11}$ & $8.76\cdot10^{-3}$\,s \\
3 & 128 & $10^{-2}$ & Algorithm~\ref{alg:quad_prog} & $5.55\cdot10^{-14}$ & $1.23\cdot10^{0}$\,s \\
& & & DR & $2.10\cdot10^{-10}$ & $1.61\cdot10^{-1}$\,s \\
3 & 256 & $6.25\cdot10^{-4}$ & Algorithm~\ref{alg:quad_prog} & $2.25\cdot10^{-13}$ & $1.12\cdot10^{2}$\,s \\
& & & DR & $2.35\cdot10^{-10}$ & $6.90\cdot10^{0}$\,s \\
\bottomrule
\end{tabular}
\end{table}

Both solvers keep every cell average at or above $l$. With Algorithm~\ref{alg:quad_prog}, the errors $L^{2}_{h}$ and $L^{\infty}_{h}$ at $\tau = 20$ agree with~\cite[Table~1]{liu2025optimization} in all four printed digits for both degrees and all three grids, and the convergence rates computed from them agree with those of~\cite[Table~1]{liu2025optimization} to within $10^{-3}$. With DR, the errors also agree in all four digits, except for $k = 3$ on the grid of $256^{2}$ cells, where DR gives $7.556\cdot10^{-8}$ and $8.083\cdot10^{-8}$ against $7.555\cdot10^{-8}$ and $8.079\cdot10^{-8}$.

Algorithm~\ref{alg:quad_prog} keeps the drift $\max_{j}|\langle f_{h}^{j},1\rangle - \langle f_{h}^{0},1\rangle|$ of the total mass over the time steps $j$ below $2.2\cdot10^{-12}$ in every computation, and DR below $2.4\cdot10^{-10}$ (Table~\ref{tab:num_dg}). Across the six computations, the drift of DR is $96$ to $3800$ times that of Algorithm~\ref{alg:quad_prog}. The exact solution~\eqref{eq:fokker_planck_exact} has total mass $1$ on $\R^{2}$, which sets the scale of the drift. This drift comes from stage~1, because DR returns $\max(\by^{(k+1)},l\mathbf{1})$~\cite[Algorithm~DR]{liu2025optimization}, which satisfies the lower bound exactly and the conservation constraint only approximately. Stage~1 is faster with DR than with Algorithm~\ref{alg:quad_prog}, by factors of $1.8$, $7.8$, and $18.1$ for $k = 2$ on the three grids and of $1.9$, $7.7$, and $16.3$ for $k = 3$. In Section~\ref{subsubsec:num_pos_synthetic}, DR takes $17$ iterations per solve. Here, after the first time step, DR takes one iteration per step on the grids of $128^{2}$ and $256^{2}$ cells, and Algorithm~\ref{alg:quad_prog} takes two or three. On the grids of $64^{2}$ cells, DR takes at most $22$ iterations per step and Algorithm~\ref{alg:quad_prog} at most $63$. Our nonoptimized implementation of Algorithm~\ref{alg:quad_prog}, despite being slower than DR, conserves the total mass $96$ to $3800$ times more accurately than DR.

\subsection{PDE-constrained optimization problems}
\label{subsec:num_pde_oc}
Pearson and Gondzio~\cite{pearson2017fast} solve a challenging PDE-constrained optimization problem (PDECO) with bounds on the control. Other PDECOs with different degrees of complexity have also been considered in \cite{benner2016block, BENNER201626, BENNER2020, Antil2023, Antil2025, Antil2025b}. Here, we consider two of the examples from \cite{pearson2017fast}, in which we seek a state $y$ and a control $u$ on a domain $\Omega$ that minimize
\begin{equation}\label{eq:oc_continuous}
\frac{1}{2}\int_{\Omega}(y - y_{d})^{2} + \frac{\beta}{2}\int_{\Omega}u^{2}
\quad \text{subject to} \quad \mathcal{L}y = u \ \text{in} \ \Omega, \quad y = 0 \ \text{on} \ \partial\Omega, \quad u_{a} \leqslant u \leqslant u_{b},
\end{equation}
where $\mathcal{L}$ is a differential operator, $y_{d}$ is the desired state, $\beta > 0$ weights the control, and $u_{a} < u_{b}$ are the bounds. In Section~\ref{subsubsec:num_oc_poisson3d}, $\mathcal{L} = -\Delta$ on a three-dimensional domain. In Section~\ref{subsubsec:num_oc_heat2d}, $\mathcal{L}$ is the heat operator on a two-dimensional domain, and the integrals in~\eqref{eq:oc_continuous} run over space and time. We run each solver five times on each instance, after one discarded call, and report the mean time of one solve. This time covers all the work the solver needs to return the state and the control, and excludes the accuracy measures below.

We discretize problem~\eqref{eq:oc_continuous} before we optimize, as~\cite{pearson2017fast} do, with $Q1$ finite elements for the state, the control, and the adjoint, and consistent mass matrices~\cite[Sections~4.1 and~5]{pearson2017fast}. With $\by$ and $\bu \in \R^{N}$ the vectors of nodal values and $\by_{d}$ those of the desired state, the discrete problem is
\begin{equation}\label{eq:oc_discrete}
\min_{\by,\bu \in \R^{N}} \ \frac{1}{2}\langle\by - \by_{d},\matr{M}_{y}(\by - \by_{d})\rangle + \frac{\beta}{2}\langle\bu,\matr{M}_{u}\bu\rangle
\quad \text{subject to} \quad \matr{K}\by = \matr{B}\bu \ \ \text{and} \ \ u_{a}\mathbf{1} \leqslant \bu \leqslant u_{b}\mathbf{1},
\end{equation}
where $\matr{K}$ discretizes $\mathcal{L}$ together with the conditions on $y$ and is invertible, $\matr{B}\bu$ discretizes the right-hand side $u$ of the state equation, and the mass matrices $\matr{M}_{y}$ and $\matr{M}_{u}$ discretize the two integrals in~\eqref{eq:oc_continuous} and are SPD. Since the control acts on all of $\Omega$, the matrix $\matr{B}$ is built from the mass matrix of $\Omega$. MOSEK, Gurobi, and HiGHS solve problem~\eqref{eq:oc_discrete} in the $2N$ variables $\bx = (\by,\bu)$, with $N$ equality constraints and $2N$ bounds. Before each call, we divide the objective by $\norminf{\matr{M}_{y}\by_{d}}$, which leaves the minimizer unchanged. We score the point each solver returns, whatever status it reports.

Algorithm~\ref{alg:quad_prog_2} solves a smaller, equivalent problem. Since $\matr{K}$ is invertible, the state equation gives $\by = \matr{G}\bu$ with $\matr{G} = \matr{K}^{-1}\matr{B}$, and eliminating $\by$ from~\eqref{eq:oc_discrete} leaves
\begin{equation}\label{eq:oc_reduced}
\min_{\bu \in \R^{N}} \ \frac{1}{2}\langle\bu,\matr{Q}_{\mathrm{red}}\bu\rangle + \langle\bc_{\mathrm{red}},\bu\rangle
\quad \text{subject to} \quad u_{a}\mathbf{1} \leqslant \bu \leqslant u_{b}\mathbf{1},
\end{equation}
where $\matr{Q}_{\mathrm{red}} = \matr{G}^{\top}\matr{M}_{y}\matr{G} + \beta\matr{M}_{u}$ and $\bc_{\mathrm{red}} = -\matr{G}^{\top}\matr{M}_{y}\by_{d}$. Problem~\eqref{eq:oc_reduced} is problem~\eqref{eq:intro_main_problem} with $\matr{Q} = \matr{Q}_{\mathrm{red}}$, $\bc = \bc_{\mathrm{red}}$, $\matr{A} = [\matr{I};-\matr{I}]$, and $\bb = (u_{b}\mathbf{1};-u_{a}\mathbf{1})$. The matrix $\matr{Q}_{\mathrm{red}}$ is SPD because $\matr{M}_{u}$ is and $\beta > 0$. Since the rows of $\matr{A}$ are signed unit vectors and $u_{a} < u_{b}$, at most one bound on each coordinate $u_{i}$ is active at a feasible point. The rows of every active set are therefore linearly independent, and Theorem~\ref{thm:descent_convergence} applies to problem~\eqref{eq:oc_reduced}. We never form the dense matrix $\matr{Q}_{\mathrm{red}}$. Each iteration of Algorithm~\ref{alg:quad_prog_2} needs one product $\matr{Q}_{\mathrm{red}}\bd^{(k)}$, which gives the exact step length $\Delta^{(k)}_{*}$ and, since $\bu^{(k+1)} = \bu^{(k)} + \Delta^{(k)}\bd^{(k)}$, also updates the gradient. We apply $\matr{Q}_{\mathrm{red}}$ as an operator, at the cost given in each subsection below. On a box, the NNLS problem~\eqref{eq:nnls_problem} splits into $N$ scalar problems with closed-form solutions. We start from the midpoint $\bu^{(0)} = \tfrac{1}{2}(u_{a}+u_{b})\mathbf{1}$ and stop when $\normtwo{\bd^{(k)}} \leqslant \varepsilon$ with $\varepsilon = 10^{-8}$. Before solving, we divide $\matr{Q}_{\mathrm{red}}$ and $\bc_{\mathrm{red}}$ by $\norminf{\bc_{\mathrm{red}}}$, which leaves the minimizer unchanged and makes $\varepsilon$ relative to the data.

We measure accuracy in two ways. First, we build a reference solution. We fix the controls in the final active set of Algorithm~\ref{alg:quad_prog_2} at their bounds, solve for the other controls in~\eqref{eq:oc_reduced} by conjugate gradients to relative residual $10^{-14}$, and accept the result only if every free control lies strictly inside its bounds and the bound multipliers it implies are nonnegative. For every solver, we report the gap $V(\bu) - V(\bu_{\mathrm{ref}})$, where $V$ is the objective of~\eqref{eq:oc_reduced}. Second, we compute the optimality residuals following~\cite{qpbenchmark}. Specifically, we write problem~\eqref{eq:oc_discrete} as the minimization of $\tfrac{1}{2}\langle\bx,\matr{Q}\bx\rangle + \langle\bc,\bx\rangle$ subject to $\matr{E}\bx = \boldsymbol{0}$ and $\matr{A}\bx \leqslant \bb$, where $\matr{Q} = \operatorname{diag}(\matr{M}_{y},\beta\matr{M}_{u})$, $\bc = (-\matr{M}_{y}\by_{d};\boldsymbol{0})$, $\matr{E} = [\matr{K},-\matr{B}]$, $\matr{A} = [\matr{0},\matr{I};\matr{0},-\matr{I}]$, and $\bb = (u_{b}\mathbf{1};-u_{a}\mathbf{1})$, and let $\bl$ and $\bp \geqslant \boldsymbol{0}$ be the multipliers of the two constraints. The primal residual, the dual residual, and the duality gap of a point $(\bx,\bl,\bp)$ are
\begin{subequations}\label{eq:oc_residuals}
\begin{align}
r_{p} &= \max\bigl(\norminf{\matr{E}\bx},\ \norminf{\max(0,\matr{A}\bx - \bb)}\bigr), \label{eq:oc_res_p} \\
r_{d} &= \norminf{\matr{Q}\bx + \bc + \matr{E}^{\top}\bl + \matr{A}^{\top}\bp}, \label{eq:oc_res_d} \\
r_{g} &= \bigl|\langle\bx,\matr{Q}\bx\rangle + \langle\bc,\bx\rangle + \langle\bb,\bp\rangle\bigr|. \label{eq:oc_res_g}
\end{align}
\end{subequations}
Since the data of~\eqref{eq:oc_discrete} carry a factor $\beta h^{3}$ in Section~\ref{subsubsec:num_oc_poisson3d} and $\beta\Delta\tau\,h^{2}$ in Section~\ref{subsubsec:num_oc_heat2d}, we divide each residual by the size of the terms that compose it. Finally, each solver is scored from its own multipliers, and Algorithm~\ref{alg:quad_prog_2} also from its adjoint state.

\subsubsection{Control of the Poisson equation in 3D}
\label{subsubsec:num_oc_poisson3d}
The first example, from~\cite[Section~5]{pearson2017fast}, is problem~\eqref{eq:oc_continuous} with $\mathcal{L} = -\Delta$ on $\Omega = [0,1]^{3}$ and desired state $y_{d}(x) = \exp(-64\normsq{x - (1/2,1/2,1/2)})$. We impose $y = 0$ on $\partial\Omega$ and follow the conventions of IFISS~\cite{ers14}. The grid is uniform with spacing $h$ and $1/h - 1$ interior nodes per direction, so the state and the control each have $N = (1/h-1)^{3}$ degrees of freedom, and $\by_{d}$ holds the nodal values of $y_{d}$. The matrix $\matr{K}$ is the $Q1$ stiffness matrix of the negative Laplacian, and $\matr{B} = \matr{M}_{y} = \matr{M}_{u} = \matr{M}$ is the consistent $Q1$ mass matrix. We use the pairs $(\beta,u_{a},u_{b}) = (10^{-4},0,20)$ and $(10^{-2},0,1)$ and the meshes $h = 2^{-2},\dots,2^{-5}$ of~\cite[Table~6]{pearson2017fast}.

We factor $\matr{K}$ once per mesh by a sparse Cholesky factorization under a nested-dissection ordering. One application of $\matr{Q}_{\mathrm{red}} = \matr{M}\matr{K}^{-1}\matr{M}\matr{K}^{-1}\matr{M} + \beta\matr{M}$ then costs four products with $\matr{M}$ and two solves with $\matr{K}$, each a pair of triangular solves.

\begin{table}[!htb]
\centering
\footnotesize
\setlength{\tabcolsep}{4pt}
\begin{minipage}[t]{0.58\linewidth}
\centering
\caption{Poisson control of Section~\ref{subsubsec:num_oc_poisson3d}. Mean time in seconds of one solve of problem~\eqref{eq:oc_discrete}, over five solves, for both pairs $(\beta,u_{a},u_{b})$, and in parentheses the time divided by that of the fastest solver on the same mesh.}
\label{tab:num_oc3d}
\begin{tabular}{rrcccc}
\toprule
$h$ & $N$ & Algorithm~\ref{alg:quad_prog_2} & MOSEK & Gurobi & HiGHS \\
\midrule
\multicolumn{6}{l}{$\beta = 10^{-4}$, $0 \leqslant u \leqslant 20$} \\
$2^{-2}$ & $27$ & 0.002 (1.0) & 0.002 (1.0) & 0.003 (1.3) & 0.005 (2.3) \\
$2^{-3}$ & $343$ & 0.010 (1.0) & 0.077 (7.8) & 0.055 (5.6) & 0.027 (2.7) \\
$2^{-4}$ & $3\,375$ & 0.243 (1.0) & 1.75 (7.2) & 2.27 (9.4) & 0.569 (2.3) \\
$2^{-5}$ & $29\,791$ & 22.4 (1.0) & 119 (5.3) & 261 (11.6) & 23.3 (1.0) \\
\midrule
\multicolumn{6}{l}{$\beta = 10^{-2}$, $0 \leqslant u \leqslant 1$} \\
$2^{-2}$ & $27$ & 0.001 (1.0) & 0.001 (1.6) & 0.002 (2.8) & 0.004 (6.3) \\
$2^{-3}$ & $343$ & 0.008 (1.0) & 0.070 (8.5) & 0.054 (6.6) & 0.034 (4.2) \\
$2^{-4}$ & $3\,375$ & 0.155 (1.0) & 1.77 (11.4) & 3.45 (22.2) & 0.833 (5.4) \\
$2^{-5}$ & $29\,791$ & 3.86 (1.0) & 120 (31.0) & 176 (45.5) & 30.3 (7.9) \\
\bottomrule
\end{tabular}
\end{minipage}
\hfill
\begin{minipage}[t]{0.38\linewidth}
\centering
\caption{Poisson control of Section~\ref{subsubsec:num_oc_poisson3d}. Largest objective gap $V(\bu) - V(\bu_{\mathrm{ref}})$ and relative residual $\max(r_{p},r_{d},r_{g})$ each solver returns.}
\label{tab:num_oc3d_acc}
\begin{tabular}{lcc}
\toprule
solver & gap & residual \\
\midrule
Algorithm~\ref{alg:quad_prog_2} & $5.0\cdot10^{-18}$ & $2.8\cdot10^{-9}$ \\
MOSEK & $6.5\cdot10^{-11}$ & $6.3\cdot10^{-7}$ \\
Gurobi & $6.5\cdot10^{-19}$ & $2.2\cdot10^{-11}$ \\
HiGHS & $1.9\cdot10^{-13}$ & $2.2\cdot10^{-10}$ \\
\bottomrule
\end{tabular}
\end{minipage}
\end{table}

Table~\ref{tab:num_oc3d} reports the times and Table~\ref{tab:num_oc3d_acc} compares the accuracy. Algorithm~\ref{alg:quad_prog_2} is the fastest solver on every mesh of both pairs. Over both pairs and all four meshes, the objective gap of Algorithm~\ref{alg:quad_prog_2} stays at or below $5.0\cdot10^{-18}$, smaller than those of HiGHS and MOSEK. Its largest relative residual, $2.8\cdot10^{-9}$, lies between those of HiGHS and Gurobi and that of MOSEK. Thus Algorithm~\ref{alg:quad_prog_2} performs well and with high accuracy compared to highly optimized quadratic programming solvers.

\subsubsection{Control of the time-dependent heat equation in 2D}
\label{subsubsec:num_oc_heat2d}
The second example is the time-dependent heat control problem of~\cite[Sections~3.3 and~5]{pearson2017fast}, in which $y_{\tau} - \Delta y = u$ holds on $\Omega \times (0,T]$ and the integrals in~\eqref{eq:oc_continuous} run over $\Omega \times (0,T]$. Pearson and Gondzio state neither the domain nor the initial and boundary data for this example. We take $\Omega = [0,1]^{2}$, $T = 1$, $y = 0$ on $\partial\Omega$, and $y = 0$ at $\tau = 0$, as in the heat control examples of~\cite{stoll2010all,pearson2012regularization}, and the desired state $y_{d}(x,\tau) = \tfrac{1}{2}\tau\sin(2\pi x_{1}x_{2})$, the two-dimensional form of the desired state of~\cite[Section~6.2]{stoll2010all}. This desired state changes sign on $\Omega$, so both bounds can be active.

We discretize in space as in Section~\ref{subsubsec:num_oc_poisson3d} and in time by the backward Euler method with step $\Delta\tau = 0.02$, so $T/\Delta\tau = 50$ steps reach $T$, and we apply the trapezoidal rule to the integrals in time~\cite[Section~3.3]{pearson2017fast}. With the time levels stacked, the state and the control each have $N = 50(1/h-1)^{2}$ degrees of freedom. The matrix $\matr{K}$ is block lower bidiagonal, with one backward Euler step on the diagonal and the negated spatial mass matrix below it. The matrix $\matr{B}$ is $\Delta\tau$ times the block diagonal spatial mass matrix, and $\matr{M}_{y} = \matr{M}_{u}$ carries the trapezoidal weights. We use the pairs $(\beta,u_{a},u_{b}) = (10^{-2},0,1)$ and $(10^{-4},0,30)$ of~\cite[Table~7]{pearson2017fast} and the meshes $h = 2^{-2},\dots,2^{-6}$.

On a uniform grid, the one-dimensional $Q1$ stiffness and mass matrices share the eigenvectors of the discrete sine transform, so the two-dimensional sine basis diagonalizes the spatial stiffness, the spatial mass, and one backward Euler step. In that basis, $\matr{K}^{-1}$ and $\matr{K}^{-\top}$ reduce to one scalar recurrence per spatial mode, forward and backward in time, and $\matr{M}_{y}$ and $\matr{M}_{u}$ are diagonal. One product with $\matr{Q}_{\mathrm{red}}$ then costs two sine transforms of every time level, which we compute as dense matrix products.

\begin{table}[!htb]
\centering
\footnotesize
\setlength{\tabcolsep}{4pt}
\begin{minipage}[t]{0.58\linewidth}
\centering
\caption{Heat control of Section~\ref{subsubsec:num_oc_heat2d} with $\Delta\tau = 0.02$, with columns as in Table~\ref{tab:num_oc3d}. The table has no Gurobi entry for $h = 2^{-6}$ because Gurobi did not converge within four hours for that problem.}
\label{tab:num_oc2d}
\begin{tabular}{rrcccc}
\toprule
$h$ & $N$ & Algorithm~\ref{alg:quad_prog_2} & MOSEK & Gurobi & HiGHS \\
\midrule
\multicolumn{6}{l}{$\beta = 10^{-2}$, $0 \leqslant u \leqslant 1$} \\
$2^{-2}$ & $450$ & 0.007 (1.0) & 0.020 (3.0) & 0.010 (1.5) & 0.017 (2.5) \\
$2^{-3}$ & $2\,450$ & 0.058 (1.0) & 0.310 (5.4) & 0.144 (2.5) & 0.173 (3.0) \\
$2^{-4}$ & $11\,250$ & 0.565 (1.0) & 1.96 (3.5) & 1.76 (3.1) & 1.60 (2.8) \\
$2^{-5}$ & $48\,050$ & 6.63 (1.0) & 27.1 (4.1) & 25.2 (3.8) & 17.2 (2.6) \\
$2^{-6}$ & $198\,450$ & 148 (1.0) & 560 (3.8) & --- & 349 (2.4) \\
\midrule
\multicolumn{6}{l}{$\beta = 10^{-4}$, $0 \leqslant u \leqslant 30$} \\
$2^{-2}$ & $450$ & 0.022 (2.0) & 0.023 (2.1) & 0.011 (1.0) & 0.018 (1.7) \\
$2^{-3}$ & $2\,450$ & 0.251 (1.7) & 0.318 (2.2) & 0.147 (1.0) & 0.162 (1.1) \\
$2^{-4}$ & $11\,250$ & 1.55 (1.1) & 3.91 (2.8) & 1.95 (1.4) & 1.39 (1.0) \\
$2^{-5}$ & $48\,050$ & 14.1 (1.0) & 26.0 (1.8) & 28.6 (2.0) & 17.7 (1.3) \\
$2^{-6}$ & $198\,450$ & 300 (1.0) & 653 (2.2) & --- & 334 (1.1) \\
\bottomrule
\end{tabular}
\end{minipage}
\hfill
\begin{minipage}[t]{0.38\linewidth}
\centering
\caption{Heat control of Section~\ref{subsubsec:num_oc_heat2d}. Largest objective gap $V(\bu) - V(\bu_{\mathrm{ref}})$ and relative residual $\max(r_{p},r_{d},r_{g})$ each solver returns.}
\label{tab:num_oc2d_acc}
\begin{tabular}{lcc}
\toprule
solver & gap & residual \\
\midrule
Algorithm~\ref{alg:quad_prog_2} & $9.6\cdot10^{-16}$ & $2.6\cdot10^{-10}$ \\
MOSEK & $8.9\cdot10^{-8}$ & $2.4\cdot10^{-4}$ \\
Gurobi & $3.7\cdot10^{-18}$ & $4.4\cdot10^{-11}$ \\
HiGHS & $2.4\cdot10^{-14}$ & $3.4\cdot10^{-10}$ \\
\bottomrule
\end{tabular}
\end{minipage}
\end{table}

Table~\ref{tab:num_oc2d} reports the times. For $\beta = 10^{-2}$, Algorithm~\ref{alg:quad_prog_2} is the fastest solver on every mesh. For $\beta = 10^{-4}$, it is the fastest on the two finest meshes, and Gurobi or HiGHS is faster on the three coarsest, by a factor of at most $2.0$. From $h = 2^{-5}$ to $h = 2^{-6}$, $N$ grows by a factor of $4.13$ and the time of every solver by a factor of $19$ to $25$. Over both pairs and all five meshes, the objective gap of Algorithm~\ref{alg:quad_prog_2} stays at or below $9.6\cdot10^{-16}$, again smaller than those of HiGHS and MOSEK. Its largest relative residual, $2.6\cdot10^{-10}$, is also smaller than theirs (Table~\ref{tab:num_oc2d_acc}). MOSEK loses accuracy as the mesh is refined. For $\beta = 10^{-4}$ on the finest mesh, its gap is $8.9\cdot10^{-8}$, its largest relative residual $2.4\cdot10^{-4}$, and its control differs from the reference by $1.6\cdot10^{-1}$ in the infinity norm, against a bound $u \leqslant 30$. HiGHS reports an optimal status on every mesh, yet its control for $\beta = 10^{-4}$ at $h = 2^{-4}$ differs from the reference by $7.3\cdot10^{-4}$ in the infinity norm. Again, Algorithm~\ref{alg:quad_prog_2} performs well and with high accuracy compared to highly optimized quadratic programming solvers.

\section{Conclusion and future work}\label{sec:discussion}
In this work, we solved the gradient flow of linear and convex quadratic programs exactly. By the minimal selection principle, the slow solution from every feasible point is piecewise explicit, and one NNLS problem gives the initial direction of each piece. A piece ends when a constraint becomes active or a multiplier changes sign, and the norm of the velocity decreases strictly at every discontinuity. When strict complementarity fails, a recursion of cone projections selects the active set of the next piece. We proved that the slow solution has finitely many pieces for every SPSD matrix $\matr{Q}$ and that its final piece yields an optimal solution or a certificate of unboundedness. For $\matr{Q} = t\matr{I}_{n\times n}$, every piece is a ray with a closed-form breakpoint and no active set occurs twice, which gives the exact Algorithm~\ref{alg:quad_prog}. For general SPSD $\matr{Q}$, we analyzed minimal selection descent (Algorithm~\ref{alg:quad_prog_2}), the explicit Euler discretization of the slow system, whose steps each solve one NNLS problem over the active constraints. Its iterates converge to an optimal solution when one exists and otherwise certify unboundedness or drive the objective to $-\infty$. When the multipliers at the limit are positive, the iterates identify the optimal active set after finitely many steps and then converge at a linear rate. In our experiments, Algorithm~\ref{alg:quad_prog} conserved the total mass in a positivity-preserving limiter more accurately than a state-of-the-art Douglas--Rachford splitting method, and minimal selection descent reached the optimal value of PDE-constrained control problems to machine precision in times competitive with commercial and open-source solvers.

Several directions remain open. First, we expect that our solution method can be adapted to other differential inclusions: second-order gradient inclusions, including the ODE associated with Nesterov's accelerated gradient method~\cite{su2016differential}; inclusions arising from saddle-point problems and from mirror descent; and some gradient inclusions posed in Hilbert spaces. Second, we expect that minimal selection descent can be extended to general convex optimization problems, and our convergence analysis to other discretizations of the slow system. Third, we expect that differential inclusions can yield solution methods for second-order cone programs. Fourth, we expect that implementations of our algorithms on graphics processing units (GPUs) can scale to large linear and quadratic programs and be benchmarked against existing solvers at that scale. Finally, the experiments of Section~\ref{sec:numerics} suggest applications to positivity-preserving variational schemes for PDEs and to PDE-constrained optimization. Some PDE-constrained optimization problems can be posed as second-order cone programs, and their continuous formulations lead to the gradient inclusions in Hilbert spaces mentioned above.


\section*{Conflicts of interest}
The authors declare they have no conflicts of interest.

\section*{Declaration of generative AI and AI-assisted technologies in the writing process}
During the preparation of this work, the authors used Claude Code (Anthropic) to assist with proof development, numerical implementation, and drafting. The authors reviewed and edited the content as needed and take full responsibility for the content of the publication.

\appendix
\section{Mathematical background} \label{app:A}
We list here some definitions and results from convex and functional analysis used in this work. For comprehensive references, see~\cite{rockafellar1997convex,rockafellar2009variational,aubin2012differential,hiriart2013convexI}.

\begin{defn}[Convex sets] \label{def:convex_sets}
    A set $C\subset \Rn$ is convex if for every pair $(\bx_{1},\bx_{2}) \in C \times C$ and every scalar $\lambda \in (0,1)$, the vector $\lambda\bx_{1}+(1-\lambda)\bx_{2}$ is contained in $C$.
\end{defn}
\begin{defn}[Closed convex polyhedra]\label{def:convex_polyhedron}
    A set $C \subset \Rn$ is a closed convex polyhedron if it is the intersection of a finite number of half-spaces:
    \[
    C = \left\{\bx \in \Rn : \langle\bu_{i},\bx\rangle \leqslant r_{i} \ \text{for every $i \in [m]$}  \right\}
    \]
    with $\{\bu_{1},\dots,\bu_{m}\} \subset \Rn$ and $\{r_{1},\dots,r_{m}\}\subset \R$.
\end{defn}
\begin{defn}[Convex cones]\label{def:convex_cones}
    A set $K \subset \Rn$ is a cone if $\boldsymbol{0} \in K$ and $\lambda\bx \in K$ for all $\bx \in K$ and $\lambda > 0$. A cone $K$ is convex if it contains the point $\sum_{i=1}^{m} \eta_{i}\bx_{i}$ whenever $\bx_{i} \in K$ and $\eta_{i} \geqslant 0$ for every $i \in [m]$. 
\end{defn}
\begin{defn}[Polyhedral cones]\label{def:polyhedral_cones}
	The conical hull of some finite set $\{\bx_{1},\dots,\bx_{m}\} \subset \Rn$ is
	\[
	 	\cone\{\bx_{1},\dots,\bx_{m}\} \coloneqq \left\{\sum_{i=1}^{m}\eta_{i}\bx_{i} : \eta_{i} \geqslant 0 \, \text{for every}\ i\in [m]\right\}.
	\]
	A cone is polyhedral if $K = \cone\{\bx_{1},\dots,\bx_{m}\}$ for some finite set $\{\bx_{1},\dots,\bx_{m}\} \subset \Rn$.
\end{defn}
\begin{defn}[Proper functions] \label{def:prop_f}
    A function $f$ defined over $\Rn$ is proper if its domain $\dom f \coloneqq \left\{ \bx \in \Rn : f(\bx) < +\infty\right\}$ is nonempty and $f(\bx) > -\infty$ for every $\bx \in \dom f$.
\end{defn}
\begin{defn}[Lower semicontinuity] \label{def:lsc_f}
    A proper function $f\colon\Rn\to\R\cup\{+\infty\}$ is lower semicontinuous at $\bx \in \dom f$ if for every sequence $\left\{ \bx_k\right\} _{k=1}^{+\infty} \subset \Rn$ converging to $\bx$, $\liminf_{k\to+\infty}f(\bx_k)\geqslant f(\bx)$. We say $f$ is lower semicontinuous if it is lower semicontinuous at every $\bx \in \dom f$.
\end{defn}
\begin{defn}[Convex functions and the space $\gmRn$] \label{def:convex} 
A proper function $f\colon\Rn\to\R\cup\{+\infty\}$ is convex if $\dom f$ is convex and if for every pair $(\bx_{1},\bx_{2}) \in \dom f \times \dom f$ and every $\lambda\in[0,1]$,
\[
f(\lambda\bx_{1}+(1-\lambda)\bx_{2})\leqslant\lambda f(\bx_{1})+(1-\lambda)f(\bx_{2}).
\]
It is \textit{strictly convex} if the inequality above is strict whenever $\bx_{1}\neq\bx_{2}$ and $\lambda \in (0,1)$, and it is \textit{$t$-strongly convex} with $t>0$ if for every pair $(\bx_{1},\bx_{2}) \in \dom f \times \dom f$ and every $\lambda\in[0,1]$, 
\[
f(\lambda\bx_{1}+(1-\lambda)\bx_{2})\leqslant\lambda f(\bx_{1})+(1-\lambda)f(\bx_{2})-\frac{t}{2}\lambda(1-\lambda)\normsq{\bx_{1}-\bx_{2}}.
\]
The space of proper, lower semicontinuous, and convex functions over $\Rn$ is denoted by $\gmRn$.
\end{defn}
\begin{defn}[Subdifferentiability]\label{def:subgrad}
A function $f \in \gmRn$ is subdifferentiable at $\bx \in \Rn$ if there exists $\bp \in \Rn$ such that
\begin{equation}\label{eq:conv_subdiff_char}
f(\by) \geqslant f(\bx) + \left\langle\bp,\by-\bx \right\rangle \ \text{for every $\by \in \dom f$.}
\end{equation}
In this case, $\bp$ is called a subgradient of the function $f$ at $\bx$. The set of subgradients at $\bx \in \Rn$ is called the subdifferential of $f$ at $\bx$ and is denoted by $\partial f(\bx)$. When nonempty, $\partial f(\bx)$ is a closed convex set. We note that $f$ has a unique subgradient at $\bx$ if and only if $f$ is differentiable at $\bx$ and $\partial f(\bx) = \{\nabla f(\bx)\}$~\cite[Theorem~25.1]{rockafellar1997convex}. Finally, subdifferentials satisfy the following calculus rules.
\begin{itemize}
    \item[(i)] Let $f_{1},f_{2} \in \gmRn$ and assume $\ri \, \dom f_{1} \cap \ri \, \dom f_{2} \neq \emptyset$. Then $f_{1} + f_{2} \in \gmRn$ and $\partial(f_{1} + f_{2})(\bx) = \partial f_{1}(\bx) + \partial f_{2}(\bx)$ for every $\bx \in \dom f_{1} \cap \dom f_{2}$.

    \item[(ii)] Let $f \in \gmRn$, $g \in \gmRm$ and assume $\image{\matr{A}} \cap \ri \, \dom g \neq \emptyset$. Then the function $\bx \mapsto g(\matr{A}\bx)$ is in $\gmRn$ and $\partial(g \circ \matr{A})(\bx) = \matr{A}^{\top}\partial g(\matr{A}\bx)$ for every $\bx \in \Rn$ satisfying $\matr{A}\bx \in \dom g$.
    
    \item[(iii)] Let $f \in \gmRn$, $g \in \gmRm$ and assume $\ri \, \dom f \cap \ri \, \dom (g \circ \matr{A}) \neq \emptyset$ and $\image{\matr{A}} \cap \ri \, \dom g \neq \emptyset$. Then $\partial(f + g\circ\matr{A})(\bx) = \partial f(\bx) + \matr{A}^{\top}\partial g(\matr{A}\bx)$ for every $\bx \in \dom f \cap \dom (g\circ\matr{A})$.
\end{itemize}
See~\cite[Theorem~23.8]{rockafellar1997convex} for the proof of (i) and~\cite[Theorem~23.9]{rockafellar1997convex} for the proof of (ii). The proof of (iii) follows by applying (i) with $f_1 = f$ and $f_2 = g \circ \matr{A}$, then (ii).
\end{defn}
\begin{defn}[Monotone and maximal monotone mappings]\label{def:mmm}
Let $F$ denote a set-valued mapping from $\Rn$ to $\Rn$ with \textit{graph} $\{(\bx,\bp) \in \Rn \times \Rn \colon \bp \in F(\bx)\}$. We say $F$ is \textit{monotone} if
\begin{equation}\label{eq:monotone_inequality}
\langle\bp_{1}-\bp_{2},\bx_{1}-\bx_{2}\rangle \geqslant 0
\end{equation}
for every $\bx_{1},\bx_{2} \in \Rn$ and every $\bp_{1} \in F(\bx_{1})$, $\bp_{2} \in F(\bx_{2})$. It is \textit{maximal} if no other set-valued mapping strictly contains the graph of $F$. If $f \in \gmRn$, then its subdifferential is a maximal monotone mapping~\cite[Theorem 12.17]{rockafellar2009variational}. 
\end{defn}
\begin{defn}[Characteristic functions]\label{def:char_fun}
The characteristic function of a subset $C \subset \Rn$ is defined as
\[
\chi_{C}(\bx) \coloneqq \begin{cases} 0, \; &\text{if}\;\bx \in C \\ +\infty \; &\text{otherwise}.\end{cases}
\]
We have $\chi_{C} \in \gmRn$ if and only if $C$ is closed and convex.
\end{defn}
\begin{defn}[Euclidean projection]\label{def:projection}
Let $C \subset \Rn$ denote a closed convex subset and let $\bx \in \Rn$. The projection of $\bx$ onto $C$ is defined as $\proj_{C}(\bx) \coloneqq \argmin_{\by \in C} \normsq{\bx-\by}$. The projection of $\bx$ onto $C$ exists, is unique, and satisfies the characterization~\cite[Section III, 3.1]{hiriart2013convexI}:
\begin{equation}\label{eq:char_projections}
\langle \bx - \proj_{C}(\bx), \by - \proj_{C}(\bx) \rangle \leqslant 0 \; \text{for every } \by \in C.
\end{equation}
\end{defn}
\begin{defn}[Normal and tangent cones]\label{def:normal_cones}
Let $C \subset \Rn$ denote a closed convex set. A vector $\bs \in \Rn$ is said to be normal to $C$ at $\bx \in C$ if
\[
\langle \bs, \by - \bx \rangle \leqslant 0\ \text{for every $\by \in C$.}
\]
This set of directions is called the normal cone of $C$ at $\bx$ and is denoted by $N_{C}(\bx)$. The tangent cone of $C$ at $\bx$ is the polar of the normal cone of $C$ at $\bx$:
\[
T_{C}(\bx) \coloneqq \left\{\bd \in \Rn : \langle\bs,\bd\rangle\leqslant 0 \ \text{for every $\bs \in N_{C}(\bx)$}\right\}.
\]
\end{defn}
\section{Technical proofs}
\label{app:proofs}

\subsection{Proof of Theorem~\ref{thm:scc_local}}
\label{app:thm:scc_local}
We prove part~(i) in Steps~1--3, part~(ii) in Step~4, and part~(iii) in Step~5. Throughout, we write $\bd \coloneqq \bd(\bxo)$, $\bp \coloneqq \bp(\bxo)$, $J \coloneqq J(\bxo)$, and $G \coloneqq G(\bxo)$, where $\bp$ is chosen so that~\eqref{eq:LI} holds. Note that $[\matr{A}\bd]_{J \cup G} = \boldsymbol{0}$, by~\eqref{eq:nnls_oc_2_5} on $J$ and~\eqref{eq:grazing_set} on $G$.

\paragraph{Step~1.} For a set $K$ with $J \subseteq K \subseteq J \cup G$, let $\by_{K}(\tau) \coloneqq \bxo + \matr{M}_{K}(\tau)\bd$ denote the candidate built from $K$, with $\matr{M}_{K}$ as in Section~\ref{subsec:removing_scc_local}. We construct one such set $K$ whose candidate is feasible on a right neighborhood of $0$. Two identities hold. First, the computation behind~\eqref{eq:active_rows_invariant}, with $K$ in place of $J(\bxo)$, gives $\matr{A}_{K}\matr{M}_{K}(\tau)\bd = \boldsymbol{0}$, so the constraints of $K$ remain active along $\by_{K}$. Second, $\dot\by_{K}(\tau) = -\matr{P}_{K}^{\perp}(\matr{Q}\by_{K}(\tau)+\bc)$ for every $\tau \geqslant 0$. Indeed, differentiating~\eqref{eq:int_mexp} gives $\dot\by_{K}(\tau) = e^{-\matr{P}_{K}^{\perp}\matr{Q}\tau}\bd$, while integrating the derivative of $s \mapsto e^{-\matr{P}_{K}^{\perp}\matr{Q}s}$ over $[0,\tau]$ gives $e^{-\matr{P}_{K}^{\perp}\matr{Q}\tau} - \matr{I}_{n\times n} = -\matr{P}_{K}^{\perp}\matr{Q}\matr{M}_{K}(\tau)$, so that $\dot\by_{K}(\tau) = \bd - \matr{P}_{K}^{\perp}\matr{Q}\matr{M}_{K}(\tau)\bd$. The right-hand side $-\matr{P}_{K}^{\perp}(\matr{Q}\by_{K}(\tau)+\bc)$ of the claimed identity expands to $-\matr{P}_{K}^{\perp}(\matr{Q}\bxo+\bc) - \matr{P}_{K}^{\perp}\matr{Q}\matr{M}_{K}(\tau)\bd$, and $-\matr{P}_{K}^{\perp}(\matr{Q}\bxo+\bc) = \bd$ because $\bd \in \ker\matr{A}_{K}$ and $-(\matr{Q}\bxo+\bc) - \bd = \matr{A}_{J}^{\top}\bp_{J} \in \image{\matr{A}_{K}^{\top}} = (\ker\matr{A}_{K})^{\perp}$. We call
\begin{equation}\label{eq:normal_part}
\bn_{K}(\tau) \coloneqq -\matr{Q}\by_{K}(\tau) - \bc - \dot{\by}_{K}(\tau) = -(\matr{I}_{n\times n} - \matr{P}^{\perp}_{K})\bigl(\matr{Q}\by_{K}(\tau) + \bc\bigr)
\end{equation}
the \emph{normal part} along $\by_{K}$, which in particular lies in $\image{\matr{A}_{K}^{\top}}$.

Suppose that $\by_{K}(\tau) \in C$ and $\bn_{K}(\tau) \in N_{C}(\by_{K}(\tau))$ for every $\tau$ in an interval $[0,\tau')$. Then $\dot{\by}_{K}(\tau) = -\nabla_{\bx}f(\by_{K}(\tau)) - \bn_{K}(\tau) \in -\partial_{\bx}V(\by_{K}(\tau))$, so the candidate solves~\eqref{eq:main_grad_incl} on $[0,\tau')$ with $\by_{K}(0) = \bxo$. It therefore coincides there with the slow solution by the monotonicity argument in Step~3 of the proof of Theorem~\ref{thm:local_solution}. Constraints inactive at $\bxo$ and constraints $i \in \eqset(\bxo)$ with $[\matr{A}\bd]_{i} < 0$ stay inactive for all sufficiently small $\tau > 0$ along any candidate. Near $\tau = 0$, the two conditions therefore reduce to the signs of the constraint values $\nu_{i}(\tau) \coloneqq [\matr{A}\by_{K}(\tau)]_{i} - b_{i}$ for $i \in G \setminus K$ and to the membership of $\bn_{K}(\tau)$ in $N_{C}(\by_{K}(\tau))$. This membership holds whenever $\bn_{K}(\tau)$ is a nonnegative combination of the rows of $\matr{A}_{K}$, since the constraints of $K$ are active along $\by_{K}$. Steps~2 and~3 below build $K$ iteratively and determine these signs near $\tau = 0$, respectively.

\paragraph{Step~2.} A recursion selects $K$ and produces vectors $\bd_{r}$, starting from $\bd_{0} \coloneqq \bd$, a running set $L_{1} \coloneqq J$, and an undecided set $U_{1} \coloneqq G$. At stage $r = 1,2,\dots$, let $\bz_{r}$ denote the projection of $-\matr{Q}\bd_{r-1}$ onto the cone $\mathcal{K}_{r} \coloneqq \{\bs \in \ker\matr{A}_{L_{r}} : [\matr{A}\bs]_{U_{r}} \leqslant \boldsymbol{0}\}$. Its polar cone is the normal cone of the polyhedron $\mathcal{K}_{r}$ at $\boldsymbol{0}$, where every constraint of $\mathcal{K}_{r}$ is active, so it is generated by the vectors $\pm\matr{A}_{i}^{\top}$ for $i \in L_{r}$ and $\matr{A}_{i}^{\top}$ for $i \in U_{r}$~\cite[Example~5.2.6b]{hiriart2013convexI}. Moreau's decomposition theorem, applied to $\mathcal{K}_{r}$ and its polar cone as in~\eqref{eq:moreau_decomp}, therefore gives $-\matr{Q}\bd_{r-1} = \bz_{r} + \Atop\bm{\eta}$ for some $\bm{\eta} \in \Rm$ supported on $L_{r} \cup U_{r}$, with $\bm{\eta}_{U_{r}} \geqslant \boldsymbol{0}$ and $\langle\bz_{r},\Atop\bm{\eta}\rangle = 0$. Let $H_{r}$ denote the set of such vectors $\bm{\eta}$, the multipliers of stage $r$. When the rows of $\matr{A}_{L_{r} \cup U_{r}}$ are linearly dependent, $H_{r}$ may contain more than one vector. Every $\bm{\eta} \in H_{r}$ satisfies the complementarity condition
\begin{equation}\label{eq:stage_complementarity}
\eta_{i}[\matr{A}\bz_{r}]_{i} = 0 \qquad \text{for every } i \in U_{r}.
\end{equation}
Indeed, $[\matr{A}\bz_{r}]_{L_{r}} = \boldsymbol{0}$, so $0 = \langle\bz_{r},\Atop\bm{\eta}\rangle = \sum_{i \in U_{r}}\eta_{i}[\matr{A}\bz_{r}]_{i}$, and every term of this sum is nonpositive.

The stage commits $U_{r}^{+} \coloneqq \{i \in U_{r} : \eta_{i} > 0 \text{ for some } \bm{\eta} \in H_{r}\}$, drops $D_{r} \coloneqq \{i \in U_{r} : [\matr{A}\bz_{r}]_{i} < 0\}$, and sets $L_{r+1} \coloneqq L_{r} \cup U_{r}^{+}$, $U_{r+1} \coloneqq U_{r} \setminus (U_{r}^{+} \cup D_{r})$, and $\bd_{r} \coloneqq \bz_{r}$. The sets $U_{r}^{+}$ and $D_{r}$ are disjoint by~\eqref{eq:stage_complementarity}. The set $H_{r}$ is convex, so averaging one vector of $H_{r}$ for each index of $U_{r}^{+}$ gives a vector $\bm{\eta}^{(r)} \in H_{r}$ with $\eta^{(r)}_{i} > 0$ for every $i \in U_{r}^{+}$; we fix such a vector. For every $i \in U_{r+1}$, the definitions of $U_{r}^{+}$ and $D_{r}$ give
\begin{equation}\label{eq:undecided}
\eta_{i} = 0 \ \text{for every } \bm{\eta} \in H_{r}
\qquad \text{and} \qquad
[\matr{A}\bz_{r}]_{i} = 0.
\end{equation}
A stage that commits or drops removes at least one index from the undecided set, so at most $|G|$ stages do. Let $\bar{s}$ denote the last such stage, with $\bar{s} \coloneqq 0$ if there is none. The running set stabilizes at $K \coloneqq L_{\bar{s}+1}$, with $J \subseteq K \subseteq J \cup G$, and the undecided set at $R \coloneqq U_{\bar{s}+1}$. We write $J^{+}_{*}(\bxo)$, $\tau^{+}_{\mathrm{feas}}(\bxo)$, and $\tau^{+}_{*}(\bxo)$ for $K$ and the objects built from it, as in Section~\ref{subsec:removing_scc_local}.

The first stage commits the grazing constraints at which some multiplier of the projection is positive, drops those with $[\matr{A}\bz_{1}]_{i} < 0$, and leaves the others undecided. Step~3 shows that, along the candidate, a constraint committed at stage $r$ carries a weight of order $\tau^{r}$ in the normal part, and that the value of a constraint dropped at stage $r$ is of order $\tau^{r+1}$. For an undecided constraint, both leading terms vanish, and the next stage computes higher-order terms. The next lemma shows that the vectors $\bd_{r}$ depend on the final set $K$ alone.

\begin{lem}[Final-set identity]
\label{lem:final_set}
For every $r \geqslant 1$, every $\bm{\eta} \in H_{r}$ is supported on $L_{r+1} \subseteq K$, and $\bz_{r} \in \ker\matr{A}_{K}$. Hence $-\matr{Q}\bd_{r-1} = \bz_{r} + \matr{A}_{K}^{\top}\bm{\eta}_{K}$ for every $\bm{\eta} \in H_{r}$, and $\bd_{r} = \bz_{r} = -\matr{P}^{\perp}_{K}\matr{Q}\bd_{r-1} = (-\matr{P}^{\perp}_{K}\matr{Q})^{r}\bd$.
\end{lem}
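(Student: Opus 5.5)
We prove the two containment claims by tracking, stage by stage, where multipliers can live and which rows annihilate $\bz_r$; the identities for $\bd_r$ then follow by Moreau's decomposition and induction on $r$.

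\emph{Support of the multipliers.} By construction, every $\bm{\eta} \in H_{r}$ is supported on $L_{r} \cup U_{r}$. Indices of $U_{r}$ carrying a positive entry of some $\bm{\eta}\in H_r$ belong to $U_{r}^{+}$ by definition. Entries are nonnegative on $U_r$, so all other entries on $U_{r}$ vanish. Hence $\operatorname{supp}\bm{\eta} \subseteq L_{r} \cup U_{r}^{+} = L_{r+1}$. Since $L_{r}$ is nondecreasing in $r$ and stabilizes at $K = L_{\bar s+1}$, we need $L_{r+1} \subseteq K$ for every $r$, including $r > \bar s$. For $r > \bar s$ the stage commits nothing, so $U_r^+=\emptyset$ and $L_{r+1}=L_r=K$. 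For $r \leqslant \bar s$, monotonicity gives $L_{r+1}\subseteq L_{\bar s+1}=K$.

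\emph{Kernel membership.} We show $\bz_{r} \in \ker \matr{A}_{K}$, i.e.\ $[\matr{A}\bz_r]_i=0$ for $i\in K$. The set $K$ is the disjoint union of $L_{r}$ and the indices committed at stages $s \geqslant r$, namely $U_{s}^{+}$ with $r\leqslant s\leqslant \bar s$.
\begin{itemize}
\item For $i\in L_r$ this holds because $\bz_r\in\mathcal{K}_r\subseteq\ker\matr{A}_{L_r}$.
\item For $i \in U_{r}^{+}$, the complementarity condition~\eqref{eq:stage_complementarity} applied to $\bm{\eta}^{(r)}$ gives $[\matr{A}\bz_r]_i=0$.
\item For $i\in U_s^+$ with $s>r$, we have $i\in U_{r+1}$, since $U_s\subseteq U_{r+1}$ for $s \geqslant r+1$. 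Then~\eqref{eq:undecided} gives $[\matr{A}\bz_r]_i=0$.
\end{itemize}
This is the step needing the most care: a constraint committed late must already be annihilated by earlier $\bz_r$. That is exactly what the "undecided" bookkeeping~\eqref{eq:undecided} records.

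\emph{The identities for $\bd_r$.} Combining the two facts, the stage-$r$ Moreau decomposition $-\matr{Q}\bd_{r-1}=\bz_r+\Atop\bm{\eta}$ becomes $-\matr{Q}\bd_{r-1}=\bz_r+\matr{A}_K^\top\bm{\eta}_K$. Here $\bz_r\in\ker\matr{A}_K$ and $\matr{A}_K^\top\bm{\eta}_K\in\image{\matr{A}_K^\top}=(\ker\matr{A}_K)^\perp$. So this is the orthogonal decomposition along $\ker\matr{A}_K$, and $\bz_r=\matr{P}^\perp_K(-\matr{Q}\bd_{r-1})$. Here $\matr{P}^\perp_K$ is the orthogonal projection onto $\ker\matr{A}_K$, which needs no linear independence.

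Finally, induct on $r$ using $\bd_0=\bd$ and $\bd_r=\bz_r=-\matr{P}^\perp_K\matr{Q}\bd_{r-1}$. This yields $\bd_r=(-\matr{P}^\perp_K\matr{Q})^r\bd$. The induction uses only the recursion, not $\bd\in\ker\matr{A}_K$, although the latter also holds because $[\matr{A}\bd]_{J\cup G}=\boldsymbol{0}$.
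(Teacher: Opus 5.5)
Your proof is correct and follows the paper's argument. You place the support of $\bm{\eta}$ in $L_r\cup U_r^{+}$, split $K$ into $L_r$, $U_r^{+}$, and indices still undecided at stage $r+1$ (handled by the definition of $\mathcal{K}_r$, complementarity~\eqref{eq:stage_complementarity} for $\bm{\eta}^{(r)}$, and~\eqref{eq:undecided}), and finish with uniqueness of the orthogonal decomposition along $\ker\matr{A}_K\oplus\image{\matr{A}_K^{\top}}$ and induction, exactly as the paper does.
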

\begin{proof}
Let $\bm{\eta} \in H_{r}$. It is supported on $L_{r} \cup U_{r}$. On $D_{r}$, complementarity~\eqref{eq:stage_complementarity} gives $\eta_{i} = 0$, and on $U_{r+1}$, \eqref{eq:undecided} gives $\eta_{i} = 0$. Hence $\bm{\eta}$ is supported on $L_{r} \cup U_{r}^{+} = L_{r+1} \subseteq K$. Every index of $K$ lies in $L_{r}$, in $U_{r}^{+}$, or in $U_{r+1}$, because an index committed after stage $r$ is undecided at stage $r+1$. The definition of $\mathcal{K}_{r}$ gives $[\matr{A}\bz_{r}]_{L_{r}} = \boldsymbol{0}$. Complementarity~\eqref{eq:stage_complementarity} for $\bm{\eta}^{(r)}$ gives $[\matr{A}\bz_{r}]_{i} = 0$ for $i \in U_{r}^{+}$, and~\eqref{eq:undecided} gives the same for $i \in U_{r+1}$. Hence $\bz_{r} \in \ker\matr{A}_{K}$, while $\Atop\bm{\eta} = \matr{A}_{K}^{\top}\bm{\eta}_{K} \in \image{\matr{A}_{K}^{\top}} = (\ker\matr{A}_{K})^{\perp}$. Uniqueness of the orthogonal decomposition of $-\matr{Q}\bd_{r-1}$ along $\ker\matr{A}_{K} \oplus \image{\matr{A}_{K}^{\top}}$ gives $\bz_{r} = -\matr{P}^{\perp}_{K}\matr{Q}\bd_{r-1}$. Since $\bd_{r} = \bz_{r}$ and $\bd_{0} = \bd$, induction on $r$ gives the last claim.
\end{proof}

\paragraph{Step~3.} Consider the functions of Step~1 along $\by_{K}$. By Lemma~\ref{lem:final_set}, $\dot{\by}_{K}(\tau) = e^{-\matr{P}^{\perp}_{K}\matr{Q}\tau}\bd = \sum_{r \geqslant 0}(\tau^{r}/r!)\bd_{r}$. Every constraint value and the normal part $\bn_{K}$ are therefore entire functions of $\tau$.

\paragraph{Constraint values.} Let $i \in G$ be dropped at stage $r$. Then $[\matr{A}\bd_{0}]_{i} = [\matr{A}\bd]_{i} = 0$, while $[\matr{A}\bd_{j}]_{i} = [\matr{A}\bz_{j}]_{i} = 0$ for $1 \leqslant j \leqslant r-1$ by~\eqref{eq:undecided}, since $i \in U_{j+1}$, and $[\matr{A}\bd_{r}]_{i} = [\matr{A}\bz_{r}]_{i} < 0$. Hence $\nu_{i}(\tau) = (\tau^{r+1}/(r+1)!)[\matr{A}\bz_{r}]_{i} + O(\tau^{r+2})$ is negative for all sufficiently small $\tau > 0$. The constraints of $R$ are never decided and form the \emph{riding set}. Every $i \in R$ has $[\matr{A}\bd_{r}]_{i} = 0$ for every $r \geqslant 0$ by~\eqref{eq:undecided} and $i \in G$. With Step~1, this shows that $R$ is the set of constraints outside $K$ that $\tau^{K}_{\mathrm{feas}}(\bxo)$ ignores. Every constraint outside $K \cup R$ has a value that is negative for all sufficiently small $\tau > 0$ and, by the definition of $\tau^{K}_{\mathrm{feas}}(\bxo)$, has no zero in $(0,\tau^{K}_{\mathrm{feas}}(\bxo))$. Hence $\tau^{K}_{\mathrm{feas}}(\bxo) > 0$ and
\begin{equation}\label{eq:active_piece}
\by_{K}(\tau) \in C
\quad \text{and} \quad
\eqset(\by_{K}(\tau)) = K \cup R
\qquad \text{for every } \tau \in (0,\tau^{K}_{\mathrm{feas}}(\bxo)).
\end{equation}

\paragraph{Normal part.} We have $(\diff^{r}\by_{K}/\diff\tau^{r})(0) = \bd_{r-1}$ for $r\geqslant 1$. Evaluating the normal part~\eqref{eq:normal_part} and its derivatives at $\tau = 0$ gives
\begin{gather*}
\bn_{K}(0) = -(\matr{I}_{n\times n} - \matr{P}^{\perp}_{K})(\matr{Q}\bxo + \bc) = \matr{A}_{J}^{\top}\bp_{J} = \matr{A}_{K}^{\top}\bp_{K},
\\
\frac{\diff^{r}\bn_{K}}{\diff\tau^{r}}(0) = -(\matr{I}_{n\times n} - \matr{P}^{\perp}_{K})\matr{Q}\bd_{r-1} = \matr{A}_{K}^{\top}\bm{\eta}^{(r)}_{K} \quad (r \geqslant 1),
\end{gather*}
where the first line uses $-(\matr{Q}\bxo + \bc) = \bd + \matr{A}_{J}^{\top}\bp_{J}$ with $\bd \in \ker\matr{A}_{K}$, and the second line uses Lemma~\ref{lem:final_set} with $\bz_{r} \in \ker\matr{A}_{K}$. Let $\lambda_{\max}$ denote the largest eigenvalue of $\matr{Q}$. Since 
\[
\normtwo{(\matr{I}_{n\times n} - \matr{P}^{\perp}_{K})\matr{Q}} \leqslant \lambda_{\max} \quad \text{and} \quad \normtwo{\matr{P}^{\perp}_{K}\matr{Q}} \leqslant \lambda_{\max},
\]
Lemma~\ref{lem:final_set} also gives $\normtwo{(\diff^{r}\bn_{K}/\diff\tau^{r})(0)} \leqslant \lambda_{\max}\normtwo{\bd_{r-1}} \leqslant \lambda_{\max}^{r}\normtwo{\bd}$. Let
\begin{equation}\label{eq:w_constructed}
\bw(\tau) \coloneqq \bp_{K} + \sum_{r=1}^{\bar{s}}\frac{\tau^{r}}{r!}\bm{\eta}^{(r)}_{K} + (\matr{A}_{K}^{\top})^{\dagger}\bm{\rho}(\tau),
\quad \text{where} \quad
\bm{\rho}(\tau) \coloneqq \bn_{K}(\tau) - \matr{A}_{K}^{\top}\Bigl(\bp_{K} + \sum_{r=1}^{\bar{s}}\frac{\tau^{r}}{r!}\bm{\eta}^{(r)}_{K}\Bigr),
\end{equation}
and the dagger denotes the pseudo-inverse. The vector $\bm{\rho}(\tau) = \sum_{r > \bar{s}}(\tau^{r}/r!)(\diff^{r}\bn_{K}/\diff\tau^{r})(0)$ lies in $\image{\matr{A}_{K}^{\top}}$ and satisfies $\normtwo{\bm{\rho}(\tau)} \leqslant \normtwo{\bd}\sum_{r > \bar{s}}(\lambda_{\max}\tau)^{r}/r! = O(\tau^{\bar{s}+1})$. Since $\matr{A}_{K}^{\top}(\matr{A}_{K}^{\top})^{\dagger}$ is the orthogonal projection onto $\image{\matr{A}_{K}^{\top}}$, we have $\matr{A}_{K}^{\top}\bw(\tau) = \bn_{K}(\tau)$.

We claim that $\bw(\tau)$ has positive entries for all sufficiently small $\tau > 0$. A constraint $i \in J$ has $w_{i}(0) = p_{i} > 0$. Let $i$ be committed at stage $r \leqslant \bar{s}$. Then $p_{i} = 0$ because $i \notin J$, and $\eta^{(j)}_{i} = 0$ for $1 \leqslant j \leqslant r-1$ by~\eqref{eq:undecided}, because $i \in U_{j+1}$. Since $\eta^{(r)}_{i} > 0$, we obtain $w_{i}(\tau) = (\tau^{r}/r!)\eta^{(r)}_{i} + O(\tau^{r+1})$. Every constraint of $K$ lies in $J$ or is committed at a stage $r \leqslant \bar{s}$, which proves the claim. The constraints of $K$ are active along $\by_{K}$, so $\bn_{K}(\tau) = \matr{A}_{K}^{\top}\bw(\tau)$ lies in $N_{C}(\by_{K}(\tau))$ for all sufficiently small $\tau > 0$. With~\eqref{eq:active_piece}, the definition~\eqref{eq:tau_J_cone} then gives $\tau^{K}_{J}(\bxo) > 0$.

\paragraph{Conclusion.} Both times are positive, so $\tau^{K}_{*}(\bxo) > 0$. Let $\tau \in [0,\tau^{K}_{*}(\bxo))$. By~\eqref{eq:active_piece}, $\by_{K}(\tau) \in C$, and then~\eqref{eq:tau_J_cone} gives $\bn_{K}(\tau) \in N_{C}(\by_{K}(\tau))$. Step~1 shows that $\by_{K}$ is the slow solution on $[0,\tau^{K}_{*}(\bxo))$, which proves~\eqref{eq:grazing_solution}. Since $\bd(\bx(\tau)) = \dot{\bx}(\tau)$ by Theorem~\ref{thm:MSP}(iii), the identity $\bn_{K}(\tau) = \matr{A}_{K}^{\top}\bw(\tau)$ is~\eqref{eq:grazing_multipliers} for all sufficiently small $\tau > 0$. This proves part~(i).

\paragraph{Step~4.} We prove part~(ii). Write $\tau_{*} \coloneqq \tau^{+}_{*}(\bxo) < +\infty$, $\bx_{*} \coloneqq \bx(\tau_{*}) = \by_{K}(\tau_{*})$, and $\bv^{-} \coloneqq e^{-\matr{P}^{\perp}_{K}\matr{Q}\tau_{*}}\bd = \dot{\by}_{K}(\tau_{*})$, the left limit of $\dot{\bx}(\tau)$ as $\tau \to \tau_{*}^{-}$. By the conclusion of Step~3 and~\eqref{eq:active_piece}, $\bn_{K}(\tau)$ lies in the closed cone $\cone\{\matr{A}_{i}^{\top} : i \in K \cup R\}$ for every $\tau \in (0,\tau_{*})$, hence also at $\tau_{*}$. Some $\hat{\bp} \geqslant \boldsymbol{0}$ supported on $K \cup R$ therefore satisfies $\Atop\hat{\bp} = \bn_{K}(\tau_{*})$. The constraints of $K \cup R$ are active at $\bx_{*}$ by continuity, so $\hat{\bp}$ is feasible for the NNLS problem~\eqref{eq:nnls_problem} at $\bx_{*}$, and its residual is $-\Atop\hat{\bp} - \matr{Q}\bx_{*} - \bc = \bv^{-}$. Let $I^{+}$ denote the constraints outside $K \cup R$ active at $\bx_{*}$, so that $\eqset(\bx_{*}) = K \cup R \cup I^{+}$. In the KKT test of Step~2 of the proof of Proposition~\ref{prop:continuation}, the sign condition holds with equality over $K$, because the first identity of Step~1 differentiates to $[\matr{A}\bv^{-}]_{K} = \boldsymbol{0}$, and over $R$, because the values there vanish identically. This equality settles complementarity, since $\hat{\bp}$ is supported on $K \cup R$. The values of the constraints of $I^{+}$ are negative on $(0,\tau_{*})$ and vanish at $\tau_{*}$, so $[\matr{A}\bv^{-}]_{I^{+}} \geqslant \boldsymbol{0}$. Hence $\hat{\bp}$ is an optimal NNLS solution at $\bx_{*}$ if and only if $[\matr{A}\bv^{-}]_{I^{+}} = \boldsymbol{0}$, that is, if and only if the hypothesis of part~(ii) fails. Steps~3a and~3b of that proof then apply, and part~(ii) follows.

\paragraph{Step~5.} We prove part~(iii). Suppose that the rows of $\matr{A}_{J \cup G}$ are linearly independent, and let $K'$ be a set with $J \subseteq K' \subseteq J \cup G$ for which part~(i) holds. By part~(i), $\bx(\tau) = \by_{K'}(\tau)$ for $\tau \in [0,\tau^{K'}_{*}(\bxo))$, so the constraints of $K'$ are active at $\bx(\tau)$. By Step~1, the constraints outside $J \cup G$ are inactive at $\bx(\tau)$ for all sufficiently small $\tau > 0$. For such $\tau$, the rows of $\matr{A}_{\eqset(\bx(\tau))}$ are linearly independent, and part~(i) gives $\bw$ with positive entries such that $-\matr{Q}\bx(\tau) - \bc - \bd(\bx(\tau)) = \matr{A}_{K'}^{\top}\bw$. The vector $\bp'$ equal to $\bw$ on $K'$ and to zero elsewhere is therefore feasible for the NNLS problem~\eqref{eq:nnls_problem} at $\bx(\tau)$, and its residual $\bd(\bx(\tau))$ attains the optimal value, so $\bp'$ is an optimal NNLS solution. Every optimal NNLS solution $\bp''$ satisfies $\Atop\bp'' = \bn(\bx(\tau)) = \Atop\bp'$ and is supported on $\eqset(\bx(\tau))$, whose rows are linearly independent, so $\bp'' = \bp'$. Hence $J(\bx(\tau)) = K'$ for all sufficiently small $\tau > 0$. The same holds for $K' = K$, so $K' = K$.
\hfill$\square$

\subsection{Proof of Theorem~\ref{thm:finite_general}}
\label{app:finite_general}
Throughout, let $\tau \mapsto \bx(\tau)$ denote the slow solution with $\bx(0) = \bx^{(0)}$, given piecewise by~\eqref{eq:global_solution}, and let $\lambda_{\max}$ denote the largest eigenvalue of $\matr{Q}$. Theorem~\ref{thm:finite_general} claims the breakpoints are finitely many, the direction $\bd(\bx(\tau))$ converges to a limit $\bd^{s}$, and $\bx(\tau)$ converges to an optimal solution if $\bd^{s} = \boldsymbol{0}$, while the problem is unbounded below otherwise. The main difficulty is the first claim. Theorem~\ref{thm:scc_local} gives every piece a positive length but no lower bound on it, so infinitely many breakpoints could accumulate at a finite time or tend to infinity. We exclude both cases by contradiction. We first collect tools in Appendix~\ref{app:shared_objects}, among them three lemmas on trajectories whose pieces accumulate at a finite time. Appendix~\ref{app:finite_general_setting} shows that every breakpoint is a zero of an affine \emph{event value}, and that no breakpoint occurs once every event value either vanishes identically or has no zero. Appendix~\ref{app:limit_direction} identifies the limit $\bd^{s}$ and proves the last claim of the theorem. Appendix~\ref{app:no_finite_accum} excludes accumulation at a finite time by expanding the deviation of the trajectory from an affine path in powers of the remaining time. Appendix~\ref{app:no_infinite_accum} excludes accumulation at infinity by expanding the deviation of the shifted trajectory $\bx(\tau) - \tau\bd^{s}$ from its limit along eigenvectors. Appendix~\ref{app:finite_general_proof} assembles the proof and derives the two formulas from the final piece.

\subsubsection{Tools}
\label{app:shared_objects}
Given $J \subseteq [m]$, let $\matr{P}^{\perp}_{J}$ denote the orthogonal projection onto $\ker\matr{A}_{J}$ and define
\begin{equation}\label{eq:hessian_compressed}
\matr{S}_{J} \coloneqq \matr{P}^{\perp}_{J}\matr{Q}\matr{P}^{\perp}_{J}.
\end{equation}
The matrix $\matr{S}_{J}$ is SPSD, with $\normtwo{\matr{S}_{J}} \leqslant \lambda_{\max}$ and $\image{\matr{S}_{J}} \subseteq \ker\matr{A}_{J}$, and $\matr{S}_{J}\bz = \matr{P}^{\perp}_{J}\matr{Q}\bz$ for every $\bz \in \ker\matr{A}_{J}$. When the rows of $\matr{A}_{J}$ are linearly independent, we write
\begin{equation}
\label{eq:pJ_flow}
\bp_{J}(\bx) \coloneqq -\left(\matr{A}_{J}\matr{A}_{J}^{\top}\right)^{-1}\matr{A}_{J}\nabla_{\bx}f(\bx)
\end{equation}
for the least-squares multiplier of~\eqref{eq:nnls_explicit}, so that $-\nabla_{\bx}f(\bx) = -\matr{P}^{\perp}_{J}\nabla_{\bx}f(\bx) + \matr{A}_{J}^{\top}\bp_{J}(\bx)$. The next lemma extends formula~\eqref{eq:steepest_dd} to cones larger than the tangent cone.

\begin{lem}
\label{lem:velocity_identity}
Let $J$ be an index set with linearly independent rows, let $\mathcal{K} \coloneqq \{\bs \in \Rn : \matr{A}_{J}\bs \leqslant \boldsymbol{0}\}$, and let $\bx \in \Rn$ satisfy $\bp_{J}(\bx) \geqslant \boldsymbol{0}$. Then $-\matr{P}^{\perp}_{J}\nabla_{\bx}f(\bx) = \proj_{\mathcal{K}}(-\nabla_{\bx}f(\bx))$. In particular, $\bd(\bx) = \proj_{\mathcal{K}}(-\nabla_{\bx}f(\bx))$ for every $\bx \in C$ and $J = J(\bx)$, with $\bp(\bx)$ chosen so that~\eqref{eq:LI} holds.
\end{lem}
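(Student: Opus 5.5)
We verify the characterization~\eqref{eq:char_projections} of the projection onto the closed convex cone $\mathcal{K}$, or equivalently the Moreau decomposition~\eqref{eq:moreau_decomp} with respect to $\mathcal{K}$ and its polar. Set $\bv \coloneqq -\matr{P}^{\perp}_{J}\nabla_{\bx}f(\bx)$ and $\bn \coloneqq \matr{A}_{J}^{\top}\bp_{J}(\bx)$. Because the rows of $\matr{A}_{J}$ are linearly independent, the decomposition $-\nabla_{\bx}f(\bx) = \bv + \bn$ recorded after~\eqref{eq:pJ_flow} holds. It suffices to show three things: $\bv \in \mathcal{K}$; $\bn$ lies in the polar cone $\mathcal{K}^{\circ}$; and $\langle \bv,\bn\rangle = 0$. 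Then for every $\by \in \mathcal{K}$ we get $\langle -\nabla_{\bx}f(\bx) - \bv, \by - \bv\rangle = \langle \bn,\by\rangle - \langle\bn,\bv\rangle = \langle \bn,\by\rangle \leqslant 0$, and~\eqref{eq:char_projections} identifies $\bv$ as $\proj_{\mathcal{K}}(-\nabla_{\bx}f(\bx))$.

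The three facts are short. First, $\matr{A}_{J}\matr{P}^{\perp}_{J} = \matr{0}$, so $\matr{A}_{J}\bv = \boldsymbol{0}$ and $\bv \in \mathcal{K}$. Second, for $\bs \in \mathcal{K}$ we have $\langle \bn,\bs\rangle = \langle \bp_{J}(\bx), \matr{A}_{J}\bs\rangle \leqslant 0$, because $\bp_{J}(\bx) \geqslant \boldsymbol{0}$ and $\matr{A}_{J}\bs \leqslant \boldsymbol{0}$. Third, orthogonality holds because $\bv \in \ker\matr{A}_{J}$ and $\bn \in \image{\matr{A}_{J}^{\top}} = (\ker\matr{A}_{J})^{\perp}$. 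Note that the hypothesis $\bp_{J}(\bx) \geqslant \boldsymbol{0}$ is used only in the second fact, and $\bx$ need not lie in $C$.

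For the ``in particular'' claim, take $\bx \in C$ and $J = J(\bx)$, with $\bp(\bx)$ chosen so that~\eqref{eq:LI} holds. Then the rows of $\matr{A}_{J}$ are linearly independent, and formula~\eqref{eq:nnls_explicit} shows that $\bp_{J}(\bx)$ of~\eqref{eq:pJ_flow} coincides with the positive entries $\bp_{J(\bx)}(\bx)$, so $\bp_{J}(\bx) > \boldsymbol{0}$. The first part therefore applies. Formula~\eqref{eq:steepest_dd} gives $\bd(\bx) = -\matr{P}^{\perp}_{J(\bx)}\nabla_{\bx}f(\bx)$, which yields $\bd(\bx) = \proj_{\mathcal{K}}(-\nabla_{\bx}f(\bx))$. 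When $J = \emptyset$, we have $\mathcal{K} = \Rn$, the projection is the identity, and $\matr{P}^{\perp}_{\emptyset} = \matr{I}_{n\times n}$, so the statement is trivial.

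There is no real obstacle. The only point needing care is to use the correct sign convention for the polar of $\mathcal{K}$, so that the characterization~\eqref{eq:char_projections} applies to the nonclosed-form cone $\mathcal{K}$ rather than to the tangent cone. This is handled directly by the inequality in the second fact.
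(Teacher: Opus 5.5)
Your proposal is correct and follows the paper's proof: you show that $-\matr{P}^{\perp}_{J}\nabla_{\bx}f(\bx)$ lies in $\ker\matr{A}_{J} \subseteq \mathcal{K}$, that $\matr{A}_{J}^{\top}\bp_{J}(\bx)$ lies in the polar of $\mathcal{K}$, and that the two are orthogonal, and you obtain the ``in particular'' claim from~\eqref{eq:nnls_explicit} and~\eqref{eq:steepest_dd}, exactly as the paper does. Your explicit check of~\eqref{eq:char_projections} only spells out why these three properties characterize the projection, which the paper states without detail.
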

\begin{proof}
The vector $-\matr{P}^{\perp}_{J}\nabla_{\bx}f(\bx)$ lies in $\ker\matr{A}_{J} \subseteq \mathcal{K}$, the residual $\matr{A}_{J}^{\top}\bp_{J}(\bx)$ is a nonnegative combination of the rows of $J$ and so lies in the polar cone of $\mathcal{K}$, and the two are orthogonal. These three properties characterize the projection onto $\mathcal{K}$. For $J = J(\bx)$, formula~\eqref{eq:nnls_explicit} shows that $\bp_{J}(\bx)$ is the restriction of $\bp(\bx)$ to $J$, which is positive, and formula~\eqref{eq:steepest_dd} gives $\bd(\bx) = -\matr{P}^{\perp}_{J}\nabla_{\bx}f(\bx)$.
\end{proof}

The next two lemmas reduce membership in a cone generated by possibly dependent rows to sign conditions on the maps~\eqref{eq:pJ_flow} and recall a basic inequality about closed convex cones.
\begin{lem}
\label{lem:row_bases}
Let $E \subseteq [m]$ and $\bv \in \image{\matr{A}_{E}^{\top}}$. Then $\bv \in \cone\{\matr{A}_{i}^{\top} : i \in E\}$ if and only if some $B \subseteq E$ such that the rows of $\matr{A}_{B}$ form a basis of the row space of $\matr{A}_{E}$ satisfies $(\matr{A}_{B}\matr{A}_{B}^{\top})^{-1}\matr{A}_{B}\bv \geqslant \boldsymbol{0}$.
\end{lem}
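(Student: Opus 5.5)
The plan is a direct two-way argument built on Carath\'eodory's theorem for cones, applied within the row space of $\matr{A}_{E}$. Throughout, fix $\bv \in \image{\matr{A}_{E}^{\top}}$. The key observation is this. If the rows of $\matr{A}_{B}$ form a basis of the row space of $\matr{A}_{E}$, then $\image{\matr{A}_{B}^{\top}} = \image{\matr{A}_{E}^{\top}}$, and the rows of $\matr{A}_{B}$ are linearly independent. Hence $\bv = \matr{A}_{B}^{\top}\bq$ for a unique $\bq \in \R^{|B|}$. Multiplying by $\matr{A}_{B}$ and inverting the Gram matrix $\matr{A}_{B}\matr{A}_{B}^{\top}$ gives $\bq = (\matr{A}_{B}\matr{A}_{B}^{\top})^{-1}\matr{A}_{B}\bv$. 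So the vector in the statement is exactly the coordinate vector of $\bv$ in the basis $B$.

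The ``if'' direction is then immediate. If $\bq \geqslant \boldsymbol{0}$ for some such $B$, then $\bv = \matr{A}_{B}^{\top}\bq$ is a nonnegative combination of the rows indexed by $B \subseteq E$. Extending $\bq$ by zero outside $B$ shows $\bv \in \cone\{\matr{A}_{i}^{\top} : i \in E\}$.

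For the ``only if'' direction, suppose $\bv \in \cone\{\matr{A}_{i}^{\top} : i \in E\}$. By Carath\'eodory's theorem for cones~\cite[Corollary~17.1.2]{rockafellar1997convex}, the same tool used for~\eqref{eq:LI}, we can write $\bv = \matr{A}_{B_{0}}^{\top}\bq_{0}$ with $B_{0} \subseteq E$, $\bq_{0} \geqslant \boldsymbol{0}$, and the rows of $\matr{A}_{B_{0}}$ linearly independent. By the exchange lemma, we then extend $B_{0}$ to a set $B \subseteq E$ whose rows form a basis of the row space of $\matr{A}_{E}$, adding only indices from $E$. Define $\bq$ on $B$ by $\bq_{B_{0}} = \bq_{0}$ and $\bq_{B \setminus B_{0}} = \boldsymbol{0}$. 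Then $\bq \geqslant \boldsymbol{0}$ and $\matr{A}_{B}^{\top}\bq = \bv$. By the uniqueness of coordinates established above, $\bq = (\matr{A}_{B}\matr{A}_{B}^{\top})^{-1}\matr{A}_{B}\bv$, which is therefore nonnegative.

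There is no serious obstacle. The only points needing care are the degenerate cases. When $\bv = \boldsymbol{0}$, take $B_{0} = \emptyset$. When $\matr{A}_{E}$ has rank zero, read $B = \emptyset$ with the empty inequality holding trivially, following the convention of Section~\ref{subsec:computing_nnls}. Beyond these, one must check that the exchange-lemma extension can stay inside $E$; it can, since the rows of $\matr{A}_{E}$ span the row space by definition.
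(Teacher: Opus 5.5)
Your proof is correct and follows the paper's argument essentially step for step. Carath\'eodory's theorem gives a linearly independent support inside $E$, which you extend within $E$ to a row basis $B$ with zero coefficients. For the converse you read $(\matr{A}_{B}\matr{A}_{B}^{\top})^{-1}\matr{A}_{B}\bv$ as the unique coordinate vector of $\bv$ in that basis, which is the same as the paper's use of $\matr{A}_{B}^{\top}(\matr{A}_{B}\matr{A}_{B}^{\top})^{-1}\matr{A}_{B}$ as the orthogonal projection onto $\image{\matr{A}_{E}^{\top}}$.
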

\begin{proof}
Suppose that $\bv$ lies in the cone. By Carath\'eodory's theorem~\cite[Corollary~17.1.2]{rockafellar1997convex}, $\bv = \matr{A}_{L}^{\top}\bm{u}$ for some $L \subseteq E$ with linearly independent rows and some $\bm{u} \geqslant \boldsymbol{0}$. Extend $L$ to a set $B \subseteq E$ whose rows form a basis of the row space of $\matr{A}_{E}$, and extend $\bm{u}$ by zero to $B$. Then $\bv = \matr{A}_{B}^{\top}\bm{u}$, and multiplying by $(\matr{A}_{B}\matr{A}_{B}^{\top})^{-1}\matr{A}_{B}$ gives $(\matr{A}_{B}\matr{A}_{B}^{\top})^{-1}\matr{A}_{B}\bv = \bm{u} \geqslant \boldsymbol{0}$. Conversely, $\image{\matr{A}_{B}^{\top}} = \image{\matr{A}_{E}^{\top}}$ contains $\bv$, and $\matr{A}_{B}^{\top}(\matr{A}_{B}\matr{A}_{B}^{\top})^{-1}\matr{A}_{B}$ is the orthogonal projection onto it. Hence $\bv = \matr{A}_{B}^{\top}(\matr{A}_{B}\matr{A}_{B}^{\top})^{-1}\matr{A}_{B}\bv$, a nonnegative combination of rows of $\matr{A}_{E}$.
\end{proof}

\begin{lem}
\label{lem:nested_cones}
Let $\mathcal{K}_{2} \subseteq \mathcal{K}_{1}$ be closed convex cones, let $\bz \in \Rn$, and let $\bz_{r} \coloneqq \proj_{\mathcal{K}_{r}}(\bz)$ for $r = 1,2$. Then $\normsq{\bz_{1} - \bz_{2}} \leqslant \normsq{\bz_{1}} - \normsq{\bz_{2}}$.
\end{lem}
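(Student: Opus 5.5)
The plan is to use the variational characterization of projections onto closed convex cones, namely~\eqref{eq:char_projections} specialized to cones. For a closed convex cone $\mathcal{K}$ and $\bz_{\mathcal{K}} \coloneqq \proj_{\mathcal{K}}(\bz)$, inequality~\eqref{eq:char_projections} with $\by = 2\bz_{\mathcal{K}}$ and $\by = \boldsymbol{0}$ gives $\langle \bz - \bz_{\mathcal{K}}, \bz_{\mathcal{K}}\rangle = 0$. The inequality then reads $\langle \bz - \bz_{\mathcal{K}}, \by\rangle \leqslant 0$ for every $\by \in \mathcal{K}$. Equivalently, this is Moreau's decomposition~\eqref{eq:moreau_decomp}.

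The first step applies this to $\mathcal{K}_{1}$ with test vector $\by = \bz_{2}$, which is admissible because $\bz_{2} \in \mathcal{K}_{2} \subseteq \mathcal{K}_{1}$. This yields $\langle \bz - \bz_{1}, \bz_{2}\rangle \leqslant 0$, that is, $\langle \bz,\bz_{2}\rangle \leqslant \langle \bz_{1},\bz_{2}\rangle$. Applying the orthogonality identity to $\mathcal{K}_{2}$ gives $\langle \bz,\bz_{2}\rangle = \normsq{\bz_{2}}$. Together these give $\normsq{\bz_{2}} \leqslant \langle \bz_{1},\bz_{2}\rangle$.

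The last step expands the square:
\[
\normsq{\bz_{1} - \bz_{2}} = \normsq{\bz_{1}} - 2\langle\bz_{1},\bz_{2}\rangle + \normsq{\bz_{2}} \leqslant \normsq{\bz_{1}} - 2\normsq{\bz_{2}} + \normsq{\bz_{2}} = \normsq{\bz_{1}} - \normsq{\bz_{2}},
\]
which is the claim. There is no real obstacle here. The only point needing care is justifying the two cone facts (orthogonality and the polar inequality) from~\eqref{eq:char_projections}, which uses only that cones are closed under scaling by $0$ and $2$.
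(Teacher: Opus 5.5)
Your proposal is correct and follows essentially the same route as the paper's proof: you use orthogonality of the projection onto $\mathcal{K}_{2}$ together with the variational inequality for $\mathcal{K}_{1}$ tested at $\bz_{2} \in \mathcal{K}_{1}$, then expand the square. The only addition is that you spell out how the two cone facts follow from the general projection characterization by testing at $\boldsymbol{0}$ and $2\bz_{\mathcal{K}}$, which the paper leaves implicit.
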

\begin{proof}
Expanding the squares shows that the claim is equivalent to $\langle\bz_{1}-\bz_{2},\bz_{2}\rangle \geqslant 0$. The projection onto $\mathcal{K}_{2}$ satisfies $\langle\bz-\bz_{2},\bz_{2}\rangle = 0$, so $\langle\bz_{1}-\bz_{2},\bz_{2}\rangle = \langle\bz_{1}-\bz,\bz_{2}\rangle$, and the projection onto $\mathcal{K}_{1}$ satisfies $\langle\bz-\bz_{1},\bs\rangle \leqslant 0$ for every $\bs \in \mathcal{K}_{1}$, in particular for $\bs = \bz_{2}$.
\end{proof}

The next three lemmas prove control results about a trajectory whose pieces accumulate at a finite time, which will be applied in Appendix~\ref{app:no_finite_accum} to the slow solution. Let $\bar{\tau} < +\infty$, let $\tau^{(0)} < \tau^{(1)} < \cdots$ increase to $\bar{\tau}$, and call the intervals $[\tau^{(k)},\tau^{(k+1)})$ \emph{pieces}. Let $\mathcal{J}$ be a finite family of index sets, and let $J(\cdot)$ be constant on each piece with values in $\mathcal{J}$. A piece on which $J(\cdot) = J$ is a \emph{$J$-piece}. Every $J \in \mathcal{J}$ is assumed to occur on infinitely many pieces. We write $\sigma(\tau) \coloneqq \bar{\tau} - \tau$ for the remaining time.

\begin{lem}
\label{lem:driven_shared}
In this setting, let $s$ be positive and nonincreasing on $[\tau^{(0)},\bar{\tau})$, and put $\Lambda(\tau) \coloneqq \int_{\tau}^{\bar{\tau}}s(\rho)\diff\rho$, so that $\Lambda \leqslant \sigma s$. Let $\bh \in \Rn$, and for $J \in \mathcal{J}$ let $\bu_{J} \coloneqq \matr{P}^{\perp}_{J}\bh$ be the \emph{drift} of $J$ and $\bq_{J} \coloneqq \bh - \bu_{J} \in \image{\matr{A}_{J}^{\top}}$; let $u_{\max} \coloneqq \max_{J \in \mathcal{J}}\normtwo{\bu_{J}}$. Let $\bm{\omega}$ be bounded and measurable on $[\tau^{(0)},\bar{\tau})$ and satisfy, for every $\tau$ there,
\begin{equation}
\label{eq:driven_shared}
\bm{\omega}(\tau) = -\int_{\tau}^{\bar{\tau}}\bigl(s(\rho)\bu_{J(\rho)} - \matr{S}_{J(\rho)}\bm{\omega}(\rho)\bigr)\diff\rho
\qquad \text{and} \qquad
\matr{A}_{J(\tau)}\bm{\omega}(\tau) = \boldsymbol{0}.
\end{equation}
Then:
\begin{itemize}
\item[(i)] $\normtwo{\bm{\omega}(\tau)} \leqslant 2u_{\max}\,\Lambda(\tau)$ whenever $\lambda_{\max}\sigma(\tau) \leqslant \tfrac12$;
\item[(ii)] the drifts $\bu_{J}$, $J \in \mathcal{J}$, all equal one vector, which lies in $\ker\matr{A}_{J}$ for every $J \in \mathcal{J}$.
\end{itemize}
\end{lem}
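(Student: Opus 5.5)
Part~(i) will come from a Gr\"onwall-type bound on the integral equation~\eqref{eq:driven_shared}, and part~(ii) from combining the kernel condition $\matr{A}_{J(\tau)}\bm{\omega}(\tau) = \boldsymbol{0}$ with the smallness of $\bm{\omega}$ from part~(i).

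For part~(i), put $W(\tau) \coloneqq \sup_{\rho \in [\tau,\bar{\tau})}\normtwo{\bm{\omega}(\rho)}$, which is finite because $\bm{\omega}$ is bounded. We use $\normtwo{\matr{S}_{J}} \leqslant \lambda_{\max}$ and $\normtwo{\bu_{J}} \leqslant u_{\max}$. Then~\eqref{eq:driven_shared} gives, for every $\rho' \geqslant \tau$,
\[
\normtwo{\bm{\omega}(\rho')} \leqslant u_{\max}\Lambda(\rho') + \lambda_{\max}\sigma(\rho')W(\tau) \leqslant u_{\max}\Lambda(\tau) + \lambda_{\max}\sigma(\tau)W(\tau),
\]
since $\Lambda$ and $\sigma$ are nonincreasing. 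Taking the supremum over $\rho'$ gives $W(\tau)(1-\lambda_{\max}\sigma(\tau)) \leqslant u_{\max}\Lambda(\tau)$. When $\lambda_{\max}\sigma(\tau) \leqslant \tfrac12$, this yields $W(\tau) \leqslant 2u_{\max}\Lambda(\tau)$, which is~(i). The bound $\Lambda \leqslant \sigma s$ follows because $s$ is nonincreasing.

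For part~(ii), fix $J \in \mathcal{J}$ and a $J$-piece $[\tau^{(k)},\tau^{(k+1)})$ close to $\bar{\tau}$. On this piece, differentiate~\eqref{eq:driven_shared} almost everywhere, or equivalently take the difference of the integral identity at the two endpoints. Applying $\matr{A}_{J}$ annihilates $\bm{\omega}$ on the piece and also annihilates $\matr{S}_{J}\bm{\omega}$ and $\bu_{J}$, since both lie in $\ker\matr{A}_{J}$. This alone gives nothing, so the argument has to exploit the other pieces. The plan is to show first that $\bu_{J} \in \ker\matr{A}_{J'}$ for every $J' \in \mathcal{J}$.

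To do this, integrate~\eqref{eq:driven_shared} from $\tau$ to $\bar{\tau}$ with $\tau$ a left endpoint of a $J'$-piece, and apply $\matr{A}_{J'}$, which kills $\bm{\omega}(\tau)$. This gives
\[
\boldsymbol{0} = \matr{A}_{J'}\int_{\tau}^{\bar{\tau}}\bigl(s\,\bu_{J(\rho)} - \matr{S}_{J(\rho)}\bm{\omega}\bigr)\diff\rho .
\]
By part~(i), the $\bm{\omega}$-term is $O(\sigma\Lambda) = o(\Lambda)$. Dividing by $\Lambda(\tau)$ then shows that the $s$-weighted average of $\matr{A}_{J'}\bu_{J(\rho)}$ over $[\tau,\bar{\tau})$ tends to zero along such $\tau$. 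The same reasoning gives $\bm{\omega}(\tau)/\Lambda(\tau) \to$ minus the weighted average of the drifts, so these averages $\bar{\bu}(\tau)$ converge along subsequences to limit vectors lying in $\ker\matr{A}_{J'}$ for all $J'$, because $\bm{\omega}(\tau) \in \ker\matr{A}_{J(\tau)}$ and every $J'$ occurs arbitrarily late.

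To pass from averages to the individual drifts, use $\bh = \bu_{J} + \bq_{J}$ with $\bq_{J} \perp \ker\matr{A}_{J}$. Any vector $\bar{\bu}$ in $\bigcap_{J'}\ker\matr{A}_{J'}$ satisfies $\langle \bu_{J},\bar{\bu}\rangle = \langle\bh,\bar{\bu}\rangle$ for every $J$. A limit average $\bar{\bu}$ is a convex combination of the drifts, so it also satisfies $\normsq{\bar{\bu}} = \langle\bh,\bar{\bu}\rangle$. On the other hand $\normsq{\bu_{J}} = \langle\bh,\bu_{J}\rangle$, and convexity of the squared norm then forces all drifts carrying positive weight to coincide with $\bar{\bu}$. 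The main obstacle is to ensure that every $J$ receives positive limiting weight. If averages over windows fail to give this, I would instead use the windows from the start of a $J$-piece to the start of the next piece of a different type, compare consecutive windows, and take differences of the identities to isolate single pieces.
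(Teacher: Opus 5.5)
Your part~(i) is correct and is the paper's argument.

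Part~(ii) has a genuine gap. First, you claim that subsequential limits of the window averages $\bar{\bu}(\tau) \coloneqq \Lambda(\tau)^{-1}\int_{\tau}^{\bar{\tau}} s(\rho)\bu_{J(\rho)}\diff\rho$ lie in $\ker\matr{A}_{J'}$ for \emph{every} $J' \in \mathcal{J}$. That does not follow from what you derived. At a time $\tau$ you only learn $\matr{A}_{J}\bar{\bu}(\tau) = O(\sigma(\tau))$ for the set $J = J(\tau)$ of the piece containing $\tau$ (and, at a breakpoint, the adjacent set). Different $J'$ therefore give different subsequences, and nothing forces them to share a limit. In fact $\bar{\bu}(\tau)$ need not converge. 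With $s = \sigma$ and breakpoints at $\sigma = 2^{-k}$, each piece carries $3/4$ of the remaining weight $\Lambda$. If $J_{1}$- and $J_{2}$-pieces alternate, the average then has distinct even and odd limits unless $\bu_{J_{1}} = \bu_{J_{2}}$, which is what you are trying to prove.

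Second, as you note yourself, even a common limit in $\bigcap_{J'}\ker\matr{A}_{J'}$ would only identify the drifts that keep positive limiting weight. The pieces of some $J$ may have relative weight tending to zero. Your error term is $O(\sigma\Lambda)$, measured against all of $\Lambda$, so it can swamp those pieces. The fallback of isolating single pieces is not worked out and does not obviously help. On a $J$-piece every term of the difference identity lies in $\ker\matr{A}_{J}$, and the endpoint values of $\bm{\omega}$ are constrained only by the kernels of the adjacent sets.

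The missing idea is a single test vector that makes the integrand signed and confines the error to the pieces whose drift differs.
\begin{itemize}
\item[(a)] Choose $\tilde{J}$ minimizing $\normtwo{\bu_{J}}$, and set $\bu \coloneqq \bu_{\tilde{J}}$ and $\bq \coloneqq \bh - \bu \in \image{\matr{A}_{\tilde{J}}^{\top}}$.
\item[(b)] Then $\langle\bq,\bu_{J'}\rangle = \langle\bu_{J'},\bu_{J'} - \bu\rangle \geqslant \tfrac12\normsq{\bu_{J'} - \bu}$ by minimality of $\normtwo{\bu}$.
\item[(c)] Also $\bq^{\top}\matr{S}_{J'} = \boldsymbol{0}^{\top}$ whenever $\bu_{J'} = \bu$, because then $\matr{P}^{\perp}_{J'}\bq = \boldsymbol{0}$.
\item[(d)] Test~\eqref{eq:driven_shared} with $\bq$ at the left endpoint $\tau^{(k)}$ of a late $\tilde{J}$-piece, where $\langle\bq,\bm{\omega}(\tau^{(k)})\rangle = 0$. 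By (b) and (c), both terms of the integrand vanish outside the set $\Theta$ of times with $\bu_{J(\rho)} \neq \bu$.
\item[(e)] On $\Theta$, the drift term is at least $\beta s(\rho)$, with $\beta \coloneqq \tfrac12\min\{\normsq{\bu_{J'} - \bu} : \bu_{J'} \neq \bu\}$. Part~(i) and $\Lambda \leqslant \sigma s$ bound the other term \emph{pointwise} by $2\lambda_{\max}u_{\max}\normtwo{\bq}\sigma(\tau^{(k)})s(\rho)$.
\item[(f)] Hence $\bigl(\beta - 2\lambda_{\max}u_{\max}\normtwo{\bq}\sigma(\tau^{(k)})\bigr)\int_{\Theta}s(\rho)\diff\rho \leqslant 0$.
\end{itemize}
If some drift differed from $\bu$, (f) would force $\int_{\Theta}s = 0$ for late $\tau^{(k)}$. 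Then $\Theta$, a union of pieces of positive length with $s > 0$, would be empty, contradicting the recurrence of every $J' \in \mathcal{J}$. This holds however small the relative weight of those pieces is. So all drifts equal $\bu = \matr{P}^{\perp}_{J'}\bh \in \ker\matr{A}_{J'}$ for every $J'$. The pointwise-in-$\rho$ comparison, made possible by the sign in (b) and the annihilation in (c), is exactly what your averaging argument lacks.
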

\begin{proof}
Let $\Omega(\tau) \coloneqq \sup_{\rho \in [\tau,\bar{\tau})}\normtwo{\bm{\omega}(\rho)}$, finite because $\bm{\omega}$ is bounded. For $\rho \geqslant \tau$, the first identity of~\eqref{eq:driven_shared} gives
\[
\normtwo{\bm{\omega}(\rho)} \leqslant u_{\max}\,\Lambda(\rho) + \lambda_{\max}\sigma(\rho)\Omega(\rho) \leqslant u_{\max}\,\Lambda(\tau) + \tfrac12\Omega(\tau),
\]
because $\Lambda$ and $\sigma$ are nonincreasing and $\lambda_{\max}\sigma(\tau) \leqslant \tfrac12$. Taking the supremum over $\rho$ gives $\Omega(\tau) \leqslant u_{\max}\,\Lambda(\tau) + \tfrac12\Omega(\tau)$. This proves (i).

Now, choose $\tilde{J} \in \mathcal{J}$ minimizing $\normtwo{\bu_{J}}$ and write $\bu \coloneqq \bu_{\tilde{J}}$, $\bq \coloneqq \bq_{\tilde{J}}$. For $J' \in \mathcal{J}$, since $\matr{P}^{\perp}_{J'}$ is symmetric and idempotent,
\begin{equation}
\label{eq:qtest_shared}
\bq^{\top}\bu_{J'} = \langle\bu_{J'},\bu_{J'} - \bu\rangle = \tfrac12\normsq{\bu_{J'} - \bu} + \tfrac12\bigl(\normsq{\bu_{J'}} - \normsq{\bu}\bigr) \geqslant \tfrac12\normsq{\bu_{J'} - \bu},
\end{equation}
and if $\bu_{J'} = \bu$, then $\bu \in \image{\matr{P}^{\perp}_{J'}}$ and $\matr{P}^{\perp}_{J'}\bq = \bu_{J'} - \matr{P}^{\perp}_{J'}\bu = \boldsymbol{0}$, hence
\begin{equation}
\label{eq:fullann_shared}
\bq^{\top}\matr{S}_{J'} = \bigl(\matr{P}^{\perp}_{J'}\bq\bigr)^{\top}\matr{Q}\matr{P}^{\perp}_{J'} = \boldsymbol{0}^{\top}.
\end{equation}
Suppose that some $J' \in \mathcal{J}$ has $\bu_{J'} \neq \bu$, so that $\beta = \tfrac12\min\{\normsq{\bu_{J'} - \bu} : \bu_{J'} \neq \bu\} > 0$, the family $\mathcal{J}$ being finite. Let $\tau^{(k)}$ be the left endpoint of a $\tilde{J}$-piece with $\lambda_{\max}\sigma(\tau^{(k)}) \leqslant \tfrac12$, and let $\Theta$ be the set of times $\rho \geqslant \tau^{(k)}$ with $\bu_{J(\rho)} \neq \bu$. Then $\bq^{\top}\bm{\omega}(\tau^{(k)}) = 0$, by the second identity of~\eqref{eq:driven_shared} and $\bq \in \image{\matr{A}_{\tilde{J}}^{\top}}$. We test the first identity at $\tau = \tau^{(k)}$ with $\bq$. A time $\rho \notin \Theta$ contributes nothing, by~\eqref{eq:qtest_shared} and~\eqref{eq:fullann_shared}. At a time $\rho \in \Theta$, the first term of the integrand contributes at least $\beta s(\rho)$, by~\eqref{eq:qtest_shared}, and the second at most $2\lambda_{\max}u_{\max}\normtwo{\bq}\sigma(\tau^{(k)})s(\rho)$ in absolute value, by (i) and $\Lambda(\rho) \leqslant \sigma(\rho)s(\rho) \leqslant \sigma(\tau^{(k)})s(\rho)$. Hence $\bigl(\beta - 2\lambda_{\max}u_{\max}\normtwo{\bq}\sigma(\tau^{(k)})\bigr)\int_{\Theta}s(\rho)\diff\rho \leqslant 0$. Since $\tilde{J}$ occurs on infinitely many pieces and $\sigma(\tau^{(k)}) \to 0$, some such $\tau^{(k)}$ makes the factor in parentheses positive, and then the integral vanishes. The set $\Theta$ is a union of pieces, each of positive length, and $s > 0$, so $\Theta$ is empty. But every $J' \in \mathcal{J}$ has pieces after $\tau^{(k)}$, a contradiction. Hence $\bu_{J'} = \bu$ for every $J' \in \mathcal{J}$, and $\bu = \matr{P}^{\perp}_{J'}\bh \in \ker\matr{A}_{J'}$.
\end{proof}

\begin{lem}
\label{lem:alignment_shared}
Let $\bar{\bv} \in \ker\matr{A}_{J}$ for every $J \in \mathcal{J}$, and let $\bm{\zeta}$ be bounded and measurable and satisfy~\eqref{eq:driven_shared} with $s = \sigma$ and $\bh = \matr{Q}\bar{\bv}$. Then there exist vectors $\bu^{[0]} \coloneqq \bar{\bv}, \bu^{[1]}, \bu^{[2]}, \dots$ such that $\matr{S}_{J}\bu^{[j-1]} = \bu^{[j]} \in \ker\matr{A}_{J}$ for every $J \in \mathcal{J}$ and every $j \geqslant 1$, and such that, for every $r \geqslant 0$, the function $\bm{\omega}_{r} \coloneqq \bm{\zeta} + \sum_{j=1}^{r}\frac{\sigma^{j+1}}{(j+1)!}\bu^{[j]}$ satisfies
\begin{equation}
\label{eq:levelrig_shared}
\normtwo{\bm{\omega}_{r}(\tau)} \leqslant 2\normtwo{\bu^{[r+1]}}\frac{\sigma(\tau)^{r+2}}{(r+2)!}
\qquad \text{whenever $\lambda_{\max}\sigma(\tau) \leqslant \tfrac12$.}
\end{equation}
\end{lem}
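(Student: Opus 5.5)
Argue by induction on $r$, using Lemma~\ref{lem:driven_shared} at every level. The key observation is that $\bm{\omega}_{r}$ satisfies a system of the form~\eqref{eq:driven_shared}, with a new driving weight and drift. Compute directly with $\frac{\diff}{\diff\tau}\frac{\sigma^{j+1}}{(j+1)!} = -\frac{\sigma^{j}}{j!}$. If $\bu^{[1]},\dots,\bu^{[r]}$ are already constructed with $\matr{S}_{J}\bu^{[j-1]} = \bu^{[j]} \in \ker\matr{A}_{J}$ for all $J \in \mathcal{J}$, then the integral identity for $\bm{\zeta}$ (with $s=\sigma$, $\bh = \matr{Q}\bar{\bv}$) gives
\[
\bm{\omega}_{r}(\tau) = -\int_{\tau}^{\bar{\tau}}\Bigl(\tfrac{\sigma(\rho)^{r+1}}{(r+1)!}\,\matr{P}^{\perp}_{J(\rho)}\matr{Q}\bu^{[r]} - \matr{S}_{J(\rho)}\bm{\omega}_{r}(\rho)\Bigr)\diff\rho .
\]
Verify this by telescoping. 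Since $\bar{\bv} = \bu^{[0]} \in \ker\matr{A}_{J}$, the drift of level $0$ is $\matr{P}^{\perp}_{J}\matr{Q}\bar{\bv} = \matr{S}_{J}\bu^{[0]} = \bu^{[1]}$. Inserting $\bm{\omega}_{r} = \bm{\zeta} + \sum_{j}$, the term $\matr{S}_{J}\frac{\sigma^{j+1}}{(j+1)!}\bu^{[j]} = \frac{\sigma^{j+1}}{(j+1)!}\bu^{[j+1]}$ cancels against the derivative of the next summand. Only the top term $\frac{\sigma^{r+1}}{(r+1)!}\matr{S}_{J}\bu^{[r]}$ survives. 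Because $\bu^{[r]} \in \ker\matr{A}_{J}$, this equals $\frac{\sigma^{r+1}}{(r+1)!}\matr{P}^{\perp}_{J}\matr{Q}\bu^{[r]}$. The constraint $\matr{A}_{J(\tau)}\bm{\omega}_{r}(\tau) = \boldsymbol{0}$ follows from that of $\bm{\zeta}$ and from $\bu^{[j]} \in \ker\matr{A}_{J}$ for every $J$. Boundedness and measurability are inherited.

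Thus $\bm{\omega}_{r}$ satisfies~\eqref{eq:driven_shared} with $s = \sigma^{r+1}/(r+1)!$, which is positive and nonincreasing, and with $\bh = \matr{Q}\bu^{[r]}$. Apply Lemma~\ref{lem:driven_shared}(ii) to this system: the drifts $\matr{P}^{\perp}_{J}\matr{Q}\bu^{[r]}$ coincide for all $J \in \mathcal{J}$ and lie in every $\ker\matr{A}_{J}$. Call this common vector $\bu^{[r+1]}$. Since $\bu^{[r]} \in \ker\matr{A}_{J}$, it equals $\matr{S}_{J}\bu^{[r]}$, which extends the construction by one level. The base case $r=0$ is the same argument applied to $\bm{\omega}_{0} = \bm{\zeta}$ itself with $\bh = \matr{Q}\bu^{[0]}$, which produces $\bu^{[1]}$.

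For the bound, apply Lemma~\ref{lem:driven_shared}(i) to the system of level $r$. Its drift has norm $u_{\max} = \normtwo{\bu^{[r+1]}}$, since all drifts are equal, and $\Lambda(\tau) = \int_{\tau}^{\bar{\tau}}\sigma^{r+1}/(r+1)!\,\diff\rho = \sigma(\tau)^{r+2}/(r+2)!$. This gives~\eqref{eq:levelrig_shared} exactly.

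The main obstacle is the bookkeeping in the telescoping identity, specifically that the drift of level $r$ is stated through $\matr{P}^{\perp}_{J}\matr{Q}$ rather than $\matr{S}_{J}$. The substitution $\matr{S}_{J}\bz = \matr{P}^{\perp}_{J}\matr{Q}\bz$ is valid only for $\bz \in \ker\matr{A}_{J}$. So the inductive hypothesis must carry the fact that every $\bu^{[j]}$ lies in every $\ker\matr{A}_{J}$, and this is precisely what Lemma~\ref{lem:driven_shared}(ii) supplies at each stage.
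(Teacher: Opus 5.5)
Your proof is correct and follows the paper's argument. In both, you induct on $r$, show that $\bm{\omega}_{r}$ satisfies~\eqref{eq:driven_shared} with $s = \sigma^{r+1}/(r+1)!$ and $\bh = \matr{Q}\bu^{[r]}$, and then apply Lemma~\ref{lem:driven_shared}(ii) to obtain $\bu^{[r+1]}$ and Lemma~\ref{lem:driven_shared}(i) to obtain the bound. The only difference is cosmetic: you verify the level-$r$ identity by telescoping directly from the identity for $\bm{\zeta}$, while the paper derives it in one step from the level-$(r-1)$ identity via $\bm{\omega}_{r}(\tau) = \int_{\tau}^{\bar{\tau}}\matr{S}_{J(\rho)}\bm{\omega}_{r-1}(\rho)\diff\rho$.
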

\begin{proof}
We show by induction on $r \geqslant 0$ that $\bu^{[1]},\dots,\bu^{[r]}$ exist with the stated relations and that $\bm{\omega}_{r}$ satisfies the hypotheses of Lemma~\ref{lem:driven_shared} with $s = \sigma^{r+1}/(r+1)!$ and $\bh = \matr{Q}\bu^{[r]}$, whose drifts are $\matr{P}^{\perp}_{J}\matr{Q}\bu^{[r]} = \matr{S}_{J}\bu^{[r]}$ because $\bu^{[r]} \in \ker\matr{A}_{J}$. Part~(ii) of that lemma then supplies the common vector $\bu^{[r+1]} \coloneqq \matr{S}_{J}\bu^{[r]} \in \ker\matr{A}_{J}$, and part~(i), with $\Lambda = \sigma^{r+2}/(r+2)!$ and $u_{\max} = \normtwo{\bu^{[r+1]}}$, gives~\eqref{eq:levelrig_shared}.

For $r = 0$, the function $\bm{\omega}_{0} = \bm{\zeta}$ satisfies the hypotheses by assumption. Let $r \geqslant 1$. The function $\bm{\omega}_{r} = \bm{\omega}_{r-1} + \frac{\sigma^{r+1}}{(r+1)!}\bu^{[r]}$ is bounded, and $\matr{A}_{J(\tau)}\bm{\omega}_{r}(\tau) = \boldsymbol{0}$ because $\bu^{[r]} \in \ker\matr{A}_{J(\tau)}$. Since $\frac{\sigma(\tau)^{r+1}}{(r+1)!} = \int_{\tau}^{\bar{\tau}}\frac{\sigma(\rho)^{r}}{r!}\diff\rho$, the identity~\eqref{eq:driven_shared} for $\bm{\omega}_{r-1}$, whose drifts are $\bu^{[r]}$, gives
\[
\bm{\omega}_{r}(\tau) = \int_{\tau}^{\bar{\tau}}\matr{S}_{J(\rho)}\bm{\omega}_{r-1}(\rho)\diff\rho = -\int_{\tau}^{\bar{\tau}}\Bigl(\frac{\sigma(\rho)^{r+1}}{(r+1)!}\matr{S}_{J(\rho)}\bu^{[r]} - \matr{S}_{J(\rho)}\bm{\omega}_{r}(\rho)\Bigr)\diff\rho,
\]
which is~\eqref{eq:driven_shared} for $\bm{\omega}_{r}$ with $s = \sigma^{r+1}/(r+1)!$ and the drifts $\matr{S}_{J}\bu^{[r]}$.
\end{proof}

\begin{lem}
\label{lem:sign_shared}
In the setting of Lemma~\ref{lem:alignment_shared}, every $\ba \in \Rn$ admits a time $\tau_{\ba} < \bar{\tau}$ such that $\tau \mapsto \langle\ba,\bm{\zeta}(\tau)\rangle$ either vanishes at every $\tau \in [\tau_{\ba},\bar{\tau})$ or at none.
\end{lem}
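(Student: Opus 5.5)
We plan to use Lemma~\ref{lem:alignment_shared}, which expands $\bm{\zeta}$ as a polynomial in $\sigma$ with a remainder of controlled order. Fix $\ba \in \Rn$. For every $r \geqslant 0$, the definition of $\bm{\omega}_{r}$ gives
\[
\langle\ba,\bm{\zeta}(\tau)\rangle = -\sum_{j=1}^{r}\frac{\sigma(\tau)^{j+1}}{(j+1)!}\langle\ba,\bu^{[j]}\rangle + \langle\ba,\bm{\omega}_{r}(\tau)\rangle,
\]
and by~\eqref{eq:levelrig_shared} the last term is bounded by $2\normtwo{\ba}\normtwo{\bu^{[r+1]}}\sigma(\tau)^{r+2}/(r+2)!$ once $\lambda_{\max}\sigma(\tau) \leqslant \tfrac12$. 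We then split into two cases according to whether some coefficient $\langle\ba,\bu^{[j]}\rangle$ with $j \geqslant 1$ is nonzero.

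\emph{Case 1: some coefficient is nonzero.} Let $j_{0} \geqslant 1$ be the least index with $\langle\ba,\bu^{[j_{0}]}\rangle \neq 0$, and take $r = j_{0}$. The expansion then reads $\langle\ba,\bm{\zeta}(\tau)\rangle = -\frac{\sigma^{j_{0}+1}}{(j_{0}+1)!}\langle\ba,\bu^{[j_{0}]}\rangle + O(\sigma^{j_{0}+2})$. The constant in the error term is uniform in $\tau$. Choose $\tau_{\ba}$ so close to $\bar{\tau}$ that $\lambda_{\max}\sigma \leqslant \tfrac12$ holds there and the error is strictly smaller than the leading term. On $[\tau_{\ba},\bar{\tau})$ we have $\sigma > 0$, so the function has the fixed sign of $-\langle\ba,\bu^{[j_{0}]}\rangle$ and vanishes nowhere.

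\emph{Case 2: all coefficients vanish.} Then $\langle\ba,\bm{\zeta}(\tau)\rangle = \langle\ba,\bm{\omega}_{r}(\tau)\rangle$ for every $r$, so
\[
|\langle\ba,\bm{\zeta}(\tau)\rangle| \leqslant 2\normtwo{\ba}\normtwo{\bu^{[r+1]}}\frac{\sigma^{r+2}}{(r+2)!}.
\]
The vectors satisfy $\bu^{[r+1]} = \matr{S}_{J}\bu^{[r]}$ and $\normtwo{\matr{S}_{J}} \leqslant \lambda_{\max}$, so $\normtwo{\bu^{[r+1]}} \leqslant \lambda_{\max}^{r+1}\normtwo{\bar{\bv}}$. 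The bound is therefore at most $2\normtwo{\ba}\normtwo{\bar{\bv}}\lambda_{\max}^{-1}(\lambda_{\max}\sigma)^{r+2}/(r+2)!$ when $\lambda_{\max} > 0$. This tends to $0$ as $r \to \infty$, uniformly on the region $\lambda_{\max}\sigma \leqslant \tfrac12$. If $\lambda_{\max} = 0$, the vectors $\bu^{[r+1]}$ vanish outright. In both subcases $\langle\ba,\bm{\zeta}\rangle \equiv 0$ on $[\tau_{\ba},\bar{\tau})$, where $\tau_{\ba}$ is any time with $\lambda_{\max}\sigma(\tau_{\ba}) \leqslant \tfrac12$.

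The only delicate step is Case~1. There we must check that the remainder bound of~\eqref{eq:levelrig_shared} is of strictly higher order, $\sigma^{j_{0}+2}$ against $\sigma^{j_{0}+1}$. We must also check that its constant does not depend on $\tau$, so that a single threshold $\tau_{\ba}$ works. Both properties hold because~\eqref{eq:levelrig_shared} holds pointwise for every $\tau$ with $\lambda_{\max}\sigma(\tau) \leqslant \tfrac12$, with constant $2\normtwo{\bu^{[j_{0}+1]}}/(j_{0}+2)!$.
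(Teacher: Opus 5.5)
Your proof is correct and follows the paper's own argument. You split on whether some $\langle\ba,\bu^{[j]}\rangle$, $j \geqslant 1$, is nonzero. In that case the least such index gives a dominant term through \eqref{eq:levelrig_shared}; otherwise the bound $\normtwo{\bu^{[r+1]}} \leqslant \lambda_{\max}^{r+1}\normtwo{\bar{\bv}}$ drives the remainder to zero as $r \to \infty$. The only difference is cosmetic: the paper sends the remainder to zero using the geometric factor $(\lambda_{\max}\sigma)^{r+1} \leqslant 2^{-(r+1)}$ rather than the factorial.
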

\begin{proof}
Let $\bu^{[j]}$ and $\bm{\omega}_{r}$ be as in Lemma~\ref{lem:alignment_shared}, put $\alpha_{j} \coloneqq \langle\ba,\bu^{[j]}\rangle$ for $j \geqslant 1$, and let $\tau' \geqslant \tau^{(0)}$ satisfy $\lambda_{\max}\sigma(\tau') \leqslant \tfrac12$, so that~\eqref{eq:levelrig_shared} holds on $[\tau',\bar{\tau})$. Note that $\normtwo{\bu^{[r+1]}} \leqslant \lambda_{\max}^{r+1}\normtwo{\bar{\bv}}$.

If every $\alpha_{j}$ vanishes, then $\langle\ba,\bm{\zeta}\rangle = \langle\ba,\bm{\omega}_{r}\rangle$ for every $r$, and~\eqref{eq:levelrig_shared} gives
\[
\bigl|\langle\ba,\bm{\zeta}(\tau)\rangle\bigr| \leqslant 2\normtwo{\ba}\normtwo{\bar{\bv}}\sigma(\tau)\bigl(\lambda_{\max}\sigma(\tau)\bigr)^{r+1} \leqslant 2^{-r}\normtwo{\ba}\normtwo{\bar{\bv}}\sigma(\tau)
\]
for every $\tau \in [\tau',\bar{\tau})$. Letting $r \to \infty$ makes $\langle\ba,\bm{\zeta}\rangle$ vanish there, and we take $\tau_{\ba} = \tau'$. Otherwise, let $r$ be minimal with $\alpha_{r} \neq 0$. Then $\langle\ba,\bm{\zeta}\rangle = \langle\ba,\bm{\omega}_{r}\rangle - \alpha_{r}\sigma^{r+1}/(r+1)!$, and~\eqref{eq:levelrig_shared} gives
\[
\bigl|\langle\ba,\bm{\zeta}(\tau)\rangle\bigr| \geqslant \bigl(|\alpha_{r}| - 2\normtwo{\ba}\normtwo{\bu^{[r+1]}}\sigma(\tau)\bigr)\frac{\sigma(\tau)^{r+1}}{(r+1)!},
\]
which is positive once $2\normtwo{\ba}\normtwo{\bu^{[r+1]}}\sigma(\tau) < |\alpha_{r}|$, that is, on some interval $[\tau_{\ba},\bar{\tau})$ with $\tau_{\ba} \geqslant \tau'$.
\end{proof}

\subsubsection{Pieces and events}
\label{app:finite_general_setting}
We now turn to the breakpoints of the slow solution and first describe how its pieces end. As before, a piece of the slow solution with active set $J$ is a \emph{$J$-piece}. For the $k$-th piece, let $R^{(k)}$ denote the riding set of Appendix~\ref{app:thm:scc_local} at $\bx^{(k)}$, and let $N^{(k)} \coloneqq \cone\{\matr{A}_{i}^{\top} : i \in J_{*}^{(k)} \cup R^{(k)}\}$, which contains $-\nabla_{\bx}f(\bx(\tau)) - \bd(\bx(\tau))$ on the open piece by Lemma~\ref{lem:pieces_flow}(i).

\begin{lem}[Dynamics on a piece]
\label{lem:pieces_flow}
Consider the $k$-th piece and let $J = J_{*}^{(k)}$.
\begin{itemize}
\item[(i)] For every $\tau \in [\tau^{(k)},\tau^{(k+1)})$, the constraints of $J$ are active at $\bx(\tau)$, and
\begin{equation}
\label{eq:piece_flow}
\bd(\bx(\tau)) = -\matr{P}^{\perp}_{J}\nabla_{\bx}f(\bx(\tau)) \in \ker\matr{A}_{J}.
\end{equation}
For every $\tau \in (\tau^{(k)},\tau^{(k+1)})$, we have $\eqset(\bx(\tau)) = J \cup R^{(k)}$, $\matr{A}_{J \cup R^{(k)}}\bd(\bx(\tau)) = \boldsymbol{0}$, and $-\nabla_{\bx}f(\bx(\tau)) - \bd(\bx(\tau)) \in N^{(k)}$. If $\tau^{(k+1)} < +\infty$, then the constraints of $J \cup R^{(k)}$ are active at $\bx^{(k+1)}$, and $\bd(\bx(\tau)) \to -\matr{P}^{\perp}_{J}\nabla_{\bx}f(\bx^{(k+1)})$ as $\tau \uparrow \tau^{(k+1)}$. This limit lies in $\ker\matr{A}_{J \cup R^{(k)}}$, and $-(\matr{I}_{n\times n} - \matr{P}^{\perp}_{J})\nabla_{\bx}f(\bx^{(k+1)}) \in N^{(k)}$.

\item[(ii)] If $\tau^{(k+1)} < +\infty$, then at least one of two events occurs at $\tau^{(k+1)}$. In an \emph{enter event}, some $i \notin J$ satisfies $[\matr{A}\bx(\tau)]_{i} < b_{i}$ for every $\tau \in (\tau^{(k)},\tau^{(k+1)})$ and $[\matr{A}\bx^{(k+1)}]_{i} = b_{i}$. In a \emph{leave event}, some $B \subseteq J \cup R^{(k)}$ such that the rows of $\matr{A}_{B}$ form a basis of the row space of $\matr{A}_{J \cup R^{(k)}}$, and some $i \in B$, are such that $\tau \mapsto [\bp_{B}(\bx(\tau))]_{i}$ does not vanish identically on the piece and $[\bp_{B}(\bx^{(k+1)})]_{i} = 0$.
\end{itemize}
\end{lem}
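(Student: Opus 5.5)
Part~(i) reads off the $k$-th piece. By construction~\eqref{eq:global_iteration}, the $k$-th piece is the candidate $\by_{J}$ of Appendix~\ref{app:thm:scc_local} started at $\bx^{(k)}$, with $J = J_{*}^{(k)}$, shifted by $\tau^{(k)}$.

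\textbf{Active constraints and velocity.} The first identity of Step~1 there gives $\matr{A}_{J}\matr{M}_{J}(\cdot)\bd^{(k)} = \boldsymbol{0}$, so the constraints of $J$ stay active on the whole piece. The second identity gives $\dot{\by}_{J} = -\matr{P}^{\perp}_{J}\nabla_{\bx}f(\by_{J})$. Since $\bd(\bx(\tau)) = \dot{\bx}(\tau)$ by Theorem~\ref{thm:MSP}(iii), this is~\eqref{eq:piece_flow}, and the image of $\matr{P}^{\perp}_{J}$ is $\ker\matr{A}_{J}$.

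\textbf{Open piece.} For $\tau$ in the open piece, \eqref{eq:active_piece} gives $\eqset(\bx(\tau)) = J \cup R^{(k)}$. The constraints of $R^{(k)}$ vanish identically along the candidate, and those of $J$ are active. Differentiating therefore gives $\matr{A}_{J\cup R^{(k)}}\bd(\bx(\tau)) = \boldsymbol{0}$. The Conclusion of Step~3 gives $\bn_{J}(\tau) = -\nabla_{\bx}f(\bx(\tau)) - \bd(\bx(\tau)) \in N_{C}(\bx(\tau))$. This normal cone equals $\cone\{\matr{A}_{i}^{\top} : i \in \eqset(\bx(\tau))\} = N^{(k)}$.

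\textbf{Right endpoint.} When $\tau^{(k+1)} < +\infty$, continuity of $\bx$ and of the affine constraint values keeps $J \cup R^{(k)}$ active at $\bx^{(k+1)}$. Continuity of $\tau \mapsto -\matr{P}^{\perp}_{J}\nabla_{\bx}f(\bx(\tau))$ gives the left limit. The identity $\matr{A}_{J\cup R^{(k)}}\bd = \boldsymbol{0}$ passes to this limit. The normal part tends to $-(\matr{I}-\matr{P}^{\perp}_{J})\nabla_{\bx}f(\bx^{(k+1)})$ and stays in the closed cone $N^{(k)}$.

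\textbf{Part~(ii).} Here the breakpoint is $\tau^{(k+1)} - \tau^{(k)} = \tau^{K}_{*}(\bx^{(k)}) = \min(\tau^{K}_{\mathrm{feas}},\tau^{K}_{J})$. I split into cases according to which time attains the minimum.

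If $\tau^{K}_{\mathrm{feas}}$ attains it, then some constraint $i$ outside $J \cup R^{(k)}$ has a value that does not vanish identically. This value is negative on the open piece by~\eqref{eq:active_piece} and zero at $\tau^{(k+1)}$, which is an enter event. Such an $i$ lies outside $J$, since the constraints of $J$ are identically active.

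Otherwise $\tau^{K}_{J} < \tau^{K}_{\mathrm{feas}}$. Then $\eqset(\by) = J \cup R^{(k)}$ persists on a right neighborhood of $\tau^{(k+1)}$. Membership of the normal part in $N_{C}(\by)$ is therefore membership in $N^{(k)}$, and it fails at times arbitrarily close to $\tau^{(k+1)}$ from the right.

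I will use Lemma~\ref{lem:row_bases} with $E = J \cup R^{(k)}$. The normal part always lies in $\image{\matr{A}_{J}^{\top}} \subseteq \image{\matr{A}_{E}^{\top}}$. Membership then means that $\bp_{B}(\by(\tau)) \geqslant \boldsymbol{0}$ for some basis $B$, and the identity $(\matr{A}_{B}\matr{A}_{B}^{\top})^{-1}\matr{A}_{B}\bn = \bp_{B}$ holds because $\matr{A}_{B}\bd = \boldsymbol{0}$. Each $\tau \mapsto [\bp_{B}(\by(\tau))]_{i}$ is an entire function of $\tau$, since the candidate is given by the matrix exponential.

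The set of bases is finite. Before $\tau^{(k+1)}$, membership holds at every time, so the open piece is covered by finitely many closed sets on which some basis $B$ works. Picking a sequence $\tau_{n} \downarrow \tau^{(k+1)}$ at which membership fails, every basis has some coordinate negative at $\tau_{n}$. Passing to a subsequence, one basis $B$ is nonnegative on a set accumulating at $\tau^{(k+1)}$ from the left, and hence by analyticity on a left interval. The same $B$ must have a fixed coordinate $i$ negative at $\tau_{n}$ for infinitely many $n$.

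That coordinate is nonnegative to the left of $\tau^{(k+1)}$ and negative at points approaching it from the right. Hence it is not identically zero and, by continuity, vanishes at $\tau^{(k+1)}$. This is a leave event.

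\textbf{Main obstacle.} This last selection of a single basis $B$ and index $i$ is the delicate step. My first attempt will be to rely on analyticity, so that each coordinate has isolated zeros or vanishes identically. This turns "nonnegative on a set accumulating from the left" into "nonnegative on an interval", and the finiteness of the set of bases does the rest.
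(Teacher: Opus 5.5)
Your proposal is correct. For part~(i) and the enter event it follows the paper's proof essentially step by step: the two identities of Step~1, \eqref{eq:active_piece}, the conclusion of Step~3, and continuity plus closedness of $N^{(k)}$ at the right endpoint. The difference is in how you select $(B,i)$ for the leave event.

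\textbf{The paper's selection.} Measuring time from $\tau^{(k)}$, the paper first notes that $\bn_{J}(\tau) \in N^{(k)}$ on $(0,\tau_{J})$, and closedness of $N^{(k)}$ then gives $\bn_{J}(\tau_{J}) \in N^{(k)}$. It takes $B$ from Lemma~\ref{lem:row_bases} at the single time $\tau_{J}$. It then argues by contradiction: if every coordinate of $\bp_{B}(\by(\cdot))$ that vanishes at $\tau_{J}$ vanished identically, the positive coordinates would stay positive nearby, and membership would persist just after $\tau_{J}$.

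\textbf{Your selection.} Your double pigeonhole takes a basis that is nonnegative at left times accumulating at $\tau_{J}$, upgrades this to a left interval by analyticity, and then takes a coordinate of that basis that is negative at infinitely many right times. This also works. It even gives slightly more: a coordinate that is positive just before $\tau_{J}$ and negative at times accumulating just after it. The lemma, however, only needs a zero at $\tau_{J}$ that does not vanish identically. The paper's route is shorter and uses analyticity only to pass from ``not identically zero'' to ``not identically zero on the piece''.

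\textbf{Two steps you leave unjustified.} First, you assert without proof that membership fails at times arbitrarily close to $\tau_{J}$ from the right. Since $\tau_{J}$ is only an infimum, you must exclude the case where the failure set contains $\tau_{J}$ but otherwise stays away from it. That is exactly the closedness observation above. Second, your enter event needs the infimum in \eqref{eq:tau_feas} to be attained. This holds because the zero set of each constraint value that does not vanish identically is closed in $(0,+\infty)$, and $\tau_{\mathrm{feas}} > 0$.
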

\begin{proof}

Measure time from $\tau^{(k)}$, write $\bd \coloneqq \bd^{(k)}$, $R \coloneqq R^{(k)}$, and $\by \coloneqq \by_{J}$ for the candidate of Appendix~\ref{app:thm:scc_local} at $\bx^{(k)}$, and let $\bn_{J}$ be its normal part~\eqref{eq:normal_part}.

The slow system is autonomous, so Theorem~\ref{thm:scc_local}(i) applies at $\bx^{(k)}$ with $K = J$, and $\bx(\tau) = \by(\tau)$ on the piece. Step~1 of Appendix~\ref{app:thm:scc_local} shows that the constraints of $J$ stay active and that $\dot{\by}(\tau) = -\matr{P}^{\perp}_{J}\nabla_{\bx}f(\by(\tau))$, which lies in $\ker\matr{A}_{J}$. Since $\bd(\bx(\tau)) = \dot{\bx}(\tau)$ by Theorem~\ref{thm:MSP}(iii), this gives~\eqref{eq:piece_flow}. By~\eqref{eq:active_piece}, $\eqset(\bx(\tau)) = J \cup R$ on the open piece. The values of the constraints of $R$ vanish identically along $\by$, so $\matr{A}_{J \cup R}\dot{\by}(\tau) = \boldsymbol{0}$ for every $\tau \geqslant 0$. By Moreau's decomposition~\eqref{eq:moreau_decomp} and~\eqref{eq:subdiff_V}, $-\nabla_{\bx}f(\bx(\tau)) - \bd(\bx(\tau))$ lies in $N_{C}(\bx(\tau))$, which equals $N^{(k)}$ on the open piece. The claims at $\tau^{(k+1)}$ follow by continuity, because $\bx \mapsto \matr{P}^{\perp}_{J}\nabla_{\bx}f(\bx)$ is affine and $N^{(k)}$ is closed. This proves (i).

Suppose that $\tau^{(k+1)} < +\infty$, and abbreviate $\tau_{\mathrm{feas}} \coloneqq \tau^{+}_{\mathrm{feas}}(\bx^{(k)})$ and $\tau_{J} \coloneqq \tau^{+}_{J}(\bx^{(k)})$, so that $\tau_{*}^{(k)} = \min(\tau_{\mathrm{feas}},\tau_{J})$ is finite, and positive by Theorem~\ref{thm:scc_local}(i). Suppose first that $\tau_{*}^{(k)} = \tau_{\mathrm{feas}}$. The infimum~\eqref{eq:tau_feas} runs over the constraints $i \notin J$ whose values do not vanish identically on the piece (Section~\ref{subsec:removing_scc_local}). The positive zeros of each such value form a set that is closed in $(0,+\infty)$, with infimum at least $\tau_{\mathrm{feas}} > 0$, so some $i \notin J$ has a zero at $\tau_{\mathrm{feas}}$ and none in $(0,\tau_{\mathrm{feas}})$. Its value is nonpositive because $\bx(\tau) \in C$, hence negative on $(0,\tau_{\mathrm{feas}})$: an enter event occurs.

Suppose now that $\tau_{J} < \tau_{\mathrm{feas}}$. For every $\tau \in (0,\tau_{\mathrm{feas}})$, \eqref{eq:active_piece} gives $\by(\tau) \in C$ and $N_{C}(\by(\tau)) = N^{(k)}$, and the definition~\eqref{eq:tau_J_cone} gives $\bn_{J}(\tau) \in N^{(k)}$ for $\tau \in (0,\tau_{J})$. The cone $N^{(k)}$ is closed, so $\bn_{J}(\tau_{J}) \in N^{(k)}$. Hence $\tau_{J}$ does not belong to the set in~\eqref{eq:tau_J_cone}, and there are times in $(\tau_{J},\tau_{\mathrm{feas}})$ arbitrarily close to $\tau_{J}$ at which $\bn_{J} \notin N^{(k)}$. Let $B \subseteq J \cup R$ be such that the rows of $\matr{A}_{B}$ form a basis of the row space of $\matr{A}_{J \cup R}$. Since $\matr{A}_{B}\dot{\by}(\tau) = \boldsymbol{0}$ and $\bn_{J}(\tau) \in \image{\matr{A}_{J}^{\top}} \subseteq \image{\matr{A}_{B}^{\top}}$ for every $\tau \geqslant 0$, the definitions~\eqref{eq:normal_part} and~\eqref{eq:pJ_flow} give
\begin{equation}\label{eq:normal_basis}
\bn_{J}(\tau) = \matr{A}_{B}^{\top}(\matr{A}_{B}\matr{A}_{B}^{\top})^{-1}\matr{A}_{B}\bn_{J}(\tau) = \matr{A}_{B}^{\top}\bp_{B}(\by(\tau)) \qquad \text{for every } \tau \geqslant 0.
\end{equation}
By Lemma~\ref{lem:row_bases} and $\bn_{J}(\tau_{J}) \in N^{(k)}$, some such $B$ satisfies $\bp_{B}(\by(\tau_{J})) \geqslant \boldsymbol{0}$. The function $\tau \mapsto \bp_{B}(\by(\tau))$ is entire, because $\bp_{B}$ is affine and $\by$ is entire. Suppose that every $i \in B$ with $[\bp_{B}(\by(\tau_{J}))]_{i} = 0$ had $[\bp_{B}(\by(\cdot))]_{i} \equiv 0$. The positive entries stay positive near $\tau_{J}$ by continuity, so $\bp_{B}(\by(\tau)) \geqslant \boldsymbol{0}$ for every $\tau$ near $\tau_{J}$, and~\eqref{eq:normal_basis} would give $\bn_{J}(\tau) \in N^{(k)}$ for every $\tau > \tau_{J}$ near $\tau_{J}$, a contradiction. Hence some $i \in B$ has $[\bp_{B}(\by(\tau_{J}))]_{i} = 0$ and $[\bp_{B}(\by(\cdot))]_{i} \not\equiv 0$. An entire function that does not vanish identically does not vanish identically on any interval. Since $\bx = \by$ on the piece and $\bx^{(k+1)} = \by(\tau_{J})$, a leave event of $(B,i)$ occurs. This proves (ii).
\end{proof}

Suppose there are infinitely many breakpoints. At each breakpoint $\tau^{(k+1)}$, we fix one event of Lemma~\ref{lem:pieces_flow}(ii), either an enter event of a constraint $i \notin J_{*}^{(k)}$ or a leave event, which we label by the pair $(B,i)$ of its set $B$ and index $i$ in Lemma~\ref{lem:pieces_flow}(ii). The \emph{event value} of an enter event of $i$ is $[\matr{A}\bx]_{i} - b_{i}$, and the event value of a leave event of $(B,i)$ is $[\bp_{B}(\bx)]_{i}$. Both are affine in $\bx$, the second by~\eqref{eq:pJ_flow}, and vanish at $\bx^{(k+1)}$. An active set, a constraint of an enter event, or a pair of a leave event is \emph{recurring} if it occurs on infinitely many pieces. To study late breakpoints, we discard the finitely many pieces with a nonrecurring object and call the remaining pieces the \emph{tail}. Every object on the tail is recurring, even after we discard finitely many further pieces. Let $\Phi$ be the finite set of gradients of recurring event values. Every breakpoint of the tail is therefore a zero of a recurring event value. The next lemma excludes late breakpoints when each recurring event value eventually vanishes identically or has no zero.

\begin{lem}
\label{lem:events_flow}
Suppose there are infinitely many breakpoints. Let $\tau \mapsto \bm{D}(\tau)$ satisfy $\phi(\bx(\tau)) = \langle\ba,\bm{D}(\tau)\rangle$ on the tail at every $\tau$, for every recurring event value $\phi$ and its gradient $\ba \in \Phi$. Suppose that every $\ba \in \Phi$ admits a time $\tau_{\ba}$ on the tail after which $\tau \mapsto \langle\ba,\bm{D}(\tau)\rangle$ either vanishes identically or has no zero. Then no piece of the tail starts after the largest of the times $\tau_{\ba}$.
\end{lem}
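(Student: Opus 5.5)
The plan is a direct contradiction argument that uses only how tail breakpoints are labeled. Let $\tau_{\max}$ be the largest of the finitely many times $\tau_{\ba}$, $\ba \in \Phi$; this maximum exists because $\Phi$ is finite. Suppose some piece of the tail starts at a breakpoint $\tau^{(k+1)} > \tau_{\max}$, or at $\tau^{(k+1)} = \tau_{\max}$ if the intervals are read as half-open. Set $\tau^{(k)}$ to be the start of the preceding piece. Because there are infinitely many breakpoints and each piece has positive length, we may, after discarding finitely many pieces, choose $k$ so that $\tau^{(k)} \geqslant \tau_{\max}$. Then the whole piece $[\tau^{(k)},\tau^{(k+1)}]$ lies after every $\tau_{\ba}$.

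Next we examine the event fixed at $\tau^{(k+1)}$. By Lemma~\ref{lem:pieces_flow}(ii) and the definition of the tail, it is an enter or leave event with a recurring event value $\phi$ and gradient $\ba \in \Phi$. This $\phi$ satisfies two properties.
\begin{itemize}
\item[(a)] It vanishes at $\bx^{(k+1)} = \bx(\tau^{(k+1)})$.
\item[(b)] It does not vanish identically on the open piece $(\tau^{(k)},\tau^{(k+1)})$. For an enter event, $[\matr{A}\bx(\tau)]_{i} - b_{i} < 0$ there. For a leave event, Lemma~\ref{lem:pieces_flow}(ii) states directly that $\tau \mapsto [\bp_{B}(\bx(\tau))]_{i}$ does not vanish identically on the piece.
\end{itemize}
By hypothesis, $\phi(\bx(\tau)) = \langle\ba,\bm{D}(\tau)\rangle$ on the tail. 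On $[\tau_{\ba},\infty)$ restricted to the tail, this function either vanishes identically or has no zero. Property (a) gives a zero at $\tau^{(k+1)} \geqslant \tau_{\ba}$, which rules out the second alternative. Property (b) gives a nonzero value at some $\tau \in (\tau^{(k)},\tau^{(k+1)}) \subseteq [\tau_{\ba},\infty)$, which rules out the first. This contradiction shows that no tail piece starts after $\tau_{\max}$.

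The main obstacle is bookkeeping at the endpoints. First, the identity $\phi(\bx(\tau)) = \langle\ba,\bm{D}(\tau)\rangle$ must hold at the breakpoint $\tau^{(k+1)}$ itself. This follows because $\tau^{(k+1)}$ is the start of the next tail piece, and the identity is assumed at every tail time. Second, the open piece must lie after $\tau_{\ba}$, which the choice $\tau^{(k)} \geqslant \tau_{\max}$ guarantees. If the statement is read strictly, so that no piece starts after $\max\tau_{\ba}$, the argument above applies to any breakpoint $\tau^{(k+1)}$ whose preceding piece starts at or after $\max\tau_{\ba}$. Any later piece start would be such a breakpoint, since infinitely many pieces follow and the piece before it starts at or after $\max\tau_{\ba}$ as soon as two consecutive breakpoints exceed that time. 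The remaining case is the first breakpoint after $\max\tau_{\ba}$, whose preceding piece straddles $\max\tau_{\ba}$. This case is handled by applying the same argument to the second breakpoint after $\max\tau_{\ba}$, which yields the contradiction.
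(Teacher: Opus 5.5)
Your argument is correct and is essentially the paper's proof: take a tail piece that starts after the largest $\tau_{\ba}$, look at the recurring event fixed at its right endpoint, and observe that the zero of $\langle\ba,\bm{D}\rangle$ there, together with its nonvanishing on the open piece (strict negativity for an enter event, not vanishing identically for a leave event), contradicts the dichotomy. The paper skips your off-by-one bookkeeping by working directly with the piece $[\tau^{(k)},\tau^{(k+1)})$ with $\tau^{(k)} > \tau'$ and its own ending event. That is exactly the ``second breakpoint'' case you reduce to, and it is the reason your extra shift is needed.
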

\begin{proof}
Let $\tau'$ be the largest of the times $\tau_{\ba}$. Consider a piece $[\tau^{(k)},\tau^{(k+1)})$ of the tail with $\tau^{(k)} > \tau'$, with the event fixed at $\tau^{(k+1)}$. This event is recurring, so its gradient $\ba$ lies in $\Phi$. At an enter event, Lemma~\ref{lem:pieces_flow}(ii) makes $\langle\ba,\bm{D}\rangle$ negative on $(\tau^{(k)},\tau^{(k+1)})$ and zero at $\tau^{(k+1)}$. At a leave event, it makes $\langle\ba,\bm{D}\rangle$ zero at $\tau^{(k+1)}$ without vanishing identically on the piece. Both cases contradict the dichotomy, because the closed piece lies after $\tau_{\ba}$.
\end{proof}

\subsubsection{The limit direction}
\label{app:limit_direction}

We now show that the limit of $\bd(\bx(\tau))$ in Theorem~\ref{thm:finite_general} is the projection $\bd^{s}$ defined by
\begin{equation}
\label{eq:ds}
\mathcal{R} \coloneqq \left\{\bv \in \Rn : \matr{A}\bv \leqslant \boldsymbol{0}, \ \matr{Q}\bv = \boldsymbol{0}\right\},
\qquad
\bd^{s} \coloneqq \proj_{\mathcal{R}}(-\bc),
\qquad
I_{0} \coloneqq \left\{i \in [m] : [\matr{A}\bd^{s}]_{i} = 0\right\}.
\end{equation}
We also prove the alternative between optimality and unboundedness in that theorem. Problem~\eqref{eq:intro_main_problem} is bounded below if and only if no direction $\bd$ satisfies $\matr{A}\bd \leqslant \boldsymbol{0}$, $\matr{Q}\bd = \boldsymbol{0}$, and $\langle\bc,\bd\rangle < 0$, and it then attains its infimum~\cite{frank1956}. Since $\mathcal{R}$ is a closed convex cone, Moreau's decomposition theorem gives
\begin{equation}
\label{eq:ds_moreau}
\langle\bc + \bd^{s},\bd^{s}\rangle = 0
\qquad \text{and} \qquad
\langle\bc + \bd^{s},\br\rangle \geqslant 0 \ \text{for every $\br \in \mathcal{R}$.}
\end{equation}
We also let $C_{0} \coloneqq \{\by \in \Rn : [\matr{A}\by]_{I_{0}} \leqslant \bb_{I_{0}}\}$, $f_{0}(\by) \coloneqq f(\by) + \langle\bd^{s},\by\rangle$, and $V_{0} \coloneqq f_{0} + \chi_{C_{0}}$, so that $V_{0}$ is the objective function of problem~\eqref{eq:intro_main_problem} with the data $\matr{A}_{I_{0}}$, $\bb_{I_{0}}$, and $\bc + \bd^{s}$.

\begin{lem}\label{lem:ds}
The following hold:
\begin{itemize}
\item[(i)] The direction $\bd^{s}$ satisfies $\matr{A}\bd^{s} \leqslant \boldsymbol{0}$, $\matr{Q}\bd^{s} = \boldsymbol{0}$, and $\langle\bc,\bd^{s}\rangle = -\normsq{\bd^{s}}$. If $\bd^{s} \neq \boldsymbol{0}$, then for every $\bx \in C$ the ray $\bx + \Delta\bd^{s}$, $\Delta \geqslant 0$, is feasible and $V(\bx + \Delta\bd^{s}) = V(\bx) - \Delta\normsq{\bd^{s}} \to -\infty$ as $\Delta \to +\infty$.
\item[(ii)] Every $\bx \in C$ satisfies $\normsq{\bd(\bx) - \bd^{s}} \leqslant \normsq{\bd(\bx)} - \normsq{\bd^{s}}$.
\item[(iii)] The function $f_{0}$ attains its minimum over $C_{0}$.
\end{itemize}
\end{lem}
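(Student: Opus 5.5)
The three parts rest on two facts: the Moreau relations~\eqref{eq:ds_moreau} for the cone $\mathcal{R}$, and the nested-cone inequality of Lemma~\ref{lem:nested_cones}.

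\textbf{Part (i).} Since $\bd^{s} \in \mathcal{R}$, we have $\matr{A}\bd^{s} \leqslant \boldsymbol{0}$ and $\matr{Q}\bd^{s} = \boldsymbol{0}$. The first relation in~\eqref{eq:ds_moreau} gives $\langle\bc,\bd^{s}\rangle = -\normsq{\bd^{s}}$. For $\bx \in C$ and $\Delta \geqslant 0$, we have $\matr{A}(\bx + \Delta\bd^{s}) \leqslant \bb$. Expanding $f$ and using $\matr{Q}\bd^{s} = \boldsymbol{0}$ kills both the cross term and the quadratic term, which leaves $V(\bx + \Delta\bd^{s}) = V(\bx) - \Delta\normsq{\bd^{s}}$.

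\textbf{Part (ii).} The plan is to show that $\bd^{s}$ is also the projection of $-\nabla_{\bx}f(\bx)$ onto $\mathcal{R}$. Then Lemma~\ref{lem:nested_cones}, applied with $\mathcal{K}_{2} = \mathcal{R} \subseteq \mathcal{K}_{1} = T_{C}(\bx)$ and $\bz = -\nabla_{\bx}f(\bx)$, gives the claim, because $\bd(\bx) = \proj_{T_{C}(\bx)}(\bz)$. The inclusion $\mathcal{R} \subseteq T_{C}(\bx)$ is immediate: $\matr{A}\bv \leqslant \boldsymbol{0}$ implies $[\matr{A}\bv]_{\eqset(\bx)} \leqslant \boldsymbol{0}$. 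For the projection claim, write $\bz = -\bc - \matr{Q}\bx$. We check the Moreau characterization on $\mathcal{R}$. First, $\langle\bz - \bd^{s},\bd^{s}\rangle = \langle -\bc - \bd^{s},\bd^{s}\rangle - \langle\matr{Q}\bx,\bd^{s}\rangle = 0$, using~\eqref{eq:ds_moreau} and the symmetry of $\matr{Q}$. Second, for every $\br \in \mathcal{R}$, $\langle\bz - \bd^{s},\br\rangle = -\langle\bc + \bd^{s},\br\rangle - \langle\bx,\matr{Q}\br\rangle \leqslant 0$, since $\matr{Q}\br = \boldsymbol{0}$. Together with $\bd^{s} \in \mathcal{R}$, these two conditions identify $\bd^{s} = \proj_{\mathcal{R}}(\bz)$.

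\textbf{Part (iii).} I expect this to be the main obstacle, since it needs an existence result. The plan is to apply the Frank--Wolfe theorem~\cite{frank1956}, as quoted above, to the problem with data $\matr{A}_{I_{0}}$, $\bb_{I_{0}}$, $\bc + \bd^{s}$. Its feasible set $C_{0}$ contains $C$, so it is nonempty. It then suffices to show that no $\bv$ satisfies $\matr{A}_{I_{0}}\bv \leqslant \boldsymbol{0}$, $\matr{Q}\bv = \boldsymbol{0}$, and $\langle\bc + \bd^{s},\bv\rangle < 0$.

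Suppose such a $\bv$ exists. For indices $i \notin I_{0}$ we have $[\matr{A}\bd^{s}]_{i} < 0$, so for small $\epsilon > 0$ the vector $\br = \bd^{s} + \epsilon\bv$ satisfies $\matr{A}\br \leqslant \boldsymbol{0}$. It also satisfies $\matr{Q}\br = \boldsymbol{0}$, so $\br \in \mathcal{R}$. Now $\langle\bc + \bd^{s},\br\rangle = 0 + \epsilon\langle\bc + \bd^{s},\bv\rangle < 0$, using~\eqref{eq:ds_moreau} for the first term. This contradicts the second relation in~\eqref{eq:ds_moreau}.

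Hence $f_{0}$ is bounded below on $C_{0}$ and, by Frank--Wolfe, attains its minimum there. The delicate point is to make sure the boundedness criterion quoted before~\eqref{eq:ds} is applied to the modified problem with the data above, rather than to the original one.
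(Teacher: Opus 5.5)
Your proposal is correct and follows the paper's proof essentially step for step. Part (i) uses $\bd^{s} \in \mathcal{R}$ and the first relation in~\eqref{eq:ds_moreau}. Part (ii) identifies $\bd^{s} = \proj_{\mathcal{R}}(-\nabla_{\bx}f(\bx))$ and applies Lemma~\ref{lem:nested_cones} to $\mathcal{R} \subseteq T_{C}(\bx)$. Part (iii) applies the Frank--Wolfe criterion to the data $\matr{A}_{I_{0}}$, $\bb_{I_{0}}$, $\bc + \bd^{s}$.

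The only differences are cosmetic. In (ii), the paper gets the projection identity by noting that $\normsq{-\nabla_{\bx}f(\bx) - \br}$ and $\normsq{-\bc - \br}$ differ by a constant on $\mathcal{R}$, rather than by checking the Moreau characterization. In (iii), the paper argues directly with $\br + \Delta\bd^{s}$ for large $\Delta$, rather than by contradiction with $\bd^{s} + \epsilon\bv$, which is the same vector up to scaling.
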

\begin{proof}
The first two relations hold because $\bd^{s} \in \mathcal{R}$, and the third is the first relation in~\eqref{eq:ds_moreau}. The ray is feasible because $\matr{A}\bd^{s} \leqslant \boldsymbol{0}$, and the formula for $V(\bx + \Delta\bd^{s})$ follows from $\matr{Q}\bd^{s} = \boldsymbol{0}$ and the third relation. This proves (i).

Let $\bx \in C$. Every $\br \in \mathcal{R}$ satisfies $\langle\matr{Q}\bx,\br\rangle = \langle\bx,\matr{Q}\br\rangle = 0$, so $\normsq{-\nabla_{\bx}f(\bx) - \br}$ and $\normsq{-\bc - \br}$ differ by a constant independent of $\br$, and therefore $\proj_{\mathcal{R}}(-\nabla_{\bx}f(\bx)) = \bd^{s}$. Moreover $\mathcal{R} \subseteq T_{C}(\bx) = \{\bs \in \Rn : [\matr{A}\bs]_{\eqset(\bx)} \leqslant \boldsymbol{0}\}$. Lemma~\ref{lem:nested_cones}, applied to $\mathcal{R} \subseteq T_{C}(\bx)$ and the vector $-\nabla_{\bx}f(\bx)$, gives (ii).

The function $f_{0}$ is a convex quadratic with linear term $\bc + \bd^{s}$, and $C \subset C_{0}$ is nonempty, so the criterion above, applied to problem~\eqref{eq:intro_main_problem} with the data $\matr{A}_{I_{0}}$, $\bb_{I_{0}}$, and $\bc + \bd^{s}$, reduces the claim to showing that $\langle\bc + \bd^{s},\br\rangle \geqslant 0$ for every $\br$ with $[\matr{A}\br]_{I_{0}} \leqslant \boldsymbol{0}$ and $\matr{Q}\br = \boldsymbol{0}$. For such $\br$ and for $\Delta \geqslant 0$ large enough, $\br + \Delta\bd^{s} \in \mathcal{R}$, because $[\matr{A}\bd^{s}]_{i} < 0$ for every $i \notin I_{0}$. Both relations in~\eqref{eq:ds_moreau} then give $\langle\bc + \bd^{s},\br\rangle = \langle\bc + \bd^{s},\br + \Delta\bd^{s}\rangle \geqslant 0$. This proves (iii).
\end{proof}

\begin{prop}\label{prop:limit_direction}
The following hold:
\begin{itemize}
\item[(i)] The direction satisfies $\bd(\bx(\tau)) \to \bd^{s}$ as $\tau \to +\infty$.
\item[(ii)] There exists $\tau_{0} \geqslant 0$ such that $\eqset(\bx(\tau)) \subseteq I_{0}$ for every $\tau \geqslant \tau_{0}$.
\item[(iii)] The function $\tau \mapsto \by(\tau_{0} + \tau)$, where $\by(\tau) \coloneqq \bx(\tau) - \tau\bd^{s}$, is the solution of the differential inclusion~\eqref{eq:main_grad_incl} for $V_{0}$ with initial value $\by(\tau_{0})$. In particular, $\tau \mapsto \normtwo{\bd(\bx(\tau)) - \bd^{s}}$ is nonincreasing on $[\tau_{0},+\infty)$. The function $\by$ converges as $\tau \to +\infty$ to a point $\bar{\by}$ that minimizes $f_{0}$ over $C_{0}$. If $\bd^{s} = \boldsymbol{0}$, then $\bar{\by} = \lim_{\tau \to +\infty}\bx(\tau)$ is an optimal solution of problem~\eqref{eq:intro_main_problem}.
\end{itemize}
\end{prop}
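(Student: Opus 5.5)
Part~(i) comes first, using Lemma~\ref{lem:ds}(ii) together with the monotonicity in Theorem~\ref{thm:MSP}(i). By Lemma~\ref{lem:ds}(ii), $\normsq{\bd(\bx(\tau)) - \bd^{s}} \leqslant \normsq{\bd(\bx(\tau))} - \normsq{\bd^{s}}$. The right-hand side is nonincreasing in $\tau$, so it suffices to show that $\ell \coloneqq \lim_{\tau\to\infty}\normtwo{\bd(\bx(\tau))}$ equals $\normtwo{\bd^{s}}$. I would run an energy argument. Since $\matr{Q}\bd^{s} = \boldsymbol{0}$ and $\langle\bc,\bd^{s}\rangle = -\normsq{\bd^{s}}$, the function $W(\tau) \coloneqq V(\bx(\tau)) + \tau\normsq{\bd^{s}}$ satisfies $\dot W = -\normsq{\bd(\bx(\tau))} + \normsq{\bd^{s}} \leqslant 0$.

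I then need $W$ bounded below. Write $\bx(\tau) = \by(\tau) + \tau\bd^{s}$. Then $W(\tau) = f_{0}(\by(\tau))$ up to the identity $f(\by + \tau\bd^{s}) = f(\by) - \tau\normsq{\bd^{s}}$, so that $W = f(\by(\tau)) = f_{0}(\by(\tau)) - \langle\bd^{s},\by(\tau)\rangle$. This needs care. Instead I would directly show $\by(\tau) \in C_{0}$; this holds because $[\matr{A}\bd^{s}]_{I_{0}} = \boldsymbol{0}$, so $[\matr{A}\by]_{I_{0}} = [\matr{A}\bx]_{I_{0}} \leqslant \bb_{I_{0}}$. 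I would then use Lemma~\ref{lem:ds}(iii), which says $f_{0}$ is bounded below on $C_{0}$, together with $\langle\bd^{s},\by\rangle$ controlled via $\frac{\diff}{\diff\tau}\langle\bd^{s},\by\rangle = \langle\bd^{s},\bd(\bx) - \bd^{s}\rangle \geqslant 0$ by Lemma~\ref{lem:nested_cones}'s inner-product form. Combining these pieces should show $\int_{0}^{\infty}(\normsq{\bd(\bx)} - \normsq{\bd^{s}})\diff\tau < \infty$, and monotonicity then forces $\ell = \normtwo{\bd^{s}}$. Making this bookkeeping close is the main obstacle. If it fails, I would instead prove (ii) first and then apply Theorem~\ref{thm:MSP}(ii) to $V_{0}$ via (iii), which gives $\dot{\by} \to \boldsymbol{0}$ in the sense $\normtwo{\bd(\bx) - \bd^{s}} \to 0$. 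For that route I still need a limit velocity statement; I would use that the minimal norm velocity of a gradient flow with attained minimum tends to zero, which follows from the nonincreasing norm and the finiteness of $\int\normsq{\dot{\by}}$, bounded by $V_{0}(\by(\tau_{0})) - \min V_{0}$.

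For part~(ii), the indices $i \notin I_{0}$ have $[\matr{A}\bd^{s}]_{i} < 0$. Once $\normtwo{\bd(\bx(\tau)) - \bd^{s}}$ is small, say below $\min_{i\notin I_{0}}|[\matr{A}\bd^{s}]_{i}|/(2\max\normtwo{\matr{A}_{i}})$, each such constraint value strictly decreases. It can therefore be zero at most at isolated times, and after one further time it stays negative, because a decreasing function vanishing at $\tau_{1}$ is negative afterwards. To avoid circularity with (i), I would use the non-circular route: first establish (i) by the energy argument above, and then derive (ii).

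For part~(iii), for $\tau \geqslant \tau_{0}$ the set $\eqset(\bx(\tau)) \subseteq I_{0}$ and $[\matr{A}\bd^{s}]_{I_{0}} = \boldsymbol{0}$ give $\eqset_{C_{0}}(\by(\tau)) = \eqset(\bx(\tau))$, so $N_{C_{0}}(\by(\tau)) = N_{C}(\bx(\tau))$. Moreover, $\nabla f_{0}(\by) = \matr{Q}\bx + \bc + \bd^{s}$ because $\matr{Q}\bd^{s} = \boldsymbol{0}$. Hence $\dot{\by} = \dot{\bx} - \bd^{s} \in -\nabla f_{0}(\by) - N_{C_{0}}(\by)$, and uniqueness gives the first claim. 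The monotonicity claim is Theorem~\ref{thm:MSP}(i) for $V_{0}$, since its minimal selection is $\bd(\bx) - \bd^{s}$. The convergence claim is Theorem~\ref{thm:MSP}(ii) combined with Lemma~\ref{lem:ds}(iii). If $\bd^{s} = \boldsymbol{0}$, then $V_{0} = f + \chi_{C_{0}}$, and the limit lies in $C$, which is closed. Since $V_{0} \leqslant V$ on $C$, this limit is optimal for problem~\eqref{eq:intro_main_problem}.
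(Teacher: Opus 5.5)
\textbf{Gap in part (i).} Your parts (ii) and (iii) are fine and essentially match the paper. The paper shows that the values of the constraints outside $I_0$ tend to $-\infty$ rather than arguing eventual strict decrease, but either works.

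The gap is in part (i), on which the other two rest. Your energy $W(\tau) = V(\bx(\tau)) + \tau\normsq{\bd^{s}}$ equals $f(\by(\tau)) = f_0(\by(\tau)) - \langle\bd^{s},\by(\tau)\rangle$. Bounding it below therefore needs an \emph{upper} bound on $\langle\bd^{s},\by(\tau)\rangle$. The fact you cite, $\frac{\diff}{\diff\tau}\langle\bd^{s},\by\rangle = \langle\bd^{s},\bd(\bx)-\bd^{s}\rangle \geqslant 0$, gives the opposite direction.

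No static bound is available either. When $\bd^{s} \neq \boldsymbol{0}$, the map $\by \mapsto \langle\bd^{s},\by\rangle$ is unbounded above on $C_0$: moving along $\bd^{s}$ leaves the constraints of $I_0$ unchanged. $W$ is in fact bounded below, but the only evident reason is that $\by(\tau)$ converges, which is part (iii). Your fallback is circular, as you note, because your proof of (ii) uses (i).

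\textbf{The fix.} Drop the extra term and use $f_0$ itself as the Lyapunov function. By \eqref{eq:nnls_oc_3}, for almost every $\tau$,
\[
\frac{\diff}{\diff\tau}f_0(\bx(\tau)) = -\normsq{\bd(\bx)} + \langle\bd^{s},\bd(\bx)\rangle .
\]
Expanding Lemma~\ref{lem:ds}(ii) gives $\langle\bd(\bx),\bd^{s}\rangle \geqslant \normsq{\bd^{s}}$, which turns this into
\[
\frac{\diff}{\diff\tau}f_0(\bx(\tau)) \leqslant -\normsq{\bd(\bx(\tau))-\bd^{s}} .
\]
Since $\bx(\tau) \in C \subseteq C_0$, Lemma~\ref{lem:ds}(iii) bounds $f_0(\bx(\tau))$ below by $\min_{C_0}f_0$. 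Equivalently, $f_0(\bx(\tau)) = f_0(\by(\tau))$ because $\matr{Q}\bd^{s} = \boldsymbol{0}$ and $\langle\bc+\bd^{s},\bd^{s}\rangle = 0$, so your $\by$ works here too.

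Hence $\int_0^{\infty}\normsq{\bd(\bx(\rho))-\bd^{s}}\diff\rho < \infty$. For every $\tau$ we have $\normtwo{\bd(\bx(\tau))-\bd^{s}} \geqslant \ell - \normtwo{\bd^{s}} \geqslant 0$, so finiteness of the integral forces $\ell = \normtwo{\bd^{s}}$. Your inequality from Lemma~\ref{lem:ds}(ii) then gives (i). This is the paper's argument.

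What is integrable is $\normsq{\bd-\bd^{s}}$, not $\normsq{\bd}-\normsq{\bd^{s}}$. The two differ by $2\langle\bd^{s},\bd-\bd^{s}\rangle$, which is exactly the term you could not control.
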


\begin{proof}
The function $\tau \mapsto f_{0}(\bx(\tau))$ is Lipschitz continuous on bounded intervals, and at almost every $\tau$ its derivative is $\langle\nabla_{\bx}f(\bx(\tau)) + \bd^{s},\bd(\bx(\tau))\rangle = -\normsq{\bd(\bx(\tau))} + \langle\bd^{s},\bd(\bx(\tau))\rangle$, by~\eqref{eq:nnls_oc_3}. Expanding the square in Lemma~\ref{lem:ds}(ii) gives $\langle\bd(\bx),\bd^{s}\rangle \geqslant \normsq{\bd^{s}}$ for every $\bx \in C$, hence
\[
\normsq{\bd(\bx) - \bd^{s}} = \normsq{\bd(\bx)} - 2\langle\bd(\bx),\bd^{s}\rangle + \normsq{\bd^{s}} \leqslant \normsq{\bd(\bx)} - \langle\bd(\bx),\bd^{s}\rangle.
\]
Integrating, and using $\bx(\tau) \in C \subset C_{0}$ with Lemma~\ref{lem:ds}(iii), we obtain for every $\tau \geqslant 0$
\[
\int_{0}^{\tau}\normsq{\bd(\bx(\rho)) - \bd^{s}}\diff\rho \leqslant f_{0}(\bx^{(0)}) - f_{0}(\bx(\tau)) \leqslant f_{0}(\bx^{(0)}) - \min_{C_{0}}f_{0},
\]
so $\int_{0}^{+\infty}\normsq{\bd(\bx(\rho)) - \bd^{s}}\diff\rho$ is finite. By Theorem~\ref{thm:MSP}(i) and~(iii), $\normtwo{\bd(\bx(\tau))}$ decreases to a limit $\nu$, and $\nu \geqslant \normtwo{\bd^{s}}$ by Lemma~\ref{lem:ds}(ii). Since $\normtwo{\bd(\bx(\tau)) - \bd^{s}} \geqslant \nu - \normtwo{\bd^{s}}$ for every $\tau$ and the integral is finite, $\nu = \normtwo{\bd^{s}}$. Lemma~\ref{lem:ds}(ii) at $\bx = \bx(\tau)$ then gives $\normsq{\bd(\bx(\tau)) - \bd^{s}} \leqslant \normsq{\bd(\bx(\tau))} - \normsq{\bd^{s}} \to 0$. This proves (i).

Let $i \notin I_{0}$. By (i), $[\matr{A}\bd(\bx(\tau))]_{i} \to [\matr{A}\bd^{s}]_{i} < 0$, so $[\matr{A}\bx(\tau)]_{i} - b_{i} = [\matr{A}\bx^{(0)}]_{i} - b_{i} + \int_{0}^{\tau}[\matr{A}\bd(\bx(\rho))]_{i}\diff\rho \to -\infty$. The constraints outside $I_{0}$ are finitely many, so all of them are inactive at $\bx(\tau)$ from some time $\tau_{0}$ on. This proves (ii).

Let $\tau \geqslant \tau_{0}$. Since $[\matr{A}\by(\tau)]_{I_{0}} = [\matr{A}\bx(\tau)]_{I_{0}}$, we have $\by(\tau) \in C_{0}$, and by (ii) the constraints of $C_{0}$ active at $\by(\tau)$ are those of $\eqset(\bx(\tau))$, so $N_{C_{0}}(\by(\tau)) = N_{C}(\bx(\tau))$, by~\cite[Example 5.2.6b]{hiriart2013convexI}. Since $\matr{Q}\bd^{s} = \boldsymbol{0}$, we have $\nabla f_{0}(\by(\tau)) = \nabla_{\bx}f(\bx(\tau)) + \bd^{s}$. With $\bn(\bx) = -\nabla_{\bx}f(\bx) - \bd(\bx) \in N_{C}(\bx)$ as in Section~\ref{subsec:computing_nnls}, the right derivative of $\by$ is therefore
\[
\dot{\by}(\tau) = \bd(\bx(\tau)) - \bd^{s} = -\nabla f_{0}(\by(\tau)) - \bn(\bx(\tau)) \in -\partial V_{0}(\by(\tau)).
\]
The function $\tau \mapsto \by(\tau_{0} + \tau)$ is Lipschitz continuous, so it solves~\eqref{eq:main_grad_incl} for $V_{0}$ with initial value $\by(\tau_{0}) \in C_{0}$. It is the slow solution of that problem. Indeed, two Lipschitz continuous solutions with the same initial value coincide because the monotonicity of $\partial V_{0}$ makes the squared distance between them nonincreasing. Theorem~\ref{thm:MSP}(i) then makes the norm of its right derivative $\bd(\bx(\tau)) - \bd^{s}$ nonincreasing. Since $V_{0}$ attains its minimum by Lemma~\ref{lem:ds}(iii), Theorem~\ref{thm:MSP}(ii) shows that $\by(\tau)$ converges to a minimizer $\bar{\by}$ of $V_{0}$. If $\bd^{s} = \boldsymbol{0}$, then $I_{0} = [m]$, $C_{0} = C$, $V_{0} = V$, and $\by = \bx$, which gives the last claim.
\end{proof}

\subsubsection{No accumulation of breakpoints at a finite time}
\label{app:no_finite_accum}
\begin{prop}
\label{prop:no_finite_accum}
The breakpoints of the slow solution do not accumulate at a finite time, in the sense that if there are infinitely many, then $\tau^{(k)} \to +\infty$.
\end{prop}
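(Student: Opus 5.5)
We argue by contradiction. Suppose there are infinitely many breakpoints and $\tau^{(k)} \uparrow \bar{\tau} < +\infty$. We then show that the hypotheses of Lemma~\ref{lem:events_flow} hold on the tail, so that no tail piece starts after some time before $\bar{\tau}$. This contradicts the assumption that infinitely many pieces start before $\bar{\tau}$.

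\textbf{Setting up the shared-lemma framework.} Discard finitely many pieces so that only recurring active sets, events, and pairs remain. Let $\mathcal{J}$ be the family of recurring active sets, with $J(\tau) \coloneqq J_{*}^{(k)}$ on the $k$-th piece. Since $\bx$ is Lipschitz, it has a limit $\bar{\bx} \coloneqq \lim_{\tau\uparrow\bar{\tau}}\bx(\tau)$, and $\bar{\bx} \in C$. By Theorem~\ref{thm:MSP}(i) and~(iii), $\normtwo{\bd(\bx(\tau))}$ is nonincreasing. I would set $\bar{\bv} \coloneqq -\matr{P}^{\perp}_{J}\nabla_{\bx}f(\bar{\bx})$ and first show that this vector does not depend on $J \in \mathcal{J}$ and lies in $\ker\matr{A}_{J}$ for every $J$.

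To obtain this, apply Lemma~\ref{lem:driven_shared}(ii) to $\bm{\omega}(\tau) \coloneqq \bx(\tau) - \bar{\bx}$, or rather to a suitable affine correction of it. By~\eqref{eq:piece_flow}, $\dot{\bx} = -\matr{P}^{\perp}_{J(\tau)}\nabla_{\bx}f(\bar{\bx}) - \matr{S}_{J(\tau)}(\bx - \bar{\bx})$ on each piece, using $\bx - \bar{\bx} \in \ker\matr{A}_{J(\tau)}$. This holds because the constraints of $J$ are active both along the piece and at $\bar{\bx}$ (the latter by continuity and recurrence). The identity has the form~\eqref{eq:driven_shared} with $s \equiv 1$ and $\bh = \nabla_{\bx}f(\bar{\bx})$, up to a sign. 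Lemma~\ref{lem:driven_shared}(ii) then makes all drifts equal to a single vector $-\bar{\bv} \in \bigcap_{J}\ker\matr{A}_{J}$.

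Next, put $\bm{\zeta}(\tau) \coloneqq \bx(\tau) - \bar{\bx} + \sigma(\tau)\bar{\bv}$. Substituting into the flow equation, and using that the drift terms cancel, gives
\[
\dot{\bm{\zeta}} = -\matr{S}_{J}(\bm{\zeta} - \sigma\bar{\bv}),
\]
with $\matr{A}_{J(\tau)}\bm{\zeta}(\tau) = \boldsymbol{0}$. Integrating from $\tau$ to $\bar{\tau}$, where $\bm{\zeta} \to \boldsymbol{0}$, yields exactly~\eqref{eq:driven_shared} with $s = \sigma$ and $\bh = \matr{Q}\bar{\bv}$. The drifts are $\matr{S}_{J}\bar{\bv} = \matr{P}^{\perp}_{J}\matr{Q}\bar{\bv}$ since $\bar{\bv} \in \ker\matr{A}_{J}$. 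So Lemma~\ref{lem:alignment_shared} applies to $\bm{\zeta}$.

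\textbf{Event values as linear functionals of $\bm{\zeta}$.} Every recurring event value $\phi$ is affine and vanishes at infinitely many breakpoints accumulating at $\bar{\tau}$, so $\phi(\bar{\bx}) = 0$. With gradient $\ba$, we get $\phi(\bx(\tau)) = \langle\ba,\bx(\tau) - \bar{\bx}\rangle = \langle\ba,\bm{\zeta}(\tau)\rangle - \sigma(\tau)\langle\ba,\bar{\bv}\rangle$. To apply Lemma~\ref{lem:events_flow} with $\bm{D} = \bx - \bar{\bx}$, I would fold the term $-\sigma\bar{\bv}$ into the expansion of Lemma~\ref{lem:sign_shared}. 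The coefficients $\alpha_{0} \coloneqq -\langle\ba,\bar{\bv}\rangle$ and $\alpha_{j}$ for $j \geqslant 1$ from Lemma~\ref{lem:alignment_shared} give $\langle\ba,\bx - \bar{\bx}\rangle = \sum_{j}\alpha_{j}\sigma^{j+1}/(j+1)! + O(\cdot)$, with the remainder controlled by~\eqref{eq:levelrig_shared}. The first nonzero coefficient then fixes the sign near $\bar{\tau}$. If all coefficients vanish, the function vanishes identically near $\bar{\tau}$. This reproduces the proof of Lemma~\ref{lem:sign_shared} with one extra term. Each $\ba \in \Phi$ therefore has a time $\tau_{\ba} < \bar{\tau}$ after which $\langle\ba,\bm{D}\rangle$ either vanishes identically or has no zero. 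Lemma~\ref{lem:events_flow} then forbids tail pieces starting after $\max_{\ba}\tau_{\ba} < \bar{\tau}$, which is the desired contradiction.

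\textbf{Main obstacle.} The delicate step is the first one: justifying the hypotheses of Lemma~\ref{lem:driven_shared} for the leave-event gradients and for $\bm{\omega}$. This requires three checks. First, $\bx - \bar{\bx}$ must lie in $\ker\matr{A}_{J}$ on every $J$-piece, which needs $\matr{A}_{J}\bar{\bx} = \bb_{J}$ for recurring $J$. Second, $\bm{\omega}$ and $\bm{\zeta}$ must be bounded and measurable, which follows from the Lipschitz bound. Third, the sign bookkeeping must put the drift in the form $s\bu_{J}$ with $s > 0$. For leave events, the value $[\bp_{B}(\bx)]_{i}$ is affine in $\bx$ with a fixed gradient, since $B$ is recurring. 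So it fits $\Phi$, and no extra argument should be needed beyond $\phi(\bar{\bx}) = 0$, which follows from continuity.
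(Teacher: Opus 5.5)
Your argument is correct, and its skeleton is the paper's: restrict to a tail of recurring objects, pass to the limit point $\bar{\bx}$, form the deviation $\bm{\zeta}$, invoke Lemma~\ref{lem:alignment_shared}, and close with a sign dichotomy and Lemma~\ref{lem:events_flow}. You depart from it in two places.

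First, you must show that the vectors $\bar{\bd}_{J} = -\matr{P}^{\perp}_{J}\nabla_{\bx}f(\bar{\bx})$ of the recurring active sets coincide and lie in every $\ker\matr{A}_{J}$. The paper (Lemma~\ref{lem:limit_velocity_flow}(ii)) does this at the breakpoints. There the left limit of the velocity is a projection onto a cone containing $T_{C}(\bx^{(k+1)})$ (Lemmas~\ref{lem:row_bases} and~\ref{lem:velocity_identity}). Lemma~\ref{lem:nested_cones}, together with the monotonicity of $\normtwo{\bd(\bx(\tau))}$ from Theorem~\ref{thm:MSP}(i), then makes the jumps tend to zero, so consecutive $\bar{\bd}_{J}$ must eventually agree.

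You instead apply the rigidity result Lemma~\ref{lem:driven_shared}(ii) with $s \equiv 1$, and this is legitimate. Since $\matr{A}_{J}\bar{\bx} = \bb_{J}$ for recurring $J$, formula~\eqref{eq:piece_flow} gives~\eqref{eq:driven_shared} for $\bar{\bx} - \bx$ with $\bh = \nabla_{\bx}f(\bar{\bx})$. Equivalently, it holds for $\bx - \bar{\bx}$ with $\bh = -\nabla_{\bx}f(\bar{\bx})$; that is the sign bookkeeping you flagged. In effect, you start the induction of Lemma~\ref{lem:alignment_shared} one level lower, at weight $\sigma^{0}/0! = 1$.

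Second, the paper proves $\langle\ba,\bar{\bv}\rangle = 0$ from $\bm{\zeta} = o(\sigma)$, as in Lemma~\ref{lem:deviation_flow}(ii). You skip this step. You take $\bm{D} = \bx - \bar{\bx} = \bm{\omega}_{r} - \sum_{j=0}^{r}\frac{\sigma^{j+1}}{(j+1)!}\bu^{[j]}$ and let $\langle\ba,\bu^{[0]}\rangle$ serve as a zeroth coefficient in the proof of Lemma~\ref{lem:sign_shared}, which then goes through unchanged.

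Your route uses only the piecewise flow. It never needs the breakpoint jump inequality or the decay of the velocity norm, so it is more economical and reuses a lemma the argument needs anyway. The paper's route is a direct breakpoint-by-breakpoint estimate, and it reuses that estimate almost verbatim for the blocked tail of minimal selection descent in Proposition~\ref{prop:limit_velocity}.
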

Suppose, for contradiction, that there are infinitely many breakpoints and that $\lim_{k \to +\infty} \tau^{(k)} = \bar{\tau} < +\infty$. We fix the events and the tail as in Appendix~\ref{app:finite_general_setting} and write $\sigma(\tau) \coloneqq \bar{\tau} - \tau$.

\begin{lem}
\label{lem:limit_velocity_flow}
In this setting, the following holds.
\begin{itemize}
\item[(i)] The limit $\bar{\bx} \coloneqq \lim_{\tau \uparrow \bar{\tau}}\bx(\tau)$ exists. Every recurring active set $J$ satisfies $\matr{A}_{J}\bar{\bx} = \bb_{J}$ and, with $\bar{\bd}_{J} \coloneqq -\matr{P}^{\perp}_{J}\nabla_{\bx}f(\bar{\bx})$,
\begin{equation}
\label{eq:pin_flow}
\bd(\bx(\tau)) = \bar{\bd}_{J} - \matr{S}_{J}(\bx(\tau) - \bar{\bx})
\quad \text{on every $J$-piece of the tail.}
\end{equation}
\item[(ii)] The vectors $\bar{\bd}_{J}$ of the recurring active sets coincide. Their common value $\bar{\bv}$ satisfies $\bd(\bx(\tau)) \to \bar{\bv}$ as $\tau \uparrow \bar{\tau}$ and $\bar{\bv} \in \ker\matr{A}_{J}$ for every recurring $J$.
\end{itemize}
\end{lem}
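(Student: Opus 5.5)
For~(i), I would start from Theorem~\ref{thm:MSP}. The slow solution is globally Lipschitz, so $\bx(\tau)$ is Cauchy as $\tau \uparrow \bar{\tau}$, and the limit $\bar{\bx}$ exists. Take a recurring active set $J$. It owns $J$-pieces starting at times $\tau^{(k)} \uparrow \bar{\tau}$, and Lemma~\ref{lem:pieces_flow}(i) makes the constraints of $J$ active at $\bx(\tau^{(k)})$. Passing to the limit gives $\matr{A}_{J}\bar{\bx} = \bb_{J}$.

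For~\eqref{eq:pin_flow}, I would use~\eqref{eq:piece_flow} on a $J$-piece: $\bd(\bx(\tau)) = -\matr{P}^{\perp}_{J}(\matr{Q}\bx(\tau) + \bc)$. Since $\nabla_{\bx}f$ is affine, this expands to
\[
\bd(\bx(\tau)) = \bar{\bd}_{J} - \matr{P}^{\perp}_{J}\matr{Q}(\bx(\tau) - \bar{\bx}).
\]
To replace $\matr{P}^{\perp}_{J}\matr{Q}$ by $\matr{S}_{J}$, I need $\bx(\tau) - \bar{\bx} \in \ker\matr{A}_{J}$. This holds because $\matr{A}_{J}\bx(\tau) = \bb_{J}$ on the piece and $\matr{A}_{J}\bar{\bx} = \bb_{J}$. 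The identity $\matr{S}_{J}\bz = \matr{P}^{\perp}_{J}\matr{Q}\bz$ for $\bz \in \ker\matr{A}_{J}$ then gives~\eqref{eq:pin_flow}.

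For~(ii), the goal is to show that $\bd(\bx(\tau))$ has a limit as $\tau \uparrow \bar{\tau}$; comparing along pieces of different recurring sets then forces the $\bar{\bd}_{J}$ to coincide. By~\eqref{eq:pin_flow}, on $J$-pieces of the tail $\bd(\bx(\tau)) - \bar{\bd}_{J} \to \boldsymbol{0}$, because $\normtwo{\matr{S}_{J}} \leqslant \lambda_{\max}$ and $\bx(\tau) \to \bar{\bx}$. So it suffices to show that $\bd(\bx(\tau))$ converges as a whole. The norm $\normtwo{\bd(\bx(\tau))}$ is nonincreasing by Theorem~\ref{thm:MSP}(i), so it has a limit $\nu$. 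Hence every $\normtwo{\bar{\bd}_{J}} = \nu$, since each $\bar{\bd}_{J}$ is a subsequential limit of $\bd(\bx(\tau))$.

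The main obstacle is upgrading equal norms to equal vectors. I would exploit that each $\bar{\bd}_{J}$ is a projection of the same vector $-\nabla_{\bx}f(\bar{\bx})$ onto a cone, and use Lemma~\ref{lem:nested_cones}.
\begin{itemize}
\item[1.] Pass to the limit at $\bar{\bx}$. The constraints active at $\bx(\tau)$ near $\bar{\tau}$ are active at $\bar{\bx}$, so $T_{C}(\bar{\bx}) \subseteq T_{C}(\bx(\tau))$ for $\tau$ near $\bar{\tau}$.
\item[2.] Compare $\bd(\bar{\bx})$ with the direction on a late piece. Apply Lemma~\ref{lem:nested_cones} with $\mathcal{K}_{2} = T_{C}(\bar{\bx})$, $\mathcal{K}_{1} = T_{C}(\bx(\tau))$ and the vector $-\nabla_{\bx}f(\bx(\tau))$, which tends to $-\nabla_{\bx}f(\bar{\bx})$.
\item[3.] This gives $\normsq{\bd(\bx(\tau)) - \proj_{T_{C}(\bar{\bx})}(-\nabla_{\bx}f(\bx(\tau)))} \leqslant \normsq{\bd(\bx(\tau))} - \normsq{\proj_{T_{C}(\bar{\bx})}(-\nabla_{\bx}f(\bx(\tau)))}$. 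The projection onto the fixed cone $T_{C}(\bar{\bx})$ is continuous and tends to $\bd(\bar{\bx})$.
\item[4.] Show the right-hand side tends to $\nu^{2} - \normsq{\bd(\bar{\bx})}$, and that this is zero. The bound $\normtwo{\bd(\bar{\bx})} \leqslant \nu$ follows from step~3. For the reverse, $\normtwo{\bd(\bar{\bx})} \geqslant \nu$, I would invoke the minimal selection principle: the slow solution restarted at $\bar{\bx}$ coincides with $\bx$ on $[\bar{\tau},+\infty)$, and the norm is nonincreasing, so $\normtwo{\bd(\bar{\bx})} = \normtwo{\dot{\bx}(\bar{\tau})} \leqslant \nu$ only. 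The reverse inequality thus does not come for free, and this is where I am least sure of the argument.
\item[5.] If step~4 gives equality, then $\bd(\bx(\tau)) \to \bd(\bar{\bx})$, so every $\bar{\bd}_{J} = \bd(\bar{\bx}) \eqqcolon \bar{\bv}$.
\end{itemize}

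If the equality in step~4 fails, my fallback is to use Lemma~\ref{lem:velocity_identity} on each recurring $J$. Since the limiting multipliers $\bp_{J}(\bar{\bx}) \geqslant \boldsymbol{0}$, it gives $\bar{\bd}_{J} = \proj_{\{\matr{A}_{J}\bs \leqslant \boldsymbol{0}\}}(-\nabla_{\bx}f(\bar{\bx}))$. Then I would apply Lemma~\ref{lem:nested_cones} to pairs of cones, comparing the cones of consecutive sets $J, J'$ through the common limit of $\bd(\bx(\tau))$ at each breakpoint, to force equal norms to give equal vectors.

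Finally, $\bar{\bv} = \bar{\bd}_{J}$ lies in the range of $\matr{P}^{\perp}_{J}$, which is $\ker\matr{A}_{J}$, for every recurring $J$.
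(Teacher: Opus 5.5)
Part~(i) is correct and is the paper's argument. The gap is in part~(ii).

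\textbf{Your main route fails.} It aims to prove $\bd(\bx(\tau)) \to \bd(\bar{\bx})$. That is stronger than the lemma, and it cannot be obtained here. Monotonicity of $\normtwo{\dot{\bx}}$ gives only $\normtwo{\bd(\bar{\bx})} \leqslant \nu$. The hypotheses available at this point do not exclude a constraint $i$ that is inactive along the whole tail and becomes active exactly at $\bar{\bx}$ with $[\matr{A}\bar{\bv}]_{i} > 0$. In that case $\bar{\bv} \notin T_{C}(\bar{\bx})$, so $\bd(\bar{\bx}) \neq \bar{\bv}$, and your own step-3 inequality forces $\normtwo{\bd(\bar{\bx})} < \nu$. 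The velocity then jumps at $\bar{\tau}$, the same phenomenon as in Proposition~\ref{prop:continuation}. This is why the lemma claims convergence to the common value of the $\bar{\bd}_{J}$, not to $\bd(\bar{\bx})$.

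\textbf{Your fallback does not close the gap either.} There are two problems.
\begin{itemize}
\item[(a)] The cones $\{\bs : \matr{A}_{J}\bs \leqslant \boldsymbol{0}\}$ and $\{\bs : \matr{A}_{J'}\bs \leqslant \boldsymbol{0}\}$ of consecutive active sets are not nested in general. So Lemma~\ref{lem:nested_cones} gives no comparison between $\bar{\bd}_{J}$ and $\bar{\bd}_{J'}$.
\item[(b)] Its premise $\bp_{J}(\bar{\bx}) \geqslant \boldsymbol{0}$ is unjustified. Here $J = J_{*}^{(k)}$ is the set $K$ of Theorem~\ref{thm:scc_local}, whose rows may be linearly dependent, so $\bp_{J}$ and Lemma~\ref{lem:velocity_identity} need not be available. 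Moreover, on the piece the normal part is only known to lie in $N^{(k)} = \cone\{\matr{A}_{i}^{\top} : i \in J \cup R^{(k)}\}$, not in the cone generated by the rows of $J$ alone.
\end{itemize}

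\textbf{The missing idea is a jump estimate at each breakpoint of the tail.} Compare the cone of the outgoing piece with the tangent cone at the breakpoint, not two active-set cones with each other.
\begin{itemize}
\item[1.] Let $\bd^{-}$ be the left limit of $\bd(\bx(\tau))$ at $\tau^{(k+1)}$. By Lemma~\ref{lem:pieces_flow}(i), $-\nabla_{\bx}f(\bx^{(k+1)}) - \bd^{-}$ lies in the closed cone $N^{(k)}$, and $\matr{A}_{J_{*}^{(k)} \cup R^{(k)}}\bd^{-} = \boldsymbol{0}$.
\item[2.] Lemma~\ref{lem:row_bases} then yields a basis $B \subseteq J_{*}^{(k)} \cup R^{(k)}$ with $\bp_{B}(\bx^{(k+1)}) \geqslant \boldsymbol{0}$. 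Lemma~\ref{lem:velocity_identity} applied with $B$ gives $\bd^{-} = \proj_{\mathcal{K}}(-\nabla_{\bx}f(\bx^{(k+1)}))$ for $\mathcal{K} = \{\bs : \matr{A}_{B}\bs \leqslant \boldsymbol{0}\}$.
\item[3.] The constraints of $B$ are active at $\bx^{(k+1)}$, so $T_{C}(\bx^{(k+1)}) \subseteq \mathcal{K}$. Lemma~\ref{lem:nested_cones} gives $\normsq{\bd^{(k+1)} - \bd^{-}} \leqslant \normsq{\bd^{-}} - \normsq{\bd^{(k+1)}}$. The right-hand side tends to $0$ because both norms tend to $\nu$; your equal-norm observation suffices here.
\item[4.] Evaluating~\eqref{eq:pin_flow} at $\bx^{(k+1)}$ on both sides of the breakpoint gives $\normtwo{\bar{\bd}_{J_{*}^{(k+1)}} - \bar{\bd}_{J_{*}^{(k)}}} \leqslant \normtwo{\bd^{(k+1)} - \bd^{-}} + 2\lambda_{\max}\normtwo{\bx^{(k+1)} - \bar{\bx}} \to 0$.
\item[5.] The vectors $\bar{\bd}_{J}$ are finitely many, so consecutive ones eventually coincide. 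Hence every recurring $J$ has the same $\bar{\bd}_{J} = \bar{\bv}$, and~\eqref{eq:pin_flow} then gives $\bd(\bx(\tau)) \to \bar{\bv}$.
\end{itemize}
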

\begin{proof}
The slow solution is Lipschitz continuous by Theorem~\ref{thm:MSP}, so $\bx(\tau)$ has a limit $\bar{\bx}$ as $\tau \uparrow \bar{\tau}$, which lies in the closed set $C$. Let $J$ be recurring. Its constraints are active on the $J$-pieces by Lemma~\ref{lem:pieces_flow}(i), and these pieces contain times arbitrarily close to $\bar{\tau}$, so $\matr{A}_{J}\bar{\bx} = \bb_{J}$ and $\bx(\tau) - \bar{\bx} \in \ker\matr{A}_{J}$ on every $J$-piece. Since $\nabla_{\bx}f$ is affine with slope $\matr{Q}$, formula~\eqref{eq:piece_flow} gives $\bd(\bx(\tau)) = \bar{\bd}_{J} - \matr{P}^{\perp}_{J}\matr{Q}(\bx(\tau) - \bar{\bx}) = \bar{\bd}_{J} - \matr{S}_{J}(\bx(\tau) - \bar{\bx})$ there. This proves (i).

Let $\tau^{(k+1)}$ be a breakpoint of the tail, let $J = J_{*}^{(k)}$, and let $\bd^{-} \coloneqq -\matr{P}^{\perp}_{J}\nabla_{\bx}f(\bx^{(k+1)})$ be the limit of $\bd(\bx(\tau))$ as $\tau \uparrow \tau^{(k+1)}$. By Lemma~\ref{lem:pieces_flow}(i), $\bn^{-} \coloneqq -\nabla_{\bx}f(\bx^{(k+1)}) - \bd^{-}$ lies in $N^{(k)}$, and $\matr{A}_{J \cup R^{(k)}}\bd^{-} = \boldsymbol{0}$. By Lemma~\ref{lem:row_bases}, some $B \subseteq J \cup R^{(k)}$ whose rows form a basis of the row space of $\matr{A}_{J \cup R^{(k)}}$ satisfies $(\matr{A}_{B}\matr{A}_{B}^{\top})^{-1}\matr{A}_{B}\bn^{-} \geqslant \boldsymbol{0}$. Since $\matr{A}_{B}\bd^{-} = \boldsymbol{0}$, this vector is $\bp_{B}(\bx^{(k+1)})$, and $\bd^{-} = -\matr{P}^{\perp}_{B}\nabla_{\bx}f(\bx^{(k+1)})$, because $\bd^{-} \in \ker\matr{A}_{B}$ and $\bn^{-} \in \image{\matr{A}_{B}^{\top}}$. Lemma~\ref{lem:velocity_identity} with $B$ for $J$ then gives $\bd^{-} = \proj_{\mathcal{K}}(-\nabla_{\bx}f(\bx^{(k+1)}))$ for $\mathcal{K} \coloneqq \{\bs \in \Rn : \matr{A}_{B}\bs \leqslant \boldsymbol{0}\}$. The constraints of $B$ are active at $\bx^{(k+1)}$, so $T_{C}(\bx^{(k+1)}) \subseteq \mathcal{K}$, and Lemma~\ref{lem:nested_cones} gives
\[
\normsq{\bd^{(k+1)} - \bd^{-}} \leqslant \normsq{\bd^{-}} - \normsq{\bd^{(k+1)}} \leqslant \normsq{\bd^{(k)}} - \normsq{\bd^{(k+1)}},
\]
where the second inequality holds because $\normtwo{\bd^{-}} = \lim_{\tau \uparrow \tau^{(k+1)}}\normtwo{\bd(\bx(\tau))} \leqslant \normtwo{\bd^{(k)}}$, the function $\tau \mapsto \normtwo{\dot{\bx}(\tau)} = \normtwo{\bd(\bx(\tau))}$ being nonincreasing by Theorem~\ref{thm:MSP}(i) and~(iii). Similarly, the sequence $\normtwo{\bd^{(k)}}$ converges, so the right-hand side tends to $0$ as $k \to \infty$. By~\eqref{eq:pin_flow} and continuity, $\bd^{-} = \bar{\bd}_{J_{*}^{(k)}} - \matr{S}_{J_{*}^{(k)}}(\bx^{(k+1)} - \bar{\bx})$ and $\bd^{(k+1)} = \bar{\bd}_{J_{*}^{(k+1)}} - \matr{S}_{J_{*}^{(k+1)}}(\bx^{(k+1)} - \bar{\bx})$, so $\normtwo{\bar{\bd}_{J_{*}^{(k+1)}} - \bar{\bd}_{J_{*}^{(k)}}} \leqslant \normtwo{\bd^{(k+1)} - \bd^{-}} + 2\lambda_{\max}\normtwo{\bx^{(k+1)} - \bar{\bx}} \to 0$. The vectors $\bar{\bd}_{J}$ of the recurring active sets are finitely many, so $\bar{\bd}_{J_{*}^{(k+1)}} = \bar{\bd}_{J_{*}^{(k)}}$ for every large $k$: either they all coincide, or the bound eventually falls below their least pairwise distance. Hence $\bar{\bd}_{J_{*}^{(k)}}$ converges to some constant $\bar{\bv}$. Every recurring active set occurs at arbitrarily large $k$, so its vector $\bar{\bd}_{J}$ equals $\bar{\bv}$. Then~\eqref{eq:pin_flow} gives $\bd(\bx(\tau)) \to \bar{\bv}$ as $\tau \uparrow \bar{\tau}$, and the directions on the $J$-pieces lie in the closed subspace $\ker\matr{A}_{J}$, so $\bar{\bv} \in \ker\matr{A}_{J}$.
\end{proof}

We measure the deviation of the trajectory from the affine path $\tau \mapsto \bar{\bx} - \sigma(\tau)\bar{\bv}$ via
\begin{equation}
\label{eq:zeta_flow}
\bm{\zeta}(\tau) \coloneqq \bx(\tau) - \bar{\bx} + \sigma(\tau)\bar{\bv}.
\end{equation}
The slow solution is Lipschitz continuous with $\dot{\bx} = \bd(\bx)$, so $\bx(\tau) - \bar{\bx} = -\int_{\tau}^{\bar{\tau}}\bd(\bx(\rho))\diff\rho$ and
\begin{equation}
\label{eq:zeta_small_flow}
\bm{\zeta}(\tau) = -\int_{\tau}^{\bar{\tau}}\bigl(\bd(\bx(\rho)) - \bar{\bv}\bigr)\diff\rho,
\qquad
\normtwo{\bm{\zeta}(\tau)} \leqslant \sigma(\tau)\sup_{\rho \in [\tau,\bar{\tau})}\normtwo{\bd(\bx(\rho)) - \bar{\bv}}.
\end{equation}
By Lemma~\ref{lem:limit_velocity_flow}(ii), the supremum tends to $0$ as $\tau \uparrow \bar{\tau}$.

\begin{lem}
\label{lem:deviation_flow}
In this setting, the following holds.
\begin{itemize}
\item[(i)] On every $J$-piece of the tail, $\dot{\bm{\zeta}} = \sigma\matr{S}_{J}\bar{\bv} - \matr{S}_{J}\bm{\zeta}$, and $\matr{A}_{J}\bm{\zeta} = \boldsymbol{0}$ on the piece and at both of its endpoints.
\item[(ii)] Let $\phi(\bx) = \phi(\bar{\bx}) + \langle\ba,\bx - \bar{\bx}\rangle$ be an affine function that vanishes at $\bx^{(k)}$ for infinitely many $k$. Then $\phi(\bar{\bx}) = 0$, $\langle\ba,\bar{\bv}\rangle = 0$, and $\phi(\bx(\tau)) = \langle\ba,\bm{\zeta}(\tau)\rangle$ for every $\tau$ on the tail.
\end{itemize}
\end{lem}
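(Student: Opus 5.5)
For part~(i), I will differentiate the definition~\eqref{eq:zeta_flow} on a $J$-piece of the tail. There $\dot{\bm{\zeta}} = \bd(\bx(\tau)) - \bar{\bv}$, since $\dot\sigma = -1$. Substituting~\eqref{eq:pin_flow} with $\bar{\bd}_{J} = \bar{\bv}$ from Lemma~\ref{lem:limit_velocity_flow}(ii) gives $\dot{\bm{\zeta}} = -\matr{S}_{J}(\bx - \bar{\bx})$. Writing $\bx - \bar{\bx} = \bm{\zeta} - \sigma\bar{\bv}$ then yields $\dot{\bm{\zeta}} = \sigma\matr{S}_{J}\bar{\bv} - \matr{S}_{J}\bm{\zeta}$.

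For the kernel identity, Lemma~\ref{lem:limit_velocity_flow}(i) already gives $\bx(\tau) - \bar{\bx} \in \ker\matr{A}_{J}$ on every $J$-piece, and Lemma~\ref{lem:limit_velocity_flow}(ii) gives $\bar{\bv} \in \ker\matr{A}_{J}$. Hence $\matr{A}_{J}\bm{\zeta} = \boldsymbol{0}$ on the closed-open piece. The identity extends to the right endpoint by continuity of $\bx$, because the constraints of $J$ are active at $\bx^{(k+1)}$ by Lemma~\ref{lem:pieces_flow}(i) and $\matr{A}_{J}\bar{\bx} = \bb_{J}$.

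For part~(ii), I will let $k \to \infty$ along the infinitely many indices with $\phi(\bx^{(k)}) = 0$. Since $\bx^{(k)} = \bx(\tau^{(k)}) \to \bar{\bx}$, continuity gives $\phi(\bar{\bx}) = 0$.

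To show $\langle\ba,\bar{\bv}\rangle = 0$, I will use $\bx^{(k)} - \bar{\bx} = \bm{\zeta}(\tau^{(k)}) - \sigma(\tau^{(k)})\bar{\bv}$ at the same indices. This gives
\[
0 = \phi(\bx^{(k)}) = \langle\ba,\bm{\zeta}(\tau^{(k)})\rangle - \sigma(\tau^{(k)})\langle\ba,\bar{\bv}\rangle.
\]
I divide by $\sigma(\tau^{(k)}) > 0$, which is positive because the breakpoints increase strictly to $\bar{\tau}$. The bound~\eqref{eq:zeta_small_flow} shows that $\normtwo{\bm{\zeta}(\tau)}/\sigma(\tau) \to 0$, so the first term vanishes in the limit and $\langle\ba,\bar{\bv}\rangle = 0$.

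The identity then follows for every $\tau$ on the tail:
\[
\phi(\bx(\tau)) = \phi(\bar{\bx}) + \langle\ba,\bm{\zeta}(\tau) - \sigma(\tau)\bar{\bv}\rangle = \langle\ba,\bm{\zeta}(\tau)\rangle.
\]

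No step is a real obstacle. The only delicate point is the division by $\sigma(\tau^{(k)})$, and it is harmless because $\sigma > 0$ before $\bar{\tau}$ and~\eqref{eq:zeta_small_flow} gives the needed $o(\sigma)$ bound on $\bm{\zeta}$.
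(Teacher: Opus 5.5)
Your proposal is correct and follows the paper's proof essentially step for step. For (i), you substitute \eqref{eq:pin_flow} with $\bar{\bd}_{J} = \bar{\bv}$ into $\dot{\bm{\zeta}} = \dot{\bx} - \bar{\bv}$, and you take the kernel identity from Lemma~\ref{lem:limit_velocity_flow}. For (ii), you get $\phi(\bar{\bx}) = 0$ by continuity and $\langle\ba,\bar{\bv}\rangle = 0$ by dividing by $\sigma(\tau^{(k)})$ and using \eqref{eq:zeta_small_flow}; your explicit handling of the endpoints via Lemma~\ref{lem:pieces_flow}(i) only spells out what the paper states in one line.
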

\begin{proof}
By Lemma~\ref{lem:limit_velocity_flow}, $\matr{A}_{J}(\bx(\tau) - \bar{\bx}) = \boldsymbol{0}$ on the piece and at its endpoints, and $\matr{A}_{J}\bar{\bv} = \boldsymbol{0}$, which gives the second claim. For the first, \eqref{eq:pin_flow} with $\bar{\bd}_{J} = \bar{\bv}$ and~\eqref{eq:zeta_flow} give $\dot{\bx} = \bar{\bv} - \matr{S}_{J}(\bm{\zeta} - \sigma\bar{\bv}) = \bar{\bv} + \sigma\matr{S}_{J}\bar{\bv} - \matr{S}_{J}\bm{\zeta}$, and $\dot{\bm{\zeta}} = \dot{\bx} - \bar{\bv}$ because $\dot{\sigma} = -1$. This proves (i).

The iterates $\bx^{(k)}$ converge to $\bar{\bx}$, so $\phi(\bar{\bx}) = 0$ by continuity. At the iterates where $\phi$ vanishes, \eqref{eq:zeta_flow} gives $0 = \langle\ba,\bx^{(k)} - \bar{\bx}\rangle = -\sigma(\tau^{(k)})\langle\ba,\bar{\bv}\rangle + \langle\ba,\bm{\zeta}(\tau^{(k)})\rangle$. Dividing by $\sigma(\tau^{(k)})$ and letting $k \to \infty$ gives $\langle\ba,\bar{\bv}\rangle = 0$, by~\eqref{eq:zeta_small_flow}. Hence $\phi(\bx(\tau)) = \langle\ba,\bm{\zeta}(\tau) - \sigma(\tau)\bar{\bv}\rangle = \langle\ba,\bm{\zeta}(\tau)\rangle$. This proves (ii).
\end{proof}

\begin{proof}[Proof of Proposition~\ref{prop:no_finite_accum}]
We apply the lemmas of Appendix~\ref{app:shared_objects} to the pieces of the tail, relabeled from $0$, with the recurring active sets as the family $\mathcal{J}$. The hypotheses of Lemma~\ref{lem:alignment_shared} hold for $\bm{\zeta}$ and $\bar{\bv}$. Indeed, the limit velocity lies in $\ker\matr{A}_{J}$ for every recurring $J$ by Lemma~\ref{lem:limit_velocity_flow}(ii), the deviation is continuous and bounded by~\eqref{eq:zeta_small_flow}, and Lemma~\ref{lem:deviation_flow}(i) gives $\matr{A}_{J(\tau)}\bm{\zeta}(\tau) = \boldsymbol{0}$ together with $\dot{\bm{\zeta}} = \sigma\matr{S}_{J}\bar{\bv} - \matr{S}_{J}\bm{\zeta}$ on every $J$-piece. The integrand of that equation is bounded and $[\tau,\tau']$ meets finitely many pieces for $\tau < \tau' < \bar{\tau}$, so $\bm{\zeta}(\tau) - \bm{\zeta}(\tau') = -\int_{\tau}^{\tau'}\dot{\bm{\zeta}}(\rho)\diff\rho$; letting $\tau' \uparrow \bar{\tau}$, with $\bm{\zeta}(\tau') \to \boldsymbol{0}$ by~\eqref{eq:zeta_small_flow}, gives~\eqref{eq:driven_shared} with $s = \sigma$ and $\bh = \matr{Q}\bar{\bv}$, whose drifts are $\matr{P}^{\perp}_{J}\matr{Q}\bar{\bv} = \matr{S}_{J}\bar{\bv}$. Lemmas~\ref{lem:alignment_shared} and~\ref{lem:sign_shared} therefore apply to $\bm{\zeta}$. Every recurring event value vanishes at the iterates of its own events, so Lemma~\ref{lem:deviation_flow}(ii) gives $\phi(\bx(\tau)) = \langle\ba,\bm{\zeta}(\tau)\rangle$ at every $\tau$ on the tail, for every recurring event value $\phi$ and its gradient $\ba \in \Phi$. Finally, by Lemma~\ref{lem:sign_shared}, every $\ba \in \Phi$ admits a time $\tau_{\ba} < \bar{\tau}$ such that $\tau \mapsto \langle\ba,\bm{\zeta}(\tau)\rangle$ either vanishes on all of $[\tau_{\ba},\bar{\tau})$ or has no zero there. By Lemma~\ref{lem:events_flow} with $\bm{D} = \bm{\zeta}$, no piece of the tail starts after the largest of these times, which contradicts $\tau^{(k)} \uparrow \bar{\tau}$.
\end{proof}

\subsubsection{No accumulation of breakpoints at infinity}
\label{app:no_infinite_accum}
\begin{prop}
\label{prop:no_infinite_accum}
The slow solution has finitely many breakpoints.
\end{prop}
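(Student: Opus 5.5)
By Proposition~\ref{prop:no_finite_accum}, if there are infinitely many breakpoints then $\tau^{(k)} \to +\infty$. I argue by contradiction and mirror Appendix~\ref{app:no_finite_accum}, with the remaining time $\sigma$ replaced by an expansion at infinity.

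\textbf{Reduction to the shifted problem.} Fix the events and the tail as in Appendix~\ref{app:finite_general_setting}. By Proposition~\ref{prop:limit_direction}(iii), after $\tau_{0}$ the shifted trajectory $\by(\tau) = \bx(\tau) - \tau\bd^{s}$ is the slow solution for $V_{0}$. It converges to a minimizer $\bar{\by}$, and its velocity $\bd(\bx(\tau)) - \bd^{s}$ tends to $\boldsymbol{0}$. On a $J$-piece of the tail, formula~\eqref{eq:piece_flow} with $\matr{Q}\bd^{s} = \boldsymbol{0}$ gives
\[
\dot{\by} = -\matr{P}^{\perp}_{J}\nabla f_{0}(\by) = \bar{\bd}_{J} - \matr{S}_{J}(\by - \bar{\by}),
\]
where $\bar{\bd}_{J} \coloneqq -\matr{P}^{\perp}_{J}\nabla f_{0}(\bar{\by})$. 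The constraints of $J$ are active along the piece.

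\textbf{The deviation equation.} Since $J$ recurs and $\by \to \bar{\by}$, we get $\matr{A}_{J}\bar{\by} = \bb_{J}$, so $\by - \bar{\by} \in \ker\matr{A}_{J}$ on $J$-pieces. Letting $\tau \to \infty$ along $J$-pieces and using $\dot{\by} \to \boldsymbol{0}$ forces $\bar{\bd}_{J} = \boldsymbol{0}$. Hence the deviation $\bm{\zeta} \coloneqq \by - \bar{\by}$ satisfies
\[
\dot{\bm{\zeta}} = -\matr{S}_{J(\tau)}\bm{\zeta}, \qquad \matr{A}_{J(\tau)}\bm{\zeta} = \boldsymbol{0}, \qquad \bm{\zeta} \to \boldsymbol{0}.
\]
This is a switched linear system with switching sets in the finite family $\mathcal{J}$.

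\textbf{Event values in terms of $\bm{\zeta}$.} Every recurring event value is affine in $\bx$. Write it as $\phi(\bx(\tau)) = \phi(\bar{\by}) + \tau\langle\ba,\bd^{s}\rangle + \langle\ba,\bm{\zeta}(\tau)\rangle$. Since $\phi$ vanishes at infinitely many $\bx^{(k)}$ and $\tau^{(k)} \to \infty$, dividing by $\tau^{(k)}$ gives $\langle\ba,\bd^{s}\rangle = 0$, and then $\phi(\bar{\by}) = 0$. Hence $\phi(\bx(\tau)) = \langle\ba,\bm{\zeta}(\tau)\rangle$ on the tail. I will check that leave-event values $[\bp_{B}(\bx)]_{i}$ also depend only on $\by$ up to the constant $\bd^{s}$, which holds because $\matr{Q}\bd^{s} = \boldsymbol{0}$ and the term involving $\bd^{s}$ is constant. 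By Lemma~\ref{lem:events_flow} with $\bm{D} = \bm{\zeta}$, it then suffices to prove that each $\langle\ba,\bm{\zeta}\rangle$ eventually either vanishes identically or has no zero.

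\textbf{Main obstacle: the sign dichotomy at infinity.} This replaces Lemmas~\ref{lem:driven_shared}--\ref{lem:sign_shared}, and I expect it to be the hardest step. I would expand $\bm{\zeta}$ along eigenvectors, as the overview suggests.
\begin{enumerate}
\item[(a)] Show that the energy $\normsq{\bm{\zeta}}$ is nonincreasing, since $\langle\bm{\zeta},\matr{S}_{J}\bm{\zeta}\rangle \geqslant 0$, and that $\bm{\zeta}$ decays.
\item[(b)] Let $\mu$ be the slowest decay rate, the smallest eigenvalue effectively seen. Use an analogue of the drift argument in Lemma~\ref{lem:driven_shared}(ii): test against the component of minimal norm and use that every $J$ recurs on pieces of positive length. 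This should force the leading component $\bu$ to satisfy $\matr{S}_{J}\bu = \mu\bu$ with $\bu \in \ker\matr{A}_{J}$ for every recurring $J$, the same vector for all $J$.
\item[(c)] Conclude that $\bm{\zeta}(\tau) = e^{-\mu\tau}(\bu + o(1))$.
\item[(d)] Peel off this component and iterate over the finitely many distinct eigenvalues. This expresses $\bm{\zeta}$ as a finite sum of common eigenmodes plus a remainder dominated by the last mode.
\end{enumerate}
From this, $\langle\ba,\bm{\zeta}\rangle$ either has a nonzero leading coefficient $\langle\ba,\bu^{[j]}\rangle$, giving no zeros eventually, or all coefficients vanish, giving $\langle\ba,\bm{\zeta}\rangle \equiv 0$ eventually. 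Lemma~\ref{lem:events_flow} then gives the contradiction.

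The delicate points are two. First, the uniform-in-switching control of the remainder, which is the analogue of Lemma~\ref{lem:driven_shared}(i), now over an infinite horizon with possibly very short pieces. Second, the zero-eigenvalue modes: there $\bm{\zeta}$ must vanish because $\bm{\zeta} \to \boldsymbol{0}$ with $\dot{\bm{\zeta}} = \boldsymbol{0}$ on common kernels.
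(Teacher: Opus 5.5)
Your reduction to the homogeneous switched system $\dot{\bm{\zeta}} = -\matr{S}_{J(\tau)}\bm{\zeta}$, $\matr{A}_{J(\tau)}\bm{\zeta} = \boldsymbol{0}$, matches the paper. So does your identification of every recurring event value with $\langle\ba,\bm{\zeta}\rangle$; dividing by $\tau^{(k)}$ treats enter and leave events uniformly, which is neat. The closing appeal to Lemma~\ref{lem:events_flow} also matches. One small omission: to write $\dot{\by} = -\matr{P}^{\perp}_{J}\nabla f_{0}(\by)$ you also need $\matr{A}_{J}\bd^{s} = \boldsymbol{0}$, which follows from $J \subseteq I_{0}$ (Proposition~\ref{prop:limit_direction}(ii)).

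The genuine gap is step~(b), which carries the whole difficulty, and the mechanism you propose for it does not transfer. The drift argument of Lemma~\ref{lem:driven_shared}(ii) relies on two things you do not have at infinity:
\begin{itemize}
\item a known source $\bh = \matr{Q}\bar{\bv}$, which defines the minimal-norm drift and the test vector $\bq$;
\item a feedback term $\matr{S}_{J}\bm{\omega}$ that is a small perturbation over a vanishing horizon ($\lambda_{\max}\sigma \leqslant \tfrac12$ in part~(i)).
\end{itemize}
At infinity the only identity is $\bm{\zeta}(\tau) = \int_{\tau}^{\infty}\matr{S}_{J(\rho)}\bm{\zeta}(\rho)\diff\rho$. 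There is no source, the feedback is the entire right-hand side, and the horizon does not shrink. ``The component of minimal norm'' and ``the slowest decay rate'' are only defined once you know that $\bm{\zeta}/\normtwo{\bm{\zeta}}$ converges, which is the claim to be proved. Recurrence on pieces of positive length does not help either, since the pieces of some recurring $J$ may have summable total length.

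The paper closes this step with two structural facts your plan does not use.

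\textbf{Common Lyapunov function.} For $\bs \in \ker\matr{A}_{J}$ we have $\langle\bs,\matr{S}_{J}\bs\rangle = \langle\bs,\matr{Q}\bs\rangle$. So the Rayleigh quotient $\varrho = \langle\btheta,\matr{Q}\btheta\rangle$ of $\btheta = \bm{\zeta}/\normtwo{\bm{\zeta}}$ is a single Lyapunov function for all pieces, with $\dot{\varrho} = -2\normsq{\dot{\btheta}}$. Hence $\varrho \downarrow \mu$ for some $\mu$, and $\normsq{\dot{\btheta}}$ is integrable.

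\textbf{Jump condition.} The velocity norm $\normtwo{\dot{\bm{\zeta}}} = \normtwo{\bd(\bx) - \bd^{s}}$ does not increase across breakpoints, by the minimal selection principle for $V_{0}$ (Proposition~\ref{prop:limit_direction}(iii)). This is essential. Your monotone energy $\normsq{\bm{\zeta}}$ says nothing about directions and is no substitute.

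The jump condition keeps $\psi = \normsq{\dot{\btheta}}$ from jumping up at breakpoints. Together with $\dot{\psi} \leqslant 4\lambda_{\max}\psi$ on pieces, it yields $\psi \to 0$ pointwise. That forces $\mu$ to be an eigenvalue of $\matr{S}_{J}$ on $\ker\matr{A}_{J}$ for every recurring $J$, however short its pieces. A spectral-gap inequality $\varrho - \mu \leqslant \kappa^{2}\psi$ then gives $\btheta$ finite length, so $\btheta \to \bar{\btheta}$, a common eigenvector.

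From there, $\gamma = \langle\bm{\zeta},\bar{\btheta}\rangle$ satisfies $\dot{\gamma} = -\mu\gamma$ on every piece. The remainder $\bm{\zeta}/\gamma - \bar{\btheta}$ inherits the same equation and jump condition, with $\matr{Q} - \mu\matr{I}_{n\times n}$ on a subspace of one lower dimension (Lemma~\ref{lem:direction_flow}). This is exactly what your peeling in (c)--(d) requires.

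Without the jump condition, nothing prevents the direction from settling on an eigenvector of one recurring $\matr{S}_{J}$ but not of another whose pieces have summable length. Then $\gamma$ is not exponential on those pieces and your expansion breaks down.
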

We prove this below. Write $\by(\tau) = \bx(\tau) - \tau\bd^{s}$, with $\tau_{0}$ and $\bar{\by}$ as in Proposition~\ref{prop:limit_direction}, and let
\[
\be(\tau) \coloneqq \by(\tau) - \bar{\by},
\]
so that $\be(\tau) \to \boldsymbol{0}$ as $\tau \to +\infty$, and the right derivative of $\be$ is $\dot{\be}(\tau) = \bd(\bx(\tau)) - \bd^{s}$.

\begin{lem}
\label{lem:late_flow}
Let $J$ be an index set whose $J$-pieces contain arbitrarily large times. Then, at every time $\tau$ of every $J$-piece,
\[
\be(\tau) \in \ker\matr{A}_{J}
\qquad \text{and} \qquad
\dot{\be}(\tau) = -\matr{P}^{\perp}_{J}\matr{Q}\be(\tau) = -\matr{S}_{J}\be(\tau).
\]
\end{lem}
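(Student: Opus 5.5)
The plan is to use the two facts the late dynamics supply. By Proposition~\ref{prop:limit_direction}(ii), only constraints in $I_{0}$ can be active after $\tau_{0}$. By Proposition~\ref{prop:limit_direction}(i),(iii), $\bd(\bx(\tau)) \to \bd^{s}$ and $\be(\tau) \to \boldsymbol{0}$. Combining these with the piece formula~\eqref{eq:piece_flow} of Lemma~\ref{lem:pieces_flow}(i) should give both claims.

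\emph{Step 1: $\matr{A}_{J}\bd^{s} = \boldsymbol{0}$.} Pick a $J$-piece containing a time $\tau > \tau_{0}$. By Lemma~\ref{lem:pieces_flow}(i), the constraints of $J$ are active at $\bx(\tau)$. Hence $J \subseteq \eqset(\bx(\tau)) \subseteq I_{0}$ by Proposition~\ref{prop:limit_direction}(ii). The definition~\eqref{eq:ds} of $I_{0}$ then gives $[\matr{A}\bd^{s}]_{J} = \boldsymbol{0}$, that is, $\bd^{s} \in \ker\matr{A}_{J}$.

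\emph{Step 2: $\be(\tau) \in \ker\matr{A}_{J}$.} On every $J$-piece, early or late, Step~1 gives $\matr{A}_{J}\by(\tau) = \matr{A}_{J}\bx(\tau) = \bb_{J}$. Along the late $J$-pieces, $\by(\tau) \to \bar{\by}$, so passing to the limit gives $\matr{A}_{J}\bar{\by} = \bb_{J}$. Subtracting, $\matr{A}_{J}\be(\tau) = \boldsymbol{0}$ on every $J$-piece.

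\emph{Step 3: the derivative identity.} By~\eqref{eq:piece_flow}, $\bd(\bx(\tau)) = -\matr{P}^{\perp}_{J}(\matr{Q}\bx(\tau) + \bc)$ at every time of a $J$-piece, with the right derivative at the left endpoint. Since $\matr{Q}\bd^{s} = \boldsymbol{0}$, we have $\matr{Q}\bx = \matr{Q}\by = \matr{Q}\bar{\by} + \matr{Q}\be$. Since $\matr{P}^{\perp}_{J}\bd^{s} = \bd^{s}$ by Step~1, this yields
\[
\dot{\be}(\tau) = \bd(\bx(\tau)) - \bd^{s} = \bg_{J} - \matr{P}^{\perp}_{J}\matr{Q}\be(\tau), \qquad \bg_{J} \coloneqq -\matr{P}^{\perp}_{J}\bigl(\matr{Q}\bar{\by} + \bc + \bd^{s}\bigr),
\]
where $\bg_{J}$ is a constant vector depending only on $J$.

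\emph{Step 4: $\bg_{J} = \boldsymbol{0}$.} Evaluate the identity of Step~3 along $J$-pieces at times tending to infinity. The left side $\bd(\bx(\tau)) - \bd^{s}$ tends to $\boldsymbol{0}$ by Proposition~\ref{prop:limit_direction}(i). The term $\matr{P}^{\perp}_{J}\matr{Q}\be(\tau)$ tends to $\boldsymbol{0}$ because $\be(\tau) \to \boldsymbol{0}$. Hence $\bg_{J} = \boldsymbol{0}$. Finally, $\be(\tau) \in \ker\matr{A}_{J}$ by Step~2, so $\matr{P}^{\perp}_{J}\matr{Q}\be(\tau) = \matr{S}_{J}\be(\tau)$, which completes the claimed identity.

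\emph{Main obstacle.} The only delicate point is Step~1. The identity $\matr{A}_{J}\bd^{s} = \boldsymbol{0}$ is what makes $\by$ inherit the active constraints of $J$, and what lets $\bd^{s}$ pass through $\matr{P}^{\perp}_{J}$. It relies on the $J$-pieces reaching past $\tau_{0}$, which is exactly the hypothesis that they contain arbitrarily large times. The same hypothesis is what makes the limits in Steps~2 and~4 available.
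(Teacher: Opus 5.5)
Your proof is correct and follows essentially the same route as the paper. You use $J \subseteq \eqset(\bx(\tau)) \subseteq I_{0}$ at a late time to get $\matr{A}_{J}\bd^{s} = \boldsymbol{0}$, pass to the limit to get $\matr{A}_{J}\bar{\by} = \bb_{J}$, and fix the constant term of the piece formula~\eqref{eq:piece_flow} by letting $\tau \to +\infty$ along $J$-pieces; your identity $\bg_{J} = \boldsymbol{0}$ is exactly the paper's $\bd^{s} = -\matr{P}^{\perp}_{J}\nabla_{\bx}f(\bar{\by})$.
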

\begin{proof}
By Lemma~\ref{lem:pieces_flow}(i) and Proposition~\ref{prop:limit_direction}(ii), at the times $\tau \geqslant \tau_{0}$ of the $J$-pieces we have $J \subseteq \eqset(\bx(\tau)) \subseteq I_{0}$ and $[\matr{A}\bx(\tau)]_{J} = \bb_{J}$. Since $J \subseteq I_{0}$, we have $[\matr{A}\bd^{s}]_{J} = \boldsymbol{0}$, so $[\matr{A}\by(\tau)]_{J} = [\matr{A}\bx(\tau)]_{J} = \bb_{J}$ at every time of every $J$-piece. Letting $\tau \to +\infty$ gives $[\matr{A}\bar{\by}]_{J} = \bb_{J}$, hence $\be(\tau) \in \ker\matr{A}_{J}$ there.

Since $\matr{Q}\bd^{s} = \boldsymbol{0}$, we have $\nabla_{\bx}f(\bx(\tau)) = \nabla_{\bx}f(\by(\tau)) \to \nabla_{\bx}f(\bar{\by})$ as $\tau \to +\infty$. Along the $J$-pieces, $\bd(\bx(\tau)) = -\matr{P}^{\perp}_{J}\nabla_{\bx}f(\by(\tau))$, by~\eqref{eq:piece_flow}, converges to $-\matr{P}^{\perp}_{J}\nabla_{\bx}f(\bar{\by})$, and to $\bd^{s}$ by Proposition~\ref{prop:limit_direction}(i), so $\bd^{s} = -\matr{P}^{\perp}_{J}\nabla_{\bx}f(\bar{\by})$. Hence, at every time $\tau$ of every $J$-piece, $\dot{\be}(\tau) = -\matr{P}^{\perp}_{J}\bigl(\nabla_{\bx}f(\by(\tau)) - \nabla_{\bx}f(\bar{\by})\bigr) = -\matr{P}^{\perp}_{J}\matr{Q}\be(\tau)$, and $\matr{P}^{\perp}_{J}\matr{Q}\be(\tau) = \matr{S}_{J}\be(\tau)$ because $\be(\tau) \in \ker\matr{A}_{J}$.
\end{proof}

Suppose there are infinitely many breakpoints, so that $\tau^{(k)} \to +\infty$ by Proposition~\ref{prop:no_finite_accum}. We fix the events and the tail as in Appendix~\ref{app:finite_general_setting}, discarding further pieces so that every piece of the tail starts after $\tau_{0}$. Every recurring active set then satisfies the hypothesis of Lemma~\ref{lem:late_flow}, and $\tau \mapsto \normtwo{\dot{\be}(\tau)}$ is nonincreasing on the tail by Proposition~\ref{prop:limit_direction}(iii).

\begin{lem}
\label{lem:direction_flow}
Let $W \subseteq \Rn$ be a subspace, let $\matr{H} \in \R^{n \times n}$ be symmetric, and for recurring $J$ let $\matr{P}^{W}_{J}$ denote the orthogonal projection onto $W \cap \ker\matr{A}_{J}$. Let $\tau \mapsto \bxi(\tau)$ be defined and continuous on the tail, continuously differentiable on the interior of each piece and from the right at its left endpoint, and nonzero at every time of the tail. Suppose that on every $J$-piece of the tail
\begin{equation}
\label{eq:level_flow}
\bxi(\tau) \in W \cap \ker\matr{A}_{J}
\qquad \text{and} \qquad
\dot{\bxi} = -\matr{P}^{W}_{J}\matr{H}\bxi,
\end{equation}
and that at every breakpoint $\tau^{(k)}$ of the tail
\begin{equation}
\label{eq:jump_flow}
\normtwo{\dot{\bxi}(\tau^{(k)})} \leqslant \lim_{\tau \uparrow \tau^{(k)}}\normtwo{\dot{\bxi}(\tau)}.
\end{equation}
Then:
\begin{itemize}
\item[(i)] The unit vector $\btheta(\tau) \coloneqq \bxi(\tau)/\normtwo{\bxi(\tau)}$ converges as $\tau \to +\infty$ to a unit vector $\bar{\btheta}$, and there exists $\mu \in \R$ such that $\bar{\btheta} \in W \cap \ker\matr{A}_{J}$ and $\matr{P}^{W}_{J}\matr{H}\bar{\btheta} = \mu\bar{\btheta}$ for every recurring $J$.
\item[(ii)] From some time on, $\gamma(\tau) \coloneqq \langle\bxi(\tau),\bar{\btheta}\rangle$ is positive, and from that time the function $\bxi^{+} \coloneqq \bxi/\gamma - \bar{\btheta}$ is continuous, continuously differentiable on each piece, takes its values in $W^{+} \coloneqq \{\bs \in W : \langle\bs,\bar{\btheta}\rangle = 0\}$, tends to $\boldsymbol{0}$, and satisfies~\eqref{eq:level_flow} and~\eqref{eq:jump_flow} with $W^{+}$ and $\matr{H} - \mu\matr{I}_{n\times n}$ for $W$ and $\matr{H}$.
\end{itemize}
\end{lem}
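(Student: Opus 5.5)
The plan is to study the normalized direction $\btheta = \bxi/\normtwo{\bxi}$ through a Rayleigh-type quantity that is monotone on each piece and does not increase at breakpoints. From this I would derive $\int\normsq{\dot{\btheta}}<\infty$ and then convergence of $\btheta$. Part~(ii) then reduces to a direct computation that uses the eigen-relation from part~(i).

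\textbf{Part (i).} On a $J$-piece write $\matr{S}^{W}_{J} \coloneqq \matr{P}^{W}_{J}\matr{H}\matr{P}^{W}_{J}$. This matrix is symmetric and agrees with $\matr{P}^{W}_{J}\matr{H}$ on $W\cap\ker\matr{A}_{J}$, so~\eqref{eq:level_flow} reads $\dot{\bxi} = -\matr{S}^{W}_{J}\bxi$.
\begin{itemize}
\item \emph{Equation for the unit vector.} On the piece,
\[
\dot{\btheta} = -\bigl(\matr{S}^{W}_{J} - \rho\,\matr{I}_{n\times n}\bigr)\btheta, \qquad \rho \coloneqq \langle\btheta,\matr{S}^{W}_{J}\btheta\rangle,
\]
and $\normtwo{\dot{\btheta}}^{2} = \normsq{\matr{S}^{W}_{J}\btheta} - \rho^{2}$.
\item \emph{Candidate Lyapunov function.} I would use $q(\tau) \coloneqq \normtwo{\dot{\bxi}(\tau)}/\normtwo{\bxi(\tau)} = \normtwo{\matr{S}^{W}_{J}\btheta}$. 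Since $\bxi$ is continuous and nonzero, $q$ does not increase at breakpoints by~\eqref{eq:jump_flow}.
\item \emph{Monotonicity inside a piece.} Expand $\btheta$ in an eigenbasis of $\matr{S}^{W}_{J}$ with weights $c_{j}^{2}$, and write $\mu_{k} = \sum_{j} \lambda_{j}^{k} c_{j}^{2}$. Then $\tfrac{\diff}{\diff\tau}\log q^{2}$ is governed by $\mu_{1}\mu_{2} - \mu_{0}\mu_{3}$, which is $\le 0$ when every $\lambda_{j}\geqslant0$, by Chebyshev's sum inequality.
\item \emph{Recovering a nonnegative spectrum.} Since $\matr{H}$ may be indefinite, I would first handle a shift of $\matr{H}$ by a multiple of $\matr{I}_{n\times n}$. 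Replacing $\bxi$ by $e^{c\tau}\bxi$ leaves $\btheta$ unchanged and shifts $\matr{S}^{W}_{J}$ by $-c$, which can make it positive semidefinite. The difficulty is that~\eqref{eq:jump_flow} is not obviously preserved under this reweighting.
\end{itemize}
This is the step I expect to be the main obstacle. If it fails, my fallback is to work with $\rho$ directly. The quantity $\rho$ decreases within a piece at rate $2\normtwo{\dot{\btheta}}^{2}$, and I would try to show that its jumps at breakpoints are controlled by the jumps of $q$, which sum to a finite total.

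\textbf{Passing to the limit in part (i).} Granted a bounded monotone quantity whose within-piece decrease dominates $\normtwo{\dot{\btheta}}^{2}$, we get $\int^{\infty}\normtwo{\dot{\btheta}}^{2}\diff\tau<\infty$. Hence every limit point $\bar{\btheta}$ satisfies $\matr{S}^{W}_{J}\bar{\btheta} = \mu_{J}\bar{\btheta}$ and $\bar{\btheta}\in W\cap\ker\matr{A}_{J}$ for each recurring $J$. Membership in $\ker\matr{A}_{J}$ holds by closedness, because $J$-pieces occur at arbitrarily late times.
\begin{itemize}
\item \emph{Common eigenvalue.} The limit of $\rho$ equals $\mu_{J}$ along $J$-pieces, so all the $\mu_{J}$ coincide; call the common value $\mu$.
\item \emph{Full convergence.} The limit set of $\btheta$ is connected, since $\btheta$ is continuous and $\dot{\btheta}$ is square integrable and tends to $0$. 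It is contained in the finite union, over the candidate values of $\mu$, of unit spheres of the subspaces $\bigcap_{J}\ker(\matr{S}^{W}_{J}-\mu)$. Connectedness therefore pins down one value of $\mu$.
\item \emph{Single limit point.} I would finish with a Łojasiewicz-type argument, or by arguing as in Lemma~\ref{lem:driven_shared} that the components of $\btheta$ orthogonal to that subspace decay, so that a single limit point remains.
\end{itemize}

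\textbf{Part (ii).} Positivity of $\gamma = \langle\bxi,\bar{\btheta}\rangle$ follows from $\btheta\to\bar{\btheta}$.
\begin{itemize}
\item \emph{Evolution of $\gamma$.} By symmetry of $\matr{S}^{W}_{J}$ and the eigen-relation, $\dot{\gamma} = -\langle\bxi,\matr{S}^{W}_{J}\bar{\btheta}\rangle = -\mu\gamma$ on every piece.
\item \emph{Flow for $\bxi^{+}$.} This gives $\dot{\bxi}^{+} = (\dot{\bxi}+\mu\bxi)/\gamma = -\matr{P}^{W}_{J}(\matr{H}-\mu\matr{I}_{n\times n})\bxi^{+}$, using $\matr{P}^{W}_{J}(\matr{H}-\mu\matr{I}_{n\times n})\bar{\btheta} = \boldsymbol{0}$.
\item \emph{Invariance of $W^{+}$.} The eigen-relation also shows that $\matr{P}^{W}_{J}$ maps $W^{+}$ into $W^{+}\cap\ker\matr{A}_{J}$. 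So this projection coincides with the projection onto $W^{+}\cap\ker\matr{A}_{J}$ on the relevant vectors, which gives~\eqref{eq:level_flow} for $\bxi^{+}$.
\item \emph{Decay.} $\bxi^{+} = \btheta/\langle\btheta,\bar{\btheta}\rangle - \bar{\btheta}\to\boldsymbol{0}$.
\end{itemize}
The remaining point is~\eqref{eq:jump_flow} for $\bxi^{+}$, that is, non-increase of $\normtwo{\dot{\bxi}+\mu\bxi}$ at breakpoints. Since $\bxi$ is continuous, this reduces to comparing $\normsq{\dot{\bxi}} + 2\mu\langle\dot{\bxi},\bxi\rangle$ across the breakpoint. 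I would try to show that $\langle\dot{\bxi},\bxi\rangle$ is continuous there, or that it moves in a compatible direction, using the projection structure behind~\eqref{eq:jump_flow}. This is a secondary obstacle, since the hypotheses as stated only control $\normtwo{\dot{\bxi}}$.
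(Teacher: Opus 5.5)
Your proposal has a genuine gap, and one missing observation sits behind both obstacles you flag. On a $J$-piece, $\btheta \in W\cap\ker\matr{A}_{J}$ and $\matr{P}^{W}_{J}$ is a symmetric projection fixing $\btheta$. Hence your $\rho = \langle\btheta,\matr{P}^{W}_{J}\matr{H}\btheta\rangle = \langle\btheta,\matr{H}\btheta\rangle$ does not depend on $J$. Since $\btheta$ is continuous, $\rho$ has no jumps at breakpoints. For the same reason, $\langle\dot{\bxi},\bxi\rangle = -\langle\bxi,\matr{H}\bxi\rangle$ is continuous there.

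Without this observation, none of your steps closes. Your fallback expects jumps of $\rho$ that must be ``controlled by the jumps of $q$''. Your shift $\bxi\mapsto e^{c\tau}\bxi$ stalls on transferring~\eqref{eq:jump_flow}, and part~(ii) stalls on the same point. As written, you have neither a monotone quantity nor~\eqref{eq:jump_flow} for $\bxi^{+}$.

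With the observation, all three close at once. $\rho$ is continuous, nonincreasing (its derivative on pieces is $-2\normsq{\dot{\btheta}}$), and bounded below by $-\normtwo{\matr{H}}$. So $\rho\to\mu$ and $\int^{\infty}\normsq{\dot{\btheta}}\diff\tau<\infty$, and your fallback is the right Lyapunov function; the detour through $q$ and Chebyshev's inequality is unnecessary. The identity $\normsq{\dot{\bxi}+c\bxi} = \normsq{\dot{\bxi}} + 2c\langle\dot{\bxi},\bxi\rangle + c^{2}\normsq{\bxi}$ shows that~\eqref{eq:jump_flow} survives every shift. In particular it survives $c=\mu$, which completes your reduction $\dot{\bxi}^{+} = (\dot{\bxi}+\mu\bxi)/\gamma$ in part~(ii).

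The passage to the limit in part~(i) also fails as stated. Square integrability of $\dot{\btheta}$ does not give $\dot{\btheta}\to\boldsymbol{0}$. You also need that $\psi \coloneqq \normsq{\dot{\btheta}} = \normsq{\dot{\bxi}}/\normsq{\bxi} - \rho^{2}$ does not increase at breakpoints, which follows from~\eqref{eq:jump_flow} and the continuity of $\rho$. You further need $\dot{\psi}\leqslant 4\normtwo{\matr{H}}\psi$ on pieces. Then $e^{-4\normtwo{\matr{H}}\tau}\psi$ is nonincreasing, and $\int\psi<\infty$ forces $\psi\to 0$.

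Even with $\psi\to0$, your claim that every limit point of $\btheta$ is an eigenvector of $\matr{P}^{W}_{J}\matr{H}$ for every recurring $J$ is unjustified. A limit point approached at some times need not be approached along the pieces of each $J$. Moreover, $\normtwo{\dot{\btheta}}\to 0$ does not bound the drift over the long intervals between those pieces. Connectedness would in any case only place the limit set on a sphere.

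The single-limit step you leave as ``\L{}ojasiewicz-type'' is where the real work is. Let $\delta>0$ be a uniform gap between $\mu$ and the other eigenvalues of the finitely many restricted maps $\matr{P}^{W}_{J}\matr{H}$ on $W\cap\ker\matr{A}_{J}$. Expanding $\btheta$ in their eigenbases gives $0\leqslant\rho-\mu\leqslant\kappa^{2}\psi$ late on the tail. Hence $\tfrac{\diff}{\diff\tau}\sqrt{\rho-\mu}\leqslant-\normtwo{\dot{\btheta}}/\kappa$ on pieces, so $\btheta$ has finite length and converges. Only after that does the eigen-relation follow, for the single limit $\bar{\btheta}$ and every recurring $J$. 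It comes from passing to the limit in $(\matr{P}^{W}_{J}\matr{H}-\rho\matr{I}_{n\times n})\btheta = -\dot{\btheta}$ along the $J$-pieces.
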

\begin{proof}
For recurring $J$, let $\matr{H}_{J}$ denote the restriction of $\matr{P}^{W}_{J}\matr{H}$ to $W \cap \ker\matr{A}_{J}$. It is a symmetric linear map from that subspace into itself, with $\normtwo{\matr{H}_{J}} \leqslant \normtwo{\matr{H}}$ and $\langle\bs,\matr{H}_{J}\bs\rangle = \langle\bs,\matr{H}\bs\rangle$ there. Let $\varrho \coloneqq \langle\btheta,\matr{H}\btheta\rangle$ and $\psi \coloneqq \normsq{\dot{\btheta}}$, the dot denoting a right derivative at the left endpoint of a piece. On each $J$-piece, \eqref{eq:level_flow} is a linear differential equation with constant coefficients, so $\bxi$ is real analytic there, and so are $\btheta$, $\varrho$, and $\psi$, because $\bxi$ does not vanish.

\paragraph{Step~1.} We show that $\varrho$ converges to some $\mu \in \R$ and that $\psi(\tau) \to 0$ as $\tau \to +\infty$. On a $J$-piece, \eqref{eq:level_flow} gives $\langle\bxi,\dot{\bxi}\rangle = -\langle\bxi,\matr{H}\bxi\rangle$, so $\tfrac{\diff}{\diff\tau}\normtwo{\bxi} = -\varrho\normtwo{\bxi}$ and the vector of $W \cap \ker\matr{A}_{J}$
\begin{equation}
\label{eq:udot_flow}
\dot{\btheta} = -\matr{H}_{J}\btheta + \varrho\btheta
\end{equation}
is orthogonal to $\btheta$. Hence $\dot{\varrho} = 2\langle\matr{H}\btheta,\dot{\btheta}\rangle = 2\langle\matr{H}_{J}\btheta,\dot{\btheta}\rangle = 2\langle\varrho\btheta - \dot{\btheta},\dot{\btheta}\rangle = -2\psi$, and
\begin{equation}
\label{eq:psi_flow}
\psi = \normsq{\matr{H}_{J}\btheta} - \varrho^{2} = \normsq{\dot{\bxi}}/\normsq{\bxi} - \varrho^{2}.
\end{equation}
Differentiating~\eqref{eq:udot_flow} gives $\ddot{\btheta} = -\matr{H}_{J}\dot{\btheta} + \dot{\varrho}\btheta + \varrho\dot{\btheta}$, so $\dot{\psi} = 2\langle\dot{\btheta},\ddot{\btheta}\rangle = -2\langle\dot{\btheta},\matr{H}_{J}\dot{\btheta}\rangle + 2\varrho\psi \leqslant 4\normtwo{\matr{H}}\psi$ on the piece. At a breakpoint, $\bxi$ and $\varrho$ are continuous and $\normtwo{\dot{\bxi}}$ does not increase, by~\eqref{eq:jump_flow}, so $\psi$ does not increase, by~\eqref{eq:psi_flow}. Hence $\tau \mapsto e^{-4\normtwo{\matr{H}}\tau}\psi(\tau)$ is nonincreasing on the tail. The function $\varrho$ is continuous, nonincreasing, and bounded below by $-\normtwo{\matr{H}}$, so it converges to some $\mu \in \R$, and $\int_{\tau_{1}}^{+\infty}\psi(\rho)\diff\rho = \tfrac12(\varrho(\tau_{1}) - \mu) < +\infty$ for every $\tau_{1}$ on the tail. For $\rho \in [\tau - 1,\tau]$ we have $\psi(\rho) \geqslant e^{-4\normtwo{\matr{H}}}\psi(\tau)$, so $\psi(\tau) \leqslant e^{4\normtwo{\matr{H}}}\int_{\tau-1}^{\tau}\psi(\rho)\diff\rho \to 0$.

\paragraph{Step~2.} We show that $0 \leqslant \varrho - \mu \leqslant \kappa^{2}\psi$ on $[\tau_{1},+\infty)$, for some $\kappa > 0$ and some time $\tau_{1}$ on the tail. Let $J$ be recurring. By~\eqref{eq:udot_flow}, $\normtwo{\dot{\btheta}} = \normtwo{(\matr{H}_{J} - \varrho)\btheta}$ is at least the distance from $\varrho$ to the spectrum of $\matr{H}_{J}$. Since $\psi \to 0$, $\varrho \to \mu$, and the $J$-pieces contain arbitrarily large times, $\mu$ is an eigenvalue of $\matr{H}_{J}$. There are finitely many recurring active sets, so we can fix $\delta > 0$ smaller than the distance from $\mu$ to the other eigenvalues of every $\matr{H}_{J}$ with $J$ recurring. If $\mu$ is the only eigenvalue of every such $\matr{H}_{J}$, we set $\delta \coloneqq 1$. Let $\tau_{1}$ be a time on the tail after which $\varrho - \mu \leqslant \delta/2$, consider a time $\tau \geqslant \tau_{1}$ of a $J$-piece, and let $\btheta_{\mu}$ be the orthogonal projection of $\btheta(\tau)$ onto the $\mu$-eigenspace of $\matr{H}_{J}$. The other eigenvalues of $\matr{H}_{J}$ lie at distance at least $\delta/2$ from $\varrho$, so expanding $\btheta$ in an orthonormal eigenbasis of $\matr{H}_{J}$ gives
\[
\psi = \normsq{(\matr{H}_{J} - \varrho)\btheta} \geqslant \tfrac{\delta^{2}}{4}\normsq{\btheta - \btheta_{\mu}},
\qquad
\varrho - \mu = \langle\btheta - \btheta_{\mu},(\matr{H}_{J} - \mu)(\btheta - \btheta_{\mu})\rangle \leqslant 2\normtwo{\matr{H}}\normsq{\btheta - \btheta_{\mu}}.
\]
The claim holds with $\kappa^{2} \coloneqq 8(1 + \normtwo{\matr{H}})/\delta^{2}$, and $\varrho \geqslant \mu$ because $\varrho$ is nonincreasing with limit $\mu$.

\paragraph{Step~3.} We prove (i). If $\varrho(\tau_{2}) = \mu$ at some time $\tau_{2} \geqslant \tau_{1}$, then $\varrho = \mu$ on $[\tau_{2},+\infty)$, so $\psi = -\dot{\varrho}/2$ vanishes there and $\btheta$ is constant. Otherwise $\varrho > \mu$ on the tail, and on each piece after $\tau_{1}$, Step~2 gives $\tfrac{\diff}{\diff\tau}\sqrt{\varrho - \mu} = -\psi/\sqrt{\varrho - \mu} \leqslant -\sqrt{\psi}/\kappa = -\normtwo{\dot{\btheta}}/\kappa$. The function $\sqrt{\varrho - \mu}$ is continuous, so integrating gives $\int_{\tau_{1}}^{+\infty}\normtwo{\dot{\btheta}(\rho)}\diff\rho \leqslant \kappa\sqrt{\varrho(\tau_{1}) - \mu}$. In both cases, $\btheta(\tau)$ converges to a unit vector $\bar{\btheta}$. For recurring $J$, the directions on the $J$-pieces lie in the closed subspace $W \cap \ker\matr{A}_{J}$, so $\bar{\btheta}$ does, and along the $J$-pieces $(\matr{H}_{J} - \varrho)\btheta = -\dot{\btheta} \to \boldsymbol{0}$ by Step~1. Hence $\matr{H}_{J}\bar{\btheta} = \mu\bar{\btheta}$, that is, $\matr{P}^{W}_{J}\matr{H}\bar{\btheta} = \mu\bar{\btheta}$.

\paragraph{Step~4.} We prove (ii). Since $\langle\btheta,\bar{\btheta}\rangle \to 1$, the function $\gamma = \normtwo{\bxi}\langle\btheta,\bar{\btheta}\rangle$ is positive from some time on. From that time on, $\bxi^{+} = \btheta/\langle\btheta,\bar{\btheta}\rangle - \bar{\btheta}$ is continuous, continuously differentiable on each piece, orthogonal to $\bar{\btheta}$, and tends to $\boldsymbol{0}$, and on every $J$-piece it lies in $W^{+} \cap \ker\matr{A}_{J}$, since $\bxi$ and $\bar{\btheta}$ lie in $W \cap \ker\matr{A}_{J}$. Consider a $J$-piece and write $\matr{P} \coloneqq \matr{P}^{W}_{J}$. Both $\matr{P}$ and $\matr{H}$ are symmetric and $\matr{P}\bs = \bs$ for $\bs \in \{\bxi,\bxi^{+},\bar{\btheta}\}$, so (i) gives
\begin{equation}
\label{eq:sym_flow}
\langle\matr{P}\matr{H}\bs,\bar{\btheta}\rangle = \langle\bs,\matr{P}\matr{H}\bar{\btheta}\rangle = \mu\langle\bs,\bar{\btheta}\rangle
\qquad \text{for $\bs \in \{\bxi,\bxi^{+}\}$.}
\end{equation}
With $\bs = \bxi$, this identity and~\eqref{eq:level_flow} give $\dot{\gamma} = -\mu\gamma$, and then
\[
\dot{\bxi}^{+} = (\dot{\bxi} - \dot{\gamma}\,\bxi)/\gamma = -\matr{P}\matr{H}(\bar{\btheta} + \bxi^{+}) + \mu(\bar{\btheta} + \bxi^{+}) = -(\matr{P}\matr{H} - \mu)\bxi^{+}.
\]
With $\bs = \bxi^{+}$, identity~\eqref{eq:sym_flow} shows that $\matr{P}\matr{H}\bxi^{+}$ is orthogonal to $\bar{\btheta}$, so it lies in $W^{+} \cap \ker\matr{A}_{J}$. Since this subspace lies in $W \cap \ker\matr{A}_{J}$, we have $\matr{P}^{W^{+}}_{J} = \matr{P}^{W^{+}}_{J}\matr{P}$, and therefore $\matr{P}^{W^{+}}_{J}\matr{H}\bxi^{+} = \matr{P}^{W^{+}}_{J}\matr{P}\matr{H}\bxi^{+} = \matr{P}\matr{H}\bxi^{+}$. Hence $\dot{\bxi}^{+} = -\matr{P}^{W^{+}}_{J}(\matr{H} - \mu\matr{I}_{n\times n})\bxi^{+}$, which is~\eqref{eq:level_flow} for $\bxi^{+}$.

It remains to prove~\eqref{eq:jump_flow} for $\bxi^{+}$. On a piece, $\dot{\bxi} = \dot{\gamma}(\bar{\btheta} + \bxi^{+}) + \gamma\dot{\bxi}^{+} = \gamma\bigl(\dot{\bxi}^{+} - \mu(\bar{\btheta} + \bxi^{+})\bigr)$, where $\dot{\bxi}^{+}$ is orthogonal to $\bar{\btheta}$ and $\langle\dot{\bxi}^{+},\bxi^{+}\rangle = -\langle\bxi^{+},\matr{H}\bxi^{+}\rangle + \mu\normsq{\bxi^{+}}$. Therefore
\[
\normsq{\dot{\bxi}} = \gamma^{2}\Bigl(\normsq{\dot{\bxi}^{+}} + 2\mu\langle\bxi^{+},\matr{H}\bxi^{+}\rangle - 2\mu^{2}\normsq{\bxi^{+}} + \mu^{2}\normsq{\bar{\btheta} + \bxi^{+}}\Bigr).
\]
At a breakpoint, $\gamma$ and $\bxi^{+}$ are continuous, so every term on the right-hand side except $\normsq{\dot{\bxi}^{+}}$ is continuous, and $\normtwo{\dot{\bxi}^{+}}$ does not increase because $\normtwo{\dot{\bxi}}$ does not.
\end{proof}

\begin{proof}[Proof of Proposition~\ref{prop:no_infinite_accum}]
We now derive a contradiction in three steps.

\paragraph{Step~1.} We show that there exist an integer $r$ with $0 \leqslant r \leqslant n$, unit vectors $\bar{\btheta}_{1},\dots,\bar{\btheta}_{r}$, a time $\tau'$ on the tail, and functions $\gamma_{0},\dots,\gamma_{r-1}$, positive on $[\tau',+\infty)$, such that
\begin{equation}
\label{eq:expansion_flow}
\be(\tau) = \sum_{j=1}^{r}\gamma_{0}(\tau)\cdots\gamma_{j-1}(\tau)\,\bar{\btheta}_{j} \quad \text{for every $\tau \geqslant \tau'$,}
\qquad
\gamma_{j}(\tau) \longrightarrow 0 \quad \text{as $\tau \to +\infty$, for $1 \leqslant j < r$.}
\end{equation}
For $r = 0$ the sum is empty and $\be$ vanishes on $[\tau',+\infty)$. Let $\bxi$ satisfy~\eqref{eq:level_flow}. If $\bxi$ vanishes at a time $\tau_{1}$, then it vanishes on $[\tau_{1},+\infty)$. Indeed, equation~\eqref{eq:level_flow} is linear on each piece, so $\bxi$ vanishes on the rest of the piece containing $\tau_{1}$ and, by continuity, at the start of the next piece. Hence $\bxi$ either vanishes from some time on or vanishes at no time.

Set $\bxi_{0} \coloneqq \be$ and $W_{0} \coloneqq \Rn$. Then $\matr{P}^{W_{0}}_{J} = \matr{P}^{\perp}_{J}$, so $\bxi_{0}$ satisfies~\eqref{eq:level_flow} with $\matr{H} = \matr{Q}$ by Lemma~\ref{lem:late_flow}, and it satisfies~\eqref{eq:jump_flow} because $\normtwo{\dot{\be}}$ is nonincreasing on the tail. If $\bxi_{0}$ vanishes from some time on, the claim holds with $r = 0$. Otherwise Lemma~\ref{lem:direction_flow} gives a unit vector $\bar{\btheta}_{1}$, a number $\mu_{1}$, a function $\gamma_{0}$, positive from some time on, and the function $\bxi_{1} \coloneqq \bxi_{0}^{+}$, with values in the $(n-1)$-dimensional subspace $W_{1}$ orthogonal to $\bar{\btheta}_{1}$, satisfying~\eqref{eq:level_flow} and~\eqref{eq:jump_flow} with $W_{1}$ and $\matr{Q} - \mu_{1}\matr{I}_{n\times n}$. Discarding the pieces that start before $\gamma_{0}$ becomes positive, we repeat the argument with $\bxi_{1}$. After $j$ repetitions,
\[
\be = \gamma_{0}\Bigl(\bar{\btheta}_{1} + \gamma_{1}\bigl(\bar{\btheta}_{2} + \dots + \gamma_{j-1}(\bar{\btheta}_{j} + \bxi_{j})\bigr)\Bigr),
\]
with $\bxi_{j}$ taking its values in a subspace $W_{j}$ of dimension $n - j$. Since $W_{n} = \{\boldsymbol{0}\}$, some $\bxi_{r}$ with $r \leqslant n$ vanishes from a time $\tau'$ on, and~\eqref{eq:expansion_flow} holds, because $\gamma_{j} = \langle\bxi_{j},\bar{\btheta}_{j+1}\rangle \to 0$, as $\bxi_{j} \to \boldsymbol{0}$ for $j \geqslant 1$ by Lemma~\ref{lem:direction_flow}(ii).

\paragraph{Step~2.} We show that $\phi(\bx(\tau)) = \langle\ba,\be(\tau)\rangle$ at every $\tau$ on the tail, for every recurring event value $\phi$ and its gradient $\ba \in \Phi$. Let $i$ be a constraint that recurs in enter events. At the iterates of these events, $i \in \eqset(\bx^{(k+1)}) \subseteq I_{0}$ by Proposition~\ref{prop:limit_direction}(ii), so $[\matr{A}\bd^{s}]_{i} = 0$ and $[\matr{A}\bx(\tau)]_{i} - b_{i} = [\matr{A}\by(\tau)]_{i} - b_{i}$ for every $\tau$. This affine function of $\by(\tau)$ vanishes at the iterates of the enter events of $i$, whose times tend to infinity, so it vanishes at $\bar{\by}$ and equals $\langle\ba,\be(\tau)\rangle$. Now let $(B,i)$ be a recurring pair of a leave event. Since $\matr{Q}\bd^{s} = \boldsymbol{0}$, formula~\eqref{eq:pJ_flow} gives $[\bp_{B}(\bx(\tau))]_{i} = [\bp_{B}(\by(\tau))]_{i}$, an affine function of $\by(\tau)$ that vanishes at the iterates of the leave events of $(B,i)$, hence at $\bar{\by}$, and equals $\langle\ba,\be(\tau)\rangle$.

\paragraph{Step~3.} Fix $\ba \in \Phi$ and put $\alpha_{j} \coloneqq \langle\ba,\bar{\btheta}_{j}\rangle$. By~\eqref{eq:expansion_flow}, $\langle\ba,\be(\tau)\rangle = \sum_{j=1}^{r}\gamma_{0}(\tau)\cdots\gamma_{j-1}(\tau)\,\alpha_{j}$ for $\tau \geqslant \tau'$. If every $\alpha_{j}$ vanishes, then $\langle\ba,\be\rangle$ vanishes identically on $[\tau',+\infty)$, and we put $\tau_{\ba} \coloneqq \tau'$. Otherwise let $j$ be minimal with $\alpha_{j} \neq 0$. Then
\[
\langle\ba,\be(\tau)\rangle = \gamma_{0}(\tau)\cdots\gamma_{j-1}(\tau)\Bigl(\alpha_{j} + \sum_{l=j+1}^{r}\gamma_{j}(\tau)\cdots\gamma_{l-1}(\tau)\,\alpha_{l}\Bigr),
\]
and $\gamma_{j}\cdots\gamma_{l-1} \to 0$ for $l > j$, so $\langle\ba,\be\rangle$ has no zero on some interval $[\tau_{\ba},+\infty)$. By Lemma~\ref{lem:events_flow} with $\bm{D} = \be$, no piece of the tail starts after the largest of the times $\tau_{\ba}$, $\ba \in \Phi$, which contradicts $\tau^{(k)} \to +\infty$.
\end{proof}

\subsubsection{Conclusion of the proof}
\label{app:finite_general_proof}
\begin{proof}[Proof of Theorem~\ref{thm:finite_general}]
By Proposition~\ref{prop:no_infinite_accum}, the breakpoints are finitely many, so the final piece is $[\tau^{(K)},+\infty)$ and $\tau^{(K+1)} = \tau^{(K)} + \tau_{*}^{(K)} = +\infty$. By Proposition~\ref{prop:limit_direction}(i), $\bd(\bx(\tau))$ converges to the vector $\bd^{s}$ of~\eqref{eq:ds}. 
By~\eqref{eq:piece_flow}, $\bd^{(K)} \in \ker\matr{A}_{J_{*}^{(K)}}$, and by~\eqref{eq:global_solution}, $\bx(\tau) = \bx^{(K)} + \matr{M}_{J_{*}^{(K)}}(\tau - \tau^{(K)})\bd^{(K)}$ on the final piece, where $\dot{\matr{M}}_{J_{*}^{(K)}}(\tau) = e^{-\matr{P}^{\perp}_{J_{*}^{(K)}}\matr{Q}\tau}$ by~\eqref{eq:int_mexp}. The matrices $\matr{P}^{\perp}_{J_{*}^{(K)}}\matr{Q}$ and $\matr{S}_{J_{*}^{(K)}}$ agree on $\ker\matr{A}_{J_{*}^{(K)}}$ and map it into itself. A vector $\bv \in \ker\matr{A}_{J_{*}^{(K)}}$ satisfies $\matr{S}_{J_{*}^{(K)}}\bv = \boldsymbol{0}$ if and only if $\langle\bv,\matr{Q}\bv\rangle = 0$, that is, $\matr{Q}\bv = \boldsymbol{0}$, so $\ker\matr{A}_{J_{*}^{(K)}}$ is the orthogonal sum of $\ker\matr{Q} \cap \ker\matr{A}_{J_{*}^{(K)}}$ and $\image{\matr{S}_{J_{*}^{(K)}}}$, on which $\matr{S}_{J_{*}^{(K)}}$ is positive definite. Then
\[
\bd(\bx(\tau)) = e^{-\matr{S}_{J_{*}^{(K)}}(\tau - \tau^{(K)})}\bd^{(K)} \to \proj_{\ker\matr{Q} \cap \ker\matr{A}_{J_{*}^{(K)}}}(\bd^{(K)}),
\]
so $\bd^{s}$ is this projection. Finally, if $\bd^{s} = \boldsymbol{0}$, then $\bd^{(K)} \in \image{\matr{S}_{J_{*}^{(K)}}}$ and so
\[
\matr{M}_{J_{*}^{(K)}}(\tau)\bd^{(K)} = (\matr{S}_{J_{*}^{(K)}})^{\dagger}\bigl(\matr{I}_{n\times n} - e^{-\matr{S}_{J_{*}^{(K)}}\tau}\bigr)\bd^{(K)}.
\]
Hence $\bx(\tau)$ converges to $\bx^{(K)} + (\matr{S}_{J_{*}^{(K)}})^{\dagger}\bd^{(K)}$, an optimal solution by Proposition~\ref{prop:limit_direction}(iii). If $\bd^{s} \neq \boldsymbol{0}$, then Lemma~\ref{lem:ds}(i) gives (ii).
\end{proof}

\subsection{Proofs for Section~\ref{sec:computations_global}}
\label{app:proofs_descent}
This appendix proves Proposition~\ref{prop:descent_dichotomy} and Theorem~\ref{thm:descent_convergence}. Appendix~\ref{app:descent_setup} collects the objects the proofs need beyond Section~\ref{sec:computations_global}, Appendix~\ref{app:blocked_tail} excludes the one obstruction to convergence, Appendix~\ref{app:proof_descent_psd} proves the proposition, and Appendix~\ref{app:proof_descent_convergence} proves the theorem.

Throughout, $\matr{Q}$ is SPSD with largest eigenvalue $\lambda_{\max}$. We write $\bx^{(k)}$, $\bd^{(k)}$, $\Delta^{(k)}$ for the iterates, directions, and steps of Algorithm~\ref{alg:quad_prog_2}, run with tolerance $\varepsilon = 0$, and we abbreviate $\eqset^{(k)} \coloneqq \eqset(\bx^{(k)})$, $J^{(k)} \coloneqq J(\bx^{(k)})$. We choose every NNLS solution $\bp(\bx)$ below so that~\eqref{eq:LI} holds.
\subsubsection{Setup}
\label{app:descent_setup}
Recall the compressed Hessian $\matr{S}_{J}$ of~\eqref{eq:hessian_compressed} and its properties, collected in Appendix~\ref{app:shared_objects}. It is positive semidefinite on $\ker\matr{A}_{J}$. The direction $\bd(\bx)$ vanishes exactly at the optimal solutions, since $\bd(\bx) = \boldsymbol{0}$ if and only if $\boldsymbol{0} \in \partial_{\bx}V(\bx)$ (Section~\ref{subsec:char_di}). We may therefore assume $\bd^{(k)} \neq \boldsymbol{0}$, since Algorithm~\ref{alg:quad_prog_2} stops at $\bd^{(k)} = \boldsymbol{0}$, its test being $\normtwo{\bd^{(k)}} \leqslant \varepsilon = 0$. Likewise $\Delta^{(k)} < +\infty$ at every step of an infinite iteration, since $\Delta^{(k)} = +\infty$ stops it.

We call step $k$ \emph{blocked} if $\Delta^{(k)} = \Delta_{\mathrm{feas}}(\bx^{(k)}) < \Delta_{*}^{(k)}$, and \emph{full} otherwise. A blocked step ends on a constraint: some $i_{k} \notin \eqset^{(k)}$ attains the minimum in~\eqref{eq:delta_feas}, with
\begin{equation}\label{eq:blocking}
[\matr{A}\bd^{(k)}]_{i_{k}} > 0,
\qquad
[\matr{A}\bx^{(k)}]_{i_{k}} < b_{i_{k}},
\qquad
[\matr{A}\bx^{(k+1)}]_{i_{k}} = b_{i_{k}},
\end{equation}
and we call $i_{k}$ the \emph{blocking wall} of step $k$. The first relation holds because $\Delta_{\mathrm{feas}}(\bx^{(k)}) < \Delta_{*}^{(k)} \leqslant +\infty$, so the minimizing quotient in~\eqref{eq:delta_feas} is finite and positive. Every step has $\Delta^{(k)} > 0$. Indeed, on a blocked step, the first two relations of~\eqref{eq:blocking} give $\Delta_{\mathrm{feas}}(\bx^{(k)}) > 0$, while on a full step $\Delta^{(k)} = \Delta_{*}^{(k)} = \normsq{\bd^{(k)}}/\langle\bd^{(k)},\matr{Q}\bd^{(k)}\rangle > 0$ because $\bd^{(k)} \neq \boldsymbol{0}$. The constraints of $J^{(k)}$ are active at $\bx^{(k)}$ because $J^{(k)} \subseteq \eqset^{(k)}$, and stay active at $\bx^{(k+1)}$ because $[\matr{A}\bd^{(k)}]_{J^{(k)}} = \boldsymbol{0}$ by~\eqref{eq:nnls_oc_2_5}. Hence $J^{(k)} \subseteq \eqset^{(k+1)}$.

\begin{defn}[Blocked tail]
\label{def:blocked_tail}
A \emph{blocked tail} is an infinite sequence of consecutive iterates of Algorithm~\ref{alg:quad_prog_2}, indexed by $k \geqslant 0$ after relabeling, such that every step is blocked, $\inf_{k}\normtwo{\bd^{(k)}} > 0$, and $\sum_{k}\Delta^{(k)} < +\infty$.
\end{defn}
\begin{lem}
\label{lem:tail_extraction}
Suppose the iteration is infinite, that $\normtwo{\bd^{(k)}} \geqslant \theta > 0$ for every $k$, and that $\inf_{k}V(\bx^{(k)}) > -\infty$. Then only finitely many steps have $\Delta^{(k)} \geqslant 1/\lambda_{\max}$, and the steps after the last of them, or all steps if there is none, form a blocked tail.
\end{lem}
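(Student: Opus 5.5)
The plan is to apply the sufficient decrease bound of Lemma~\ref{lem:sufficient_decrease} at every step and sum it. Since the iteration is infinite, every step has $\bd^{(k)} \neq \boldsymbol{0}$ and $\Delta^{(k)} < +\infty$, so for every $k$
\[
V(\bx^{(k)}) - V(\bx^{(k+1)}) \geqslant \tfrac12\min\bigl(\Delta_{\mathrm{feas}}(\bx^{(k)}),1/\lambda_{\max}\bigr)\normsq{\bd^{(k)}} \geqslant \tfrac{\theta^{2}}{2}\min\bigl(\Delta^{(k)},1/\lambda_{\max}\bigr),
\]
where the second inequality uses $\Delta^{(k)} \leqslant \Delta_{\mathrm{feas}}(\bx^{(k)})$ and $\normtwo{\bd^{(k)}} \geqslant \theta$. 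The sequence $V(\bx^{(k)})$ is nonincreasing and bounded below by hypothesis. Telescoping therefore gives $\sum_{k}\min(\Delta^{(k)},1/\lambda_{\max}) < +\infty$.

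From this summability I draw two conclusions. First, only finitely many indices can have $\Delta^{(k)} \geqslant 1/\lambda_{\max}$, since each such index contributes $1/\lambda_{\max}$ to the sum. Second, after the last such index $k_{0}$ (or from $k = 0$ if there is none), every term equals $\Delta^{(k)}$, so $\sum_{k > k_{0}}\Delta^{(k)} < +\infty$.

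It remains to show that every step after $k_{0}$ is blocked. A full step has $\Delta^{(k)} = \Delta_{*}^{(k)} = \normsq{\bd^{(k)}}/\langle\bd^{(k)},\matr{Q}\bd^{(k)}\rangle$. Because $\langle\bd^{(k)},\matr{Q}\bd^{(k)}\rangle \leqslant \lambda_{\max}\normsq{\bd^{(k)}}$, this gives $\Delta^{(k)} \geqslant 1/\lambda_{\max}$. Such a step is therefore excluded after $k_{0}$, and every later step is blocked. If instead $\langle\bd^{(k)},\matr{Q}\bd^{(k)}\rangle = 0$, then $\Delta_{*}^{(k)} = +\infty$, and a full step would have $\Delta^{(k)} = +\infty$, which cannot happen in an infinite iteration.

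Finally, $\inf_{k}\normtwo{\bd^{(k)}} \geqslant \theta > 0$ holds by hypothesis. After relabeling the indices from $k_{0}+1$, the three requirements of Definition~\ref{def:blocked_tail} are met: all steps are blocked, the directions are bounded away from zero, and the step lengths are summable. I see no real obstacle; the only care needed is the comparison between $\Delta_{\mathrm{feas}}$ and $\Delta^{(k)}$ in Lemma~\ref{lem:sufficient_decrease}, and the degenerate case $\langle\bd,\matr{Q}\bd\rangle = 0$ handled above.
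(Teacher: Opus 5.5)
Your proposal is correct and follows the paper's proof essentially step for step. You sum the bound of Lemma~\ref{lem:sufficient_decrease} using $\min(\Delta_{\mathrm{feas}}(\bx^{(k)}),1/\lambda_{\max}) \geqslant \min(\Delta^{(k)},1/\lambda_{\max})$, deduce finitely many long steps and summable lengths for the rest, and note that a full step has $\Delta^{(k)} = \Delta_{*}^{(k)} \geqslant 1/\lambda_{\max}$, with the degenerate case $\langle\bd^{(k)},\matr{Q}\bd^{(k)}\rangle = 0$ ruled out by $\Delta^{(k)} < +\infty$, exactly as the paper does.
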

\begin{proof}
Since $\Delta^{(k)} \leqslant \Delta_{\mathrm{feas}}(\bx^{(k)})$, the bound~\eqref{eq:sufficient_decrease} gives
\[
V(\bx^{(k)}) - V(\bx^{(k+1)}) \geqslant \tfrac12\min(\Delta^{(k)},1/\lambda_{\max})\theta^{2} \geqslant 0.
\]
Summing over $k$ bounds $\tfrac12\theta^{2}\sum_{k}\min(\Delta^{(k)},1/\lambda_{\max})$ by $V(\bx^{(0)}) - \inf_{k}V(\bx^{(k)})$. Hence finitely many steps have $\Delta^{(k)} \geqslant 1/\lambda_{\max}$, and the others have summable length. A full step has $\Delta^{(k)} = \Delta_{*}^{(k)} \geqslant 1/\lambda_{\max}$, the quotient being finite because $\Delta^{(k)} < +\infty$, so the steps after the last one with $\Delta^{(k)} \geqslant 1/\lambda_{\max}$ are blocked, summable, and have $\normtwo{\bd^{(k)}} \geqslant \theta$.
\end{proof}

\subsubsection{The blocked tail}
\label{app:blocked_tail}
Theorem~\ref{thm:descent_convergence} rests on excluding a single pattern.
\begin{lem}
\label{lem:residual_growth}
Every step of Algorithm~\ref{alg:quad_prog_2} satisfies $\normtwo{\bd^{(k+1)}} \leqslant \bigl(1 + \lambda_{\max}\Delta^{(k)}\bigr)\normtwo{\bd^{(k)}}$. More precisely, with $\tilde{\bd} \coloneqq \proj_{T_{C}(\bx^{(k+1)})}(-\nabla_{\bx}f(\bx^{(k)}))$, we have that
\[
\normsq{\tilde{\bd} - \bd^{(k)}} \leqslant \normsq{\bd^{(k)}} - \normsq{\tilde{\bd}} \quad \text{and} \quad \normtwo{\bd^{(k+1)} - \tilde{\bd}} \leqslant \lambda_{\max}\Delta^{(k)}\normtwo{\bd^{(k)}}.
\]
\end{lem}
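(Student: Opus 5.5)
We split the claim at the intermediate vector $\tilde{\bd}$, the projection onto the new tangent cone $T_{C}(\bx^{(k+1)})$ of the old antigradient $-\nabla_{\bx}f(\bx^{(k)})$. The first inequality compares two projections of the same vector onto nested cones. The second compares projections of two different vectors onto the same cone. Once both are in hand, the triangle inequality gives
\[
\normtwo{\bd^{(k+1)}} \leqslant \normtwo{\tilde{\bd}} + \lambda_{\max}\Delta^{(k)}\normtwo{\bd^{(k)}} \leqslant \bigl(1 + \lambda_{\max}\Delta^{(k)}\bigr)\normtwo{\bd^{(k)}},
\]
because the first inequality already forces $\normtwo{\tilde{\bd}} \leqslant \normtwo{\bd^{(k)}}$.

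For the first inequality we imitate Step~2 of the proof of Theorem~\ref{thm:finite_tI} and the argument of Lemma~\ref{lem:limit_velocity_flow}(ii). Set $J \coloneqq J^{(k)}$ and $\mathcal{K} \coloneqq \{\bs \in \Rn : \matr{A}_{J}\bs \leqslant \boldsymbol{0}\}$. By Lemma~\ref{lem:velocity_identity}, applied at $\bx^{(k)}$ with $\bp(\bx^{(k)})$ satisfying~\eqref{eq:LI}, we have $\bd^{(k)} = \proj_{\mathcal{K}}(-\nabla_{\bx}f(\bx^{(k)}))$. Appendix~\ref{app:descent_setup} shows $J^{(k)} \subseteq \eqset^{(k+1)}$. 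Since $T_{C}(\bx^{(k+1)}) = \{\bs : [\matr{A}\bs]_{\eqset^{(k+1)}} \leqslant \boldsymbol{0}\}$, this gives $T_{C}(\bx^{(k+1)}) \subseteq \mathcal{K}$. Lemma~\ref{lem:nested_cones}, applied with $\mathcal{K}_{1} = \mathcal{K}$, $\mathcal{K}_{2} = T_{C}(\bx^{(k+1)})$ and $\bz = -\nabla_{\bx}f(\bx^{(k)})$, then yields exactly $\normsq{\tilde{\bd} - \bd^{(k)}} \leqslant \normsq{\bd^{(k)}} - \normsq{\tilde{\bd}}$.

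For the second inequality, Section~\ref{subsec:moreau} gives $\bd^{(k+1)} = \proj_{T_{C}(\bx^{(k+1)})}(-\nabla_{\bx}f(\bx^{(k+1)}))$. Projections onto a closed convex cone are $1$-Lipschitz, so
\[
\normtwo{\bd^{(k+1)} - \tilde{\bd}} \leqslant \normtwo{\nabla_{\bx}f(\bx^{(k+1)}) - \nabla_{\bx}f(\bx^{(k)})} = \Delta^{(k)}\normtwo{\matr{Q}\bd^{(k)}} \leqslant \lambda_{\max}\Delta^{(k)}\normtwo{\bd^{(k)}}.
\]
Here we used that $\nabla_{\bx}f$ is affine with slope $\matr{Q}$ and that $\bx^{(k+1)} - \bx^{(k)} = \Delta^{(k)}\bd^{(k)}$.

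The only delicate points are bookkeeping. The step must be finite, which holds because we assume $\Delta^{(k)} < +\infty$ in an infinite run; if $\Delta^{(k)} = +\infty$ there is no next iterate. The inclusion $J^{(k)} \subseteq \eqset^{(k+1)}$ relies on~\eqref{eq:nnls_oc_2_5}. Finally, Lemma~\ref{lem:velocity_identity} identifies $\bd^{(k)}$ as the projection onto the larger cone $\mathcal{K}$, and that identification needs the~\eqref{eq:LI} choice of $\bp^{(k)}$. We expect no real obstacle beyond citing these facts in the right order.
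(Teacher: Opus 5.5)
Your proof is correct and follows essentially the same route as the paper's. The paper likewise derives $T_{C}(\bx^{(k+1)}) \subseteq \{\bs : \matr{A}_{J^{(k)}}\bs \leqslant \boldsymbol{0}\}$ from $J^{(k)} \subseteq \eqset^{(k+1)}$, uses Lemma~\ref{lem:velocity_identity} and Lemma~\ref{lem:nested_cones} for the first estimate and the $1$-Lipschitz property of $\proj_{T_{C}(\bx^{(k+1)})}$ for the second, and then combines the two bounds with the triangle inequality.
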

\begin{proof}
On every step, full or blocked, $J^{(k)} \subseteq \eqset^{(k+1)}$ as shown in Appendix~\ref{app:descent_setup}, so $T \coloneqq T_{C}(\bx^{(k+1)}) \subseteq \mathcal{K} \coloneqq \{\bs : \matr{A}_{J^{(k)}}\bs \leqslant \boldsymbol{0}\}$, and Lemma~\ref{lem:velocity_identity} gives $\bd^{(k)} = \proj_{\mathcal{K}}(-\nabla_{\bx}f(\bx^{(k)}))$. Set $\tilde{\bd} \coloneqq \proj_{T}(-\nabla_{\bx}f(\bx^{(k)}))$. Lemma~\ref{lem:nested_cones} applied to $T \subseteq \mathcal{K}$ gives the first estimate, hence $\normtwo{\tilde{\bd}} \leqslant \normtwo{\bd^{(k)}}$, and the $1$-Lipschitz property of $\proj_{T}$ gives $\normtwo{\bd^{(k+1)} - \tilde{\bd}} \leqslant \normtwo{\matr{Q}(\bx^{(k+1)} - \bx^{(k)})} \leqslant \lambda_{\max}\Delta^{(k)}\normtwo{\bd^{(k)}}$. Adding the two bounds proves the claim.
\end{proof}

Along a blocked tail, iterating Lemma~\ref{lem:residual_growth} with $1 + t \leqslant e^{t}$ gives
\begin{equation}
\label{eq:tail_dir_bound}
\sup_{l \geqslant 0}\normtwo{\bd^{(l)}} \;\leqslant\; \normtwo{\bd^{(0)}}\exp\Bigl(\lambda_{\max}\sum_{l \geqslant 0}\Delta^{(l)}\Bigr) \;=:\; B_{0} \;<\; +\infty,
\end{equation}
the sum being finite by Definition~\ref{def:blocked_tail}, and $\bd^{(0)}$ being the first direction of the tail. Hence $\normtwo{\bx^{(k+1)} - \bx^{(k)}} \leqslant B_{0}\Delta^{(k)}$, so the iterates are Cauchy and converge to some $\bar{\bx} \in C$.

\begin{thm}
\label{thm:no_blocked_tail}
Algorithm~\ref{alg:quad_prog_2}, run with tolerance $\varepsilon = 0$, has no blocked tails.
\end{thm}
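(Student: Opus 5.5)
The plan is to argue by contradiction. We fix a blocked tail, relabeled from $k=0$, with $\normtwo{\bd^{(k)}} \geqslant \theta > 0$ and $\sum_k \Delta^{(k)} < +\infty$. By~\eqref{eq:tail_dir_bound}, the iterates converge to some $\bar{\bx} \in C$. The idea is to reduce the late iterates to a perturbation of the exact linear-program iteration~\eqref{eq:tI_iteration} with $t = 0$ and frozen gradient $\bar{\bg} \coloneqq \nabla_{\bx}f(\bar{\bx})$. Theorem~\ref{thm:finite_tI}(ii) rules out infinitely many blocked steps for that iteration, so we aim to show the perturbation does not change this.

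\textbf{Localization.} Since $\bx^{(k)} \to \bar{\bx}$, from some index on every constraint active at $\bx^{(k)}$, and every blocking wall $i_k$, lies in $\eqset(\bar{\bx})$. Discarding finitely many steps, we may therefore assume that all active sets $\eqset^{(k)}$, $J^{(k)}$ and all walls belong to the finite family of subsets of $\eqset(\bar{\bx})$, and we pass to the recurring ones as in Appendix~\ref{app:finite_general_setting}. Also $\nabla_{\bx}f(\bx^{(k)}) = \bar{\bg} + \matr{Q}(\bx^{(k)} - \bar{\bx})$, and the error term is bounded by $\lambda_{\max}B_0\sum_{l\geqslant k}\Delta^{(l)} \to 0$.

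\textbf{Monotone potential.} Write $\bn^{(k)} \coloneqq -\nabla_{\bx}f(\bx^{(k)}) - \bd^{(k)} = \Atop\bp^{(k)}$. By~\eqref{eq:nnls_explicit}, $\bn^{(k)}$ equals $\matr{A}_{J^{(k)}}^{\top}\bp_{J^{(k)}}(\bx^{(k)})$, so it differs from the frozen value $\bar{\bn}_{J} \coloneqq -(\matr{I}-\matr{P}^{\perp}_{J})\bar{\bg}$, $J = J^{(k)}$, by $o(1)$. Step~2 of the proof of Theorem~\ref{thm:finite_tI} is the template. The vector $\bv$ obtained by transporting $\bd^{(k)}$ to $\bx^{(k+1)}$ lies in $\mathcal{K} = \{\bs : \matr{A}_{J^{(k)}}\bs \leqslant \boldsymbol{0}\}$. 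It is not in $T_C(\bx^{(k+1)})$, because $[\matr{A}\bd^{(k)}]_{i_k} > 0$. The error in this transport is $O(\lambda_{\max}\Delta^{(k)}B_0)$, by Lemma~\ref{lem:residual_growth}. This should give
\[
\normtwo{\bn^{(k+1)}}^{2} \geqslant \normtwo{\bn^{(k)}}^{2} + \normsq{\bd^{(k+1)} - \tilde{\bd}} - c\,\Delta^{(k)}.
\]
The frozen quantities $\normtwo{\bar{\bn}_{J}}$ take finitely many values, and the perturbation is summable. Hence $\normtwo{\bn^{(k)}}$ converges, $J \mapsto \normtwo{\bar{\bn}_J}$ is eventually constant along the tail, and the squared jumps $\normsq{\bd^{(k+1)} - \tilde{\bd}}$ are summable. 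Using Pythagoras, $\normsq{\nabla_{\bx}f} = \normsq{\bd} + \normsq{\bn}$, together with Lemma~\ref{lem:residual_growth}, we also get $\bd^{(k+1)} - \bd^{(k)} \to \boldsymbol{0}$ and $\normtwo{\bd^{(k)}} \to \nu \geqslant \theta$.

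\textbf{Contradiction at the walls.} Take a limit point $\bar{\bd}$ of $\bd^{(k)}$ along steps whose wall is a recurring index $i$. Then $[\matr{A}\bd^{(k)}]_{i} > 0$, while $i \in \eqset^{(k+1)}$ forces $[\matr{A}\bd^{(k+1)}]_{i} \leqslant 0$ by~\eqref{eq:nnls_oc_1}. Since $\bd^{(k+1)} - \bd^{(k)} \to \boldsymbol{0}$, this gives $[\matr{A}\bar{\bd}]_{i} = 0$. On the other hand, the wall $i$ must become inactive again between its recurrences. That requires either a direction with $[\matr{A}\bd]_i < 0$ or a vanishing multiplier. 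I expect this to be the main obstacle, because the discrete steps have no uniform lower length.

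For this step I would first try a discrete analogue of Lemma~\ref{lem:driven_shared}. Choose the recurring $\tilde J$ minimizing the frozen drift $\normtwo{\matr{P}^{\perp}_{J}\bar{\bg}}$, and test the sum of the increments $\bx^{(l+1)} - \bx^{(l)} = \Delta^{(l)}\bd^{(l)}$ against $\bq = \bar{\bn}_{\tilde J}$. Inequality~\eqref{eq:qtest_shared} then forces all drifts to coincide, so all limit directions equal one $\bar{\bd}$ with $\matr{A}_J\bar{\bd} = \boldsymbol{0}$ for every recurring $J$ and every recurring wall. Two facts should then make the iteration stop. First, $\normtwo{\bd^{(k)}} \geqslant \theta$ with $\bd^{(k)} \to \bar{\bd}$. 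Second, $\bar{\bd}$ is tangent to every recurring face and to every recurring wall. Together they should make $\Delta_{\mathrm{feas}}(\bx^{(k)})$ eventually at least a fixed positive constant, or else force a full step. Either outcome contradicts $\sum_k \Delta^{(k)} < +\infty$ with all steps blocked.
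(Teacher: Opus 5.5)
Your localization and first-order analysis are sound, and they essentially reproduce Proposition~\ref{prop:limit_velocity} and Lemma~\ref{lem:recurring_walls}(i). Along the tail, $\bd^{(k+1)} - \bd^{(k)} \to \boldsymbol{0}$. The directions converge to one vector $\bar{\bd}$ (the paper's $\bar{\bv}$) with $\normtwo{\bar{\bd}} > 0$. Moreover $\matr{A}_{J}\bar{\bd} = \boldsymbol{0}$ and $[\matr{A}\bar{\bd}]_{i} = 0$ for every recurring active set $J$ and every recurring wall $i$.

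Your discrete analogue of Lemma~\ref{lem:driven_shared} adds nothing here. It uses the frozen drifts $\matr{P}^{\perp}_{J}\bar{\bg}$, i.e.\ it is applied to $\bx^{(k)} - \bar{\bx}$ itself. The coincidence of the vectors $-\matr{P}^{\perp}_{J}\bar{\bg}$ already follows from $\bd^{(k+1)} - \bd^{(k)} \to \boldsymbol{0}$. Likewise, the monotone potential only shows that $\normtwo{\bar{\bn}_{J}}$ is eventually the same for all recurring $J$. That cannot rule out cycling among active sets sharing this value, and that cycling is exactly the case left open.

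The genuine gap is your last step. The two facts you list are first-order limits, and they give neither a lower bound on $\Delta_{\mathrm{feas}}(\bx^{(k)})$ nor a full step. Take a recurring wall $i$. You have $[\matr{A}\bar{\bx}]_{i} = b_{i}$, so the numerator $b_{i} - [\matr{A}\bx^{(k)}]_{i}$ in~\eqref{eq:delta_feas} tends to $0$. Tangency makes the denominator $[\matr{A}\bd^{(k)}]_{i}$ tend to $0$ as well. Nothing you have established relates these rates, so the ratio may tend to zero, consistently with $\sum_{k}\Delta^{(k)} < +\infty$. A full step would need $\Delta_{\mathrm{feas}}(\bx^{(k)}) \geqslant \Delta_{*}^{(k)} \geqslant 1/\lambda_{\max}$, which is not forced either.

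You correctly saw that the real question is whether a wall can leave and re-enter infinitely often. The missing idea is to answer it by a higher-order expansion, not by a step-length bound.

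\begin{itemize}
\item The paper introduces the deviation $\bm{\zeta}^{(k)} = \bx^{(k)} - \bar{\bx} + \sigma_{k}\bar{\bv}$, where $\sigma_{k} = \sum_{l \geqslant k}\Delta^{(l)}$.
\item It proves $\matr{A}_{J^{(k)}}\bm{\zeta}^{(k)} = \matr{A}_{J^{(k)}}\bm{\zeta}^{(k+1)} = \boldsymbol{0}$ and the exact recursion $\bm{\zeta}^{(k+1)} = \bm{\zeta}^{(k)} + \Delta^{(k)}\sigma_{k}\matr{S}_{J^{(k)}}\bar{\bv} - \Delta^{(k)}\matr{S}_{J^{(k)}}\bm{\zeta}^{(k)}$.
\item It runs the driven-lemma argument on $\bm{\zeta}$ with the second-order drifts $\matr{S}_{J}\bar{\bv}$, and then recursively to all orders, as in Lemma~\ref{lem:alignment_shared}.
\item This yields common vectors $\bu^{[j]} = \matr{S}_{J}\bu^{[j-1]}$ and an expansion of $\bm{\zeta}^{(k)}$ in the iterated tail sums $\sigma^{[j]}_{k}$.
\item The expansion gives the dichotomy of Lemma~\ref{lem:discrete_tail}(ii): $\langle\ba,\bm{\zeta}^{(k)}\rangle$ is eventually either identically zero or never zero.
\end{itemize}

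For $\ba = \matr{A}_{i}^{\top}$ one has $\langle\ba,\bm{\zeta}^{(k)}\rangle = [\matr{A}\bx^{(k)}]_{i} - b_{i}$. The first alternative contradicts $i \notin \eqset^{(k)}$ at the steps that $i$ blocks. The second contradicts $[\matr{A}\bx^{(k+1)}]_{i} = b_{i}$ at those same steps. Hence each recurring wall blocks only finitely many steps, and no uniform lower step length is ever needed.
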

We prove this below. Fix a blocked tail with limit $\bar{\bx}$, step lengths $\Delta^{(k)}$, blocking walls $i_{k}$, and active sets $J^{(k)}$. Both index collections are finite, so we may discard finitely many further steps until every occurring active set and every blocking wall repeats infinitely often. We call these \emph{recurring}.

\paragraph{The limit velocity.} For recurring $J$, the map $\bx \mapsto -\matr{P}^{\perp}_{J}\nabla_{\bx}f(\bx)$ is affine, so the directions along the steps with $J^{(k)} = J$ converge:
\begin{equation}\label{eq:dbar}
\bd^{(k)} = -\matr{P}^{\perp}_{J}\nabla_{\bx}f(\bx^{(k)}) \longrightarrow \bar{\bd}_{J} \coloneqq -\matr{P}^{\perp}_{J}\nabla_{\bx}f(\bar{\bx}),
\qquad
\normtwo{\bd^{(k)} - \bar{\bd}_{J}} \leqslant \lambda_{\max}\normtwo{\bx^{(k)} - \bar{\bx}}
\end{equation}
for every $k$ with $J^{(k)} = J$. The next two results show that these limits coincide.

\begin{prop}
\label{prop:limit_velocity}
Let a blocked tail be given as above. Its directions converge to a single limit $\bar{\bv}$ with $\normtwo{\bar{\bv}} > 0$. Moreover $\bar{\bd}_{J} = \bar{\bv}$ and $\bar{\bv} \in \ker\matr{A}_{J}$ for every recurring active set $J$.
\end{prop}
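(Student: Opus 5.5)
The plan follows the proof of Lemma~\ref{lem:limit_velocity_flow}(ii), with the discrete inequality of Lemma~\ref{lem:residual_growth} playing the role that the jump estimate at breakpoints played for the slow solution. Fix a step $k$ of the tail and put $J \coloneqq J^{(k)}$, $J' \coloneqq J^{(k+1)}$. Lemma~\ref{lem:residual_growth} gives a vector $\tilde{\bd}$ with
\[
\normsq{\tilde{\bd} - \bd^{(k)}} \leqslant \normsq{\bd^{(k)}} - \normsq{\tilde{\bd}}
\quad\text{and}\quad
\normtwo{\bd^{(k+1)} - \tilde{\bd}} \leqslant \lambda_{\max}\Delta^{(k)}B_{0},
\]
where $B_{0}$ is the bound~\eqref{eq:tail_dir_bound}. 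From these two estimates I will derive
\[
\normtwo{\bd^{(k+1)} - \bd^{(k)}} \leqslant \bigl(\normsq{\bd^{(k)}} - \normsq{\tilde{\bd}}\bigr)^{1/2} + \lambda_{\max}B_{0}\Delta^{(k)}.
\]

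The first key step is to show that $\normsq{\bd^{(k)}} - \normsq{\tilde{\bd}} \to 0$. Telescoping gives
\[
\normsq{\bd^{(k+1)}} \leqslant \normsq{\tilde{\bd}} + 2B_{0}\lambda_{\max}B_{0}\Delta^{(k)} + (\lambda_{\max}B_{0}\Delta^{(k)})^{2}.
\]
Summing, the nonnegative gaps $\normsq{\bd^{(k)}} - \normsq{\tilde{\bd}}$ are bounded by the drop $\normsq{\bd^{(0)}}$ plus a constant multiple of $\sum_{k}\Delta^{(k)}$, which is finite by Definition~\ref{def:blocked_tail}. The gaps are therefore summable and tend to $0$, and hence $\normtwo{\bd^{(k+1)} - \bd^{(k)}} \to 0$.

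Combining this with~\eqref{eq:dbar} gives
\[
\normtwo{\bar{\bd}_{J'} - \bar{\bd}_{J}} \leqslant \normtwo{\bd^{(k+1)} - \bd^{(k)}} + 2\lambda_{\max}\normtwo{\bx^{(k)} - \bar{\bx}} + \lambda_{\max}\normtwo{\bx^{(k+1)} - \bx^{(k)}} \to 0.
\]
There are only finitely many recurring limits $\bar{\bd}_{J}$. Once the bound falls below their least pairwise distance, consecutive limits agree, so all recurring $\bar{\bd}_{J}$ equal one vector $\bar{\bv}$, and~\eqref{eq:dbar} then gives $\bd^{(k)} \to \bar{\bv}$. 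This is the same finiteness argument as in Lemma~\ref{lem:limit_velocity_flow}.

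Each $\bd^{(k)}$ with $J^{(k)} = J$ lies in $\ker\matr{A}_{J}$ by~\eqref{eq:nnls_oc_2_5}. This subspace is closed, so $\bar{\bv} \in \ker\matr{A}_{J}$ for every recurring $J$. Finally, $\normtwo{\bar{\bv}} = \lim_{k}\normtwo{\bd^{(k)}} \geqslant \inf_{k}\normtwo{\bd^{(k)}} > 0$ by the definition of a blocked tail.

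The main obstacle I expect is the summability step, that is, controlling $\normsq{\bd^{(k+1)}}$ from above by $\normsq{\tilde{\bd}}$ with summable error. This requires the uniform bound $B_{0}$ and $\sum_{k}\Delta^{(k)} < +\infty$; without them, the directions could oscillate between distinct limits $\bar{\bd}_{J}$.
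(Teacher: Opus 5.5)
Your proposal is correct and follows essentially the paper's proof. Lemma~\ref{lem:residual_growth} with the bound $B_{0}$ gives $\normtwo{\bd^{(k+1)} - \bd^{(k)}} \to 0$, and the finiteness argument over the recurring limits $\bar{\bd}_{J}$ together with the closedness of $\ker\matr{A}_{J}$ finishes the proof. The only minor difference is how you show $\normsq{\bd^{(k)}} - \normsq{\tilde{\bd}} \to 0$: you sum these nonnegative gaps against $\sum_{k}\Delta^{(k)} < +\infty$, whereas the paper first proves that $\normtwo{\bd^{(k)}}$ converges via the nonincreasing sequence $\normtwo{\bd^{(k)}} + \lambda_{\max}B_{0}\sum_{l \geqslant k}\Delta^{(l)}$.
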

\begin{proof}
We proceed in four steps.

\paragraph{Step~1.} Set $\tilde{\bd} \coloneqq \proj_{T_{C}(\bx^{(k+1)})}(-\nabla_{\bx}f(\bx^{(k)}))$. Lemma~\ref{lem:residual_growth} gives
\begin{equation}\label{eq:jumpest}
\normsq{\tilde{\bd} - \bd^{(k)}} \;\leqslant\; \normsq{\bd^{(k)}} - \normsq{\tilde{\bd}}
\quad\text{and}\quad
\normtwo{\bd^{(k+1)} - \tilde{\bd}} \leqslant \lambda_{\max}\Delta^{(k)}\normtwo{\bd^{(k)}} \leqslant \lambda_{\max}B_{0}\Delta^{(k)},
\end{equation}
the last inequality by~\eqref{eq:tail_dir_bound}. Hence $\normtwo{\bd^{(k+1)}} \leqslant \normtwo{\bd^{(k)}} + \lambda_{\max}B_{0}\Delta^{(k)}$. Since $\sum_{k}\Delta^{(k)} < +\infty$, the sequence $\normtwo{\bd^{(k)}} + \lambda_{\max}B_{0}\sum_{l \geqslant k}\Delta^{(l)}$ is nonincreasing and bounded below, so $\lim_{k}\normtwo{\bd^{(k)}}$ exists and is at least $\inf_{k}\normtwo{\bd^{(k)}} > 0$.

\paragraph{Step~2.} From $\normtwo{\tilde{\bd}} \geqslant \normtwo{\bd^{(k+1)}} - \lambda_{\max}B_{0}\Delta^{(k)}$ and~\eqref{eq:jumpest}, the case $\normtwo{\bd^{(k+1)}} < \lambda_{\max}B_{0}\Delta^{(k)}$ being trivial,
\[
\normsq{\tilde{\bd} - \bd^{(k)}} \;\leqslant\; \normsq{\bd^{(k)}} - \normsq{\bd^{(k+1)}} + 2\lambda_{\max}B_{0}^{2}\Delta^{(k)} + \bigl(\lambda_{\max}B_{0}\Delta^{(k)}\bigr)^{2},
\]
and the right-hand side tends to $0$ because $\normtwo{\bd^{(k)}}$ converges and $\Delta^{(k)} \to 0$. The triangle inequality now gives $\normtwo{\bd^{(k+1)} - \bd^{(k)}} \to 0$.

\paragraph{Step~3.} By~\eqref{eq:dbar}, $\normtwo{\bd^{(k)} - \bar{\bd}_{J^{(k)}}} \leqslant \lambda_{\max}\normtwo{\bx^{(k)} - \bar{\bx}} \to 0$ uniformly in the active set. The finitely many values $\{\bar{\bd}_{J} : J \ \text{recurring}\}$ lie at least $2\rho > 0$ apart, where $\rho = 1$ if only one value occurs. For $k$ large,
\[
\normtwo{\bar{\bd}_{J^{(k+1)}} - \bar{\bd}_{J^{(k)}}} \leqslant \normtwo{\bd^{(k+1)} - \bd^{(k)}} + \lambda_{\max}\normtwo{\bx^{(k)} - \bar{\bx}} + \lambda_{\max}\normtwo{\bx^{(k+1)} - \bar{\bx}} < 2\rho,
\]
which forces $\bar{\bd}_{J^{(k+1)}} = \bar{\bd}_{J^{(k)}}$. Hence $\bar{\bd}_{J^{(k)}}$ is eventually constant, equal to some $\bar{\bv}$. Every recurring active set attains it, so $\bar{\bd}_{J} = \bar{\bv}$ for all recurring $J$, and $\bd^{(k)} \to \bar{\bv}$ with $\normtwo{\bar{\bv}} > 0$.

\paragraph{Step~4.} For recurring $J$, the directions with $J^{(k)} = J$ lie in the closed subspace $\ker\matr{A}_{J}$ and converge to $\bar{\bv}$, so $\bar{\bv} \in \ker\matr{A}_{J}$.
\end{proof}

\paragraph{Exact reduction.} We place the blocked tail in coordinates adapted to its limit, namely
\begin{equation}\label{eq:tail_coords}
\sigma_{k} \coloneqq \sum_{l \geqslant k}\Delta^{(l)},
\qquad
\bm{\zeta}^{(k)} \coloneqq \bx^{(k)} - \bar{\bx} + \sigma_{k}\bar{\bv},
\end{equation}
so $\sigma_{k} \downarrow 0$, $\bx^{(k)} - \bar{\bx} = -\sum_{l \geqslant k}\Delta^{(l)}\bd^{(l)}$ by telescoping, and $\bm{\zeta}^{(k)} = -\sum_{l \geqslant k}\Delta^{(l)}(\bd^{(l)} - \bar{\bv}) \to \boldsymbol{0}$. Define
\begin{equation}\label{eq:tail_drifts}
\bg \coloneqq \matr{Q}\bar{\bv},
\qquad
\bu_{J} \coloneqq \matr{P}^{\perp}_{J}\bg = \matr{S}_{J}\bar{\bv}
\end{equation}
for recurring $J$, where $\matr{S}_{J}\bar{\bv} = \matr{P}^{\perp}_{J}\matr{Q}\bar{\bv}$ because $\bar{\bv} \in \ker\matr{A}_{J}$ by Proposition~\ref{prop:limit_velocity}.

\begin{lem}[Recurring walls]
\label{lem:recurring_walls}
Every recurring blocking wall $i$ satisfies:
\begin{itemize}
\item[(i)] $\matr{A}_{i}\bar{\bx} = b_{i}$ and $\matr{A}_{i}\bar{\bv} = 0$;

\item[(ii)] $\matr{A}_{i}\bm{\zeta}^{(k)} = \matr{A}_{i}(\bx^{(k)}-\bar{\bx}) \leqslant 0$ for every $k$ on the tail, with strict inequality whenever $i$ is the blocking wall of step $k$, and $\matr{A}_{i}\bm{\zeta}^{(k+1)} = 0$ for every such $k$.
\end{itemize}
\end{lem}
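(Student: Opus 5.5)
Since $i$ is a recurring blocking wall, there are infinitely many steps $k$ of the tail with $i_{k} = i$. By \eqref{eq:blocking}, each such step gives $[\matr{A}\bx^{(k+1)}]_{i} = b_{i}$. The iterates converge to $\bar{\bx}$, so passing to the limit along these steps yields $\matr{A}_{i}\bar{\bx} = b_{i}$, the first claim of~(i).

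For the second claim of~(i), fix such a step $k$. Using $\matr{A}_{i}\bar{\bx} = b_{i}$ and the definition~\eqref{eq:tail_coords}, we get
\[
0 = \matr{A}_{i}(\bx^{(k+1)} - \bar{\bx}) = \matr{A}_{i}\bm{\zeta}^{(k+1)} - \sigma_{k+1}\matr{A}_{i}\bar{\bv}.
\]
Hence $\matr{A}_{i}\bar{\bv} = \matr{A}_{i}\bm{\zeta}^{(k+1)}/\sigma_{k+1}$, where $\sigma_{k+1} > 0$ because every step has $\Delta^{(l)} > 0$ (Appendix~\ref{app:descent_setup}). The telescoped form $\bm{\zeta}^{(k+1)} = -\sum_{l \geqslant k+1}\Delta^{(l)}(\bd^{(l)} - \bar{\bv})$ gives
\[
\normtwo{\bm{\zeta}^{(k+1)}} \leqslant \sigma_{k+1}\sup_{l > k}\normtwo{\bd^{(l)} - \bar{\bv}}.
\]
The supremum tends to $0$ by Proposition~\ref{prop:limit_velocity}. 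Letting $k \to \infty$ along the blocked steps of $i$ therefore gives $\matr{A}_{i}\bar{\bv} = 0$. This ratio estimate is the only delicate point, and it mirrors Lemma~\ref{lem:deviation_flow}(ii) for the flow.

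For~(ii), the identity $\matr{A}_{i}\bar{\bv} = 0$ makes the $\sigma_{k}\bar{\bv}$ term invisible, so $\matr{A}_{i}\bm{\zeta}^{(k)} = \matr{A}_{i}\bx^{(k)} - b_{i}$ for every $k$. This quantity is $\leqslant 0$ because every iterate lies in $C$, and Algorithm~\ref{alg:quad_prog_2} preserves feasibility. When $i$ is the blocking wall of step $k$, the second relation of~\eqref{eq:blocking} makes the inequality strict, and the third relation gives $\matr{A}_{i}\bm{\zeta}^{(k+1)} = 0$.
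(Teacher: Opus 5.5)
Your proof is correct and follows the paper's argument essentially step for step: you pass to the limit along the hits of $i$ to get $\matr{A}_{i}\bar{\bx} = b_{i}$, divide $0 = \matr{A}_{i}\bm{\zeta}^{(k+1)} - \sigma_{k+1}\matr{A}_{i}\bar{\bv}$ by $\sigma_{k+1}$ using $\normtwo{\bm{\zeta}^{(k+1)}} = o(\sigma_{k+1})$, and read~(ii) off feasibility and~\eqref{eq:blocking}. Your explicit remark that $\sigma_{k+1} > 0$, because every step has $\Delta^{(l)} > 0$, supplies a detail the paper leaves implicit.
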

\begin{proof}
By~\eqref{eq:tail_coords}, $\bm{\zeta}^{(k)} = -\sum_{l \geqslant k}\Delta^{(l)}(\bd^{(l)} - \bar{\bv})$, so $\normtwo{\bm{\zeta}^{(k)}} \leqslant \sigma_{k}\sup_{l \geqslant k}\normtwo{\bd^{(l)} - \bar{\bv}} = o(\sigma_{k})$ by Proposition~\ref{prop:limit_velocity}. At the steps $k$ blocked by $i$, $\matr{A}_{i}\bx^{(k+1)} = b_{i}$ by~\eqref{eq:blocking}; these iterates converge to $\bar{\bx}$, so $\matr{A}_{i}\bar{\bx} = b_{i}$, and then $0 = \matr{A}_{i}(\bx^{(k+1)} - \bar{\bx}) = -\sigma_{k+1}\matr{A}_{i}\bar{\bv} + \matr{A}_{i}\bm{\zeta}^{(k+1)}$. Dividing by $\sigma_{k+1}$ and letting $k \to \infty$ along the hits gives $\matr{A}_{i}\bar{\bv} = 0$. This proves (i).

Feasibility gives $\matr{A}_{i}(\bx^{(k)}-\bar{\bx}) = \matr{A}_{i}\bx^{(k)} - b_{i} \leqslant 0$, and (i) gives $\matr{A}_{i}\bm{\zeta}^{(k)} = \matr{A}_{i}(\bx^{(k)}-\bar{\bx})$. The inequality is strict at the blocking steps of $i$ by~\eqref{eq:blocking}, where $\matr{A}_{i}\bm{\zeta}^{(k+1)} = \matr{A}_{i}(\bx^{(k+1)}-\bar{\bx}) = 0$.
\end{proof}

\begin{lem}
\label{lem:deviation}
On the blocked tail, relabeled as above so that every $J^{(k)}$ is recurring, for every $k$:
\begin{itemize}
\item[(i)] $\matr{A}_{J^{(k)}}\bm{\zeta}^{(k)} = \matr{A}_{J^{(k)}}\bm{\zeta}^{(k+1)} = \boldsymbol{0}$;
\item[(ii)] $\bm{\zeta}^{(k+1)} = \bm{\zeta}^{(k)} + \Delta^{(k)}\sigma_{k}\bu_{J^{(k)}} - \Delta^{(k)}\matr{S}_{J^{(k)}}\bm{\zeta}^{(k)}$.
\end{itemize}
\end{lem}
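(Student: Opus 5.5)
The proof is a direct computation from the definitions~\eqref{eq:tail_coords}--\eqref{eq:tail_drifts} and the structure of a single step of Algorithm~\ref{alg:quad_prog_2}. We treat part~(i) first, since part~(ii) uses it to pass from $\matr{P}^{\perp}_{J}\matr{Q}$ to $\matr{S}_{J}$.

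\emph{Part~(i).} Fix $k$ and write $J \coloneqq J^{(k)}$, which is recurring. By Appendix~\ref{app:descent_setup}, $J \subseteq \eqset^{(k)}$ and $J \subseteq \eqset^{(k+1)}$, so $\matr{A}_{J}\bx^{(k)} = \matr{A}_{J}\bx^{(k+1)} = \bb_{J}$. Since $J$ recurs, there are steps $l$ with $J^{(l)} = J$ arbitrarily far along the tail. At each such step $\matr{A}_{J}\bx^{(l)} = \bb_{J}$, and $\bx^{(l)} \to \bar{\bx}$, so $\matr{A}_{J}\bar{\bx} = \bb_{J}$. Proposition~\ref{prop:limit_velocity} gives $\matr{A}_{J}\bar{\bv} = \boldsymbol{0}$. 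Substituting into~\eqref{eq:tail_coords} gives
\[
\matr{A}_{J}\bm{\zeta}^{(k)} = \matr{A}_{J}(\bx^{(k)} - \bar{\bx}) + \sigma_{k}\matr{A}_{J}\bar{\bv} = \boldsymbol{0},
\]
and the same computation applies to $\bm{\zeta}^{(k+1)}$, using $\matr{A}_{J}\bx^{(k+1)} = \bb_{J}$ and $\sigma_{k+1}$.

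\emph{Part~(ii).} Use $\bx^{(k+1)} = \bx^{(k)} + \Delta^{(k)}\bd^{(k)}$ together with $\sigma_{k+1} = \sigma_{k} - \Delta^{(k)}$. These give
\[
\bm{\zeta}^{(k+1)} = \bm{\zeta}^{(k)} + \Delta^{(k)}\bigl(\bd^{(k)} - \bar{\bv}\bigr).
\]
It therefore suffices to show $\bd^{(k)} - \bar{\bv} = \sigma_{k}\bu_{J} - \matr{S}_{J}\bm{\zeta}^{(k)}$.
\begin{itemize}
\item By~\eqref{eq:dbar} and $\bar{\bd}_{J} = \bar{\bv}$ (Proposition~\ref{prop:limit_velocity}), the affine map $\bx \mapsto -\matr{P}^{\perp}_{J}\nabla_{\bx}f(\bx)$ has slope $-\matr{P}^{\perp}_{J}\matr{Q}$. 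Hence $\bd^{(k)} = \bar{\bv} - \matr{P}^{\perp}_{J}\matr{Q}(\bx^{(k)} - \bar{\bx})$.
\item Substitute $\bx^{(k)} - \bar{\bx} = \bm{\zeta}^{(k)} - \sigma_{k}\bar{\bv}$. This gives $\bd^{(k)} - \bar{\bv} = \sigma_{k}\matr{P}^{\perp}_{J}\matr{Q}\bar{\bv} - \matr{P}^{\perp}_{J}\matr{Q}\bm{\zeta}^{(k)}$.
\item The first term is $\sigma_{k}\bu_{J}$ by~\eqref{eq:tail_drifts}.
\item By part~(i), $\bm{\zeta}^{(k)} \in \ker\matr{A}_{J}$, and $\matr{S}_{J}\bz = \matr{P}^{\perp}_{J}\matr{Q}\bz$ for every $\bz \in \ker\matr{A}_{J}$. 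So the second term is $\matr{S}_{J}\bm{\zeta}^{(k)}$.
\end{itemize}
Multiplying by $\Delta^{(k)}$ yields the claimed recursion.

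The only step that needs care is that the formula $\bd^{(k)} = -\matr{P}^{\perp}_{J^{(k)}}\nabla_{\bx}f(\bx^{(k)})$ in~\eqref{eq:dbar} holds at \emph{every} step, not only in the limit. This is formula~\eqref{eq:steepest_dd} with $\bp^{(k)}$ chosen so that~\eqref{eq:LI} holds, which the algorithm assumes throughout. Everything else is algebra.
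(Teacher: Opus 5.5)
Your proposal is correct and follows essentially the same route as the paper: part~(i) from the activity of $J^{(k)}$ at $\bx^{(k)}$, $\bx^{(k+1)}$, and (by recurrence) at $\bar{\bx}$, together with $\matr{A}_{J}\bar{\bv} = \boldsymbol{0}$. Part~(ii) writes $\bd^{(k)} = \bar{\bv} + \sigma_{k}\bu_{J^{(k)}} - \matr{S}_{J^{(k)}}\bm{\zeta}^{(k)}$ via the affine formula~\eqref{eq:dbar}, uses part~(i) to replace $\matr{P}^{\perp}_{J}\matr{Q}$ by $\matr{S}_{J}$, and then uses $\sigma_{k+1} = \sigma_{k} - \Delta^{(k)}$.
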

\begin{proof}
The constraints of $J^{(k)}$ are active at $\bx^{(k)}$ and $\bx^{(k+1)}$, as shown after~\eqref{eq:blocking}. Since $J^{(k)}$ recurs, they are active at infinitely many iterates converging to $\bar{\bx}$, so $\matr{A}_{J^{(k)}}\bar{\bx} = \bb_{J^{(k)}}$ and $\matr{A}_{J^{(k)}}(\bx^{(k)}-\bar{\bx}) = \matr{A}_{J^{(k)}}(\bx^{(k+1)}-\bar{\bx}) = \boldsymbol{0}$. Proposition~\ref{prop:limit_velocity} gives $\matr{A}_{J^{(k)}}\bar{\bv} = \boldsymbol{0}$. Adding $\sigma_{k}$ and $\sigma_{k+1}$ gives (i).

By Proposition~\ref{prop:limit_velocity}, $-\matr{P}^{\perp}_{J^{(k)}}\nabla_{\bx}f(\bar{\bx}) = \bar{\bd}_{J^{(k)}} = \bar{\bv}$. Hence,
\[
\bd^{(k)} = -\matr{P}^{\perp}_{J^{(k)}}\nabla_{\bx}f(\bx^{(k)}) = \bar{\bv} - \matr{P}^{\perp}_{J^{(k)}}\matr{Q}(\bx^{(k)}-\bar{\bx}) = \bar{\bv} + \sigma_{k}\bu_{J^{(k)}} - \matr{S}_{J^{(k)}}\bm{\zeta}^{(k)},
\]
where the last step uses $\matr{P}^{\perp}_{J^{(k)}}\matr{Q}\bm{\zeta}^{(k)} = \matr{S}_{J^{(k)}}\bm{\zeta}^{(k)}$, valid because $\bm{\zeta}^{(k)} \in \ker\matr{A}_{J^{(k)}}$ by (i). Then
\[
\bm{\zeta}^{(k+1)} = \bx^{(k)} - \bar{\bx} + \Delta^{(k)}\bd^{(k)} + \sigma_{k+1}\bar{\bv} = \bm{\zeta}^{(k)} + (\sigma_{k+1} - \sigma_{k} + \Delta^{(k)})\bar{\bv} + \Delta^{(k)}\sigma_{k}\bu_{J^{(k)}} - \Delta^{(k)}\matr{S}_{J^{(k)}}\bm{\zeta}^{(k)},
\]
and $\sigma_{k+1} - \sigma_{k} + \Delta^{(k)} = 0$.
\end{proof}

\paragraph{Exclusion.} Summing the recursion of Lemma~\ref{lem:deviation}(ii) from $k$ to $k'-1$ and letting $k' \to \infty$ gives
\begin{equation}
\label{eq:driven_discrete}
\bm{\zeta}^{(k)} = -\sum_{l \geqslant k}\Delta^{(l)}\bigl(\sigma_{l}\bu_{J^{(l)}} - \matr{S}_{J^{(l)}}\bm{\zeta}^{(l)}\bigr),
\end{equation}
the series converging absolutely because $\normtwo{\bm{\zeta}^{(l)}}$ is bounded, $\normtwo{\matr{S}_{J}} \leqslant \lambda_{\max}$, and $\sigma_{k} < +\infty$. This identity is~\eqref{eq:driven_shared} with sums over steps in place of integrals, and its weights are the \emph{iterated tail sums} $\sigma^{[0]}_{k} \coloneqq 1$ and $\sigma^{[j+1]}_{k} \coloneqq \sum_{l \geqslant k}\Delta^{(l)}\sigma^{[j]}_{l}$, so that $\sigma^{[1]}_{k} = \sigma_{k}$. They are positive and nonincreasing in $k$, and they satisfy $\sigma^{[j+1]}_{k} \leqslant \sigma_{k}\sigma^{[j]}_{k}$, hence $\sigma^{[j]}_{k} \leqslant \sigma_{k}^{j}$.
\begin{lem}
\label{lem:discrete_tail}
On the blocked tail, relabeled so that every $J^{(k)}$ is recurring:
\begin{itemize}
\item[(i)] There exist vectors $\bu^{[0]} \coloneqq \bar{\bv}, \bu^{[1]}, \bu^{[2]}, \dots$ such that $\matr{S}_{J}\bu^{[j-1]} = \bu^{[j]} \in \ker\matr{A}_{J}$ for every recurring $J$ and every $j \geqslant 1$, and such that, for every $r \geqslant 0$, the sequence $\bm{\omega}_{r}^{(k)} \coloneqq \bm{\zeta}^{(k)} + \sum_{j=1}^{r}\sigma^{[j+1]}_{k}\bu^{[j]}$ satisfies $\normtwo{\bm{\omega}_{r}^{(k)}} \leqslant 2\normtwo{\bu^{[r+1]}}\sigma^{[r+2]}_{k}$ whenever $\lambda_{\max}\sigma_{k} \leqslant \tfrac12$;
\item[(ii)] Every $\ba \in \Rn$ has an index $k_{\ba}$ such that $\langle\ba,\bm{\zeta}^{(k)}\rangle$ is either all zero or all nonzero for $k \geqslant k_{\ba}$.
\end{itemize}
\end{lem}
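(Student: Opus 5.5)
The plan is to transpose the proofs of Lemmas~\ref{lem:driven_shared}, \ref{lem:alignment_shared}, and~\ref{lem:sign_shared} from integrals over $[\tau,\bar{\tau})$ to sums over steps. The weights $\Delta^{(l)}$ replace $\diff\rho$, and the iterated tail sums $\sigma^{[j]}_{k}$ replace $\sigma^{j}/j!$. I would first prove a discrete analogue of Lemma~\ref{lem:driven_shared}. Let $s_{l} > 0$ be nonincreasing, put $\Lambda_{k} \coloneqq \sum_{l \geqslant k}\Delta^{(l)}s_{l} \leqslant \sigma_{k}s_{k}$, and let $\bm{\omega}^{(k)}$ be bounded with $\matr{A}_{J^{(k)}}\bm{\omega}^{(k)} = \boldsymbol{0}$ and
\[
\bm{\omega}^{(k)} = -\sum_{l \geqslant k}\Delta^{(l)}\bigl(s_{l}\bu_{J^{(l)}} - \matr{S}_{J^{(l)}}\bm{\omega}^{(l)}\bigr),
\]
where $\bu_{J} = \matr{P}^{\perp}_{J}\bh$. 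The claims are (a) $\normtwo{\bm{\omega}^{(k)}} \leqslant 2u_{\max}\Lambda_{k}$ whenever $\lambda_{\max}\sigma_{k} \leqslant \tfrac12$, and (b) all drifts $\bu_{J}$ of recurring $J$ coincide.

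For (a), I would use the supremum argument verbatim with $\Omega_{k} \coloneqq \sup_{l \geqslant k}\normtwo{\bm{\omega}^{(l)}}$. For (b), I would pick $\tilde{J}$ minimizing $\normtwo{\bu_{J}}$ and test the identity at a step $k$ with $J^{(k)} = \tilde{J}$ against $\bq = \bh - \bu_{\tilde{J}}$. Then $\bq^{\top}\bm{\omega}^{(k)} = 0$. Inequalities~\eqref{eq:qtest_shared} and~\eqref{eq:fullann_shared} are purely algebraic and carry over unchanged. The steps $l \geqslant k$ whose drift differs from $\bu$ each contribute at least $\Delta^{(l)}s_{l}\bigl(\beta - 2\lambda_{\max}u_{\max}\normtwo{\bq}\sigma_{k}\bigr)$, using $\Lambda_{l} \leqslant \sigma_{k}s_{l}$. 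Once $\sigma_{k}$ is small the bracket is positive, so no later step may have a different drift. Every recurring $J$ recurs after $k$, so all drifts are equal, and the common value lies in each $\ker\matr{A}_{J}$. Positivity of every $\Delta^{(l)}$ (shown in Appendix~\ref{app:descent_setup}) replaces ``pieces have positive length''.

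Part~(i) then follows by induction on $r$, exactly as in Lemma~\ref{lem:alignment_shared}. The base case is~\eqref{eq:driven_discrete}, with $s_{l} = \sigma^{[1]}_{l}$ and $\bh = \matr{Q}\bar{\bv}$, whose drifts are $\matr{S}_{J}\bar{\bv}$ by~\eqref{eq:tail_drifts}; the kernel condition is Lemma~\ref{lem:deviation}(i). For the inductive step, note that $\sigma^{[r+1]}_{k} = \sum_{l \geqslant k}\Delta^{(l)}\sigma^{[r]}_{l}$ is the exact discrete counterpart of $\int_{\tau}^{\bar{\tau}}\sigma^{r}/r!$. Adding $\sigma^{[r+1]}_{k}\bu^{[r]}$ to $\bm{\omega}_{r-1}^{(k)}$ cancels its drift term and leaves
\[
\bm{\omega}_{r}^{(k)} = \sum_{l \geqslant k}\Delta^{(l)}\matr{S}_{J^{(l)}}\bm{\omega}_{r-1}^{(l)}.
\]
Rewriting $\bm{\omega}_{r-1}^{(l)} = \bm{\omega}_{r}^{(l)} - \sigma^{[r+1]}_{l}\bu^{[r]}$ puts this in the driven form with $s_{l} = \sigma^{[r+1]}_{l}$, which is positive and nonincreasing, and drifts $\matr{S}_{J}\bu^{[r]}$. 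Claim~(b) then produces $\bu^{[r+1]}$, and claim~(a) with $\Lambda_{k} = \sigma^{[r+2]}_{k}$ gives the stated bound. Boundedness of each $\bm{\omega}_{r}$ is clear from $\sigma^{[j]}_{k} \leqslant \sigma_{k}^{j}$.

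Part~(ii) copies Lemma~\ref{lem:sign_shared}. Put $\alpha_{j} = \langle\ba,\bu^{[j]}\rangle$ and note $\normtwo{\bu^{[j]}} \leqslant \lambda_{\max}^{j}\normtwo{\bar{\bv}}$. If every $\alpha_{j}$ vanishes, then $|\langle\ba,\bm{\zeta}^{(k)}\rangle| \leqslant 2\normtwo{\ba}\normtwo{\bar{\bv}}\lambda_{\max}^{r+1}\sigma_{k}^{r+2} \leqslant 2^{-r}\cdot\mathrm{const}$ for all $r$, so the inner product is zero. Otherwise, with $r$ minimal such that $\alpha_{r} \neq 0$,
\[
|\langle\ba,\bm{\zeta}^{(k)}\rangle| \geqslant |\alpha_{r}|\sigma^{[r+1]}_{k} - 2\normtwo{\ba}\normtwo{\bu^{[r+1]}}\sigma^{[r+2]}_{k} \geqslant \sigma^{[r+1]}_{k}\bigl(|\alpha_{r}| - 2\normtwo{\ba}\normtwo{\bu^{[r+1]}}\sigma_{k}\bigr),
\]
which is positive for large $k$ because $\sigma^{[r+1]}_{k} > 0$. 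The main obstacle is the inductive step: the discrete summation-by-parts identity must hold exactly, with no $O(\Delta^{2})$ error. I would check this first, and it works because the recursion in Lemma~\ref{lem:deviation}(ii) is an exact explicit-Euler identity whose weights match the definition of $\sigma^{[j]}$.
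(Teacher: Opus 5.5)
Your proposal is correct and is essentially the paper's proof. The paper simply asserts that the proofs of Lemmas~\ref{lem:driven_shared}, \ref{lem:alignment_shared}, and~\ref{lem:sign_shared} carry over to~\eqref{eq:driven_discrete}, with sums over $l \geqslant k$ in place of integrals, $\Delta^{(l)}$ in place of $\diff\rho$, $\sigma^{[j]}_{k}$ in place of $\sigma^{j}/j!$, and $\Delta^{(l)} > 0$ in place of the positive length of the pieces. You carry out that transposition explicitly, and your observation that the inductive step is exact, because $\sigma^{[r+1]}_{k} = \sum_{l \geqslant k}\Delta^{(l)}\sigma^{[r]}_{l}$ matches the explicit-Euler form of~\eqref{eq:driven_discrete}, is precisely the property the paper relies on.
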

\begin{proof}
The proofs of Lemmas~\ref{lem:driven_shared}, \ref{lem:alignment_shared}, and~\ref{lem:sign_shared} carry over to~\eqref{eq:driven_discrete}, with sums over $l \geqslant k$ in place of integrals over $[\tau,\bar{\tau})$, $\Delta^{(l)}$ in place of $\diff\rho$, $\sigma_{k}$ in place of $\sigma(\tau)$, and $\sigma^{[j]}_{k}$ in place of $\sigma^{j}/j!$. They use the weights only for the properties above and the recursion that defines them, and of the steps only $\Delta^{(l)} > 0$, in place of the positive length of the pieces. The hypotheses hold by Proposition~\ref{prop:limit_velocity}, \eqref{eq:tail_coords}, and Lemma~\ref{lem:deviation}(i).
\end{proof}

\begin{proof}[Proof of Theorem~\ref{thm:no_blocked_tail}]
Suppose a blocked tail exists and adopt the relabeling above. Fix a recurring blocking wall $i$ and put $\varphi(k) \coloneqq \matr{A}_{i}\bm{\zeta}^{(k)} = \matr{A}_{i}\bx^{(k)} - b_{i}$, by Lemma~\ref{lem:recurring_walls}(ii), so that $\varphi(k+1) = 0$ whenever $i$ blocks step $k$. By Lemma~\ref{lem:discrete_tail}(ii) with $\ba = \matr{A}_{i}^{\top}$, either $\varphi$ vanishes at every iterate beyond some index, or it vanishes at none of them. In the first case $i \in \eqset^{(k)}$ at every such iterate, whereas~\eqref{eq:blocking} requires $i \notin \eqset^{(k)}$ at every step that $i$ blocks. In the second case $\varphi(k+1) \neq 0$ at every such $k$, so no such step is blocked by $i$. Either way $i$ blocks finitely many steps. The recurring walls are finitely many, so from some step on no step is blocked by a recurring wall, although every late step is blocked by one. No blocked tail exists.
\end{proof}

\subsubsection{Proof of Proposition~\ref{prop:descent_dichotomy}}
\label{app:proof_descent_psd}
When problem~\eqref{eq:intro_main_problem} admits an optimal solution, we write $X^{s}$ for its optimal solution set and $V^{s}$ for its optimal value. The proofs of this appendix rest on one observation. At a feasible $\bx$ with active set $J = J(\bx)$ and multipliers $\bp_{J} = \bp_{J}(\bx) \geqslant \boldsymbol{0}$, the relations below hold, the last one by Moreau's decomposition~\eqref{eq:moreau_decomp}:
\begin{equation}\label{eq:perturbed_kkt}
\matr{A}\bx \leqslant \bb,
\qquad
\matr{A}_{J}\bx = \bb_{J},
\qquad
\bp_{J} \geqslant \boldsymbol{0},
\qquad
\matr{Q}\bx + \matr{A}_{J}^{\top}\bp_{J} = -\bc - \bd(\bx).
\end{equation}
These are the KKT conditions of problem~\eqref{eq:intro_main_problem} with one inexact equation, whose residual is exactly $-\bd(\bx)$. Accordingly, for an index set $J \subseteq [m]$, let $\Upsilon_{J}$ denote the solution set, possibly empty, of the exact system, in the variables $(\by,\bs) \in \Rn \times \R^{|J|}$,
\[
\Upsilon_{J} \coloneqq \bigl\{(\by,\bs) : \matr{A}\by \leqslant \bb,\ \matr{A}_{J}\by = \bb_{J},\ \bs \geqslant \boldsymbol{0},\ \matr{Q}\by + \matr{A}_{J}^{\top}\bs = -\bc\bigr\},
\]
and let $\mathcal{D}_{J}$ be the set of stationarity residuals compatible with the other three relations of $\Upsilon_{J}$,
\[
\mathcal{D}_{J} \coloneqq \bigl\{\matr{Q}\by + \matr{A}_{J}^{\top}\bs + \bc : \matr{A}\by \leqslant \bb,\ \matr{A}_{J}\by = \bb_{J},\ \bs \geqslant \boldsymbol{0}\bigr\} \subseteq \Rn.
\]

We use four facts. First, the set $\mathcal{D}_{J}$ is the image of a polyhedron under an affine map, hence a polyhedron, hence closed: a polyhedron is finitely generated and an affine image of a finitely generated set is finitely generated, so~\cite[Theorem~19.1]{rockafellar1997convex} applies twice. Second, the set $\Upsilon_{J}$ is nonempty if and only if $\boldsymbol{0} \in \mathcal{D}_{J}$. Third, every feasible $\bx$ satisfies $-\bd(\bx) \in \mathcal{D}_{J(\bx)}$, by~\eqref{eq:perturbed_kkt}. Finally, every $(\hat{\by},\hat{\bs}) \in \Upsilon_{J}$ makes $\hat{\by}$ an optimal solution of problem~\eqref{eq:intro_main_problem}, since $\hat{\by}$ is feasible and convexity of $f$ gives $f(\by) - f(\hat{\by}) \geqslant \langle\nabla_{\bx}f(\hat{\by}),\by - \hat{\by}\rangle = -\langle\hat{\bs},\matr{A}_{J}\by - \bb_{J}\rangle \geqslant 0$ at every feasible $\by$.

\begin{lem}
\label{lem:hoffman_error_bound}
Suppose problem~\eqref{eq:intro_main_problem} admits an optimal solution. There exist $\varepsilon_{0} > 0$ and $\kappa_{0} > 0$, depending only on $\matr{A}$, $\bb$, $\matr{Q}$, and $\bc$, such that every $\bx \in C$ with $\normtwo{\bd(\bx)} < \varepsilon_{0}$ satisfies
\begin{equation}\label{eq:hoffman_error_bound}
V(\bx) - V^{s} \;\leqslant\; \kappa_{0}\normsq{\bd(\bx)}.
\end{equation}
\end{lem}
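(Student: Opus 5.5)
The plan is to combine the four facts stated just before the lemma with Hoffman's error bound, applied separately to each of the finitely many index sets $J \subseteq [m]$. The point is that the relations~\eqref{eq:perturbed_kkt} at $\bx$ are the exact system defining $\Upsilon_{J(\bx)}$, except that the stationarity equation carries the residual $-\bd(\bx)$. A Hoffman bound therefore places a true KKT pair within $O(\normtwo{\bd(\bx)})$ of $(\bx,\bp_{J}(\bx))$. Convexity of $f$ then converts this distance into an objective gap of order $\normsq{\bd(\bx)}$.

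\textbf{Step 1 (choice of $\varepsilon_{0}$).} Let $\mathcal{F}$ be the family of index sets $J \subseteq [m]$ for which $\Upsilon_{J} = \emptyset$. For each $J \in \mathcal{F}$, the second fact gives $\boldsymbol{0} \notin \mathcal{D}_{J}$. By the first fact $\mathcal{D}_{J}$ is closed, so $\delta_{J} \coloneqq \operatorname{dist}(\boldsymbol{0},\mathcal{D}_{J}) > 0$. Set $\varepsilon_{0} \coloneqq \min_{J \in \mathcal{F}}\delta_{J}$, or $\varepsilon_{0} \coloneqq 1$ if $\mathcal{F}$ is empty; this depends only on the data. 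Now let $\bx \in C$ with $\normtwo{\bd(\bx)} < \varepsilon_{0}$ and put $J \coloneqq J(\bx)$. The third fact gives $-\bd(\bx) \in \mathcal{D}_{J}$, so $\operatorname{dist}(\boldsymbol{0},\mathcal{D}_{J}) < \varepsilon_{0}$. Hence $J \notin \mathcal{F}$, that is, $\Upsilon_{J} \neq \emptyset$.

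\textbf{Step 2 (Hoffman).} For each $J$ with $\Upsilon_{J} \neq \emptyset$, view $\Upsilon_{J}$ as a nonempty polyhedron in $(\by,\bs)$ described by linear inequalities and equalities. Hoffman's lemma gives a constant $H_{J}$, depending only on $\matr{A},\bb,\matr{Q},\bc$ and $J$, with the following property. For every $(\by,\bs)$, the distance from $(\by,\bs)$ to $\Upsilon_{J}$ is at most $H_{J}$ times the norm of the constraint violations. The point $(\bx,\bp_{J}(\bx))$ satisfies the first three relations of~\eqref{eq:perturbed_kkt} exactly and violates only the stationarity equation, by $\bd(\bx)$. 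We therefore get $(\hat{\by},\hat{\bs}) \in \Upsilon_{J}$ with $\normtwo{\bx - \hat{\by}} \leqslant H_{J}\normtwo{\bd(\bx)}$. By the fourth fact, $V(\hat{\by}) = V^{s}$.

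\textbf{Step 3 (gap estimate).} Convexity of $f$ gives $V(\bx) - V^{s} = f(\bx) - f(\hat{\by}) \leqslant \langle\nabla_{\bx}f(\bx),\bx - \hat{\by}\rangle$. By~\eqref{eq:perturbed_kkt}, $\nabla_{\bx}f(\bx) = -\bd(\bx) - \matr{A}_{J}^{\top}\bp_{J}(\bx)$. Moreover $\matr{A}_{J}\bx = \bb_{J} = \matr{A}_{J}\hat{\by}$, so the multiplier term contributes $-\langle\bp_{J}(\bx),\matr{A}_{J}(\bx - \hat{\by})\rangle = 0$. This leaves
\[
V(\bx) - V^{s} \leqslant -\langle\bd(\bx),\bx - \hat{\by}\rangle \leqslant H_{J}\normsq{\bd(\bx)}.
\]
Taking $\kappa_{0} \coloneqq \max_{J \notin \mathcal{F}} H_{J}$ over the finitely many $J$ (and $\kappa_{0} \geqslant 1$, say, to keep it positive) proves~\eqref{eq:hoffman_error_bound}.

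The only delicate point is Step 1. Hoffman's bound alone says nothing when $\Upsilon_{J(\bx)}$ is empty, so we must rule out small residuals on such active sets. The closedness of $\mathcal{D}_{J}$, the first fact, is exactly what provides the positive separation $\delta_{J}$. Everything else is a direct application of the listed facts.
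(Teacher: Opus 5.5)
Your proof is correct and is essentially the paper's argument: you choose the same $\varepsilon_{0}$ from the closed sets $\mathcal{D}_{J}$ and apply the same Hoffman bound on each nonempty $\Upsilon_{J}$. One small fix: cap $\varepsilon_{0}$ at $1$, as the paper does, because if every $\mathcal{D}_{J}$ with $\Upsilon_{J} = \emptyset$ is empty your minimum is $+\infty$. The only real difference is the last step. You use the gradient inequality at $\bx$ and get $\kappa_{0} = \max_{J} H_{J}$, whereas the paper expands $V$ at $\hat{\by}$, where the first-order term vanishes, and gets $\kappa_{0} = \tfrac{\lambda_{\max}}{2}\max_{J} h_{J}^{2}$.
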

\begin{proof}
Define $\varepsilon_{0} \coloneqq \min\bigl(\{1\} \cup \{\mathrm{dist}(\boldsymbol{0},\mathcal{D}_{J}) : J\ \text{with}\ \boldsymbol{0} \notin \mathcal{D}_{J}\}\bigr)$ with $\mathrm{dist}(\boldsymbol{0},\emptyset) \coloneqq +\infty$. The minimum runs over $1$ and finitely many distances to closed sets that avoid the origin, so $\varepsilon_{0} \in (0,1]$. For each $J$ with $\Upsilon_{J} \neq \emptyset$, let $h_{J}$ denote the constant of the error bound~\cite{hoffman1952} for the linear system defining $\Upsilon_{J}$, which depends only on the coefficient matrices, not on the right-hand sides, and set $\kappa_{0} \coloneqq \tfrac{\lambda_{\max}}{2}\max\{h_{J}^{2} : \Upsilon_{J} \neq \emptyset\}$, a maximum over a nonempty finite set since every optimal $\bx$ satisfies $(\bx,\bp_{J(\bx)}(\bx)) \in \Upsilon_{J(\bx)}$, by~\eqref{eq:perturbed_kkt} with $\bd(\bx) = \boldsymbol{0}$. If it vanishes, replace $\kappa_{0}$ by any positive number.

Now let $\bx$ be feasible with $\normtwo{\bd(\bx)} < \varepsilon_{0}$ and put $J \coloneqq J(\bx)$. Since $-\bd(\bx) \in \mathcal{D}_{J}$, the case $\boldsymbol{0} \notin \mathcal{D}_{J}$ would give $\normtwo{\bd(\bx)} \geqslant \mathrm{dist}(\boldsymbol{0},\mathcal{D}_{J}) \geqslant \varepsilon_{0}$, a contradiction. Hence $\Upsilon_{J} \neq \emptyset$. The point $(\bx,\bp_{J}(\bx))$ satisfies every relation defining $\Upsilon_{J}$ exactly except the stationarity equation, whose violation is $-\bd(\bx)$ by~\eqref{eq:perturbed_kkt}, so the error bound yields $(\hat{\by},\hat{\bs}) \in \Upsilon_{J}$ with $\normtwo{\bx - \hat{\by}} \leqslant h_{J}\normtwo{\bd(\bx)}$. Finally, since $V(\hat{\by}) = V^{s}$ by the fourth fact above, expanding $V$ at $\hat{\by}$ and using $\matr{Q}\hat{\by} + \bc = -\matr{A}_{J}^{\top}\hat{\bs}$ and $\matr{A}_{J}\bx = \matr{A}_{J}\hat{\by} = \bb_{J}$,
\[
V(\bx) - V^{s} = -\langle\hat{\bs},\matr{A}_{J}(\bx - \hat{\by})\rangle + \tfrac12\langle\bx - \hat{\by},\matr{Q}(\bx - \hat{\by})\rangle
\leqslant \tfrac{\lambda_{\max}}{2}h_{J}^{2}\normsq{\bd(\bx)} \leqslant \kappa_{0}\normsq{\bd(\bx)},
\]
where the first term vanishes.
\end{proof}

\begin{proof}[Proof of Proposition~\ref{prop:descent_dichotomy}]
Suppose first that some iterate satisfies $\bd^{(K)} \neq \boldsymbol{0}$ and $\Delta^{(K)} = +\infty$, and write $\bx = \bx^{(K)}$, $\bd = \bd^{(K)}$, $J = J(\bx)$. Then $\Delta_{*}^{(K)} = +\infty$ gives $\langle\bd,\matr{Q}\bd\rangle = 0$, hence $\bd \in \ker\matr{Q}$, since $\langle\bd,\matr{Q}\bd\rangle = \normsq{\matr{Q}^{1/2}\bd}$ for SPSD $\matr{Q}$. Moreover, $\matr{A}_{J}\bd = \boldsymbol{0}$ by the KKT condition~\eqref{eq:nnls_oc_2_5}, $[\matr{A}\bd]_{i} \leqslant 0$ for $i \in \eqset(\bx) \setminus J$ because $\bd \in T_{C}(\bx)$, and $[\matr{A}\bd]_{i} \leqslant 0$ for $i \notin \eqset(\bx)$ because $\Delta_{\mathrm{feas}}(\bx) = +\infty$ in~\eqref{eq:delta_feas}. Hence $\matr{A}\bd \leqslant \boldsymbol{0}$. Finally, the optimality condition~\eqref{eq:nnls_oc_3} gives $\langle\bc,\bd\rangle = \langle\matr{Q}\bx + \bc,\bd\rangle - \langle\bx,\matr{Q}\bd\rangle = -\normsq{\bd}$. The ray $\bx + \Delta\bd$ is therefore feasible, and $V(\bx + \Delta\bd) = V(\bx) - \Delta\normsq{\bd} \to -\infty$, which proves~\eqref{eq:recession_certificate} and unboundedness along the ray. In particular, no such iterate exists when problem~\eqref{eq:intro_main_problem} is bounded below, and $V(\bx^{(k)}) \geqslant V^{s}$ holds there because the iterates are feasible, which proves the final claim.

Now suppose the problem is unbounded below and no iterate has $\Delta^{(k)} = +\infty$. No optimal solution exists, so $\Upsilon_{J} = \emptyset$ for every $J$, every closed set $\mathcal{D}_{J}$ avoids the origin, and $\theta_{0} \coloneqq \min_{J}\mathrm{dist}(\boldsymbol{0},\mathcal{D}_{J}) > 0$ (with $\mathrm{dist}(\boldsymbol{0},\emptyset) = +\infty$). Since $-\bd^{(k)} \in \mathcal{D}_{J^{(k)}}$, every iterate satisfies $\normtwo{\bd^{(k)}} \geqslant \theta_{0}$, so the iteration is infinite. If $V(\bx^{(k)})$ decreased to some $V_{\infty} > -\infty$, Lemma~\ref{lem:tail_extraction} would produce a blocked tail, which Theorem~\ref{thm:no_blocked_tail} excludes. Hence $V(\bx^{(k)}) \to -\infty$.
\end{proof}

\subsubsection{Proof of Theorem~\ref{thm:descent_convergence}}
\label{app:proof_descent_convergence}
Assume throughout that problem~\eqref{eq:intro_main_problem} has an optimal solution. By the first part of the proof of Proposition~\ref{prop:descent_dichotomy}, no iterate has $\Delta^{(k)} = +\infty$, and every step with $\bd^{(k)} \in \ker\matr{Q}$ is blocked. If $V(\bx^{(k)})$ decreased to some $V_{\infty} > V^{s}$, then every $k$ would have $\normtwo{\bd^{(k)}} \geqslant \theta \coloneqq \min\bigl(\varepsilon_{0},\sqrt{(V_{\infty} - V^{s})/\kappa_{0}}\bigr) > 0$: either $\normtwo{\bd^{(k)}} \geqslant \varepsilon_{0}$, or~\eqref{eq:hoffman_error_bound} gives $\kappa_{0}\normsq{\bd^{(k)}} \geqslant V(\bx^{(k)}) - V^{s} \geqslant V_{\infty} - V^{s}$. Lemma~\ref{lem:tail_extraction} would then yield a blocked tail, which Theorem~\ref{thm:no_blocked_tail} excludes. Hence $V(\bx^{(k)}) \downarrow V^{s}$.

\begin{lem}
\label{lem:finite_length}
Every step satisfies $\normtwo{\bx^{(k+1)} - \bx^{(k)}} \leqslant 2\bigl(V(\bx^{(k)}) - V(\bx^{(k+1)})\bigr)/\normtwo{\bd^{(k)}}$. Moreover, if the values $V(\bx^{(k)})$ decrease to $V^{s}$, then every $k$ with $V(\bx^{(k)}) - V^{s} \leqslant \kappa_{0}\varepsilon_{0}^{2}$ satisfies
\begin{equation}\label{eq:finite_length}
\sum_{l \geqslant k}\normtwo{\bx^{(l+1)} - \bx^{(l)}} \;\leqslant\; 4\sqrt{\kappa_{0}}\sqrt{V(\bx^{(k)}) - V^{s}},
\end{equation}
and the iterates converge to a point $\bxsol \in X^{s}$.
\end{lem}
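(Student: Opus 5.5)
The per-step bound follows from the parabola~\eqref{eq:obj_fun_after_descent}. Since $\Delta^{(k)} \leqslant \Delta_{*}^{(k)}$, we have $\Delta^{(k)}\langle\bd^{(k)},\matr{Q}\bd^{(k)}\rangle \leqslant \normsq{\bd^{(k)}}$. Therefore
\[
V(\bx^{(k)}) - V(\bx^{(k+1)}) = \Delta^{(k)}\normsq{\bd^{(k)}} - \tfrac{(\Delta^{(k)})^{2}}{2}\langle\bd^{(k)},\matr{Q}\bd^{(k)}\rangle \geqslant \tfrac12\Delta^{(k)}\normsq{\bd^{(k)}}.
\]
Dividing by $\normtwo{\bd^{(k)}}/2$ and using $\normtwo{\bx^{(k+1)} - \bx^{(k)}} = \Delta^{(k)}\normtwo{\bd^{(k)}}$ gives the first claim. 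If the iteration stops at some $\bd^{(K)} = \boldsymbol{0}$, everything else is trivial, so assume it is infinite.

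For~\eqref{eq:finite_length} I would run a Kurdyka--{\L}ojasiewicz-type argument with exponent $1/2$, taking Lemma~\ref{lem:hoffman_error_bound} as the {\L}ojasiewicz inequality. Write $r_{l} \coloneqq V(\bx^{(l)}) - V^{s} \geqslant 0$; it is nonincreasing. The first step is to check that once $r_{k} \leqslant \kappa_{0}\varepsilon_{0}^{2}$, the error bound applies at every $l \geqslant k$. This is the delicate point, because Lemma~\ref{lem:hoffman_error_bound} requires $\normtwo{\bd^{(l)}} < \varepsilon_{0}$ and not a bound on $r_{l}$. I would handle it by cases. If $\normtwo{\bd^{(l)}} \geqslant \varepsilon_{0}$, then $\kappa_{0}\normsq{\bd^{(l)}} \geqslant \kappa_{0}\varepsilon_{0}^{2} \geqslant r_{k} \geqslant r_{l}$. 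If $\normtwo{\bd^{(l)}} < \varepsilon_{0}$, the error bound gives $r_{l} \leqslant \kappa_{0}\normsq{\bd^{(l)}}$. So in either case $\normtwo{\bd^{(l)}} \geqslant \sqrt{r_{l}/\kappa_{0}}$ for all $l \geqslant k$.

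Next comes the concavity trick. For $r_{l} > 0$, concavity of the square root gives
\[
\sqrt{r_{l}} - \sqrt{r_{l+1}} \geqslant \frac{r_{l} - r_{l+1}}{2\sqrt{r_{l}}}.
\]
Combining this with the first claim and the lower bound on $\normtwo{\bd^{(l)}}$ yields
\[
\normtwo{\bx^{(l+1)} - \bx^{(l)}} \leqslant \frac{2(r_{l} - r_{l+1})}{\normtwo{\bd^{(l)}}} \leqslant \frac{2\sqrt{\kappa_{0}}\,(r_{l} - r_{l+1})}{\sqrt{r_{l}}} \leqslant 4\sqrt{\kappa_{0}}\bigl(\sqrt{r_{l}} - \sqrt{r_{l+1}}\bigr).
\]
If $r_{l} = 0$, then $\bx^{(l)}$ is optimal and $\bd^{(l)} = \boldsymbol{0}$, because the direction vanishes exactly at optimal points. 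The iteration has then stopped, so this case does not arise on an infinite run. Summing the displayed bound telescopically over $l \geqslant k$, with $r_{l} \to 0$, gives~\eqref{eq:finite_length}.

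Finally, \eqref{eq:finite_length} shows that the sequence has finite length, so it is Cauchy and converges to some $\bxsol$. The limit lies in $C$ because $C$ is closed. By continuity of $f$ on $C$, $V(\bxsol) = \lim_{k} V(\bx^{(k)}) = V^{s}$, hence $\bxsol \in X^{s}$. The only genuine obstacle is the bookkeeping in the second paragraph: Lemma~\ref{lem:hoffman_error_bound} is stated under a smallness condition on the direction, so the step most likely to need care is converting it into a lower bound on $\normtwo{\bd^{(l)}}$ for every large $l$.
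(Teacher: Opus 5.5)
Your proposal is correct and follows the paper's proof step for step. Both use the decrease bound $V(\bx^{(k)}) - V(\bx^{(k+1)}) \geqslant \tfrac12\Delta^{(k)}\normsq{\bd^{(k)}}$ from the parabola, the same case split that turns Lemma~\ref{lem:hoffman_error_bound} into $\normtwo{\bd^{(l)}} \geqslant \sqrt{r_{l}/\kappa_{0}}$ for $l \geqslant k$, the square-root telescoping, and the Cauchy argument. The only cosmetic difference is that you treat full and blocked steps at once via $\Delta^{(k)}\langle\bd^{(k)},\matr{Q}\bd^{(k)}\rangle \leqslant \normsq{\bd^{(k)}}$, whereas the paper handles the two cases separately.
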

\begin{proof}
On a full step, evaluating~\eqref{eq:obj_fun_after_descent} at $\Delta_{*}^{(k)}$ gives $V(\bx^{(k)}) - V(\bx^{(k+1)}) = \tfrac12\Delta_{*}^{(k)}\normsq{\bd^{(k)}}$. On a blocked step, the proof of Lemma~\ref{lem:sufficient_decrease} gives $V(\bx^{(k)}) - V(\bx^{(k+1)}) \geqslant \tfrac12\Delta^{(k)}\normsq{\bd^{(k)}}$. Thus, since $\normtwo{\bx^{(k+1)} - \bx^{(k)}} = \Delta^{(k)}\normtwo{\bd^{(k)}}$, dividing both by $\normtwo{\bd^{(k)}}/2$ gives the first claim. The numbers $a_{k} \coloneqq V(\bx^{(k)}) - V^{s}$ are nonnegative and nonincreasing, and $a_{k} > 0$ whenever step $k$ is taken, since $\bx^{(k)}$ is then not optimal. Let $a_{k} \leqslant \kappa_{0}\varepsilon_{0}^{2}$, which holds for some $k$ since $a_{k} \to 0$, and let $l \geqslant k$, so $a_{l} \leqslant \kappa_{0}\varepsilon_{0}^{2}$. Then $\normtwo{\bd^{(l)}} \geqslant \sqrt{a_{l}/\kappa_{0}}$, by~\eqref{eq:hoffman_error_bound} when $\normtwo{\bd^{(l)}} < \varepsilon_{0}$, and because $\normtwo{\bd^{(l)}} \geqslant \varepsilon_{0} \geqslant \sqrt{a_{l}/\kappa_{0}}$ otherwise. Hence
\[
\normtwo{\bx^{(l+1)} - \bx^{(l)}} \;\leqslant\; 2\sqrt{\kappa_{0}}\,\frac{a_{l} - a_{l+1}}{\sqrt{a_{l}}} \;\leqslant\; 4\sqrt{\kappa_{0}}\bigl(\sqrt{a_{l}} - \sqrt{a_{l+1}}\bigr),
\]
using $(a_{l} - a_{l+1})/\sqrt{a_{l}} = (\sqrt{a_{l}} - \sqrt{a_{l+1}})(\sqrt{a_{l}} + \sqrt{a_{l+1}})/\sqrt{a_{l}} \leqslant 2(\sqrt{a_{l}} - \sqrt{a_{l+1}})$, since $0 \leqslant a_{l+1} \leqslant a_{l}$, and the sum over $l \geqslant k$ telescopes to~\eqref{eq:finite_length}. The iterates are then Cauchy. Their limit $\bxsol$ lies in the closed set $C$, and $V(\bxsol) = \lim_{k}V(\bx^{(k)}) = V^{s}$ by continuity, so $\bxsol \in X^{s}$.
\end{proof}

Part~(i) of Theorem~\ref{thm:descent_convergence} follows. If $\bd^{(K)} = \boldsymbol{0}$ for some $K$, then $\bx^{(K)} \in X^{s}$ and $\bxsol = \bx^{(K)}$. Otherwise $V(\bx^{(k)}) \downarrow V^{s}$ as shown above, and Lemma~\ref{lem:finite_length} gives $\bx^{(k)} \to \bxsol \in X^{s}$.

For part~(ii), set $J^{s} \coloneqq J(\bxsol) = \eqset(\bxsol)$. By~\eqref{eq:LI}, the vectors $\{\matr{A}_{i}^{\top} : i \in J^{s}\}$ are linearly independent. Hence the dual solution $\bpsol$ is unique, and $[\bpsol]_{i} > 0$ for every $i \in J^{s}$ because $J^{s}$ is the support of $\bpsol$. Write $s_{0} \coloneqq \min_{i \notin J^{s}}(b_{i} - [\matr{A}\bxsol]_{i}) > 0$ for the least inactive slack at $\bxsol$. Let $\bar{\matr{S}} \coloneqq \matr{S}_{J^{s}}$ and $\bar{R} \coloneqq \image{\bar{\matr{S}}}$. When $\bar{R} \neq \{\boldsymbol{0}\}$, let $\bar{\lambda}_{\min} \leqslant \bar{\lambda}_{\max}$ denote the smallest and largest positive eigenvalues of $\bar{\matr{S}}$, so that $\bar{\kappa} = \bar{\lambda}_{\max}/\bar{\lambda}_{\min}$. The next lemma shows that, near $\bxsol$, the direction is linear on the face of $\bxsol$ and bounded away from zero elsewhere.

\begin{lem}
\label{lem:localization}
There exist $\rho_{0}, \delta_{0} > 0$ such that every feasible $\bx$ with $\normtwo{\bx - \bxsol} \leqslant \rho_{0}$ satisfies:
\begin{itemize}
\item[(i)] $\eqset(\bx) \subseteq J^{s}$;

\item[(ii)] if $\eqset(\bx) \neq J^{s}$, then $\normtwo{\bd(\bx)} \geqslant \delta_{0}$;

\item[(iii)] if $\eqset(\bx) = J^{s}$, then $J(\bx) = J^{s}$ and $\bd(\bx) = \bar{\matr{S}}(\bxsol - \bx) \in \bar{R}$, so that $\normtwo{\bd(\bx)} \leqslant \lambda_{\max}\normtwo{\bx - \bxsol}$.
\end{itemize}
\end{lem}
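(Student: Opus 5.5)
The plan is to obtain all three parts from continuity of the data near $\bxsol$, using that $\bpsol$ is unique and strictly positive on $J^{s}$. Choose $\rho_{0} < s_{0}/(2\normtwo{\matr{A}})$ (the norm taken rowwise maximum if convenient). Then every feasible $\bx$ with $\normtwo{\bx - \bxsol} \leqslant \rho_{0}$ has $b_{i} - [\matr{A}\bx]_{i} \geqslant s_{0}/2 > 0$ for every $i \notin J^{s}$. This gives (i) directly.

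For (iii), suppose $\eqset(\bx) = J^{s}$. The rows of $\matr{A}_{J^{s}}$ are linearly independent, so the least-squares multiplier $\bp_{J^{s}}(\bx)$ of~\eqref{eq:pJ_flow} is well defined. It is affine in $\bx$ and equals $\bpsol_{J^{s}} > \boldsymbol{0}$ at $\bxsol$, because $\nabla_{\bx}f(\bxsol) = -\matr{A}_{J^{s}}^{\top}\bpsol_{J^{s}}$. Shrinking $\rho_{0}$ makes $\bp_{J^{s}}(\bx) > \boldsymbol{0}$ on the ball, since its Lipschitz constant is at most $\normtwo{(\matr{A}_{J^{s}}\matr{A}_{J^{s}}^{\top})^{-1}\matr{A}_{J^{s}}}\lambda_{\max}$. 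The vector equal to $\bp_{J^{s}}(\bx)$ on $J^{s}$ and zero elsewhere is then feasible for the NNLS problem at $\bx$. Its residual $-\matr{P}^{\perp}_{J^{s}}\nabla_{\bx}f(\bx)$ lies in $\ker\matr{A}_{J^{s}}$, so it satisfies~\eqref{eq:nnls_oc_1}--\eqref{eq:nnls_oc_2} with equality, and it is therefore optimal. By linear independence of the rows of $\matr{A}_{J^{s}} = \matr{A}_{\eqset(\bx)}$, this solution is the unique one, which gives $J(\bx) = J^{s}$. Moreover, $\bd(\bx) = -\matr{P}^{\perp}_{J^{s}}(\nabla_{\bx}f(\bx) - \nabla_{\bx}f(\bxsol)) = -\matr{P}^{\perp}_{J^{s}}\matr{Q}(\bx - \bxsol)$, since $\matr{P}^{\perp}_{J^{s}}\nabla_{\bx}f(\bxsol) = \boldsymbol{0}$. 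Both points have the $J^{s}$ constraints active, so $\bx - \bxsol \in \ker\matr{A}_{J^{s}}$, and this expression equals $\bar{\matr{S}}(\bxsol - \bx) \in \bar{R}$. Its norm is bounded by $\lambda_{\max}\normtwo{\bx - \bxsol}$.

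For (ii), which I expect to be the main point, I would argue by contradiction with a compactness argument over the finitely many index sets $E \subsetneq J^{s}$. Suppose feasible points $\bx_{k} \to \bxsol$ satisfy $\eqset(\bx_{k}) = E \neq J^{s}$ and $\bd(\bx_{k}) \to \boldsymbol{0}$. Then $-\bd(\bx_{k}) - \nabla_{\bx}f(\bx_{k}) \in \cone\{\matr{A}_{i}^{\top} : i \in E\}$, and this cone is closed. Passing to the limit gives $-\nabla_{\bx}f(\bxsol) = \matr{A}_{E}^{\top}\bq$ for some $\bq \geqslant \boldsymbol{0}$. Extending $\bq$ by zero yields a dual solution at $\bxsol$ that is supported on $E$. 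Uniqueness of $\bpsol$, which follows from the independence of the rows of $\matr{A}_{J^{s}}$, then forces $\bpsol$ to vanish on $J^{s} \setminus E \neq \emptyset$. This contradicts $\bpsol > \boldsymbol{0}$ on $J^{s}$. Hence, for each $E$ there are $\rho_{E}, \delta_{E} > 0$ with $\normtwo{\bd(\bx)} \geqslant \delta_{E}$ whenever $\eqset(\bx) = E$ and $\normtwo{\bx - \bxsol} \leqslant \rho_{E}$. Taking minima over the finitely many $E$, together with the radii from (i) and (iii), gives $\rho_{0}$ and $\delta_{0}$.
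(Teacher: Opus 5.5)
Your proof is correct and follows the paper's argument. Parts (i) and (iii) match it step for step. In (ii) you rely on the same key facts: the cones $\cone\{\matr{A}_{i}^{\top} : i \in E\}$ are closed, $\bpsol$ is unique and positive on $J^{s}$, and there are finitely many $E \subsetneq J^{s}$. The only difference is that you get $\delta_{E}$ by a sequential contradiction, whereas the paper argues directly from $\normtwo{\bd(\bx)} = \mathrm{dist}(-\nabla_{\bx}f(\bx),N_{C}(\bx)) \geqslant \mathrm{dist}(-\nabla_{\bx}f(\bxsol),N_{C}(\bx)) - \lambda_{\max}\normtwo{\bx - \bxsol}$, which gives explicit constants $\delta_{0}$ and $\rho_{0} \leqslant \delta_{0}/\lambda_{\max}$.
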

\begin{proof}
For $i \notin J^{s}$ the slack satisfies $b_{i} - [\matr{A}\bx]_{i} \geqslant s_{0} - \normtwo{\matr{A}}\normtwo{\bx - \bxsol} \geqslant s_{0}/2$ once $\rho_{0} \leqslant s_{0}/(2\normtwo{\matr{A}})$. This proves (i). For (ii), by~\eqref{eq:moreau_decomp}, $\normtwo{\bd(\bx)} = \mathrm{dist}(-\nabla_{\bx}f(\bx),N_{C}(\bx))$. By (i), $\eqset(\bx)$ is a proper subset of $J^{s}$. We claim that $\mathrm{dist}(-\nabla_{\bx}f(\bxsol),N_{C}(\bx)) > 0$. Otherwise $-\nabla_{\bx}f(\bxsol) \in N_{C}(\bx)$ has a representation supported on $\eqset(\bx)$, which, compared with $\bpsol$ over the linearly independent $\{\matr{A}_{i}^{\top} : i \in J^{s}\}$, forces $[\bpsol]_{i} = 0$ for $i \in J^{s} \setminus \eqset(\bx) \neq \emptyset$ and contradicts the hypothesis. The cone $N_{C}(\bx)$ depends only on $\eqset(\bx)$, which takes finitely many values, so $\delta_{0} \coloneqq \tfrac12\min\{\mathrm{dist}(-\nabla_{\bx}f(\bxsol),N_{C}(\by)) : \by \in C,\ \eqset(\by) \subsetneq J^{s}\}$ is positive. Hence
\[
\normtwo{\bd(\bx)} \geqslant \mathrm{dist}\bigl(-\nabla_{\bx}f(\bxsol),N_{C}(\bx)\bigr) - \normtwo{\nabla_{\bx}f(\bx) - \nabla_{\bx}f(\bxsol)} \geqslant 2\delta_{0} - \lambda_{\max}\rho_{0},
\]
and $\rho_{0} \leqslant \delta_{0}/\lambda_{\max}$ gives (ii).

The least-squares multiplier $\bp_{J^{s}}(\bx)$ of~\eqref{eq:pJ_flow} is affine in $\bx$ with $\bp_{J^{s}}(\bxsol) = \bpsol > \boldsymbol{0}$, so shrinking $\rho_{0}$ keeps $\bp_{J^{s}}(\bx) > \boldsymbol{0}$ on the ball. Since $\eqset(\bx) = J^{s}$, Lemma~\ref{lem:velocity_identity} with $\mathcal{K} = T_{C}(\bx)$ gives $\bd(\bx) = -\matr{P}^{\perp}_{J^{s}}\nabla_{\bx}f(\bx)$. Every NNLS solution at $\bx$ is supported in $\eqset(\bx) = J^{s}$, whose rows are linearly independent, so $\bp(\bx) = \bp_{J^{s}}(\bx)$ and $J(\bx) = J^{s}$. Finally $\matr{P}^{\perp}_{J^{s}}\nabla_{\bx}f(\bxsol) = -\bd(\bxsol) = \boldsymbol{0}$, since $\bxsol$ is optimal, so $\bd(\bx) = -\matr{P}^{\perp}_{J^{s}}\matr{Q}(\bx - \bxsol) = \bar{\matr{S}}(\bxsol - \bx)$, since $\bx - \bxsol \in \ker\matr{A}_{J^{s}}$. This proves (iii).
\end{proof}

On the affine set $\mathcal{F} \coloneqq \{\bx : \matr{A}_{J^{s}}\bx = \bb_{J^{s}}\}$, write $\be \coloneqq \bx - \bxsol \in \ker\matr{A}_{J^{s}}$ and $\be_{\bar{R}}$ for its component in $\bar{R}$. Since $\nabla_{\bx}f(\bxsol) = -\matr{A}_{J^{s}}^{\top}\bpsol$ is orthogonal to $\ker\matr{A}_{J^{s}}$, every feasible $\bx \in \mathcal{F}$ satisfies $V(\bx) - V^{s} = \tfrac12\langle\be,\matr{Q}\be\rangle = \tfrac12\langle\be_{\bar{R}},\bar{\matr{S}}\be_{\bar{R}}\rangle$, hence
\begin{equation}\label{eq:facegrowth}
\tfrac{\bar{\lambda}_{\min}}{2}\normsq{\be_{\bar{R}}} \;\leqslant\; V(\bx) - V^{s} \;\leqslant\; \tfrac{\bar{\lambda}_{\max}}{2}\normsq{\be_{\bar{R}}}
\qquad \text{when $\bar{R} \neq \{\boldsymbol{0}\}$.}
\end{equation}

\begin{lem}
\label{lem:absorbing}
There exists $\rho_{1} \in (0,\rho_{0}]$ such that if some iterate satisfies $\normtwo{\bx^{(k)} - \bxsol} \leqslant \rho_{1}$ and $\eqset^{(k)} = J^{s}$, then every $k' \geqslant k$ satisfies $\eqset^{(k')} = J^{(k')} = J^{s}$ and $\normtwo{\bx^{(k')} - \bxsol} \leqslant \rho_{0}$, every step $k' \geqslant k$ is full, and~\eqref{eq:descent_rate} holds for every $k' \geqslant k$ at which the iteration has not stopped.
\end{lem}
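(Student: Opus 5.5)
We prove Lemma~\ref{lem:absorbing} by induction on $k' \geqslant k$, with the hypothesis that $\eqset^{(k')} = J^{s}$ and that $\bx^{(k')}$ lies in a small ball around $\bxsol$. Work on the face $\mathcal{F}$ with $\be^{(k')} \coloneqq \bx^{(k')} - \bxsol \in \ker\matr{A}_{J^{s}}$. By Lemma~\ref{lem:localization}(iii), $J^{(k')} = J^{s}$ and $\bd^{(k')} = -\bar{\matr{S}}\be^{(k')} \in \bar{R} \subseteq \ker\matr{A}_{J^{s}}$. Write $\be^{(k')} = \be_{\bar{R}} + \be_{0}$ with $\be_{0} \in \ker\bar{\matr{S}}$. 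A full step then updates only $\be_{\bar{R}}$ and leaves $\be_{0}$ fixed. If $\bar{\matr{S}} = \matr{0}$, then $\bd^{(k)} = \boldsymbol{0}$ and the iteration stops, so the claims hold trivially. Assume therefore $\bar{R} \neq \{\boldsymbol{0}\}$.

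The induction step proceeds as follows. First, show the step is full, i.e.\ $\Delta_{*}^{(k')} \leqslant \Delta_{\mathrm{feas}}(\bx^{(k')})$. On $J^{s}$ we have $[\matr{A}\bd^{(k')}]_{J^{s}} = \boldsymbol{0}$, so only constraints $i \notin J^{s}$ enter $\Delta_{\mathrm{feas}}$. Their slacks are at least $s_{0}/2$ on the ball of radius $\rho_{0}$. Meanwhile $\Delta_{*}^{(k')}\normtwo{\bd^{(k')}} = \normsq{\bd}/\langle\bd,\matr{Q}\bd\rangle\cdot\normtwo{\bd} \leqslant \normtwo{\bd^{(k')}}/\bar{\lambda}_{\min} \leqslant (\lambda_{\max}/\bar{\lambda}_{\min})\normtwo{\be_{\bar{R}}}$. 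Here we use $\langle\bd,\matr{Q}\bd\rangle = \langle\bd,\bar{\matr{S}}\bd\rangle \geqslant \bar{\lambda}_{\min}\normsq{\bd}$ for $\bd \in \bar{R}$. So the full step moves the point by at most $(\lambda_{\max}/\bar{\lambda}_{\min})\normtwo{\be_{\bar{R}}}$, which keeps all inactive slacks positive once $\rho_{1}$ is small. Hence $\Delta^{(k')} = \Delta_{*}^{(k')}$, the new point stays in $\mathcal{F}$, and no new constraint becomes active. Together with Lemma~\ref{lem:localization}(i) this gives $\eqset^{(k'+1)} = J^{s}$, provided the new point stays in the $\rho_{0}$-ball.

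Second, establish the rate and control the distance. On $\mathcal{F}$ the objective is $V - V^{s} = \tfrac12\langle\be_{\bar{R}},\bar{\matr{S}}\be_{\bar{R}}\rangle$, and the full step is exactly steepest descent with exact line search for this quadratic restricted to $\bar{R}$. The Kantorovich inequality then gives~\eqref{eq:descent_rate} with $\bar{\kappa}$. Combining with~\eqref{eq:facegrowth} yields
\[
\normsq{\be_{\bar{R}}^{(k'')}} \leqslant \bar{\kappa}\,\normsq{\be_{\bar{R}}^{(k)}}
\]
for all later $k''$. The total movement is controlled by the finite-length bound: either apply~\eqref{eq:finite_length} with $V(\bx^{(k)}) - V^{s} \leqslant \tfrac{\bar{\lambda}_{\max}}{2}\rho_{1}^{2}$, or sum the geometric series directly, since each step moves at most $(\lambda_{\max}/\bar{\lambda}_{\min})\normtwo{\be_{\bar{R}}^{(l)}}$ and $\normtwo{\be_{\bar{R}}^{(l)}}$ decays geometrically by~\eqref{eq:descent_rate} and~\eqref{eq:facegrowth}. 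Either way, $\normtwo{\bx^{(k'')} - \bxsol} \leqslant \rho_{1} + c\,\rho_{1}$ for a constant $c$ depending only on $\lambda_{\max}$, $\bar{\lambda}_{\min}$, $\bar{\lambda}_{\max}$. Choose $\rho_{1}$ so that $(1+c)\rho_{1} \leqslant \rho_{0}$ and so that the per-step movement stays below $s_{0}/(4\normtwo{\matr{A}})$.

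The main obstacle is closing this induction uniformly. The ball hypothesis must be propagated along the whole tail, not just one step. That is why we bound the cumulative displacement rather than each step separately: the geometric decay makes the cumulative bound hold with constants that do not depend on $k'$.
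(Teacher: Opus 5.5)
Your proof is correct and follows the paper's outline: an induction on the face $\mathcal{F}$ via Lemma~\ref{lem:localization}(iii), fullness of each step from the slack bound $s_{0}/2$ and $\Delta_{*}^{(k')} \leqslant 1/\bar{\lambda}_{\min}$, and the identification of full steps with exact-line-search steepest descent on $\bar{R}$ followed by the Kantorovich bound.

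Where you differ is in propagating the ball hypothesis along the tail. The paper does this pointwise. The component of $\be^{(k')}$ in $\ker\bar{\matr{S}}$ never changes, and monotonicity of $V$ with~\eqref{eq:facegrowth} gives $\normsq{\be^{(k')}_{\bar{R}}} \leqslant \bar{\kappa}\normsq{\be^{(k)}_{\bar{R}}}$. Hence $\normtwo{\be^{(k')}} \leqslant \sqrt{\bar{\kappa}}\normtwo{\be^{(k)}} \leqslant \sqrt{\bar{\kappa}}\rho_{1}$, and the choice $\rho_{1} \coloneqq \min\bigl(\rho_{0}/\sqrt{\bar{\kappa}},\, s_{0}\bar{\lambda}_{\min}/(4\normtwo{\matr{A}}\lambda_{\max}\sqrt{\bar{\kappa}})\bigr)$ closes the induction in one line.

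You state both ingredients, that $\be_{0}$ is fixed and the $\bar{\kappa}$ bound on $\be_{\bar{R}}$, but never add them. Once you add $\normsq{\be_{0}}$ to the $\bar{\kappa}$ bound, the ``main obstacle'' you describe disappears, and no cumulative displacement bound is needed.

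Your two substitutes do work, with some costs:
\begin{itemize}
\item The geometric-series route is self-contained. It needs the induction hypothesis on the whole segment $k,\dots,k'$, since~\eqref{eq:descent_rate} is available only for steps already shown to be full on the face. Its constant is of order $(\lambda_{\max}/\bar{\lambda}_{\min})\bar{\kappa}^{3/2}$ rather than $\sqrt{\bar{\kappa}}$.
\item The route through~\eqref{eq:finite_length} has one merit: it gives a distance bound for all later iterates independently of the induction. However, it imports the Hoffman-type bound of Lemma~\ref{lem:hoffman_error_bound}. It also needs the extra smallness condition $\tfrac{\bar{\lambda}_{\max}}{2}\rho_{1}^{2} \leqslant \kappa_{0}\varepsilon_{0}^{2}$ before~\eqref{eq:finite_length} applies. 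Its constant $c$ then depends on $\kappa_{0}$, not only on $\lambda_{\max}$, $\bar{\lambda}_{\min}$, and $\bar{\lambda}_{\max}$ as you claim.
\end{itemize}
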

\begin{proof}
If $\bar{R} = \{\boldsymbol{0}\}$, Lemma~\ref{lem:localization}(iii) gives $\bd^{(k)} = \boldsymbol{0}$, so the iteration stops at $\bx^{(k)}$ and the claims hold with $\rho_{1} \coloneqq \rho_{0}$. Otherwise, choose $\rho_{1} \coloneqq \min\bigl(\rho_{0}/\sqrt{\bar{\kappa}},\ s_{0}\bar{\lambda}_{\min}/(4\normtwo{\matr{A}}\lambda_{\max}\sqrt{\bar{\kappa}})\bigr)$ and write $\be^{(k')} \coloneqq \bx^{(k')} - \bxsol$. We show by induction that every iterate from $\bx^{(k)}$ onward lies in $\mathcal{F} \cap \{\bx : \normtwo{\bx - \bxsol} \leqslant \sqrt{\bar{\kappa}}\rho_{1}\}$ with $\eqset = J = J^{s}$, that its component orthogonal to $\bar{R}$ equals that of $\be^{(k)}$, and that it takes a full step. Given the hypothesis at $k'$, Lemma~\ref{lem:localization}(iii) applies at $\bx^{(k')}$, since $\sqrt{\bar{\kappa}}\rho_{1} \leqslant \rho_{0}$, and gives $J^{(k')} = J^{s}$, $\bd^{(k')} = -\bar{\matr{S}}\be^{(k')} \in \bar{R}$, and
\[
\normtwo{\bd^{(k')}} \leqslant \lambda_{\max}\normtwo{\bx^{(k')} - \bxsol} \leqslant \lambda_{\max}\sqrt{\bar{\kappa}}\rho_{1} \leqslant \frac{s_{0}\bar{\lambda}_{\min}}{4\normtwo{\matr{A}}}.
\]
Since $\bd^{(k')} \in \bar{R}$, we have $\langle\bd^{(k')},\matr{Q}\bd^{(k')}\rangle = \langle\bd^{(k')},\bar{\matr{S}}\bd^{(k')}\rangle \geqslant \bar{\lambda}_{\min}\normsq{\bd^{(k')}}$, so $\Delta_{*}^{(k')} \leqslant 1/\bar{\lambda}_{\min}$. Every constraint outside $J^{s}$ has slack at least $s_{0}/2$ at $\bx^{(k')}$ by the proof of Lemma~\ref{lem:localization}(i), so
\[
\Delta_{\mathrm{feas}}(\bx^{(k')}) \geqslant \frac{s_{0}/2}{\normtwo{\matr{A}}\normtwo{\bd^{(k')}}} \geqslant \frac{2}{\bar{\lambda}_{\min}} > \Delta_{*}^{(k')},
\]
and the step is full. It moves along $\bar{R} \subseteq \ker\matr{A}_{J^{s}}$, so $\bx^{(k'+1)} \in \mathcal{F}$, where the constraints of $J^{s}$ are active and every other is strictly inactive because $\Delta^{(k')} < \Delta_{\mathrm{feas}}(\bx^{(k')})$, giving $\eqset^{(k'+1)} = J^{s}$; and the component of $\be^{(k'+1)}$ orthogonal to $\bar{R}$ equals that of $\be^{(k')}$, hence that of $\be^{(k)}$. Since $V$ is nonincreasing, \eqref{eq:facegrowth} gives $\normsq{\be^{(k'+1)}_{\bar{R}}} \leqslant \bar{\kappa}\normsq{\be^{(k)}_{\bar{R}}}$, so $\normsq{\be^{(k'+1)}} \leqslant \bar{\kappa}\normsq{\be^{(k)}_{\bar{R}}} + \normsq{\be^{(k)} - \be^{(k)}_{\bar{R}}} \leqslant \bar{\kappa}\normsq{\be^{(k)}} \leqslant \bar{\kappa}\rho_{1}^{2}$, closing the induction. On this tail, $\be^{(k'+1)}_{\bar{R}} = \be^{(k')}_{\bar{R}} - \Delta_{*}^{(k')}\bar{\matr{S}}\be^{(k')}_{\bar{R}}$ with $\Delta_{*}^{(k')} = \normsq{\bar{\matr{S}}\be^{(k')}_{\bar{R}}}/\langle\bar{\matr{S}}\be^{(k')}_{\bar{R}},\bar{\matr{S}}^{2}\be^{(k')}_{\bar{R}}\rangle$, which is steepest descent with exact line search for the quadratic $\be \mapsto \tfrac12\langle\be,\bar{\matr{S}}\be\rangle$ on $\bar{R}$, where $\bar{\matr{S}}$ is positive definite with spectrum in $[\bar{\lambda}_{\min},\bar{\lambda}_{\max}]$. The Kantorovich bound~\cite[Theorem~3.3]{nocedal2006numerical}, stated there for the norm $\be \mapsto \langle\be,\bar{\matr{S}}\be\rangle^{1/2}$, gives~\eqref{eq:descent_rate}, because $V(\bx) - V^{s} = \tfrac12\langle\be_{\bar{R}},\bar{\matr{S}}\be_{\bar{R}}\rangle$ at every feasible $\bx \in \mathcal{F}$.
\end{proof}

\begin{prop}
\label{prop:identification}
Under the hypotheses of Theorem~\ref{thm:descent_convergence}(ii), some iterate satisfies $\eqset^{(k)} = J^{s}$ and $\normtwo{\bx^{(k)} - \bxsol} \leqslant \rho_{1}$.
\end{prop}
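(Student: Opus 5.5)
We will argue by contradiction using the localization of Lemma~\ref{lem:localization}. By Theorem~\ref{thm:descent_convergence}(i), either the iteration stops at some $\bx^{(K)} = \bxsol$, in which case $\eqset^{(K)} = J^{s}$ and the claim is trivial, or $\bx^{(k)} \to \bxsol$ with $\bd^{(k)} \neq \boldsymbol{0}$ for all $k$. In the second case, fix $k_{0}$ such that $\normtwo{\bx^{(k)} - \bxsol} \leqslant \rho_{1}$ for every $k \geqslant k_{0}$. Lemma~\ref{lem:localization}(i) gives $\eqset^{(k)} \subseteq J^{s}$ for these $k$. Suppose, for contradiction, that $\eqset^{(k)} \neq J^{s}$ for every $k \geqslant k_{0}$. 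Lemma~\ref{lem:localization}(ii) then gives $\normtwo{\bd^{(k)}} \geqslant \delta_{0} > 0$ for all $k \geqslant k_{0}$.

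The contradiction comes from the sufficient decrease of Lemma~\ref{lem:sufficient_decrease}. Since $V(\bx^{(k)}) \downarrow V^{s} > -\infty$ and the directions stay at least $\delta_{0}$ in norm, Lemma~\ref{lem:tail_extraction}, applied to the iterates from $k_{0}$ on with $\theta = \delta_{0}$, produces a blocked tail. This contradicts Theorem~\ref{thm:no_blocked_tail}.

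An alternative, more direct route is also available. The bound $\normtwo{\bd^{(k)}} \geqslant \delta_{0}$ is incompatible with $V(\bx^{(k)}) \to V^{s}$ through the error bound~\eqref{eq:hoffman_error_bound}, but only in the wrong direction, so this does not immediately close. The blocked-tail argument is therefore the one to use.

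The main subtlety is ensuring that the relabeled tail satisfies every hypothesis of Lemma~\ref{lem:tail_extraction}. The iteration must be infinite, which holds since $\bd^{(k)} \neq \boldsymbol{0}$. The direction norms must be bounded below by $\delta_{0}$ on the shifted sequence. The values of $V$ must be bounded below, which holds by $V^{s}$. Once these are checked, Lemma~\ref{lem:tail_extraction} yields that only finitely many steps have $\Delta^{(k)} \geqslant 1/\lambda_{\max}$ and that the rest form a blocked tail, and Theorem~\ref{thm:no_blocked_tail} gives the contradiction. Hence some $k \geqslant k_{0}$ has $\eqset^{(k)} = J^{s}$ together with $\normtwo{\bx^{(k)} - \bxsol} \leqslant \rho_{1}$, as claimed.
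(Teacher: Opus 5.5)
Your proof is correct and follows the paper's argument: you dispose of the finitely terminating case, use the convergence of part~(i) to place the late iterates within $\rho_{1} \leqslant \rho_{0}$ of $\bxsol$, and apply Lemma~\ref{lem:localization}(i)--(ii) to get $\normtwo{\bd^{(k)}} \geqslant \delta_{0}$ if $\eqset^{(k)} = J^{s}$ never holds; Lemma~\ref{lem:tail_extraction} with $V \geqslant V^{s}$ then produces a blocked tail, which Theorem~\ref{thm:no_blocked_tail} excludes. The paragraph on the error bound~\eqref{eq:hoffman_error_bound} adds nothing and can be dropped.
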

\begin{proof}
If $\bd^{(K)} = \boldsymbol{0}$ for some $K$, then $\bx^{(K)} = \bxsol$ and the claim holds at $k = K$, so assume the iteration is infinite. By part~(i), all late iterates have $\normtwo{\bx^{(k)} - \bxsol} \leqslant \rho_{1}$. If none of them had $\eqset^{(k)} = J^{s}$, then $\eqset^{(k)} \subsetneq J^{s}$ by Lemma~\ref{lem:localization}(i), and Lemma~\ref{lem:localization}(ii) would give $\normtwo{\bd^{(k)}} \geqslant \delta_{0}$ for every late $k$. Since $V \geqslant V^{s}$, Lemma~\ref{lem:tail_extraction} applied to the late run would then yield a blocked tail, contradicting Theorem~\ref{thm:no_blocked_tail}.
\end{proof}

Let $K'$ be the index of Proposition~\ref{prop:identification}. Lemma~\ref{lem:absorbing} gives $\eqset^{(k)} = J^{(k)} = J^{s}$, a full step, and~\eqref{eq:descent_rate} at every $k \geqslant K'$ at which the iteration has not stopped, which is~\eqref{eq:descent_identification} together with the rate. If $\bar{\matr{S}} = \matr{0}$, then Lemma~\ref{lem:localization}(iii) gives $\bd^{(K')} = \boldsymbol{0}$.
\hfill$\square$

\bibliography{proj_quad_prog}

@article{aps2019mosek,
  title={{MOSEK} optimization toolbox for {MATLAB}},
  author={{MOSEK ApS}},
  journal={User's Guide and Reference Manual, Version},
  volume={4},
  number={1},
  pages={116},
  year={2019}
}

@book{attouch2014variational,
  title={Variational analysis in {S}obolev and {BV} spaces: applications to {PDEs} and optimization},
  author={Attouch, H. and Buttazzo, G. and Michaille, G.},
  year={2014},
  publisher={SIAM}
}

@book{aubin2012differential,
  title={Differential inclusions: {S}et-valued {M}aps and {V}iability {T}heory},
  author={Aubin, J.-P. and Cellina, A.},
  volume={264},
  year={2012},
  publisher={Springer Science \& Business Media}
}

@book{brezis1973ope,
  title={Op\'erateurs maximaux monotones et semi-groupes de contractions dans les espaces de {H}ilbert},
  author={Brezis, H.},
  year={1973},
  publisher={North-Holland Publishing Company},
  address={Amsterdam},
  series={North-Holland Mathematics Studies, No. 5}
}

@article{brogliato2006equivalence,
  title={On the equivalence between complementarity systems, projected systems and differential inclusions},
  author={Brogliato, B. and Daniilidis, A. and Lemar{\'e}chal, C. and Acary, V.},
  journal={Systems \& Control Letters},
  volume={55},
  number={1},
  pages={45--51},
  year={2006},
  publisher={Elsevier}
}

@incollection{brown1951computational,
  title={Computational suggestions for maximizing a linear function subject to linear inequalities},
  author={Brown, G. W. and Koopmans, T. C.},
  booktitle={Activity Analysis of Production and Allocation},
  editor={Koopmans, T. C.},
  pages={377--380},
  year={1951},
  publisher={John Wiley \& Sons},
  address={New York}
}

@article{chambolle2016introduction,
  title={An introduction to continuous optimization for imaging},
  author={Chambolle, A. and Pock, T.},
  journal={Acta Numer.},
  volume={25},
  pages={161--319},
  year={2016},
  publisher={Cambridge University Press}
}

@article{chang1989steepest,
  title={The steepest descent gravitational method for linear programming},
  author={Chang, S. Y. and Murty, K. G.},
  journal={Discrete Appl. Math.},
  volume={25},
  number={3},
  pages={211--239},
  year={1989},
  publisher={Elsevier}
}

@article{charnes1957management,
  title={Management models and industrial applications of linear programming},
  author={Charnes, A. and Cooper, W. W.},
  journal={Manage. Sci.},
  volume={4},
  number={1},
  pages={38--91},
  year={1957},
  publisher={INFORMS}
}

@article{chen2001atomic,
  title={Atomic decomposition by basis pursuit},
  author={Chen, SS and Donoho, DL and Saunders, MA},
  journal={SIAM Rev.},
  volume={43},
  number={1},
  pages={129--159},
  year={2001},
  publisher={SIAM}
}

@incollection{cottle2010harry,
  title={Harry {M}arkowitz and the early history of quadratic programming},
  author={Cottle, R. W. and Infanger, G.},
  booktitle={Handbook of Portfolio Construction},
  pages={179--211},
  year={2010},
  publisher={Springer}
}

@incollection{dantzig1951maximization,
  title={Maximization of a linear function of variables subject to linear inequalities},
  author={Dantzig, G. B.},
  booktitle={Activity Analysis of Production and Allocation},
  editor={Koopmans, T. C.},
  pages={339--347},
  year={1951},
  publisher={John Wiley \& Sons},
  address={New York}
}

@techreport{dimiccoli2016fundamentals,
  title={Fundamentals of cone regression},
  author={Dimiccoli, M.},
  institution={Institute of Robotics and Industrial Informatics (CSIC-UPC)},
  year={2016}
}

@article{dinh2025real,
  title={Real-time {QP} solvers: a concise review and practical guide towards legged robots},
  author={Dinh, V. N.},
  journal={arXiv preprint arXiv:2510.21773},
  year={2025}
}

@article{dontchev1992difference,
  title={Difference methods for differential inclusions: A survey},
  author={Dontchev, Asen and Lempio, Frank},
  journal={SIAM review},
  volume={34},
  number={2},
  pages={263--294},
  year={1992},
  publisher={SIAM}
}

@article{dupuis1991lipschitz,
  title={On {L}ipschitz continuity of the solution mapping to the {S}korokhod problem, with applications},
  author={Dupuis, P. and Ishii, H.},
  journal={Stochastics},
  volume={35},
  number={1},
  pages={31--62},
  year={1991},
  publisher={Taylor \& Francis}
}

@article{dupuis1993dynamical,
  title={Dynamical systems and variational inequalities},
  author={Dupuis, Paul and Nagurney, Anna},
  journal={Annals of Operations Research},
  volume={44},
  number={1},
  pages={7--42},
  year={1993},
  publisher={Springer}
}

@article{ers14,
author    = "Howard Elman and Alison Ramage and David Silvester",
title     = "{IFISS}: A computational laboratory for investigating incompressible flow problems",
journal   = "SIAM Review",
volume    = "56",
pages     = "261--273",
year      = "2014"
}

@article{frank1956,
  author  = {Frank, Marguerite and Wolfe, Philip},
  title   = {An algorithm for quadratic programming},
  journal = {Naval Research Logistics Quarterly},
  volume  = {3},
  number  = {1--2},
  pages   = {95--110},
  year    = {1956}
}

@book{goebel2024set,
	title={Set-valued, convex, and nonsmooth analysis in dynamics and control: An introduction},
	author={Goebel, Rafal K},
	year={2024},
	publisher={SIAM}
}

@misc{gurobi2026,
  author       = {{Gurobi Optimization, LLC}},
  title        = {{G}urobi {O}ptimizer Reference Manual},
  year         = {2026},
  howpublished = {\url{https://www.gurobi.com}}
}

@article{heemels2000linear,
  title={Linear complementarity systems},
  author={Heemels, W. P. M. H. and Schumacher, J. M. and Weiland, S.},
  journal={SIAM J. Appl. Math.},
  volume={60},
  number={4},
  pages={1234--1269},
  year={2000},
  publisher={SIAM}
}

@book{hiriart2013convexI,
  title={Convex analysis and minimization algorithms {I}: Fundamentals},
  author={Hiriart-Urruty, J.-B. and Lemar{\'e}chal, C.},
  volume={305},
  series={Grundlehren Text Editions},
  year={1996},
  publisher={Springer Science \& Business Media}
}

@book{hiriart2013convexII,
  title={Convex analysis and minimization algorithms {II}: Advanced Theory and Bundle Methods},
  author={Hiriart-Urruty, J.-B. and Lemar{\'e}chal, C.},
  volume={306},
  series={Grundlehren Text Editions},
  year={1996},
  publisher={Springer Science \& Business Media}
}

@article{hoffman1952,
  author  = {Hoffman, Alan J.},
  title   = {On approximate solutions of systems of linear inequalities},
  journal = {Journal of Research of the National Bureau of Standards},
  volume  = {49},
  number  = {4},
  pages   = {263--265},
  year    = {1952}
}

@article{huangfu2018parallelizing,
  title={Parallelizing the dual revised simplex method},
  author={Huangfu, Q. and Hall, J. A. J.},
  journal={Math. Program. Comput.},
  volume={10},
  number={1},
  pages={119--142},
  year={2018},
  publisher={Springer}
}

@article{langlois2025fast,
  title={A fast algorithm for solving the lasso problem exactly without homotopy using differential inclusions},
  author={Langlois, G. P. and Darbon, J.},
  journal={arXiv preprint arXiv:2507.05562},
  year={2025}
}

@book{lawson1995solving,
  author={Lawson, C. L. and Hanson, R. J.},
  pages={xii+337},
  publisher={SIAM},
  series={Classics in Applied Mathematics},
  title={Solving least squares problems},
  volume={15},
  year={1995}
}

@article{lemke1961constrained,
  title={The constrained gradient method of linear programming},
  author={Lemke, C. E.},
  journal={J. Soc. Indust. Appl. Math.},
  volume={9},
  number={1},
  pages={1--17},
  year={1961},
  publisher={SIAM}
}

@article{liu2024optimization,
  author  = {Liu, Chen and Buzzard, Gregery T. and Zhang, Xiangxiong},
  title   = {An optimization based limiter for enforcing positivity in a semi-implicit discontinuous {G}alerkin scheme for compressible {N}avier--{S}tokes equations},
  journal = {J. Comput. Phys.},
  volume  = {519},
  pages   = {113440},
  year    = {2024},
  doi     = {10.1016/j.jcp.2024.113440}
}

@article{liu2024simple,
  author  = {Liu, Chen and Riviere, Beatrice and Shen, Jie and Zhang, Xiangxiong},
  title   = {A simple and efficient convex optimization based bound-preserving high order accurate limiter for {C}ahn--{H}illiard--{N}avier--{S}tokes system},
  journal = {SIAM J. Sci. Comput.},
  volume  = {46},
  number  = {3},
  pages   = {A1923--A1948},
  year    = {2024},
  doi     = {10.1137/23M1587853}
}

@article{liu2025optimization,
  author  = {Liu, Chen and Hu, Jingwei and Taitano, William T. and Zhang, Xiangxiong},
  title   = {An optimization-based positivity-preserving limiter in semi-implicit discontinuous {G}alerkin schemes solving {F}okker--{P}lanck equations},
  journal = {Comput. Math. Appl.},
  volume  = {192},
  pages   = {54--71},
  year    = {2025},
  doi     = {10.1016/j.camwa.2025.05.008}
}

@article{markowitz1956optimization,
  title={The optimization of a quadratic function subject to linear constraints},
  author={Markowitz, H.},
  journal={Nav. Res. Logist. Q.},
  volume={3},
  number={1-2},
  pages={111--133},
  year={1956},
  publisher={John Wiley \& Sons}
}

@article{meyer2013simple,
  title={A simple new algorithm for quadratic programming with applications in statistics},
  author={Meyer, M. C.},
  journal={Comm. Statist. Simulation Comput.},
  volume={42},
  number={5},
  pages={1126--1139},
  year={2013},
  publisher={Taylor \& Francis}
}

@article{moreau1962decomposition,
  title={D{\'e}composition orthogonale d'un espace hilbertien selon deux c{\^o}nes mutuellement polaires},
  author={Moreau, J. J.},
  journal={C. R. Acad. Sci. Paris},
  volume={255},
  pages={238--240},
  year={1962}
}

@book{nocedal2006numerical,
  title={Numerical optimization},
  author={Nocedal, J. and Wright, S. J.},
  year={2006},
  publisher={Springer}
}

@article{pearson2012regularization,
  title={Regularization-robust preconditioners for time-dependent {PDE}-constrained optimization problems},
  author={Pearson, J. W. and Stoll, M. and Wathen, A. J.},
  journal={SIAM J. Matrix Anal. Appl.},
  volume={33},
  number={4},
  pages={1126--1152},
  year={2012},
  doi={10.1137/110847949}
}

@article{pearson2017fast,
  title={Fast interior point solution of quadratic programming problems arising from {PDE}-constrained optimization},
  author={Pearson, J. W. and Gondzio, J.},
  journal={Numer. Math.},
  volume={137},
  number={4},
  pages={959--999},
  year={2017},
  publisher={Springer},
  doi={10.1007/s00211-017-0892-8}
}

@misc{qpbenchmark,
    title = {{qpbenchmark: Benchmark for quadratic programming solvers available
  in Python}},
    author = {Caron, St{\'e}phane and Zaki, Akram and Otta, Pavel and
  Arnstr{\"o}m, Daniel and Carpentier, Justin and Yang, Fengyu and Leziart,
  Pierre-Alexandre and Sousa Pinto, Jo{\~a}o},
    url = {https://github.com/qpsolvers/qpbenchmark},
    note = {Version 2.7.1},
    year = {2026}
  }

@book{rockafellar1997convex,
  title={Convex analysis},
  author={Rockafellar, R. T.},
  volume={11},
  year={1997},
  publisher={Princeton University Press}
}

@book{rockafellar2009variational,
  title={Variational analysis},
  author={Rockafellar, R. T. and Wets, R. J.-B.},
  volume={317},
  year={2009},
  publisher={Springer Science \& Business Media}
}

@article{rosen1960gradient,
  title={The gradient projection method for nonlinear programming. {P}art {I}. {L}inear constraints},
  author={Rosen, J. B.},
  journal={J. Soc. Indust. Appl. Math.},
  volume={8},
  number={1},
  pages={181--217},
  year={1960},
  publisher={SIAM}
}

@techreport{stoll2010all,
  title={All-at-once solution of time-dependent {PDE}-constrained optimization problems},
  author={Stoll, M. and Wathen, A.},
  institution={Mathematical Institute, University of Oxford},
  type={Technical Report},
  number={NA-10/13},
  year={2010},
  month={nov},
  url={https://ora.ox.ac.uk/objects/uuid:c94b530e-5a3d-4fb1-9c11-8d50302b9cb3}
}

@article{su2016differential,
  title={A differential equation for modeling Nesterov's accelerated gradient method: Theory and insights},
  author={Su, Weijie and Boyd, Stephen and Candes, Emmanuel J},
  journal={Journal of Machine Learning Research},
  volume={17},
  number={153},
  pages={1--43},
  year={2016}
}

@inproceedings{tassa2014control,
  title={Control-limited differential dynamic programming},
  author={Tassa, Y. and Mansard, N. and Todorov, E.},
  booktitle={2014 {IEEE} International Conference on Robotics and Automation ({ICRA})},
  pages={1168--1175},
  year={2014},
  organization={IEEE}
}

@article{tendero2021algorithm,
  title={An algorithm solving compressive sensing problem based on maximal monotone operators},
  author={Tendero, Y. and Ciril, I. and Darbon, J. and Serna, S.},
  journal={SIAM J. Sci. Comput.},
  volume={43},
  number={6},
  pages={A4067--A4094},
  year={2021},
  publisher={SIAM}
}

@article{tibshirani1996regression,
  title={Regression shrinkage and selection via the lasso},
  author={Tibshirani, R.},
  journal={J. R. Stat. Soc. Ser. B. Stat. Methodol.},
  volume={58},
  number={1},
  pages={267--288},
  year={1996},
  publisher={Oxford University Press}
}

@article{todd2002many,
  title={The many facets of linear programming},
  author={Todd, M. J.},
  journal={Math. Program.},
  volume={91},
  number={3},
  pages={417--436},
  year={2002}
}

@article{wolfe1959simplex,
  title={The simplex method for quadratic programming},
  author={Wolfe, P.},
  journal={Econometrica},
  volume={27},
  number={3},
  pages={382--398},
  year={1959},
  publisher={JSTOR}
}

@article{wright2025optimization,
  title={Optimization in theory and practice},
  author={Wright, S. J.},
  journal={arXiv preprint arXiv:2510.15734},
  year={2025}
}

@article{zhang2010maximum,
  author  = {Zhang, Xiangxiong and Shu, Chi-Wang},
  title   = {On maximum-principle-satisfying high order schemes for scalar conservation laws},
  journal = {J. Comput. Phys.},
  volume  = {229},
  number  = {9},
  pages   = {3091--3120},
  year    = {2010}
}

@article{zhang2010positivity,
  author  = {Zhang, Xiangxiong and Shu, Chi-Wang},
  title   = {On positivity-preserving high order discontinuous {G}alerkin schemes for compressible {E}uler equations on rectangular meshes},
  journal = {J. Comput. Phys.},
  volume  = {229},
  number  = {23},
  pages   = {8918--8934},
  year    = {2010}
}

@article{zhang2011maximum,
  title={Maximum-principle-satisfying and positivity-preserving high-order schemes for conservation laws: survey and new developments},
  author={Zhang, Xiangxiong and Shu, Chi-Wang},
  journal={Proceedings: Mathematical, Physical and Engineering Sciences},
  pages={2752--2776},
  year={2011},
  publisher={JSTOR}
}

@book{zoutendijk1960methods,
  title={Methods of feasible directions},
  author={Zoutendijk, G.},
  volume={960},
  year={1960},
  publisher={Elsevier},
  address={Amsterdam}
}

@article{benner2016block,
  title={Block-diagonal preconditioning for optimal control problems constrained by {PDE}s with uncertain inputs},
  author={Benner, P. and Onwunta, A. and Stoll, M.},
  journal={SIAM Journal on Matrix Analysis and Applications},
  volume={37},
  number={2},
  pages={491--518},
  year={2016},
  publisher={SIAM}
}

@article{BENNER201626,
title = {Low-rank solvers for unsteady {S}tokes–{B}rinkman optimal control problem with random data},
journal = {Computer Methods in Applied Mechanics and Engineering},
volume = {304},
pages = {26-54},
year = {2016},
issn = {0045-7825},
doi = {https://doi.org/10.1016/j.cma.2016.02.004},
url = {https://www.sciencedirect.com/science/article/pii/S0045782516300263},
author = {Peter Benner and Sergey Dolgov and Akwum Onwunta and Martin Stoll}
}

@article{Antil2023,
  title={{TTRISK}: Tensor Train Decomposition Algorithm for Risk-Averse Optimization},
  author={Antil, Harbir and Dolgov, Sergey and Onwunta, Akwum},
  journal={Numerical Linear Algebra with Applications},
  volume={3},
  number={3},
  pages={e2481},
  year={2023},
  url={https://doi.org/10.1002/nla.2481.12}
 }

@article{Antil2025,
  title={Smoothed {M}oreau-{Y}osida Tensor-Train Approximation of State-Constrained Optimization Problems Under Uncertainty},
  author={Antil, Harbir and Dolgov, Sergey and Onwunta, Akwum},
  journal={Numerical Linear Algebra with Applications},
  pages={32:e70028},
  year={2025},
  url={https://doi.org/10.1002/nla.70028}
 }

@article{Antil2025b,
  title={Tensor train solution to uncertain optimization problems with shared sparsity penalty},
  author={Antil, Harbir and Dolgov, Sergey and Onwunta, Akwum},
  journal={Optimization and Engineering},
   year={2025},
  url={https://doi.org/10.1007/s11081-025-10036-7}
 }

@article{BENNER2020,
title = {Low-rank solution of an optimal control problem constrained by random {N}avier-{S}tokes equations},
journal = {International Journal on Numerical Methods for Fluids},
volume = {92},
pages = {1653-1678},
year = {2020},
author = {Peter Benner and Sergey Dolgov and Akwum Onwunta and Martin Stoll}
}

\end{document}